\documentclass[twoside,final,titlepage,openany,a4paper,12pt]{book}

\usepackage[utf8]{inputenc}
\usepackage[english]{babel}
\usepackage{enumerate}
\usepackage{latexsym}
\usepackage{amsthm,amsmath}
\usepackage{amssymb}
\usepackage{multicol}
\usepackage{tikz}
\usepackage{float} 

\frenchspacing
\usepackage[bookmarks]{hyperref} 
\hypersetup{
	pdftitle={2-switch-degree Classification of Split Graphs},
	pdfauthor={V.N. Schvollner},
	colorlinks=true,
	linkcolor=blue,
	citecolor=red,
	filecolor=cyan,
	urlcolor=magenta
}

\usetikzlibrary{babel,decorations.pathreplacing,calc,positioning,shapes.geometric}

\definecolor{10}{RGB}{115,59,171}
\definecolor{8}{RGB}{212,122,240}
\definecolor{7}{RGB}{99,212,119}
\definecolor{6}{RGB}{183,240,164}
\definecolor{D}{RGB}{255,162,79}
\definecolor{E}{RGB}{255,84,0}
\definecolor{F}{RGB}{158,248,255}
\definecolor{G}{RGB}{128,135,255}
\definecolor{I}{RGB}{187,255,0}
\definecolor{A}{cmyk}{.9,.05,.4,0}
\definecolor{B}{RGB}{150,30,150}
\definecolor{C}{RGB}{186,155,189}
\definecolor{9}{RGB}{0,180,60}
\definecolor{0}{RGB}{30,123,191}
\definecolor{1}{RGB}{255,113,102}
\definecolor{2}{RGB}{41,199,92}
\definecolor{3}{RGB}{242,207,16}
\definecolor{5}{RGB}{255,15,154}
\definecolor{4}{rgb}{.8,0,.8}

\definecolor{Red}{rgb}{1,0.4,0.4}
\definecolor{Green}{rgb}{.1,.5,.1}
\definecolor{Blue}{rgb}{.1,.1,.5}
\definecolor{blue}{RGB}{0,0,255}
\definecolor{Yellow}{rgb}{.8,.4,0}
\definecolor{X}{rgb}{.8,.4,0}
\definecolor{H}{rgb}{0,0,1}
\definecolor{light}{rgb}{.67,.84,.90}
\definecolor{Cyan}{rgb}{0,1,1}
\definecolor{Purple}{rgb}{.5,0,.5}
\definecolor{Purple2}{rgb}{.5,.2,.5}
\definecolor{white}{rgb}{1.0,1.0,1.0}
\definecolor{Purple2}{rgb}{.8,.4,0}
\definecolor{Amarillo}{RGB}{225,191,73}
\definecolor{Celeste}{RGB}{117,170,219}
\definecolor{Castano}{RGB}{232,53,17}
\definecolor{Black}{RGB}{0,0,0}
\definecolor{White}{RGB}{255,255,255}
\definecolor{gris}{rgb}{.5,.5,.5}

\newtheorem{theorem}{Theorem}[section]

\newtheorem{conjecture}[theorem]{Conjecture}
\newtheorem{corollary}[theorem]{Corollary}

\newtheorem{lemma}[theorem]{Lemma}

\newtheorem{proposition}[theorem]{Proposition}


\newtheorem{proposicion}[theorem]{Proposition}

\theoremstyle{definition}


\pgfdeclarelayer{background2}
\pgfdeclarelayer{background}
\pgfdeclarelayer{foreground}
\pgfsetlayers{background2,background,main,foreground}

\title{2-switch-degree classification of split graphs}
\author{Victor Nicolas Schvöllner}
\date{\today}

\begin{document}
	\frontmatter
	\begin{titlepage}
		\begin{center}
			\vspace*{0in}
			
			Universidad Nacional de San Luis (UNSL)\\
			\vspace*{1.5cm}
			Facultad de Ciencias Físico-Matemáticas y Naturales (FCFMyN)\\
			\vspace*{1.5cm}
			Department of Mathematics\\
			\vspace*{1.5cm}
			\begin{large}
				\textbf{Thesis submitted for the degree of\\
					Ph.D. in Mathematics}\\
			\end{large}
			\vspace*{1.5cm}
			\rule{80mm}{0.1mm}\\
			\vspace*{1.5cm}
			\begin{Large}
				\textbf{2-switch-degree classification of split graphs} \\
			\end{Large}
			\vspace*{1.5cm}
			\begin{large}
				Thesis presented by: Victor Nicolas Schvöllner\\
			\end{large}
			\vspace*{1.5cm}
			\rule{80mm}{0.1mm}\\
			\vspace*{0.1in}
			\begin{large}
				Supervised by: 
				Dr. Adri\'{a}n Pastine
			\end{large}
		\end{center}
	\end{titlepage}
	
	\newpage
	
	{
		\hypersetup{linkcolor=black}
		\tableofcontents
		\listoffigures
	}


\chapter*{Summary and main objectives}

The degree sequence of a graph $G$ is the list of degrees of its vertices. A 2-switch is an operation on a graph $G$ that replaces two disjoint edges $ab, cd \in G$ with two disjoint edges that are not in $G$ and that are incident to the vertices $a,b,c$, and $d$. This operation preserves the degree sequence. A fundamental result related to this concept, which can be found in \cite{chartrand2010graphs} (page 24), states that, given two graphs with the same degree sequence, it is possible to transform one into the other by means of a sequence of 2-switches. The 2-switch-degree of $G$, or simply the degree of $G$, is the number of 2-switches acting on $G$. The main goal of this work is to study the properties of this new structural parameter (addressed in Chapter \ref{cap:deg(G)}), and to develop tools to classify certain families of graphs based on it (Chapters \ref{cap:grafos.asociados.a.split} and \ref{cap:grafos.activos.con.deg<5}). More specifically, we focus on split graphs, graphs whose vertex set can be partitioned into a clique and an independent set. This choice is motivated by Tyshkevich's Decomposition (see \cite{tyshkevich2000decomposition}, or Theorem \ref{tyshk.decomp}), which states that every graph can be uniquely written as a “product” $G_r\circ\ldots\circ G_2\circ G_1$ of indecomposable graphs, where $G_2,\ldots,G_r$ are all split. In a certain sense, this decomposition plays the same role for graphs that the Fundamental Theorem of Arithmetic plays for integers. Following this analogy, indecomposable split graphs are naturally compared to prime numbers. Moreover, we will prove that the 2-switch-degree distributes over Tyshkevich’s composition; that is, “the degree of the product is the sum of the degrees” (as happens with polynomials). These facts justify the importance of first studying the degree classification of split graphs before tackling the same problem for arbitrary graphs. Here, it is important to emphasize the following: graphs of degree 0 are precisely threshold graphs, a well known subfamily of split graphs.

The most important tool with which we approach this problem is undoubtedly the factor graph $\Phi(S)$ associated with a split graph $S$, which is introduced in Chapter \ref{cap:grafos.asociados.a.split}. This multigraph encodes information about the degree of $S$ through the multiplicity of its edges, using only the independent vertices of $S$. At first glance, listing all indecomposable split graphs of a given degree seems like a task too difficult to accomplish by inspection or brute force. However, the factor graph breaks down this large problem into several simpler ones. Indeed, since the size of $\Phi(S)$ matches the degree of $S$, we will see that it suffices to list all (unlabeled) connected multigraphs of fixed size: to each one of these will correspond a list (possibly empty) of prime split graphs, that is, indecomposable split graphs in which all vertices are involved in some 2-switch. This method will be put into practice in Chapter \ref{cap:grafos.activos.con.deg<5}, where we classify all prime split graphs of degree 1, 2, 3, and 4. The study of the factor graph leads to two other important topics, which may be regarded as sub-objectives of this work: the twin quotient and the $\Delta$ property.

The twin quotient is the main topic of Chapter \ref{cap:cociente.gemelos}. Two vertices are said to be twins in a graph $G$ if their neighborhoods in $G$ are essentially identical. In other words, for a vertex to have twins in $G$ means that it has one or more “copies” of itself in $G$. Since “being twins” is an equivalence relation, we can quotient $G$ by this relation to obtain a quotient graph $[G]$ where such copies have been removed. The study of the twin quotient is important in this work for two main reasons. The first one appears in Section \ref{sec:caract.Phi.simples}: if $u$ and $v$ are independent vertices in $S$ that are twins in $S$, then $u$ and $v$ are non-neighbors and twins in $\Phi(S)$. Moreover, when $S$ is homogeneous (that is, when all its independent vertices have the same degree in $S$), we have that $u$ and $v$ are non-neighbors in $\Phi(S)$ if and only if they are twins in $S$. The second reason the twin quotient matters, lies in its “compatibility” with Tyshkevich's Decomposition. Indeed, in Section \ref{sec:cociente.composicion} we prove that the quotient distributes over $\circ$: $[S\circ G]=[S]\circ [G]$, as long as $S$ is a balanced split graph.

The $\Delta$ property is particularly elegant and interesting, as it represents a connection between Structural Graph Theory and Number Theory. If $n\in\mathbb{N}$, we say that $n$ has the $\Delta$ property if there exists a triple $\{x,y,z\}$ of divisors of $n$ such that $1<x<y\leq z<\sqrt{n}$ and $\frac{n}{x}-x=\frac{n}{y}-y+\frac{n}{z}-z$. Chapter \ref{cap:La.Prop.Delta} is entirely devoted to the study of numbers that satisfy or fail to satisfy this condition. However, the first signs of a connection to Number Theory appear already in Section~\ref{sec:caract.Phi.simples}, when the formula for computing the multiplicity of an edge $uv$ in $\Phi$ is given. The definition of the $\Delta$ property may seem somewhat artificial until one investigates the induced cycles of the factor graph. In Section \ref{sec:caminos,ciclos.en.Phi} we show that such cycles can only have size 3 or 4. In this context, the most “symmetric” situation that can occur in $\Phi$ is that it contains an $n$-simple triangle, that is, a 3-cycle whose edges all have multiplicity $n$. In this case, we show in Section \ref{sec:Graphs<->NumberTheory} that if $\Phi(S)$ is an $n$-simple triangle $abca$ such that $\deg_S(a)<\deg_S(b)<\deg_S(c)$, then $n$ satisfies the $\Delta$ condition. Conversely, if $n$ has the $\Delta$ property, then there exists a balanced split graph $S$ such that $\Phi(S)$ is an $n$-simple triangle $abca$ with $\deg_S(a)<\deg_S(b)<\deg_S(c)$.

Before we begin with Chapter \ref{cap:introduccion}, which introduces all the notation and background knowledge necessary for this work, we would like to clarify that all the results in Chapters \ref{cap:deg(G)}, \ref{cap:cociente.gemelos}, \ref{cap:grafos.asociados.a.split}, \ref{cap:La.Prop.Delta} and \ref{cap:grafos.activos.con.deg<5} are original, unless otherwise specified. Appendix \ref{proof.lema.2switch.preserva.vert.activos} and \ref{familias.inserc.conjuntos}, on the other hand, contain original proofs of already known results.


\mainmatter
\chapter{Introduction} \label{cap:introduccion}
\nocite{*}

\section{Basic concepts and notation}

In this section, we introduce the basic concepts of Graph Theory that we will use throughout this text. Each definition is accompanied by relevant examples, comments, and observations, with the aim of offering better intuition. We begin with the formal definition of a graph, along with those of vertex and edge. From this, we define order, size, and the complement of a graph, as well as the concepts of subgraph and graph equality. Then, we show how to draw a graph in the plane, which allows us to express many concepts more intuitively. Using this language, we define notions such as neighborhood, vertex degree, and degree sequence. This is followed by the definition of path, which opens the door to many other structural notions such as cycles, connectivity, and diameter. Among the remaining concepts introduced, we highlight the following: multigraph, digraph, forest, isomorphism and clique. \\

Before delving into Graph Theory, we would like to clarify the following conventions used throughout this text:
\begin{enumerate}[(1).]
	\item $0\notin \mathbb{N}$;
	\item if $n\in\mathbb{N}$, we use the symbol $[n]$ to denote the set $\{1,2,\ldots,n\}$;
	\item sometimes, we write singleton sets without braces, that is, $a=\{a\}$;
	\item likewise, we sometimes write two-element sets without braces or commas, that is, $ab=\{a,b\}$;
	\item the difference between two sets, $A$ and $B$, is denoted $A-B$; that is: $A-B=\{a \in A \mid a\not \in B\}$.
\end{enumerate}
The context will always help to avoid ambiguities in items (3) and (4). \\

A \textbf{graph} is an ordered pair $G=(V,E)$, where $V$ is a set and
\[ E\subseteq {{V}\choose{2}}= \{W\subseteq V:|W|=2\}. \]
The elements of $V$ are called \textbf{vertices}, and those of $E$ are called \textbf{edges}. In other words, a graph is defined by a set of vertices and a set of edges, a collection of unordered pairs of vertices. Two graphs $G=(V,E)$ and $G'=(V',E')$ are equal if $V=V'$ and $E=E'$. The quantity $|V|$ is called the \textbf{order} of the graph, and $|E|$ is its \textbf{size}. We denote the order by $|G|$ and the size by $\lVert G\rVert$. Usually, a graph $G$ is presented without displaying the pair $(V,E)$ explicitly. In such cases, we use the notation $V(G)$ and $E(G)$ to refer to the vertex set and the edge set of $G$, respectively.

The \textbf{complement} of a graph $G=(V,E)$ is the graph $\overline{G}=(V,E^{c})$, where 
\[ E^{c}={{V}\choose{2}}-E. \]
If $G=(V,E)$ and $H=(V',E')$ is a graph such that $V'\subseteq V$ and $E'\subseteq E$, then we say that $H$ is a \textbf{subgraph} of $G$ and write $H\subseteq G$. If $H\subseteq G$ and $H\neq G$, then $H$ is called a \textbf{proper subgraph} of $G$ and we write $H\subset G$. We use the same notation for proper containment of sets. For example,
\[ G=(\{x,y,z,t\}, \{ \{x,z\}, \{x,t\} \}) \]
is a graph with 4 vertices and 2 edges, and
\[ H=(\{x,y,z\}, \{ \{x,z\} \}) \]
is a proper subgraph of $G$ with order 3 and size 1. The complement of $H$ is
\[ \overline{H}=(\{x,y,z\}, \{ \{x,y\}, \{y,z\} \}). \]
A subgraph $H\subseteq G$ is said to be an \textbf{induced subgraph} of $G$ if, for every $\{u,v\}\subseteq V(H)$, we have the following: if $uv\in E(G)$, then $uv\in E(H)$. We denote this relation by $\preceq$. When $H\preceq G$ and $H\neq G$, we can write $H\prec G$. If $W=\{v_1,\ldots,v_n\}\subseteq V(G)$, we denote by $\langle W\rangle_G$ or $\langle v_1,\ldots,v_n\rangle_G$ the subgraph $H\preceq G$ such that $V(H)=W$ and
\[ E(H)=\{uv\in E(G): u,v\in W\}. \]
In this case, we say that $H$ is the subgraph of $G$ induced by $W$ or by the vertices $v_1,\ldots,v_n$. The subscript “$G$” can be omitted if the context is clear. Obviously, $H\preceq G$ implies $H\subseteq G$, but the converse is not true. This can be quickly verified with the example:
\[ H=(\{a,b\},\varnothing)\subseteq (\{a,b,c\},\{ab,bc\})=G. \]
Here, $H$ is not induced in $G$ because $ab\in E(G)-E(H)$.
If $G=(V,E)$ is a graph and $E'\subseteq\binom{V}{2}$, then 
\[ G+E'=(V,E\cup E'), \quad G-E'=(V,E-E'). \]
If $V'\subseteq V$, then 
\[ G-V'=\langle V-V'\rangle_G. \]
When $E'=\{e\}=\{ab\}$ and $V'=\{v\}$, it is common to write $G\pm e$, $G\pm ab$, and $G-v$.

A graph $G$ has a natural visual representation, where the vertices are considered as distinct, labeled points in the plane (that is, each $v\in V(G)$ corresponds to a point labeled “$v$”). Any two such points $u,v$ are joined by an arc if and only if $uv \in E(G)$. The resulting drawing is called a \textbf{labeled graph}. The same drawing without the vertex labels is called an \textbf{unlabeled graph}. Two unlabeled graphs $G$ and $H$ are considered equal if there is a labeling of both that makes them equal as graphs. Two vertices (points) $a,b$ connected by an edge $ab$ are said to be \textbf{adjacent} (or \textbf{neighbors}), and $ab$ is said to be \textbf{incident} to $a$ and $b$. Formally, $u$ and $v$ are neighbors in $G$ if and only if $uv\in E(G)$. The set
\[ N_{G}(v)=\{x\in V(G): vx\in E(G)\}, \]
of all neighbors of a vertex $v$ in a graph $G$, is called the \textbf{neighborhood} of $v$ in $G$. Clearly, $v\notin N_G(v)$. Two edges $ab, cd$ that do not share a common vertex are said to be \textbf{disjoint} ($ab\cap cd = \varnothing$). The number of edges incident to a vertex $v$ in a graph $G$ is called the \textbf{degree} of $v$ in $G$, denoted by $\deg_G(v)$, $\deg(v)$, or simply $d_v$. It is clear that $0\leq d_v \leq |G|-1$. If $d_v=0$, then $v$ is said to be \textbf{isolated}; if $d_v=1$, then $v$ is a \textbf{leaf}; and if $d_v = |G| - 1$ (i.e., $N_G(v) = V(G) - v$), then $v$ is \textbf{universal}. A $k$-\textbf{regular} graph is a graph in which all vertices have degree $k$. The value of $k$ can be omitted when unnecessary.

Since we usually write $V(G) = [n]$ for some $n$, the degrees of the vertices in $G$ can be arranged as an $n$-tuple
\[ (d_i)_{i=1}^n = (d_1, \dots, d_n), \]
forming what is called the \textbf{degree sequence} of $G$, denoted by $s(G)$ or simply by $s$, if $G$ is clear from the context. For example, if $G = ([4], \{12, 23\})$, then $s(G) = (1, 2, 1, 0)$. If we label the vertices of $G$ so that $d_1 \geq \ldots \geq d_n$, we can rewrite $s$ in the more compact form:
\[ s = d_{n_1}^{\alpha_1} \ldots d_{n_k}^{\alpha_k}, \]
where $\{n_i: i\in[k]\}\subseteq [n]$, $d_{n_1} > \dots > d_{n_k}$ and
\[ \alpha_i = |\{v\in V(G): d_v = d_{n_i}\}|. \]
This form is especially useful for unlabeled graphs, where only the multiplicities $\alpha_i$ of the degrees $d_{n_i}$ matter. Two degree sequences $(d_i)_{i=1}^n$ and $(\delta_i)_{i=1}^m$ of labeled graphs are equal if and only if $n = m$ and $d_i = \delta_i$ for all $i \in [n]$. On the other hand, two degree sequences $d_1^{\alpha_1} \ldots d_n^{\alpha_n}$ and $\delta_1^{\beta_1} \ldots \delta_m^{\beta_m}$ for unlabeled graphs are equal if and only if $n = m$ and
\[ \{(d_i, \alpha_i): i\in[n]\} = \{(\delta_i, \beta_i): i\in[n]\}. \]
A basic but very important fact about degree sequences is the so-called \textbf{Handshake Lemma}, which states that
\[ \sum_{v\in V(G)} \deg_G(v) = 2\Vert G\Vert, \]
for any graph $G$. The symbol $\mathcal{G}(s)$ denotes the set of all graphs whose degree sequence is $s$. If $X$ is a graph, then $\mathcal{G}(X)$ means $\mathcal{G}(s(X))$. Depending on context, elements of $\mathcal{G}(s)$ may be considered labeled or unlabeled. Given an arbitrary sequence of integers $s$, there may or may not exist a graph $G$ such that $s(G) = s$. A \textbf{graphical sequence} is a sequence $s$ of integers such that $\mathcal{G}(s) \neq \varnothing$. In general, multiple graphs may correspond to the same graphical sequence.
In Figure~\ref{grafo0}, we show a graph $G$ of order 12 and size 11. Its degree sequence is
\[ s(G) = (3,3,2,4,3,1,2,1,1,1,1,0). \]
If we imagine $G$ without vertex labels, we could write:
\[ s = 4^1\, 3^3\, 2^2\, 1^5\, 0^1. \]
$G$ has five leaves (vertices $6,8,9,10,11$), one isolated vertex ($12$), and no universal vertices. The edges $\{4,5\}$ and $\{10,11\}$, which are disjoint, belong to $G$, while $56$ does not. Additionally, $87$ and $74$ are not disjoint because they both share vertex 7. The neighborhood of vertex 5 is the set $\{2,5,9\}$, while $N_G(12) = \varnothing$. The graph
\[ H = (\{2,3,4,5\}, \{23,34,45\}) \]
is a proper subgraph of $G$. However, $H$ is not induced in $G$, since $25\notin E(H)$.
\begin{figure}[H]
	\[
	\begin{tikzpicture}
		[scale=.7,auto=left,every node/.style={scale=.7,circle,thick,draw}] 
		\node [label={6}] (n6) at (1,10) {};
		\node [label={4}] (n4) at (4,8)  {};
		\node [label={5}] (n5) at (7,9)  {};
		\node [label={1}] (n1) at (11,8) {};
		\node [label={2}] (n2) at (9,6)  {}; 
		\node [label={3}] (n3) at (5,5)  {};
		\node [label={7}] (n7) at (2,7)  {};
		\node [label={8}] (n8) at (1,5.5)  {};
		\node [label=right:{9}] (n9) at (14,8)  {};
		\node [label=below:{10}] (n10) at (15,10)  {};
		\node [label=left:{11}] (n11) at (17,8)  {};
		\node [label={12}] (n12) at (12,6)  {};
		\foreach \from/\to in {n6/n4,n4/n5,n5/n1,n1/n2,n2/n5,n2/n3,n3/n4,n4/n7,n7/n8,n1/n9,n10/n11}
		\draw (\from) -- (\to);
	\end{tikzpicture}
	\] 
	\caption{Representation of a graph.}
	\label{grafo0}
\end{figure}
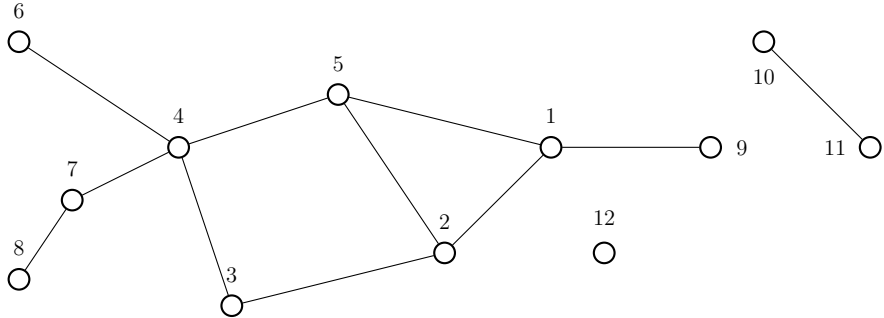
A \textbf{path} in $G$ is a sequence $v_1 \ldots v_n$ of distinct vertices in $G$ such that $v_i v_{i+1} \in E(G)$ for every $i\in [n-1]$, $n \geq 2$. We call $v_1$ the \textbf{initial vertex} of the path and $v_n$ the \textbf{terminal vertex}. The \textbf{length} of a path $v_1 \ldots v_n$ is the number of edges it contains, i.e., $n-1$. Any sequence consisting of a single vertex is considered a path of length 0.

A graph $G$ is \textbf{connected} if for every pair of vertices in $G$ there exists a path connecting them; otherwise, we say that $G$ is \textbf{disconnected} (or not connected). It follows from the definition that every graph of order 0 or 1 is connected. A \textbf{connected component} (or simply, a component) of $G$ is a maximal connected subgraph of $G$, that is, a connected subgraph not properly contained in any other connected subgraph of $G$. Visually, a disconnected graph appears split into two or more disjoint “pieces.” The \textbf{distance} between $u,v\in V(G)$ in $G$, denoted $dist_G(u,v)$, is the length of the shortest path between $u$ and $v$ in $G$. If $u$ and $v$ belong to different components, we define $dist_G(u,v)=\infty$. We may omit the subscript “$G$” when the context is clear. Every graph equipped with $dist(\ast,\ast)$ becomes a metric space, as $dist(\ast,\ast)$ satisfies the usual distance axioms. The \textbf{diameter} of $G$, denoted by $diam(G)$, is the greatest distance between any two vertices of $G$. Clearly, a graph of order 1 has diameter 0, and a disconnected graph has diameter $\infty$. We assume that if $|G|=0$, then $diam(G)=0$.

In Figure \ref{camino0} we can see the path $745219$, of length 5, in a connected graph of order 10. In Figure \ref{camino1}, on the other hand, we show the path $21$, of length 1, in a disconnected graph with 3 components.
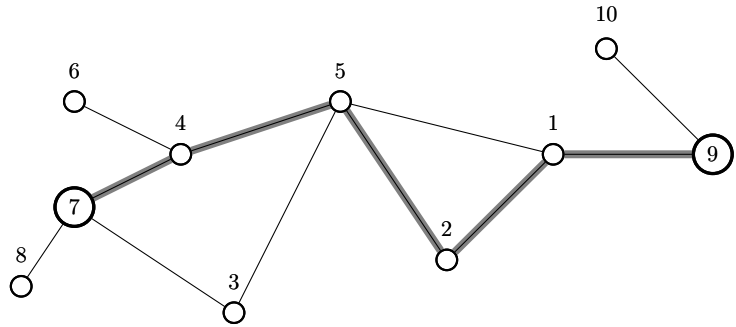
\begin{figure}[H]
	\[
	\begin{tikzpicture}
		[scale=.7,auto=left,every node/.style={scale=.7,circle,thick,draw}] 
		\node [label={6}] (n6) at (2,9) {};
		\node [label={4}] (n4) at (4,8)  {};
		\node [label={5}] (n5) at (7,9)  {};
		\node [label={1}] (n1) at (11,8) {};
		\node [label={2}] (n2) at (9,6)  {}; 
		\node [label={3}] (n3) at (5,5)  {};
		\node  [very thick] (n7) at (2,7)  {7};
		\node [label={8}] (n8) at (1,5.5)  {};
		\node  [very thick] (n9) at (14,8)  {9};
		\node [label={10}] (n10) at (12,10)  {};
		\foreach \from/\to in {n6/n4,n5/n1,n3/n7,n3/n5,n7/n8,n9/n10}
		\draw (\from) -- (\to);
		
		\draw (n7) -- (n4) [line width=3,color=gris];
		\draw (n4) -- (n5) [line width=3,color=gris];
		\draw (n5) -- (n2) [line width=3,color=gris];
		\draw (n2) -- (n1) [line width=3,color=gris];
		\draw (n1) -- (n9) [line width=3,color=gris];
		\draw (n4) -- (n5) ;
		\draw (n5) -- (n2) ;
		\draw (n2) -- (n1) ;
		\draw (n1) -- (n9) ;
		
		\node [label={6}] (n6) at (2,9) {};
		\node [label={4}] (n4) at (4,8)  {};
		\node [label={5}] (n5) at (7,9)  {};
		\node [label={1}] (n1) at (11,8) {};
		\node [label={2}] (n2) at (9,6)  {}; 
		\node [label={3}] (n3) at (5,5)  {};
		\node  [very thick] (n7) at (2,7)  {7};
		\node [label={8}] (n8) at (1,5.5)  {};
		\node  [very thick] (n9) at (14,8)  {9};
		\node [label={10}] (n10) at (12,10)  {};
		\draw (n7) -- (n4);
	\end{tikzpicture}
	\]
	\caption{Un camino de longitud 5 en un grafo conexo.}
	\label{camino0}	
\end{figure}

\begin{figure}[H]
	\[
	\begin{tikzpicture}
		[scale=.7,auto=left,every node/.style={scale=.7,circle,thick,draw}] 
		\node [label={6}] (n6) at (2,9) {};
		\node [label={4}] (n4) at (4,8)  {};
		\node [label={5}] (n5) at (7,9)  {};
		\node  [very thick](n1) at (11,8) {1};
		\node  [very thick](n2) at (9,6)  {2}; 
		\node [label={3}] (n3) at (5,6)  {};
		\node  [label={7}] (n7) at (2,7)  {};
		\node [label={8}] (n8) at (1,5.5)  {};
		\node  [label={9}] (n9) at (14,8)  {};
		\node [label={10}] (n10) at (12,10)  {};
		\foreach \from/\to in {n3/n7,n3/n5,n7/n8,n9/n10,n1/n10,n1/n9}
		\draw (\from) -- (\to);
		
		\draw (n7) -- (n4) ;
		\draw (n4) -- (n5) ;
		\draw (n2) -- (n1) [line width=3,color=gris];
		\draw (n2) -- (n1);
	\end{tikzpicture}
	\]
	\caption{Un camino de longitud 1 en un grafo disconexo.}
	\label{camino1}
\end{figure}
Let $G=(V,E)$ and $G'=(V',E')$ be graphs. If $V\cap V' = \varnothing = E\cap E'$, we say that $G$ and $G'$ are \textbf{disjoint}, and we write $G\cap G' = \varnothing$. In that case, we can “join” $G$ and $G'$ to form the graph
\[ G\dot{\cup}G' = (V\cup V', E\cup E'). \]
Moreover, if $G$ has $k$ components $G_i$, then we may write
\[ G = \dot{\bigcup}_{i=1}^k G_i, \]
since components are pairwise disjoint. When $G$ is unlabeled and all its components are isomorphic to the same unlabeled graph $H$, it is customary to write $G = kH$. For example, if $K_2$ is the unlabeled graph $(\{a,b\},\{ab\})$, then $2K_2$ is the unlabeled version of the graph $([4], \{12,34\})$. If $V\cap V' = \varnothing$, we say that $G$ and $G'$ are vertex-disjoint; if $E\cap E' = \varnothing$, we say they are edge-disjoint. Clearly, if $G$ and $G'$ are vertex-disjoint, they are also edge-disjoint.

A \textbf{cycle} in a graph $G$ is a sequence $v_1 \ldots v_n v_{n+1}$ of vertices of $G$ such that $n\geq 3$, $v_1 \ldots v_n$ is a path, $v_{n+1} = v_1$, and $v_1 v_n \in E(G)$. In other words, a cycle is essentially a path in which the initial and terminal vertices are joined by an edge. Thus, any vertex in the cycle is simultaneously an initial and terminal vertex. The \textbf{length} of a cycle $C=v_1 \ldots v_n v_{n+1}$ is the number of edges (or distinct vertices) of $C$, i.e., $n$. Clearly, a cycle (or path) in $G$ is a subgraph of $G$. Therefore, two cycles (or paths) in $G$ are equal if and only if they are equal as graphs. Graphs containing exactly one cycle are called \textbf{unicyclic}.

The terms “path” and “cycle” are also commonly used to refer to graphs that have such structures. More precisely, we say that $G$ is a path (cycle) if there exists a path (cycle) in $G$ that uses all its edges. Clearly, paths and cycles are connected graphs, and all paths (cycles) in a graph $G$ can be viewed as connected subgraphs of $G$. We denote by $P_n$ the unlabeled path of order $n\geq 2$ (and size $n-1$), and by $C_n$ the unlabeled cycle of order and size $n\geq 3$. Obviously, $C_n$ is unicyclic.

Note that the very definition of graph we gave, forbids the existence of cycles of length 1 and 2 in $G$. These are called, respectively, \textbf{loops} and \textbf{parallel edges}. In the mathematical literature, more general definitions of graph may allow such cases. Therefore, a graph with no loops or parallel edges is often called a \textbf{simple graph}. Throughout this work, the term “graph” will always mean “simple graph,” unless stated otherwise. We denote by $\mathcal{G}_n$ the set of all simple graphs of order $n$. Whether the elements of $\mathcal{G}_n$ are considered labeled or unlabeled will depend on context.
\begin{figure}[H]
	\[
	\begin{tikzpicture}
		[scale=.7,auto=left,every node/.style={scale=.7,circle,thick,draw}] 
		\node  (1) at (0,0) {1};
		\node  [label=below:$M_{0}$](2) at (2.5,0) {2};
		\node  (3) at (2.5,2.5) {3};
		\node  (4) at (0,2.5) {4};
		\foreach \from/\to in {2/3,3/4,4/1}
		\draw (\from) -- (\to);
		\draw (1) [loop right,scale=3.5] to (1) ;
	\end{tikzpicture}
	\begin{tikzpicture}
		\node (1) at (0,0) {};
		\node (1) at (1,0) {};
	\end{tikzpicture}
	\begin{tikzpicture}
		[scale=.7,auto=left,every node/.style={scale=.7,circle,thick,draw}] 
		\node  (1) at (0,0) {1};
		\node  [label=below:$M_{1}$](2) at (2.5,0) {2};
		\node  (3) at (2.5,2.5) {3};
		\node  (4) at (0,2.5) {4};
		\foreach \from/\to in {1/2,2/3,3/4,4/1}
		\draw (\from) -- (\to);
		\draw (2) [bend right=25] to (3) ;
		\draw (2) [bend left=25] to (3) ;
	\end{tikzpicture}
	\begin{tikzpicture}
		\node (1) at (0,0) {};
		\node (1) at (.5,0) {};
	\end{tikzpicture}
	\begin{tikzpicture}
		[scale=.7,auto=left,every node/.style={scale=.7,circle,thick,draw}] 
		\node  (1) at (0,0) {1};
		\node  [label=below:$M_{2}$](2) at (2.5,0) {2};
		\node  (3) at (2.5,2.5) {3};
		\node  (4) at (0,2.5) {4};
		\foreach \from/\to in {2/3,3/4,4/1}
		\draw (\from) -- (\to);
		\draw (1) [loop right,scale=3.5] to (1) ;
		\draw (2) [bend right=25] to (3) ;
	\end{tikzpicture}
	\] 
	\caption{$M_{0}, M_{1}$ y $M_{2}$ are not simple graphs.}
	\label{nosimples}
\end{figure}
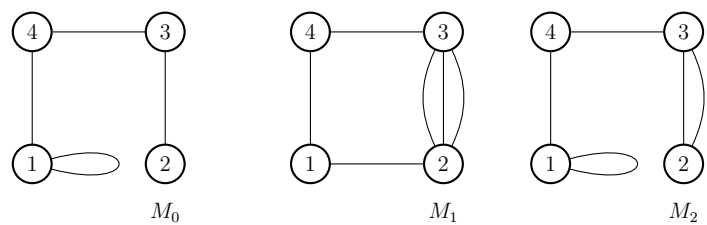
In a more general setting that allows loops and parallel edges, we introduce the concept of a multigraph. Formally, a \textbf{multigraph} $M$ is a pair $(V,E)$ where $V = V(M)$ is a set of vertices and $E = E(M)$ is a \textbf{multiset} of edges—that is, a set of edges that allows repetitions, as well as ordered pairs of the form $(v,v)$, which represent loops on vertex $v$. The number of repetitions of an edge $e \in E(M)$ is called the \textbf{multiplicity} of $e$ and is denoted by $\sigma_e(M)$, or simply $\sigma_e$ if the multigraph is clear from context. Obviously, $e \notin E(M)$ if and only if $\sigma_e(M) = 0$. In these terms, a loopless multigraph $M$ is a simple graph if and only if $\sigma_e(M) = 1$ for all $e \in E(M)$. Alternatively: a loopless multigraph $M$ is simple if
\[ \sigma_{uv}(M) \in \{0,1\}, \]
for all $\{u,v\} \subseteq V(M)$. Two multigraphs $M = (V,E)$ and $M' = (V',E')$ are equal if $V = V'$ and
\[ \sigma_{uv}(M) = \sigma_{uv}(M') \]
for all $\{u,v\} \subseteq V$. We use the notation $me$ to indicate that there are $m \geq 2$ copies of an element $e \in E(M)$. Obviously, $m = \sigma_e(M)$. For example, in Figure~\ref{nosimples} we have:
\[ E(M_2) = \{ (1,1), \{1,4\}, \{3,4\}, 2\{2,3\} \}. \]

A \textbf{digraph} (or \textbf{directed graph}) is a pair $D=(V,E)$ where $V$ is a set of vertices and
\[ E \subseteq \{(x,y) : x,y \in V,\, x \neq y\}. \]
The elements of $E$ are called \textbf{arcs} or \textbf{directed edges}. An arc $(x,y)$ represents a directed edge from $x$ to $y$, and is depicted as an arrow from $x$ to $y$. In simple terms, a digraph is a graph with directed edges and no loops. When both $(a,b)$ and $(b,a)$ are in $E$, we may draw an undirected edge $\{a,b\}$ between $a$ and $b$, and say that $D$ “contains the edge $\{a,b\}$.” When there is no ambiguity, we may write $xy$ instead of $(x,y)$ and $yx$ instead of $(y,x)$. Two digraphs $D=(V,E)$ and $D'=(V',E')$ are equal if and only if $V=V'$ and $E=E'$.

\textbf{Important note:} Unless otherwise specified, throughout this work any concept or operation defined for simple graphs will be considered applicable to multigraphs or digraphs by ignoring multiplicities and directions of edges. \\

A \textbf{forest} is an acyclic graph—that is, a graph with no cycles. A \textbf{tree} is a connected forest. Clearly, $P_n$ is a tree and $C_n$ is not a forest. The following statements are equivalent to saying that $T$ is a tree:
\begin{enumerate}[(1).]
	\item $T$ is connected and $\Vert T \Vert = |T| - 1$;
	\item any two vertices in $T$ are connected by a unique path;
	\item for every $e \in E(T)$, the graph $(V(T), E(T)-e)$ is disconnected;
	\item for every $e \notin E(T)$, the graph $(V(T), E(T) \cup \{e\})$ is unicyclic.
\end{enumerate}

Another important family of trees are the \textbf{stars} $S_n$ of order $n \geq 3$. These are trees with exactly $n-1$ leaves. In Figure~\ref{bosque0}, we show a forest of order 12 with 4 components. Each component is a tree, and the one containing vertex 10 is a star.

\begin{figure}[H]
	\[
	\begin{tikzpicture}
		[scale=.7,auto=left,every node/.style={scale=.7,circle,thick,draw}]
		\node [label={6}] (n6) at (2,9) {};
		\node [label={4}] (n4) at (4,8)  {};
		\node [label={5}] (n5) at (7,9)  {};
		\node [label={1}] (n1) at (11,8) {};
		\node [label={2}] (n2) at (9,7)  {}; 
		\node [label={3}] (n3) at (6,7)  {};
		\node [label={7}] (n7) at (2,7)  {};
		\node [label={13}] (n8) at (13,10)  {};
		\node [label={9}] (n9) at (14,8)  {};
		\node [label={10}] (n10) at (15,10)  {};
		\node [label={11}] (n11) at (17,8)  {};
		\node [label={12}] (n12) at (12.5,7)  {};
		\node [label={8}] (n13) at (8,10)  {};
		
		\foreach \from/\to in {n6/n4,n4/n5,n2/n5,n2/n3,n4/n7,n10/n11,n9/n10,n5/n13,n8/n10}
		\draw (\from) -- (\to);
	\end{tikzpicture}
	\]
	\caption{A forest of order 12 and 4 components.}
	\label{bosque0}
\end{figure}
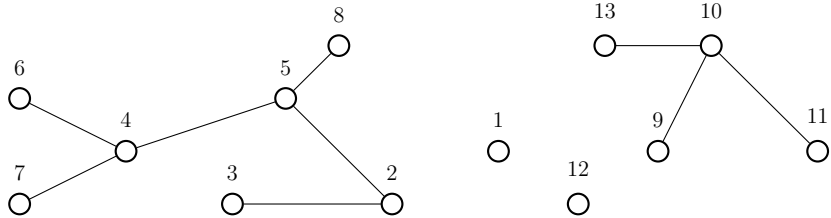

Two graphs $G = (V,E)$ and $G' = (V',E')$ are \textbf{isomorphic} if there exists a bijective function $\varphi: V \to V'$ such that $uv \in E$ if and only if $\varphi(u)\varphi(v) \in E'$. Otherwise, we say that $G$ and $G'$ are not isomorphic. When this condition holds, we say $\varphi$ is an \textbf{isomorphism} between $G$ and $G'$.

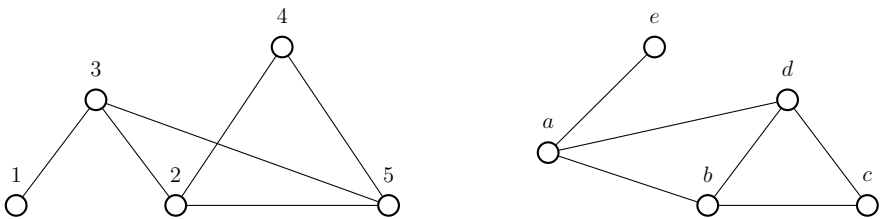
\begin{figure}[H]
	\[
	\begin{tikzpicture}
		[scale=.7,auto=left,every node/.style={scale=.7,circle,thick,draw}]
		\node [label=1] (n1) at (0,0) {};
		\node [label=2] (n2) at (3,0) {};
		\node [label=3] (n3) at (1.5,2) {};
		\node [label=4] (n4) at (5,3) {};
		\node [label=5] (n5) at (7,0) {};
		\node [label=$a$] (n6) at (10,1) {};
		\node [label=$b$] (n7) at (13,0) {};
		\node [label=$c$] (n8) at (16,0) {};
		\node [label=$d$] (n9) at (14.5,2) {};
		\node [label=$e$] (n10) at (12,3) {};
		
		\foreach \from/\to in {n1/n3,n2/n3,n2/n4,n4/n5,n3/n5,n2/n5,n6/n7,n7/n8,n8/n9,n9/n7,n10/n6,n6/n9}
		\draw (\from) -- (\to);
	\end{tikzpicture}
	\]  
	\caption{Two isomorphic graphs.}
	\label{isomorf0}
\end{figure}

The relation of isomorphism, denoted $G \approx G'$, is an equivalence relation and partitions the set of all labeled graphs of order $n$ into disjoint \textbf{isomorphism classes} (i.e., the unlabeled graphs of order $n$). Isomorphic graphs are structurally the same except for vertex labels. Necessary (but not sufficient) conditions for two graphs to be isomorphic include: same order and size, same degree sequence, same number of cycles and components. As illustrated in Figure~\ref{noisomorf0}, even satisfying all these properties does not guarantee isomorphism.

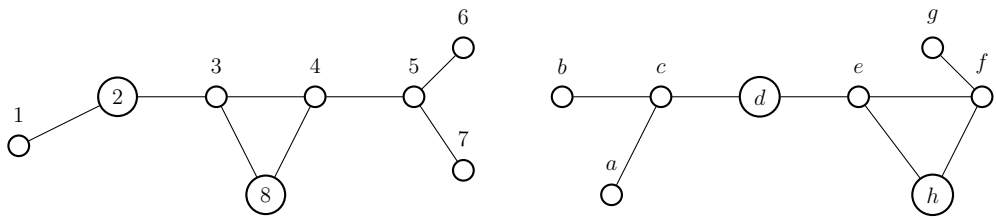
\begin{figure}[H]
	\[
	\begin{tikzpicture}
		[scale=.65,auto=left,every node/.style={scale=.7,circle,thick,draw}]
		\node [label=1] (n1) at (0,1) {};
		\node (n2) at (2,2) {2};
		\node [label=3] (n3) at (4,2) {};
		\node [label=4] (n4) at (6,2) {};
		\node [label=5] (n5) at (8,2) {};
		\node [label=6] (n6) at (9,3) {};
		\node [label=7] (n7) at (9,0.5) {};
		\node (n8) at (5,0) {8};
		
		\node [label=$a$] (n9) at (12,0) {};
		\node [label=$b$] (n10) at (11,2) {};
		\node [label=$c$] (n11) at (13,2) {};
		\node (n12) at (15,2) {$d$};
		\node [label=$e$] (n13) at (17,2) {};
		\node [label=$f$] (n14) at (19.5,2) {};
		\node [label=$g$] (n15) at (18.5,3) {};
		\node (n16) at (18.5,0) {$h$};
		
		\foreach \from/\to in {n1/n2,n2/n3,n3/n4,n4/n5,n5/n6,n5/n7,n3/n8,n8/n4,n11/n10,n11/n9,n11/n12,n12/n13,n13/n16,n14/n16,n13/n14,n14/n15}
		\draw (\from) -- (\to);
	\end{tikzpicture}
	\]
	\caption{Two non-isomorphic graphs.}
	\label{noisomorf0}
\end{figure}

A more general concept than isomorphism is that of a \textbf{homomorphism}. We say that $G=(V,E)$ and $G'=(V',E')$ are \textbf{homomorphic} if there exists a function $\varphi:V\rightarrow V'$ such that $uv\in E$ implies $\varphi(u)\varphi(v)\in E'$. Every isomorphism is a bijective homomorphism whose inverse is also a homomorphism. If $\varphi$ is an isomorphism between $G$ and $G'$, then $H \preceq G$ if and only if $\varphi(H) \preceq G'$, since isomorphisms preserve adjacency and non-adjacency. This is not true in general for homomorphisms: in such cases, $H \preceq G$ only implies $\varphi(H) \subseteq G'$.

A graph $G=(V,E)$ is \textbf{complete} if $V\neq\varnothing$ and
\[ E={{V}\choose{2}}, \]
i.e., every pair of vertices is adjacent. If $E=\varnothing$ and $V\neq\varnothing$, we say that $G$ is \textbf{empty}, consisting only of isolated vertices. Clearly, $G$ is complete if and only if $\overline{G}$ is empty. A graph is \textbf{null} when $V=\varnothing=E$. Up to isomorphism, there is only one complete, empty, or null graph. For $n\geq 1$, $K_n$ denotes the unlabeled complete graph of order $n$, and so $\overline{K_n}$ is the empty graph of the same order. A \textbf{triangle} in $G$ is a subgraph isomorphic to $K_3$. In multigraphs (digraphs), a triangle is a subgraph isomorphic to $K_3$ ignoring multiplicities (or directions). The \textbf{trivial graph} is the complete (or empty) graph of order 1: $K_1$. We denote the null graph by $K_0$.

An \textbf{independent set} of a graph $G$ is a subset of its vertices such that no two of them are adjacent, that is, $I$ is independent if $\langle I\rangle_G$ is empty or null. A \textbf{maximum independent set} is an independent set of largest possible size. The \textbf{independence number} of $G$, denoted by $\alpha(G)$, is the size of a maximum independent set.

A \textbf{clique} $K$ in $G$ is a subset of $V(G)$ such that $\langle K\rangle_G$ is complete. A \textbf{maximum clique} is a clique of maximum possible size. The \textbf{clique number} of $G$, denoted by $\omega(G)$, is the size of a maximum clique. Clearly, every independent set in $G$ is a clique in $\overline{G}$, and vice versa. Thus,
\[ \alpha(G)=\omega(\overline{G}) \quad \text{and} \quad \omega(G)=\alpha(\overline{G}). \]


\section{Preliminaries} \label{sec:preliminares}

In the mathematical literature, a \textbf{2-switch} on $G$ is usually defined as follows (see \cite{chartrand2010graphs}, page 23). Let $G$ be a graph containing four distinct vertices $a,b,c,d$ such that $ab,cd \in G$ and $ac,bd \notin G$. A 2-switch in $G$ is a process that removes the edges $ab$ and $cd$ from $G$ and adds the edges $ac$ and $bd$. However, in this work, we will use another definition (see \cite{vnsigma.2switch.graphs.forests}, page 5, or \cite{vnsigma.2switch.unic.pseudof}, page 3). A 2-switch is a function $\tau = {{a \ b}\choose{c \ d}} : \mathcal{G}_{n}\rightarrow \mathcal{G}_{n}$ such that: 
\begin{enumerate}[(1).]
	\item \begin{equation}
		\label{2switch.def}
		\tau(G)={{a \ b}\choose{c \ d}}G=(G-\{ab,cd\})+ \{ac,bd\},
	\end{equation} 
	if $\{ab,cd\}\subseteq G, ab\cap cd=\varnothing$ and $\{ac,bd\}\subseteq\overline{G}$;
	\item \[ \tau(G)=G, \]
	otherwise.
\end{enumerate}

By defining the 2-switch as a function, the application of a sequence $\theta=(\tau_i)_{i=1}^k$ of 2-switches to $G$ is automatically well-defined as a composition of functions: $\theta(G)=\tau_k\ldots\tau_1(G)$.

When $\tau(G)\neq G$ (i.e., \eqref{2switch.def}), we say that $\tau$ is \textbf{active} in $G$, or, equivalently, that $\tau$ \textbf{acts} on $G$. Otherwise (i.e., if $\tau(G)=G$), we say that $\tau$ is \textbf{inactive} in $G$ (or that it is not active, or that it does not act on $G$). Note that the 2-switch is idempotent, i.e., $\tau^2=\tau$. The matrix $A={{a \ b}\choose{c \ d}}$, of size $2\times 2$, is called the \textbf{action matrix} of $\tau$. Obviously, if $\tau $ is active in $G$, then the entries of $A$ are vertices of $G$. In this case, note that the rows of $A$ represent the edges to be removed, while the columns of $A$ represent the edges to be added. We say that $\tau$ \textbf{activates} the vertex $v$ in $G$ if $\tau$ is active in $G$ and $v$ is an entry of the action matrix of $\tau$. Two action matrices are considered equal if they are equal as functions, i.e., if they represent the same 2-switch. If $P$ is a permutation matrix, it is clear that $PA=A=AP$. However, if $\tau$ is not the identity in $\mathcal{G}_n$, then $A^t\neq A$. The function ${a \ c}\choose{b \ d}$ is called the \textbf{inverse} 2-switch of $\tau$ and is denoted by $\tau^{-1}$. The term ``inverse" comes from the easily verifiable fact that 
\[ \tau^{-1}\tau(G)=G, \]
for all $G\in\mathcal{G}_n$. 

It follows from the definition that the 2-switch preserves the degree sequence, i.e.:
\begin{equation}
	\label{2switch.prop.fund}
	s(\tau(G))=s(G),
\end{equation} 
for every 2-switch $\tau$. Indeed, when $\tau$ is active in $G$, we see from \eqref{2switch.def} that the degrees of the 4 vertices $a,b,c,d$ momentarily decrease by 1 when $\tau$ removes the edges $ab$ and $cd$. But, when adding $ac$ and $bd$, this decrease is compensated. Undoubtedly, \eqref{2switch.prop.fund} is the most important property of the 2-switch. From it, the fundamental theorem of the 2-switch can be proven, which we state below.

\begin{theorem}[\cite{chartrand2010graphs}, page 24]
	\label{berge's.theorem}
	If $G$ and $H$ are two distinct graphs with the same degree sequence, then there exists a sequence of 2-switches $(\tau_i)_{i=1}^k$ such that $H=\tau_k\ldots\tau_1(G)$.
\end{theorem}

Let $Q=\{a,b,c,d\}$ be a subset of 4 vertices of a graph $G$ and let $\tau={{a \ b}\choose{c \ d}}$ be a 2-switch. A key observation is that if $\tau$ is active in $G$, then $\langle Q\rangle_G$ is isomorphic to $P_4, C_4$ or $2K_2$. This can be quickly proven by analyzing the 11 possible isomorphism classes for a graph of order 4 (see Figure \ref{los.11.de.orden4}) and verifying that only these three satisfy all the conditions required by \eqref{2switch.def}.
\begin{figure}[h]
	\[
	\begin{tikzpicture}
		[scale=.7,auto=left,every node/.style={scale=.65,circle,thick,draw}] 
		\node (n1) at (0,0) {};
		\node (n2) at (1,0)  {};
		\node [label=$\overline{K_4}$](n3) at (0,1)  {};
		\node (n4) at (1,1) {};
		\foreach \from/\to in {}
		\draw (\from) -- (\to);
		
		\node (n5) at (3,0) {};
		\node (n6) at (4,0)  {};
		\node [label=$\overline{D_4}$](n7) at (3,1)  {};
		\node (n8) at (4,1) {};
		\foreach \from/\to in {n5/n6}
		\draw (\from) -- (\to);
		
		\node (n9) at (6,0) {};
		\node (n10) at (7,0)  {};
		\node [label=$2K_2$](n11) at (6,1)  {};
		\node (n12) at (7,1) {};
		\foreach \from/\to in {n9/n10,n11/n12}
		\draw (\from) -- (\to);
		
		\node (n13) at (9,0) {};
		\node (n14) at (10,0)  {};
		\node [label=$\overline{U_4}$](n15) at (9,1)  {};
		\node (n16) at (10,1) {};
		\foreach \from/\to in {n13/n14,n14/n16}
		\draw (\from) -- (\to);
		
		\node (n17) at (12,0) {};
		\node (n18) at (13,0)  {};
		\node [label=$P_4$](n19) at (12,1)  {};
		\node (n20) at (13,1) {};
		\foreach \from/\to in {n17/n18,n20/n19,n18/n20}
		\draw (\from) -- (\to);
		
		\node (n21) at (15,0) {};
		\node (n22) at (16,0)  {};
		\node [label=$S_4$](n23) at (15,1)  {};
		\node (n24) at (16,1) {};
		\foreach \from/\to in {n21/n22,n22/n24,n22/n23}
		\draw (\from) -- (\to);
		
	\end{tikzpicture}
	\]
	\[
	\begin{tikzpicture}
		[scale=.7,auto=left,every node/.style={scale=.65,circle,thick,draw}] 
		\node (n1) at (0,0) {};
		\node (n2) at (1,0)  {};
		\node [label=$\overline{S_4}$](n3) at (0,1)  {};
		\node (n4) at (1,1) {};
		\foreach \from/\to in {n1/n2,n2/n4,n4/n1}
		\draw (\from) -- (\to);
		
		\node (n5) at (3,0) {};
		\node (n6) at (4,0)  {};
		\node [label=$U_4$](n7) at (3,1)  {};
		\node (n8) at (4,1) {};
		\foreach \from/\to in {n5/n6,n6/n8,n8/n5,n5/n7}
		\draw (\from) -- (\to);
		
		\node (n9) at (6,0) {};
		\node (n10) at (7,0)  {};
		\node [label=$C_4$](n11) at (6,1)  {};
		\node (n12) at (7,1) {};
		\foreach \from/\to in {n9/n10,n11/n12,n10/n12,n9/n11}
		\draw (\from) -- (\to);
		
		\node (n13) at (9,0) {};
		\node (n14) at (10,0)  {};
		\node [label=$D_4$](n15) at (9,1)  {};
		\node (n16) at (10,1) {};
		\foreach \from/\to in {n13/n14,n14/n16,n16/n15,n15/n13,n14/n15}
		\draw (\from) -- (\to);
		
		\node (n17) at (12,0) {};
		\node (n18) at (13,0)  {};
		\node [label=$K_4$](n19) at (12,1)  {};
		\node (n20) at (13,1) {};
		\foreach \from/\to in {n17/n18,n20/n19,n18/n20,n20/n17,n17/n19,n18/n19}
		\draw (\from) -- (\to);
	\end{tikzpicture} 
	\]
	\caption{The 11 unlabeled graphs of order 4.}
	\label{los.11.de.orden4}
\end{figure}
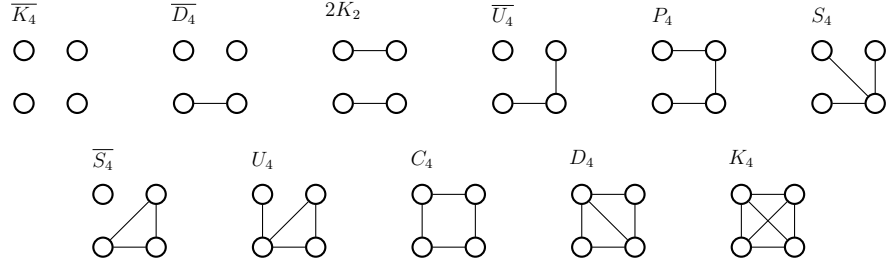

Let $\mathcal{X}$ be an arbitrary family of graphs and let $G,H\in\mathcal{X}, G\neq H$. We say that $H$ is related to $G$ via 2-switch if there exists a 2-switch $\tau$ such that $H=\tau(G)$. We denote this relation by $H\sim G$ or $H\sim_{\tau} G$. If $H\sim G$, it is clear that also $G\sim H$, since $G=\tau^{-1}(H)$. Thus, $\sim$ can be seen as an adjacency relation in $\mathcal{X}$, if we treat the latter as a set of vertices. Being
\[ E(\mathcal{X})=\{ \{G,H\}: G\sim H \}\subseteq {{\mathcal{X}}\choose{2}}, \]
we then have that the pair 
\[ \mathbf{T}(\mathcal{X})=(\mathcal{X},E(\mathcal{X})) \]
is a graph whose vertices are the members of $\mathcal{X}$, and where two vertices (graphs) $G$ and $H$ are neighbors if and only if $G$ and $H$ can be transformed into each other by means of a 2-switch. The graph $\mathbf{T}(\mathcal{X})$ is called the \textbf{transition space} associated with $\mathcal{X}$. In Figure \ref{ejemplo.esp.trans.} we can see a concrete example of this.
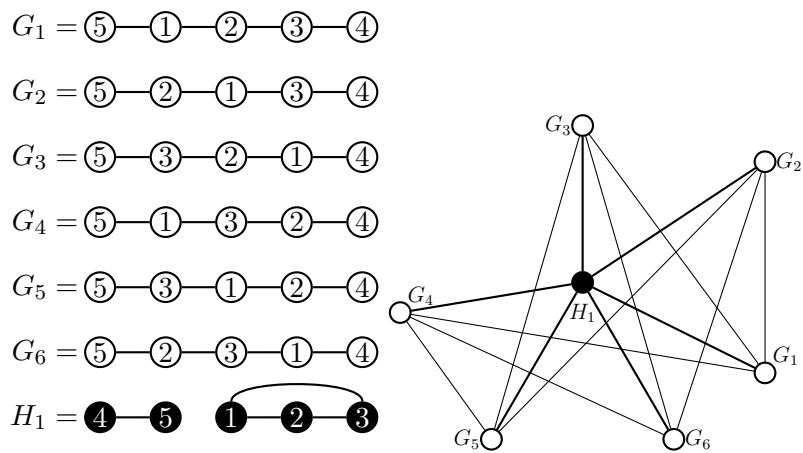
\begin{figure}[h] 
	\label{ejemplo.esp.trans.}
	\centering
	\begin{tikzpicture}[scale=1.15, every node/.style={circle, draw, thick, minimum size=1pt, inner sep=.8pt,font=\small}]
		
		\def\xsep{0.75}  
		\def\ysep{0.75}  
		
		\node (n11) at (1*\xsep, 0) [label=left:{$G_1=$}]{5};
		\node (n12) at (2*\xsep, 0) {1};
		\node (n13) at (3*\xsep, 0) {2};
		\node (n14) at (4*\xsep, 0) {3};
		\node (n15) at (5*\xsep, 0) {4};
		\draw[thick] (n11) -- (n12) -- (n13) -- (n14) -- (n15);
		
		\node (n21) at (1*\xsep, -1*\ysep)[label=left:{$G_2=$}] {5};
		\node (n22) at (2*\xsep, -1*\ysep) {2};
		\node (n23) at (3*\xsep, -1*\ysep) {1};
		\node (n24) at (4*\xsep, -1*\ysep) {3};
		\node (n25) at (5*\xsep, -1*\ysep) {4};
		\draw[thick] (n21) -- (n22) -- (n23) -- (n24) -- (n25);
		
		\node (n31) at (1*\xsep, -2*\ysep)[label=left:{$G_3=$}] {5};
		\node (n32) at (2*\xsep, -2*\ysep) {3};
		\node (n33) at (3*\xsep, -2*\ysep) {2};
		\node (n34) at (4*\xsep, -2*\ysep) {1};
		\node (n35) at (5*\xsep, -2*\ysep) {4};
		\draw[thick] (n31) -- (n32) -- (n33) -- (n34) -- (n35);
		
		\node (n41) at (1*\xsep, -3*\ysep)[label=left:{$G_4=$}] {5};
		\node (n42) at (2*\xsep, -3*\ysep) {1};
		\node (n43) at (3*\xsep, -3*\ysep) {3};
		\node (n44) at (4*\xsep, -3*\ysep) {2};
		\node (n45) at (5*\xsep, -3*\ysep) {4};
		\draw[thick] (n41) -- (n42) -- (n43) -- (n44) -- (n45);
		
		\node (n51) at (1*\xsep, -4*\ysep)[label=left:{$G_5=$}] {5};
		\node (n52) at (2*\xsep, -4*\ysep) {3};
		\node (n53) at (3*\xsep, -4*\ysep) {1};
		\node (n54) at (4*\xsep, -4*\ysep) {2};
		\node (n55) at (5*\xsep, -4*\ysep) {4};
		\draw[thick] (n51) -- (n52) -- (n53) -- (n54) -- (n55);
		
		\node (n61) at (1*\xsep, -5*\ysep)[label=left:{$G_6=$}] {5};
		\node (n62) at (2*\xsep, -5*\ysep) {2};
		\node (n63) at (3*\xsep, -5*\ysep) {3};
		\node (n64) at (4*\xsep, -5*\ysep) {1};
		\node (n65) at (5*\xsep, -5*\ysep) {4};
		\draw[thick] (n61) -- (n62) -- (n63) -- (n64) -- (n65);
		
		\node [color=black,fill=black](n71) at (1*\xsep, -6*\ysep)[label=left:{\textcolor{black}{$H_1=$}}] {\textcolor{white}{4}};
		\node [color=black,fill=black](n72) at (2*\xsep, -6*\ysep) {\textcolor{white}{5}};
		\node [color=black,fill=black](n73) at (3*\xsep, -6*\ysep) {\textcolor{white}{1}};
		\node [color=black,fill=black](n74) at (4*\xsep, -6*\ysep) {\textcolor{white}{2}};
		\node [color=black,fill=black](n75) at (5*\xsep, -6*\ysep) {\textcolor{white}{3}};
		\draw[thick,color=black] (n71) -- (n72)  (n73) -- (n74) -- (n75);
		\draw[thick,color=black] (n73) .. controls +(0,0.45) and +(0,0.45) .. (n75);
		
	\end{tikzpicture} 
	\begin{tikzpicture}[scale=.8, every node/.style={scale=0.7}]
		\node[draw, circle, fill=white, thick] (G_1) at (3.00,-1.50) {};
		\node[above right] at (G_1) {$G_1$};
		\node[draw, circle, fill=white, thick] (G_2) at (3.00,2) {};
		\node at ($(G_2)+(0.4,0)$) {$G_2$};
		\node[draw, circle, fill=white, thick] (G_3) at (0.00,2.60) {};
		\node at ($(G_3)+(-0.4,0)$) {$G_3$};
		\node[draw, circle, fill=white, thick] (G_4) at (-3.00,-0.50) {};
		\node[above right] at (G_4) {$G_4$};
		\node[draw, circle, fill=white, thick] (G_5) at (-1.50,-2.60) {};
		\node at ($(G_5)+(-0.4,0)$) {$G_5$};
		\node[draw, circle, fill=white, thick] (G_6) at (1.50,-2.60) {};
		\node at ($(G_6)+(0.4,0)$) {$G_6$};
		\node[draw, circle, fill=black, thick,color=black] (H_1) at (0.00,0.00) {};
		\node at ($(H_1)+(0,-0.5)$) {\textcolor{black}{$H_1$}};
		\draw (G_1) -- (G_2);
		\draw (G_1) -- (G_3);
		\draw[thick,color=black] (G_1) -- (H_1);
		\draw (G_1) -- (G_4);
		\draw[thick,color=black] (G_2) -- (H_1);
		\draw (G_2) -- (G_5);
		\draw (G_2) -- (G_6);
		\draw[thick,color=black] (G_3) -- (H_1);
		\draw (G_3) -- (G_5);
		\draw (G_3) -- (G_6);
		\draw[thick,color=black] (H_1) -- (G_4);
		\draw[thick,color=black] (H_1) -- (G_5);
		\draw[thick,color=black] (H_1) -- (G_6);
		\draw (G_4) -- (G_5);
		\draw (G_4) -- (G_6);
	\end{tikzpicture}
	\caption{Transition space associated with $\mathcal{G}(2^31^2)$.}
\end{figure}

It follows from the definition that a transition space is completely determined by its vertex set, i.e., $\mathbf{T}(\mathcal{X}_1 )=\mathbf{T}(\mathcal{X}_2 )$ if and only if $\mathcal{X}_1 =\mathcal{X}_2$. Therefore, when there is no ambiguity in the context, we can simply write $\mathcal{X}$, instead of $\mathbf{T}(\mathcal{X})$. Moreover, we can simply say ``space" instead of ``transition space". If $\mathcal{Y}\subseteq\mathcal{X}$, we say that $\mathbf{T}(\mathcal{Y})$ is a (transition) \textbf{subspace} of $\mathbf{T}(\mathcal{X})$. Note that $\mathbf{T}(\mathcal{Y})\preceq\mathbf{T}(\mathcal{X})$. On the other hand, if $\mathbf{S}\preceq\mathbf{T}(\mathcal{X})$, then $\mathbf{S}=\mathbf{T}(V(\mathbf{S}))$. Therefore, the subspaces of a transition space are precisely its induced subgraphs. Since the 2-switch preserves the degree sequence, each connected component $\mathcal{X}_i$ of $\mathcal{X}$ is a subspace of $\mathcal{G}(s_i)$, for some graphical sequence $s_i$. 

If $s$ is an integer sequence, the space $\mathbf{T}(\mathcal{G}(s))$ is usually called the \textbf{realization graph} associated with $s$ (\cite{arikati1999realization}, page 213). A direct consequence of Theorem \ref{berge's.theorem} is that $\mathcal{G}(s)$ is connected, for every integer sequence $s$. In recent years, the problem of determining whether a given transition space is connected or not has been extensively studied. In this direction, many advances have already been made. For example, it is known that the space $\mathcal{F}(s)$, of all forests with degree sequence $s$, is connected. An analogous result also holds for unicyclic graphs, connected graphs, and pseudoforests (the latter are graphs whose components are trees or unicyclic graphs). To delve deeper into these topics, the following papers can be consulted: \cite{vnsigma.2switch.graphs.forests}, \cite{vnsigma.2switch.unic.pseudof}. The most important feature of a connected space $\mathcal{X}$ is that it ensures the possibility of transforming, via 2-switches, a graph $G\in\mathcal{X}$ into another graph $H\in\mathcal{X}$ in such a way that every intermediate graph in the transition also belongs to $\mathcal{X}$. We formalize this concept in the next theorem.

\begin{theorem}
	Let $\mathcal{X}$ be a connected transition space and let $G,H\in\mathcal{X}$. Then there exists a sequence $\theta=(\tau_i)_{i=1}^k$ of 2-switches such that:
	\begin{enumerate}
		\item $H=\theta(G)$;
		\item $\tau_i\ldots\tau_1(G)\in\mathcal{X}$, for every $i\in[k]$.
	\end{enumerate} 
\end{theorem}

\begin{proof}
	Since $\mathcal{X}$ is connected, there exists a path in $\mathcal{X}$ connecting the vertex $G$ with the vertex $H$.
\end{proof}

Let us see an example of how to determine a realization graph. For each $n\in\mathbb{N}$, consider the graph of order $n+3$ obtained by identifying a vertex of $K_2$ with one of the leaves of $S_{n+2}$. The resulting tree, which we denote by $Y_{n+3}$, has degree sequence $s_n =(n+1)^1 2^1 1^{n+1}$. Thanks to the high symmetry of $Y_{n+3}$, it is easy in this case to see that:
\begin{enumerate}[(1).]
	\item there are exactly $n$ 2-switches active on $Y_{n+3}$; 
	\item $\tau(Y_{n+3})\approx Y_{n+3}$, for every 2-switch $\tau$;
	\item $|\mathcal{G}(s_n )|=n+1$.
\end{enumerate}
Therefore, $\mathcal{G}(s_n )\approx K_{n+1}$, for every $n\in\mathbb{N}$.\\

%

Let $s=(d_{v})_{v=1}^{n}$ be a graphical sequence. The sequence 
\[  \bar{s}=(n-1-d_{v})_{v=1}^{n} \]
is called the \textbf{dual sequence} of $s$. This sequence is also graphical, since, being $s=s(G)$ for some $G$, we have that $\bar{s}=s(\overline{G})$. Moreover, $\bar{\bar{s}}=s$. If $\mathcal{X}$ is an arbitrary family of graphs, then the set
\[ \overline{\mathcal{X}}=\{\overline{G}: G\in \mathcal{X}\} \]
is called the \textbf{dual set} of $\mathcal{X}$, and $\mathbf{T}(\overline{\mathcal{X}})$ is called the \textbf{dual space} of $\mathbf{T}(\mathcal{X})$. Clearly, $\overline{\overline{\mathcal{X}}} =\mathcal{X}$. Below, we cite and prove a remarkable fact about dual spaces. 

\begin{theorem}[\cite{arikati1999realization}, page 218]
	\label{dual.spaces.iso}
	Every transition space is isomorphic to its dual: 
	\[ \mathcal{X}\approx\overline{\mathcal{X}}. \]
\end{theorem}

\begin{proof}
	We need to exhibit a graph isomorphism between $\mathcal{X}$ and $\bar{\mathcal{X}}$. Let $\varphi: \mathcal{X} \rightarrow \overline{\mathcal{X}}$ be the function defined by $\varphi(X)=\overline{X}$ and let $G,H\in\mathcal{X}$, $G\neq H$. If $H\sim_{\tau} G$, then it is easy to see that
	\[ \overline{H}=\tau^{-1}(\overline{G}). \]
	Therefore, $\varphi(H)\sim_{\tau^{-1}}\varphi(G)$, which shows that $\varphi$ is a homomorphism. Clearly, $\varphi$ is bijective, and its inverse $\phi:\overline{\mathcal{X}}\rightarrow\mathcal{X}$ is the function defined by $\phi(Y)=\overline{Y}$. Let $A,B\in\mathcal{\overline{X}}$, with $A\neq B$. If $B\sim_{\theta} A$, we know that $\overline{B}=\theta^{-1}(\overline{A})$, i.e., $\phi(B)\sim_{\theta^{-1}} \phi(A)$. Thus, $\phi$ is also a homomorphism, and hence we conclude that $\varphi$ is an isomorphism.  
\end{proof}

In particular, Theorem \ref{dual.spaces.iso} tells us that 
\begin{equation}
	\label{gr.realiz.iso.dual}
	\mathcal{G}(s)\approx\mathcal{G}(\bar{s}),
\end{equation}
for every sequence $s$. Thanks to \eqref{gr.realiz.iso.dual}, we can then disregard investigating the structure of the realization graphs associated with the dual sequences of a given set of sequences. Moreover, as we will see later, \eqref{gr.realiz.iso.dual} allows us to immediately deduce results about the complement of a graph $G$.  \\
%

A \textbf{threshold} graph is a graph that can be constructed from $K_1$ by repeatedly applying either of the following two operations: 
\begin{enumerate}[(1).]
	\item addition of an isolated vertex;
	\item addition of a universal vertex.
\end{enumerate}
Thus, each unlabeled threshold graph of order $n$ can be uniquely identified with a binary sequence $0b_2b_3\ldots b_n$, with $b_i\in\{0,1\}$, where  $0=$``add an isolated vertex" and $1=$``add a universal vertex". For example: $K_1=0, K_2=01, \overline{K_2}=00=0^2, K_3=011=01^2, \overline{K_3}=000=0^3, S_3=001=0^21, S_4=0001=0^31$. It is easy to deduce that, in general, $K_n=01^{n-1}, \overline{K_n}=0^n$ and $S_n=0^{n-1}1$. Conversely, every binary sequence starting (on the left) with 0 defines a unique unlabeled threshold graph. In other words, there is a one-to-one correspondence between unlabeled threshold graphs and binary sequences whose first term is 0. Threshold graphs are characterized as those graphs $G$ that do not contain induced subgraphs isomorphic to $P_4 ,C_4$ or $2K_2$ (\cite{barrus.west.A4}, page 2). From this, it follows that: 
\begin{enumerate}[(1).]
	\item $G$ is threshold if and only if its complement is;
	\item every 2-switch is inactive in $G$;
	\item the property of being threshold is \textbf{hereditary}.
\end{enumerate}
Statement (1) is true because, in general, $H'\preceq H$ if and only if $\overline{H'}\preceq \overline{H}$, and, moreover, because $\overline{P_4}=P_4$ and $\overline{C_4}=2K_2$. Statement (2) is equivalent to $\mathcal{G}(s)=K_1$, if $s$ is the degree sequence of $G$. Finally, (3) means that every induced subgraph of a threshold graph is also threshold. \\

A graph $G\neq K_0$ is said to be \textbf{split} if there exists a clique $K$ in $G$ and an independent set $I$ in $G$ such that 
\[ V(G)=K\dot{\cup}I. \]
In such a case, we say that the pair $(K,I)$ is a \textbf{bipartition} for $G$. Directly from the definition, it follows that the property of being split is hereditary. We call \textbf{clique vertices} the elements of $K$, and \textbf{independent vertices} the elements of $I$. Two bipartitions $(K,I)$ and $(K',I')$ for $S$ are equal if $K=K'$ and $I=I'$. In Figure \ref{ejemplo.split} we can see an example of a split graph.
\begin{figure}[h]
	\label{ejemplo.split}
	\centering
	\begin{tikzpicture}[scale=2, every node/.style={scale=1.3, circle, draw, thick, minimum size=6pt, inner sep=2pt}]
		\foreach \i in {0,...,5} {
			\node[fill=black] (k\i) at ({cos(60*\i)}, {sin(60*\i)}) {};
		}
		
		\foreach \i in {0,...,5} {
			\foreach \j in {\i,...,5} {
				\ifnum\i<\j
				\draw (k\i) -- (k\j);
				\fi
			}
		}
		
		\node[fill=white] (s1) at (-2, 0) {};
		\node[fill=white] (s2) at (-2, -0.6) {};
		\node[fill=white] (s3) at (2, 0.6) {};
		\node[fill=white] (s4) at (2, -0.6) {};
		
		\draw (s3) -- (k1);
		\draw (s3) -- (k0);
		\draw (s4) -- (k0);
		\draw (s1) -- (k2);
		\draw (s1) -- (k3);
		\draw (s1) -- (k4);
		
			\node[fill=white] (k5b) at ({cos(60*5)}, {sin(60*5)}) {};
	\end{tikzpicture}
	\caption{Split graph (clique vertices in black, independent in white).}
\end{figure}
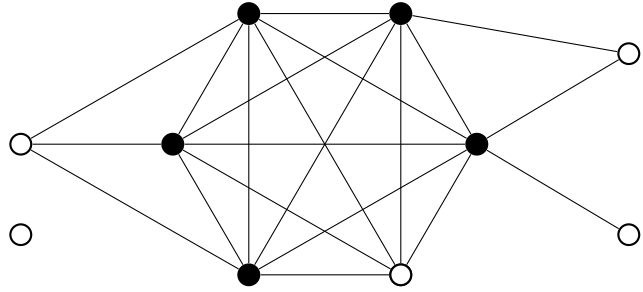

If $S$ is a split graph and $(K,I)$ is a bipartition for $S$, we shall call the triple $(S,K,I)$ a \textbf{splitting} of $S$. This notation, which is due to R. Tyshkevich (\cite{tyshkevich2000decomposition}, page 206), is necessary because there are specific operations/concepts on split graphs that depend on the bipartition. However, when the context allows, we will refer to the triple $(S,K,I)$ simply as ``split graph", to avoid overloading the language. A graph is split if and only if it does not contain induced subgraphs isomorphic to $2K_2, C_4$ or $C_5$ (\cite{barrus.west.A4}, page 14). In particular, threshold graphs are split graphs. Moreover, a graph is split if and only if its complement is. Clearly, $(K,I)$ is a bipartition for $S$ if and only if $(I,K)$ is a bipartition for $\overline{S}$. The complement of a splitting of $S$ is defined as:
\[ \overline{(S,K,I)}=(\overline{S},I,K). \]
If $S\in\mathcal{G}(s)$ and $S$ is split, then all members of $\mathcal{G}(s)$ are also split (\cite{tyshkevich2000decomposition}, page 205).

A split graph $(S,K,I)$ is said to be \textbf{balanced} if $|K|=\omega(S)$ and $|I|=\alpha(S)$. Otherwise, we say that $G$ is \textbf{unbalanced} (or not balanced). Since $\omega(S)=\alpha(\overline{S})$ and $\alpha(S)=\omega(\overline{S})$, it follows that $S$ is balanced if and only if its complement is.

\begin{proposition}
	\label{balanceado.iff.unica.bipart}
	A split graph is balanced if and only if it has a unique bipartition.
\end{proposition}

\begin{proof}
	$(\Rightarrow)$. Suppose that $S$ is balanced but has another bipartition $(K',I')\neq(K,I)$. Then, $|K'|=\omega(S)=|K|, |I'|=\alpha(S)=|I|$, but $K'\neq K$ or $I'\neq I$. 
	
	If $I'\neq I$, then $I-I'\neq\varnothing$. If $x\in I-I'$, it follows that $x\in K'$, whence $N_x\supseteq K'-x$. On the other hand, since $x\in I$, we have that $d_x\leq\omega(S)-1$. Thus, $N_x=K'-x$. But then $I'\cup x$ is an independent set in $S$ of size $\alpha(S)+1$, which is absurd.  
	
	If $K'\neq K$, we can recycle the same reasoning in $(\overline{S},I,K)$.
	
	$(\Leftarrow)$. Suppose that $S$ has a unique bipartition $(K,I)$ but is unbalanced. Then, we have that $|K|<\omega(S)=\omega$ or $|I|<\alpha(S)=\alpha$. 
	
	If $|K|<\omega$, then there exists a clique $K'$ in $S$ such that $|K'|=\omega$. Since $|K|<|K'|$, it is clear that $K'-K\neq\varnothing$. If $x\in K'-K$, then necessarily $x\in I$. Thus, $N_x\subset K$ (if $N_x=K$, then $(K\cup x,I-x)$ would be another bipartition for $S$!), whence $d_x<\omega-1$. On the other hand, since $x\in K'$, we have that $N_x\supseteq K'-x$, and hence $d_x\geq\omega-1$.
	
	If $|I|<\alpha$, we can recycle the same reasoning in $(\overline{S},I,K)$.
\end{proof}

A vertex $w$ of a split graph $(S,K,I)$ is said to be \textbf{swing} if 
\[ N_S (w)=K-w. \]
If $w\in I$, then $w$ is swing if and only if $\deg_S(w) =|K|$. By converting $w$ into a clique vertex, we obtain the split graph $(S,K\dot{\cup}w,I-w)$. If $w\in K$, then $w$ is swing if and only if $\deg_S(w) =|K|-1$. By converting $w$ into an independent vertex, we obtain the split graph $(S,K-w,I\dot{\cup}w,)$. Thus, thanks to Proposition \ref{balanceado.iff.unica.bipart}, it follows that a split graph is balanced if and only if it contains no swing vertices. Every threshold graph is unbalanced. Indeed, if $b_1\ldots b_{n}$ is the binary sequence associated with a threshold graph $G$, then the vertex $v_1$, corresponding to the term $b_1$, is swing in $G$, since it is adjacent to all non-isolated vertices. 

\begin{proposition}
	\label{2switch.preserva.balanceado}
	If $S$ is a balanced split graph, then $\tau(S)$ is also balanced, for every 2-switch $\tau$.
\end{proposition}

\begin{proof}
	We already know that $\tau(S)$ is split. Suppose that $\tau(S)\neq S$ has a swing vertex $w$. If $(K,I)$ is a bipartition for $\tau(S)$, we can assume without loss of generality that $w\in I$, from which $\deg_{\tau(S)}(w)=|K|$. Since the 2-switch preserves clique vertices and independent vertices, it follows that $(K,I)$ is the bipartition of $S$, since $S=\tau^{-1}\tau(S)$. In particular, we note that $w$ is independent in $S$. Since the 2-switch preserves the degree sequence, we have that $\deg_S(w)=|K|$. But then $w$ is swing in $S$, contradicting that $S$ is balanced.   
\end{proof}

Thanks to Proposition \ref{2switch.preserva.balanceado}, we have that, if $S\in\mathcal{G}(s)$ and $S$ is a balanced (unbalanced) split graph, then all members of $\mathcal{G}(s)$ are also balanced (unbalanced). The property of being balanced, as well as being unbalanced, is not hereditary.

Throughout this work, we will use the following standard to visually represent a split graph. The clique vertices will be colored black and arranged in a regular polygon. On the other hand, the independent vertices will be white and positioned outside this polygon. Additionally, edges between clique vertices will be omitted. Instead, the interior of the polygon will be shaded gray. This way, many edge crossings are avoided, favoring the clarity of the drawing. In Figure \ref{ejemplo.grafo.split} we can see on the right an example of an unlabeled split graph $S$, which is represented on the left in the form we have just described, and with labels. Note that $S$ is unbalanced, since vertex 6 is swing in $(S,[6],\{a,b,c,d\})$. The split graph $S+6x$, on the other hand, is balanced for every $x\in\{a,b,c,d\}$. 
\begin{figure}[h]
	\centering
	\includegraphics[scale=0.8]{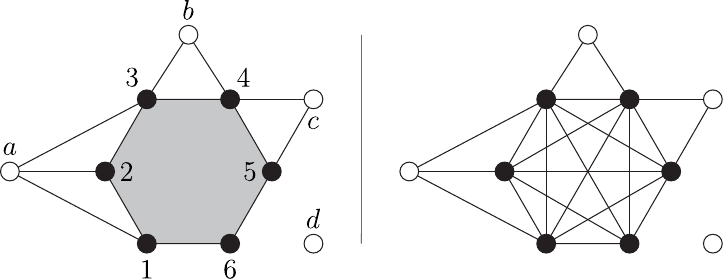}
	\caption{Standard representation of a split graph}
	\label{ejemplo.grafo.split}
\end{figure}

If $(S,K,I)$ is a split graph and $G$ is a graph disjoint from $S$, the \textbf{Tyshkevich composition} $S\circ G$ of $S$ and $G$ is defined as the graph whose vertex set is 
\[ V(S\circ G)=V(S)\cup V(G), \]
and whose edge set is 
\[ E(S\circ G)=E(S)\cup E(G)\cup\{xy:x\in K,y\in G\}. \]
In Figure \ref{ejemplo.composicion} we can see an example of this composition.
\begin{figure}[h]
	\label{ejemplo.composicion}
	\centering
	\begin{tikzpicture}[scale=1.2, every node/.style={circle, draw, minimum size=6pt, inner sep=2pt}]
		
		\node[fill=white, label=above left:$S$] (v1) at (0,0) {};    
		\node[fill=black] (v2) at (0,1) {};    
		\node[fill=black] (v3) at (1,1) {};    
		\node[fill=white] (v4) at (1,0) {};    
		
		\draw[thick] (v1) -- (v2) -- (v3) -- (v4);
		
		\node[fill=gray!60] (g1) at (3,0) {};
		\node[fill=gray!60, label=above right:$G$] (g2) at (4,0) {};
		\node[fill=gray!60] (g3) at (4,1) {};
		\node[fill=gray!60] (g4) at (3,1) {};
		
		\draw[thick] (g1) -- (g2) -- (g3) -- (g4) -- (g1);
		
		\foreach \k in {v2,v3} {
			\foreach \g in {g1,g2} {
				\draw [color=gray!50](\k) -- (\g);
			}
		}
		\draw[color=gray!50] (v3)--(g4);
		\draw[color=gray!50, bend left=15] (v3) to (g3);
		\draw[color=gray!50, bend left=20] (v2) to (g3);
		\draw[color=gray!50, bend left=20] (v2) to (g4);

		%
		
	\end{tikzpicture}
	\caption{Tyshkevich's composition of $S\approx P_4$ and $G\approx C_4$.}
\end{figure}
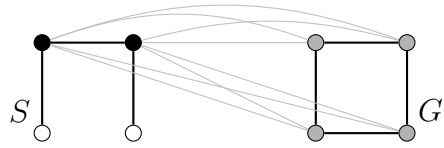

This operation depends on the bipartition of $S$: different bipartitions always lead to different compositions when $S$ is unbalanced. In Figure \ref{fig:compos.distintas.bipart} we can see a concrete example of this. The gray vertices are those of $G$, which is not split and is isomorphic to $2K_2$; the white vertices are those of $S$, which is an unbalanced split graph isomorphic to $P_3$. The gray edges are the edges added by the composition, connecting $G$ with the clique of $S$. On the left side of Figure \ref{fig:compos.distintas.bipart} the composition $S\circ G$ is performed using the bipartition $(67,5)$, while on the right side the same operation is carried out with the bipartition $(6,57)$.
\begin{figure}[H]
	\centering
	\begin{minipage}{0.45\textwidth}
		\centering
		\begin{tikzpicture}[
			vertex/.style = {circle, draw=black, thick, minimum size=3mm, inner sep=0pt},
			gvertex/.style = {vertex, fill=gray!30},
			svertex/.style = {vertex, fill=white},
			edge/.style = {draw=black, thick},
			edge_conn/.style = {draw=gray!50, thin}
			]
			\node[svertex, label=right:5] (5) at (0,2) {};
			\node[svertex, label=left:6] (6) at (-1,1) {};
			\node[svertex, label=right:7] (7) at (1,1) {};
			
			\node[gvertex, label=below:1] (1) at (-2,0) {};
			\node[gvertex, label=right:2] (2) at (-1,-1) {};
			\node[gvertex, label=left:3] (3) at (1,-1) {};
			\node[gvertex, label=below:4] (4) at (2,0) {};
			
			\begin{pgfonlayer}{background}
				\foreach \source in {6,7} {
					\foreach \target in {1,2,3,4} {
						\draw[edge_conn] (\source) -- (\target);
					}
				}
			\end{pgfonlayer}
			
			\draw[edge] (5) -- (6);
			\draw[edge] (6) -- (7);
			\draw[edge] (1) -- (2);
			\draw[edge] (3) -- (4);
		\end{tikzpicture}
	\end{minipage}
	\begin{minipage}{0.45\textwidth}
		\centering
		\begin{tikzpicture}[
			vertex/.style = {circle, draw=black, thick, minimum size=3mm, inner sep=0pt},
			gvertex/.style = {vertex, fill=gray!30},
			svertex/.style = {vertex, fill=white},
			edge/.style = {draw=black, thick},
			edge_conn/.style = {draw=gray!50, thin}
			]
			\node[svertex, label=right:5] (5) at (0,2) {};
			\node[svertex, label=left:6] (6) at (-1,1) {};
			\node[svertex, label=right:7] (7) at (1,1) {};
			
			\node[gvertex, label=below:1] (1) at (-2,0) {};
			\node[gvertex, label=right:2] (2) at (-1,-1) {};
			\node[gvertex, label=left:3] (3) at (1,-1) {};
			\node[gvertex, label=below:4] (4) at (2,0) {};
			
			\begin{pgfonlayer}{background}
				\foreach \target in {1,2,3,4} {
					\draw[edge_conn] (6) -- (\target);
				}
			\end{pgfonlayer}
			
			\draw[edge] (5) -- (6);
			\draw[edge] (6) -- (7);
			\draw[edge] (1) -- (2);
			\draw[edge] (3) -- (4);
		\end{tikzpicture}
	\end{minipage}
	\caption{The Tyshkevich composition depends on the bipartition.}
	\label{fig:compos.distintas.bipart}
\end{figure}
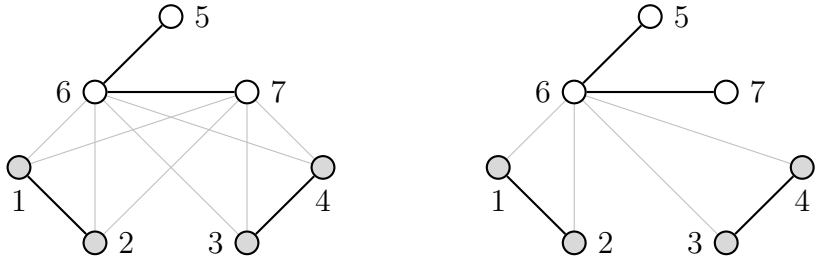

In general, $S\circ G$ is a split graph if and only if $G$ is. This implies that for every split graph $(S',K',I')$, $S'\circ S\circ G$ is well-defined and $(S'\circ S)\circ G=S'\circ(S\circ G)$. When $S$ is a balanced unlabeled split graph, we can use the notation $S^n$ to refer to the unlabeled graph resulting from the Tyshkevich composition of $n$ copies of $S$. Also, the word ``product" can be used instead of ``composition". Thus, depending on the context, the symbol $\prod_{i=1}^{n}G_i$ could replace $G_1\circ G_2\circ\ldots\circ G_n$.  

A graph $G$ is \textbf{decomposable} if there exist non-null graphs $S$ and $H$ such that $S\circ H=G$. Otherwise, $G$ is said to be \textbf{indecomposable}. With these concepts in mind, we can finally cite Tyshkevich's Decomposition Theorem, which, as already announced at the beginning, is key to the development of this entire work.

\begin{theorem}[Tyshkevich, \cite{tyshkevich2000decomposition}]
	\label{tyshk.decomp}
	Every graph $G$ can be written as a (Tyshkevich) composition of indecomposable graphs:
	\begin{equation*}
		G=G_n \circ \ldots\circ G_2 \circ G_1.
	\end{equation*}
	When each $G_r$ is non-null, this decomposition is unique up to isomorphism, where neither the order of the factors nor the choice of bipartitions can vary.
\end{theorem}

Finally, but no less important, we want to cite another previous result that is decisive for this research: the Barrus-West Theorem, which characterizes indecomposable graphs. For the understanding of its statement, it is sufficient to define the graph $A_4(G)$ associated with a graph $G$. The vertex set of $A_4(G)$ coincides with that of $G$, i.e., $V(A_4(G))=V(G)$. Moreover, two vertices $u,v$ are adjacent in $A_4(G)$ if and only if some $P_4, C_4$ or $2K_2$ induced in $G$ contains both $u$ and $v$. In other words, $uv\in E(A_4(G))$ if and only if there exists a 2-switch that activates both $u$ and $v$ in $G$. The notation ``$A_4$" derives from the expression ``alternating 4-cycle", by which Barrus and West refer to the configuration of the 4 edges involved in a 2-switch. Indeed, if in $G$ we replace $\{ab,cd\}$ with $\{ac,bd\}$ via a 2-switch, then the cycle $abdca$ alternates edges of $G$ with edges of $\overline{G}$. As we will see in Section \ref{sec:relacion.Phi-A_4}, the Barrus-West Theorem will strongly inspire the definition of the factor graph $\Phi(S)$ associated with a split graph $S$. 

\begin{theorem} [Barrus and West, \cite{barrus.west.A4}] 
	\label{indecomp.characterization} 
	A graph $G$ is indecomposable if and only if $A_4(G)$ is connected. More generally, the vertex sets of the factors of the Tyshkevich decomposition are the vertex sets of the components of $A_4(G)$.
\end{theorem}


\chapter{The degree of a graph} \label{cap:deg(G)}

In this chapter, we delve into the study of the 2-switch-degree (or simply, degree) of a graph $X$, which is nothing more than the degree of $X$ as a vertex of the realization graph $\mathcal{G}(X)$. Therefore, this is a fundamental concept that quantifies a graph's ability to transform into others via 2-switches. The degree of $X$ not only provides information about its flexibility in terms of 2-switches but is also closely related to other structural and combinatorial properties of $X$. Throughout the chapter, divided into 5 sections, we will explore the characteristics of active and inactive vertices, the basic properties of the degree, explicit formulas for its computation, and its behavior in specific families of graphs, such as trees and unicyclic graphs.

In Section \ref{sec:vertices.activos}, we introduce the concept of an active vertex, i.e., the vertices of a graph $G$ that participate in some 2-switch on $G$. In contrast, we define inactive vertices. We establish that the set of active vertices is invariant under 2-switches. This means that $G$ can be thought of as being divided into an ``active part" and a ``threshold part", and this partition of $V(G)$ is the same for all members of $\mathcal{G}(s)$, where $s=s(G)$. This remarkable fact is a consequence of Theorem \ref{indecomp.characterization} by Barrus and West, since the inactive vertices of $G$ are precisely the isolated vertices of $A_4(G)$, and the active vertices are the rest. However, we provide our proof in Appendix \ref{proof.lema.2switch.preserva.vert.activos}. Additionally, in this section, we analyze the relationship between active/inactive vertices and metric properties such as eccentricity, diameter, and connectivity. Finally, we characterize active vertices in split graphs and explore how this concept intertwines with universal vertices, swing vertices, and the property of being balanced. We emphasize that the introduction of the concepts of degree, active vertices, and their derivatives forms a very natural language for this context and ultimately determines all the important results throughout this research.

In Section \ref{sec:espacio.activo}, we focus on the subgraph $G^*$ of $G$ induced by its active vertices, which we refer to as the active part of $G$. Essentially, what we show is that to fully determine the structure of $\mathcal{G}(s)$ ($s=s(G)$), we can remove all inactive vertices from $ G $ and determine $\mathcal{G}(G^*)$, since both realization graphs turn out to be isomorphic. The graph $\mathcal{G}(G^*)$ is precisely what we call the active space associated with $s$. In other words, inactive vertices are redundant in this context, meaning they do not provide useful information for constructing $\mathcal{G}(s)$. 

In Section \ref{sec:prop.basicas.deg}, we present some fundamental properties of the 2-switch-degree. First, we show that computing the degree of a graph $G$ is essentially equivalent to finding the number of induced subgraphs of $G$ isomorphic to $P_4$, $C_4$, or $2K_2$. Hence, $G$ and $\overline{G}$ have the same degree. Subsequently, we see, among other things, that the 2-switch-degree ``respects" induced inclusion and that computing the degree of a disconnected graph practically reduces to computing the degree of each connected component. Finally, we obtain the two important results of this section: 1) the degree of the ``product" (Tyshkevich composition) is the sum of the degrees; 2) a product is active if and only if each factor is active.    

In Section \ref{sec:fórmulas.explicitas}, we derive explicit formulas to compute the degree of a graph in terms of its degree sequence and other combinatorial or structural invariants. Surprisingly, a connection is established between the 2-switch-degree and two well-studied invariants in Chemical Graph Theory: the Zagreb indices 1 and 2. Furthermore, we deduce that the difficulty of computing the degree of $G$ lies in counting the number of cliques of size 4 in $G$. On the other hand, it is observed that the degree of $G$ is ``easy" to obtain when $G$ has no cycles of order 3 or 4, and it only depends on the second Zagreb index.

Finally, in Section \ref{sec:deg.arboles.unicicl}, we focus on trees and unicyclic graphs, providing explicit formulas to compute their degree in $\mathcal{G}(s)$, $\mathcal{F}(s)$, and $\mathcal{U}(s)$. The latter two are the transition spaces associated, respectively, with forests and unicyclic graphs with degree sequence $s$. The concepts of f-degree and u-degree are introduced in this context. As already mentioned in Section \ref{sec:preliminares}, $\mathcal{F}(s)$ and $\mathcal{U}(s)$ are connected subspaces of $\mathcal{G}(s)$. A notable result of this section is that $\mathcal{F}(s)$ is regular, when $s$ is the sequence of a tree. However, $\mathcal{U}(s)$ is not regular in general, and a counterexample is provided.   


\section{Active vertices} \label{sec:vertices.activos}

Let $G$ be a graph with degree sequence $s$. The \textbf{degree} of $G$ (or \textbf{2-switch-degree} of $G$), denoted by $\deg(G)$, is the degree of $G$ viewed as a vertex of $\mathbf{T}(\mathcal{G}(s))$, i.e., the number of neighbors of the vertex $G$ in the realization graph associated with $s$. Therefore, $\deg(G)$ is the number of active 2-switches in $G$. 

Let $G$ be a graph, and let
\[ Q_G = \{ H \preceq G : |H| = 4 \}, \quad Q_G^* = \{ H \in Q_G : \deg(H) \neq 0 \}. \]
We say that a vertex $ v \in G $ is \textbf{active} in $ G $ if there exists an $ H \in Q_G^* $ containing $ v $. It is easy to deduce from Figure \ref{los.11.de.orden4} that 
\[ Q_G^* = \{ H \preceq G : H \approx X \in \{ P_4, C_4, 2K_2 \} \}. \]
We say that $ v $ is \textbf{inactive} in $ G $ when it is not active in $ G $, i.e., if $ \deg(H) = 0 $ for every $ H \in Q_G $ containing $ v $. We denote by $ act(G) $ the set of all active vertices of $ G $. In other words, a vertex $ v $ is active in $ G $ if there exists some 2-switch that activates it in $ G $. Clearly, $ V(G) - act(G) $ is the set of all inactive vertices of $ G $. 

It is important to note that a vertex $ v $ is active in $ G $ if and only if $ v $ is active in $ \overline{G} $. Indeed, if $ \tau $ is a 2-switch that activates $ v $ in $ G $, then $ \tau^{-1} $ activates $ v $ in $ \overline{G} $. Thus,
\[ act(G) = act(\overline{G}). \]
From this, it immediately follows that universal vertices are inactive, since they are isolated in the complement. 

\begin{lemma}
	\label{inac.G.inac.tauG}
	If $ a $ is an inactive vertex of a graph $ G $, then $ a $ is also inactive in $ \tau(G) $, for every 2-switch $ \tau $.
\end{lemma}

\begin{proof}
	The proof is given in Appendix \ref{proof.lema.2switch.preserva.vert.activos}.
\end{proof}

\begin{theorem}
	\label{2switch.preservs.act}
	If $ G, H \in \mathcal{G}(s) $, then $ act(G) = act(H) $. In other words, the 2-switch preserves active and inactive vertices.
\end{theorem}

\begin{proof}
	The result follows directly from Theorem \ref{berge's.theorem} and Lemma \ref{inac.G.inac.tauG}.
\end{proof}

Given $s = s(G)$, Theorem \ref{2switch.preservs.act} justifies the notation $act(s)$ as an alternative to $act(G)$.

\begin{theorem}
	\label{same.degree.act.inact}
	Let $G\in\mathcal{G}(s)$. If $a$ is an active (inactive) vertex of $G$, then all vertices of $G$ with the same degree (in $G$) as $a$ are active (inactive) in $G$.
\end{theorem}

\begin{proof}
	If $a\in act(G)$, suppose there exists a vertex $x\in G$ such that $d_a = d_x$ but $x\notin act(G)$. Now consider the graph $H$ obtained from $G$ by swapping the neighborhoods of $a$ and $x$: in $G$, every edge of the form $av$ ($v\neq x$) is replaced by $xv$, and every edge of the form $xw$ ($w \neq a$) is replaced by $aw$. Observe that $H\in \mathcal{G}(s)$, and thus, by Theorem \ref{berge's.theorem}, we can transform $G$ into $H$ via a sequence of 2-switches. By Theorem \ref{2switch.preservs.act}, we have $act(H) = act(G)$, so $a \in act(H)$ and $x \notin act(H)$. On the other hand, $H \approx_{\varphi} G$ via the transposition $\varphi = (a \, x)$. Since graph isomorphisms preserve active and inactive vertices, we obtain $x = \varphi(a) \in act(H)$ and $a = \varphi(x)\notin act(H)$, a contradiction.  
\end{proof}

We say that a graph $ G $ is \textbf{active} if
\[ act(G) = V(G). \]
Otherwise, we simply say that $ G $ is \textbf{not active} (or non-active). In particular, if $ act(G) = \varnothing \neq V(G) $, we say that $ G $ is \textbf{inactive}. It is clear that $ |G| \geq 4 $ when $ G $ is active. Some examples of active graphs are $ K_0 $, $ C_n $, and $ P_n $, for $ n \geq 4 $. Also, all disconnected graphs without isolated vertices are clearly active. Inactive graphs, on the other hand, are precisely the graphs of degree 0, i.e., threshold graphs. Thus, the property of being inactive is hereditary. Conversely, the property of being active is clearly non-hereditary, since $ K_1 \preceq G $ for any active graph $ G $, but $ act(K_1) \neq V(K_1) $. 

\begin{corollary}
	\label{inactive.implies.connected}
	Let $s$ be a positive sequence (i.e., every term of $s$ is a positive integer). If $\mathcal{G}(s)$ contains a graph that is not active, then all members of $\mathcal{G}(s)$ are connected. 
\end{corollary}   

\begin{proof}
	Suppose $ \mathcal{G}(s) $ contains a disconnected graph $ G $. Since $ s $ is positive, each component of $ G $ is non-trivial and, therefore, $ G $ is clearly active. Thus, by Theorem \ref{2switch.preservs.act}, we conclude that every member of $ \mathcal{G}(s) $ is active.
\end{proof}

The \textbf{eccentricity} $ ecc_G(x) $ of a vertex $ x $ in a graph $ G $ is a measure describing the greatest distance between $ x $ and any other vertex in $ G $. Formally,
\[ ecc_G(x) = \max\{ dist_G(v, x) : v \in V(G) \}. \]
In the following corollary, we show that in a graph without isolated vertices, inactive vertices have eccentricity 1 or 2.

\begin{proposition}
	\label{eccen.inact.vertex}
	Let $ G $ be a graph without isolated vertices. If $ x \notin act(G) $, then $ dist_G(x, v) \leq 2 $, for every $ v \in V(G) $. In other words:
	\[ ecc_G(x) \in \{1, 2\}. \]
\end{proposition}

\begin{proof}
	If $ v $ is a vertex at distance $ d $ from $ x $ (in $ G $), we can find an induced path of length $ d $ connecting $ x $ to $ v $ in $ G $. If $ d \geq 3 $, then there exist vertices $ y $ and $ z $ such that the 2-switch $ \binom{x \, y}{z \, v} $ is active in $ G $. This means $ x \in act(G) $.
\end{proof}

\begin{corollary}
	\label{inactive.implies.diam<=3}
	Let $ G $ be a graph without isolated vertices. If $ G $ is not active, then $ diam(G) \leq 3 $. Moreover, this bound is sharp. 
\end{corollary}  

\begin{proof}
	By Corollary \ref{inactive.implies.connected}, we know $ G $ is connected. Consider a vertex $ x \notin act(G) $. By the triangle inequality and Corollary \ref{eccen.inact.vertex}, for any pair of vertices $ v, w \in G $, we have (in $ G $) that  
	\begin{equation*}
		dist(v, w) \leq dist(v, x) + dist(x, w) \leq 2 + 2 = 4.
	\end{equation*}
	By Proposition \ref{eccen.inact.vertex}, equality holds if and only if $ dist(v_0, x) = dist(x, w_0) = 2 $ for some $ v_0, w_0 \in V(G) $. But in this case, $ G $ would contain an induced path of the form $v_0yxzw_0 \approx P_5$, for some $y,z\in V(G)$, forcing $ x $ to be active. Therefore, $ dist(v, w) < 4 $ for every pair $ v, w \in V(G) $, and thus $ diam(G) \leq 3 $. Finally, to see that the bound is sharp, consider the degree sequence $ 3^2 2^1 1^2 $. 
\end{proof}

The converses of Corollaries \ref{inactive.implies.connected} and \ref{inactive.implies.diam<=3} are not true: $ P_4 $ is a connected graph with diameter 3 where all its vertices are active. Even if the diameter of a graph $ G $ were 2, we cannot guarantee in general that $ G $ has any inactive vertices; for example, $ C_4 $ is an active graph with diameter 2. It is useful to rephrase Corollary \ref{inactive.implies.diam<=3} in the following equivalent way.

\begin{corollary}
	\label{inactive.implies.diam<=3.contrarr}
	Let $ s $ be a positive sequence. If $ \mathcal{G}(s) $ contains a graph of diameter $ \geq 4 $, then all members of $ \mathcal{G}(s) $ are active.
\end{corollary} 

Since split graphs will soon become central to this work, it is important to establish from the outset the properties of their active and inactive vertices. Before proceeding, note that a vertex $ v $ is active in a split graph $ (S, K, I) $ if and only if there exists an $ H \preceq S $ such that $ v \in H $ and $ H \approx P_4 $. This is because split graphs do not contain induced subgraphs isomorphic to $ C_4 $ or $ 2K_2 $. If $ v \in I $, such an $ H $ will be of the form $vabc$, where $ a, b \in K $ and $ c \in I $. If instead $ v \in K $, $ H $ can be written as $avbc$, where $ a, c \in I $ and $ b \in K $.

\begin{proposition}
	\label{split.caract.vert.activos}
	If $ (S, K, I) $ is a split graph, then the following holds:
	\begin{enumerate}
		\item $ U = \bigcap_{v \in I} N_S(v) $ is the set of universal vertices of $ S $;
		\item $ u \in I $ is active in $ S $ if and only if there exists an $ x \in I $ such that $ N_S(x) - N_S(u) \neq \varnothing $ and $ N_S(u) - N_S(x) \neq \varnothing $;
		\item $ u \in I $ is inactive in $ S $ if and only if, for every $ v \in I $, either $ N_S(v) \subseteq N_S(u) $ or $ N_S(u) \subseteq N_S(v) $;
		\item $ u \in K $ is inactive in $ S $ if and only if, for every $ v \in K $, either $ N_S(v) \cap I \subseteq N_S(u) \cap I $ or $ N_S(u) \cap I \subseteq N_S(v) \cap I $; 
		\item A vertex of $ S $ is inactive in $ S $ if and only if its neighborhood is comparable by inclusion with all other neighborhoods in its partition;
		\item If $ w $ is a swing vertex in the split graph $ (S, K, I) $, then $ w \notin act(S) $.
		\item If $ S $ is active, then $ S $ is balanced.
	\end{enumerate}
\end{proposition}

\begin{proof}
	\begin{enumerate}[(1).]
		\item A vertex $ u $ is universal in $ S $ if and only if it lies in the neighborhood of every vertex $ v $ in $ I $. 
		\item If there exist $ a \in N_x - N_u $ and $ b \in N_u - N_x $, then $ubax\preceq S$, showing that $ u \in act(S) $. Conversely, if $ u \in act(S) $, then there exist $ a, b \in K $ and $ x \in I $ such that $ubax\preceq S$, so $ a \notin N_u $ and $ b \notin N_x $. Thus, both $ N_u - N_x $ and $ N_x - N_u $ are non-empty.
		\item This is simply the contrapositive of (2).
		\item If $ u $ is an inactive clique vertex in $ S $, then, in $\overline{S}$, the same vertex is also inactive but independent. Thus, by (3), we have $ N_{\overline{S}}(v) \subseteq N_{\overline{S}}(u) $ or $ N_{\overline{S}}(u) \subseteq N_{\overline{S}}(v) $ for every $ v \in K $. Equivalently: $ N_v^c - v \subseteq N_u^c - u $ or $ N_u^c - u \subseteq N_v^c - v $, i.e., $ N_v \cup v \supseteq N_u \cup u $ or $ N_u \cup u \supseteq N_v \cup v $. Since $ N_x = (N_x \cap I) \cup (K - x) $ for every $ x \in K $, it follows that $ (N_v \cap I) \cup K \supseteq (N_u \cap I) \cup K $ or $ (N_u \cap I) \cup K \supseteq (N_v \cap I) \cup K $. Therefore, we conclude that $ N_v \cap I \supseteq N_u \cap I $ or $ N_u \cap I \supseteq N_v \cap I $.
		\item Combining (3) and (4) yields the result. 
		\item If $ w \in I $, then $ N_w = K \supseteq N_x $ for every $ x \in I $. If $ w \in K $, then $ N_w = K - w $, so $ N_w \cap I = \varnothing \subseteq N_x \cap I $ for every $ x \in K $. In both cases, (5) holds, so $ w \notin act(S) $.  
		\item If $ S $ is not balanced, then $ S $ has a swing vertex, which is inactive by (6). Thus, $ act(S) \neq V(S) $.
	\end{enumerate}
\end{proof} 

It is important to note that the converse of statement (7) in Theorem \ref{split.caract.vert.activos} is not true in general, as there exist balanced non-active graphs. Indeed, let $ S $ be the split graph with bipartition $ (K, I) = ([3], \{a, b, c\}) $, where $ N_a = \{1, 3\} $, $ N_b = \{2, 3\} $, and $ N_c = \{3\} $. It is easy to see that $ \omega(S) = |K| $ and $ \alpha(S) = |I| $, showing that $ S $ is balanced. However, the vertices $ c $ and $ 3 $ are inactive in $ S $.

\begin{proposition}
	\label{S.balanc.implica.deg>0,K=Union(N_v),|U|<|K|-1}
	Let $ (S, K, I) $ be a balanced split graph. Then, the following holds:
	\begin{enumerate}
		\item $ \deg(S) \geq 1 $, and $ |K|, |I| \geq 2 $; 
		\item $ K = \bigcup_{v \in I} N_S(v) $; 
		\item $ |U| \leq |K| - 2 $.
	\end{enumerate}
\end{proposition}

\begin{proof}
	\begin{enumerate}[(1).]
		\item If $ \deg(S) = 0 $, then $ S $ would be threshold and thus unbalanced. Hence, $ \deg(S) \geq 1 $, which implies that $ S $ contains an induced $ P_4 $. Consequently, $ |I|, |K| \geq 2 $. 
		\item Clearly, $ \bigcup_{v \in I} N_v \subseteq K $. If the inclusion were strict, there would be a vertex $ w \in K $ with no neighbors in $ I $. But then $ w $ would be swing, and thus $ S $ would not be balanced. 
		\item Since $ S $ is balanced, $ d_v < |K| $ for every $ v \in I $. Thus, $ |U| < |K| $ by (2). To complete the proof, it suffices to show that $ |U| \neq |K| - 1 $. Suppose $ |U| = |K| - 1 $. Let $ x $ be the only non-universal clique vertex in $ S $. By (1), there exists $ w \in N_x \cap I $. Then, $ \overline{S} $ has $ |K| - 1 $ isolated vertices and only one non-isolated independent vertex, $ x $. Since $ w x \notin \overline{S} $, $ w $ is swing in $ \overline{S} $. But then $ S $ is unbalanced, a contradiction.
	\end{enumerate}
\end{proof} 

The following theorem is interesting because it establishes that all connected regular graphs are active, except for complete graphs, which are inactive.

\begin{theorem}
	\label{regular.graphs}
	Let $ G $ be a connected regular graph. If $ G $ is not complete, then $ G $ is active. 
\end{theorem}

\begin{proof}
	Let $ G $ be a $ k $-regular graph. If $ k \leq 2 $, it is easy to verify that the statement holds. If $ k \geq 3 $, suppose $ G $ is neither complete nor active. Then, $ 3 \leq k \leq n - 2 $, and $ G $ is inactive by Theorem \ref{same.degree.act.inact}. Moreover, $ diam(G) = 2 $ since $ G $ is not complete and contains no induced $ P_4 $. Now, consider an induced path $abc$ of length 2 in $ G $. Since $d_a = k \geq 3$, there are $ k - 1 $ vertices $ v_i $ in the neighborhood of $ a $. Note that $ v_i \neq c $ for all $ i \in [k - 1] $, because $ dist(a, c) = 2 $. Since $ \deg(G) = 0 $, for each $ i $, the edge $ v_i b $ must be in $ G $. Thus, $ d_b \geq k + 1 $, a contradiction.
\end{proof}


\section{Active space} \label{sec:espacio.activo}

The \textbf{active part} of $G$, denoted by $G^*$, is the subgraph of $G$ induced by $act(G)$. By definition, $G^*$ is active. Thus, $G$ is active if and only if $G^*=G$. If $s=s(G)$, we refer to the degree sequence of $G^*$ as $s^*$.

\begin{lemma}
	\label{tau(G-S)=tau(G)-S}
	Let $G$ be a graph and let $V_{\theta}$ be the set of all vertices activated by some 2-switch in the sequence $\theta =(\tau_i )_{i=1}^k$. If $W\subseteq V(G)$ and $W\cap V_{\theta}=\varnothing$, then
	\[ \theta(G-W)=\theta(G)-W. \]
\end{lemma}

\begin{proof}
	Let $k=1$, that is, $\theta={{a \ b}\choose{c \ d}}$. Since $W\cap V_{\theta}=\varnothing$, it is clear that $\theta$ is active in $G$ if and only if it is active in $G-W$. If $\theta$ is active in $G-W$, then we have
	\[ \theta(G-W)=((G-W)-\{ab,cd\})+\{ac,bd\}. \]
	Since $V_{\theta}=\{a,b,c,d\}$ and $W\cap V_{\theta}=\varnothing$, it follows that
	\[ (G-W)-\{ab,cd\}=(G-\{ab,cd\})-W. \]
	For the same reason, it is easy to see that
	\[ ((G-\{ab,cd\})-W)+\{ac,bd\}= \]
	\[ ((G-\{ab,cd\})+\{ac,bd\})-W=\theta(G)-W. \]
	The rest of the proof is carried out easily by induction on $k$, since it suffices to recycle the case $k=1$ by replacing $G$ with $\tau_1\ldots\tau_{k-1}(G)$ and $\theta$ with $\tau_k$.
\end{proof}

Let $X=(V,E)$ be a graph. For any vertex set $V'$ disjoint from $V$ and any 
\[ E'\subseteq\binom{V\cup V'}{2}-\binom{V}{2}=\binom{V'}{2}\dot{\cup}\{xy:x\in V,y\in V'\}, \]
we define the $(V',E')$\textbf{-extension} of $X$ as the graph 
\[ ext(X,V',E')=(V\cup V',E\cup E'). \]
Clearly, $X\preceq ext(X,V',E')$. 

\begin{lemma}
	\label{ext(theta)=theta(ext)}
	Let $G=(V,E)$ be a graph and let $\theta$ be a sequence of 2-switches. If $W=V-act(G)$ and $E'=\{xy\in E:x\in W\}$, then
	\[ ext(\theta(G^*),W,E')=\theta(ext(G^*,W,E')). \]
\end{lemma}

\begin{proof}
	The first thing to note is that $G^*=G-W$. Hence, it is clear that $ext(G^*,W,E')=G$. Secondly, since each 2-switch in $\theta$ only deletes and adds edges of the form $xy$, with $x,y\in act(G)$, it follows that
	\[ E'=\{xy\in E(\theta(G)):x\in W\} \]
	(remember that $act(G)$ is an invariant associated with $s(G)$, by Theorem \ref{2switch.preservs.act}). With this in mind, and applying Lemma \ref{tau(G-S)=tau(G)-S}, we obtain
	\[ ext(\theta(G^*),W,E')=ext(\theta(G)-W,W,E')= \]
	\[ (V,E(\theta(G)-W)\dot{\cup}E'). \]
	Since
	\[ E(\theta(G)-W)=E(\theta(G))-\{xy\in E(\theta(G)): x\in W\}, \]
	we conclude that $ext(\theta(G^*),W,E')=\theta(G)$.
\end{proof}

If $\mathcal{X}$ is a subset of $\mathcal{G}(s)$, we define
\[ \mathcal{X}^* =\{G^* :G\in\mathcal{X}\}. \]
Let $\psi :\mathcal{X}\rightarrow\mathcal{X}^*$ be the function defined by $\psi(X)= X^*$. Choose arbitrarily a graph $X_0\in\mathcal{G}(s)$ and define
\[ E_0=\{xy\in E(X_0):x\in W\}, \]
where $W=V(X_0)-act(s)$. Thanks to Theorem \ref{2switch.preservs.act}, it is easy to verify that $E_0$ is an invariant associated with $s$. In other words, replacing $X_0$ with any other member of $\mathcal{G}(s)$, the set $E_0$ does not change. Now suppose that $G^* =H^*$. Then,
\[ G=ext(G^*,W,E_0)=ext(H^*,W,E_0)=H, \]
which shows that $\psi$ is injective. Since $\mathcal{X}^*=\psi(\mathcal{X})$, $\psi$ is obviously surjective. Therefore, $\psi$ is a bijection. 

Let $G^*,H^*\in\mathcal{X}^*$, $G^*\neq H^*$. According to Theorem \ref{berge's.theorem}, there exists a sequence $\theta$ of 2-switches such that $H=\theta(G)$, since $G,H\in\mathcal{G}(s)$ and $G\neq H$. Then, applying Theorem \ref{2switch.preservs.act} and Lemma \ref{tau(G-S)=tau(G)-S} for $W=V(G)-act(s)$, we deduce that 
\[ \theta(G^*)=\theta(G-W)=\theta(G)-W=H-W=H^*, \]
which means that $G^*,H^*\in\mathcal{G}(s^*)$. In other words, $\mathcal{X}^*$ is a subset of $\mathcal{G}(s^*)$. 

The transition space $\mathbf{T}(\mathcal{X}^*)$ is called the \textbf{active space} associated with $\mathcal{X}$ (or with $\mathbf{T}(\mathcal{X})$). Thanks to Lemma \ref{tau(G-S)=tau(G)-S} and Theorem \ref{2switch.preservs.act}, it is clear that $\psi$ is a homomorphism between the transition spaces $\mathcal{X}$ and $\mathcal{X}^*$. On the other hand, observe that the function $\zeta:\mathcal{X}^*\rightarrow\mathcal{X}$, defined by $\zeta(Y)=ext(Y,W,E_0)$, is the inverse of $\psi$, and it is also a homomorphism by Lemma \ref{ext(theta)=theta(ext)}. Then, $\psi$ is an isomorphism. 

\begin{theorem}
	\label{isomorfismo.espacio.activo}
	Every transition subspace $\mathcal{X}$ of $\mathcal{G}(s)$ is isomorphic to its associated active space:
	\begin{equation*}
		\mathcal{X}\approx\mathcal{X}^* .
	\end{equation*}
\end{theorem}

\begin{proof} 
	It follows from the previous discussion.
\end{proof}


\section{Basic properties of the degree} \label{sec:prop.basicas.deg}

At this point, it should come as no surprise that the number of 2-switches acting on a certain graph $G$ can be obtained by summing the degree of all induced subgraphs of order 4 in $G$. In other words,
\begin{equation*}
	\deg(G)=\sum_{H\in Q_G}\deg(H),
\end{equation*}
where $\deg(H)=2$ if $H\approx 2K_2$ or $C_4$, $\deg(H)=1$ if $H\approx P_4$, and $\deg(H)=0$ in the remaining cases (see Figure \ref{los.11.de.orden4}). If $X$ is one of the 11 graphs in Figure \ref{los.11.de.orden4}, we define
\[ Q_G (X)=\{H\in Q_G :H\approx X\}. \]
Then, we have the following result.

\begin{theorem}
	\label{degreeofG}
	For every graph $G$,
	\begin{equation}
		\label{eq18}
		\deg(G)=2|Q_G (2K_2 )|+2|Q_G (C_4 )|+|Q_G (P_4 )|.
	\end{equation}
	In particular, if $G$ is a split graph, then $\deg(G)=|Q_G (P_4 )|$.
\end{theorem}

\begin{proof}
	Equality \eqref{eq18} follows from the previous discussion. When $G$ is split, we know it does not contain induced subgraphs isomorphic to $C_4$ or $2K_2$. Therefore, $|Q_G (2K_2 )|=|Q_G (C_4 )|=0$.
\end{proof}

From \eqref{eq18} we observe that
\[ \deg(G)\equiv|Q_G(P_4)|\pmod{2}, \]
that is, the degree of $G$ has the same parity as the number of induced $P_4$'s in $G$. In particular, every graph of odd degree contains at least one induced $P_4$.

\begin{corollary}
	\label{degG=deg(G.complemento)}
	For any graph $G$,
	\begin{equation*}
		\deg(G)=\deg(\overline{G}).
	\end{equation*}
\end{corollary}

\begin{proof}
	Let $G\in\mathcal{G}(s)$. Then, $\overline{G} \in\mathcal{G}(\bar{s})$. By Theorem \ref{dual.spaces.iso}, we have $\mathcal{G}(s)\approx\mathcal{G}(\bar{s})$ via the isomorphism $G\mapsto\overline{G}$, and therefore $\deg(G)=\deg(\overline{G})$.
\end{proof}

It is worth mentioning that Corollary \ref{degG=deg(G.complemento)} can also be proven using the fact that $H\preceq G$ if and only if $\overline{H}\preceq\overline{G}$. This, together with the fact that $\overline{P_4}=P_4$ and $\overline{C_4}=2K_2$, implies that $|Q_G(P_4)|=|Q_{\overline{G}}(P_4)|$, $|Q_G(C_4)|=|Q_{\overline{G}}(2K_2)|$, and $|Q_G(2K_2)|=|Q_{\overline{G}}(C_4)|$.

\begin{proposition}
	\label{deg.respects.ind.inc}
	If $H\preceq G$, then 
	\begin{equation*}
		\deg(H)\leq\deg(G).
	\end{equation*}
\end{proposition}

\begin{proof}
	This is true because $Q^*_H \subseteq Q^*_G$.
\end{proof}

Proposition \ref{deg.respects.ind.inc} is not generally true if the subgraph is not induced. For example, $P_4 \subseteq K_4$ but $\deg(P_4)=1$ and $\deg(K_4)=0$. Proposition \ref{deg.respects.ind.inc} can be easily generalized as follows.

\begin{proposition}
	\label{collH.ind.sub.G.degG>=sum.degH}
	Let $\{ H_i :i\in[k]\}$ be a collection of induced subgraphs of $G$ such that $|E(H_i )\cap E(H_j )|\leq 1$ for $i\neq j$. Then, 
	\begin{equation*}
		\sum_{i=1}^{k}\deg(H_i )\leq\deg(G).
	\end{equation*}
\end{proposition}

\begin{proof}
	If $\tau$ were an active 2-switch on both $H_i$ and $H_j$ for certain distinct $i,j$, then we would have $|E(H_i )\cap E(H_j )|\geq 2$, contradicting the hypothesis. Therefore, each active 2-switch in $H_i$ is inactive in $H_j$, for every $j\neq i$.
\end{proof}

\begin{proposition}
	If $G'\preceq G^{*}$ and $\deg(G')=\deg(G)>0$, then $G'=G^{*}$. In other words, $G^{*}$ is the smallest induced subgraph (in terms of order) of $G$ with the same degree as $G$.
\end{proposition}

\begin{proof}
	Suppose that $G'\preceq G^{*}$ and $\deg(G')=\deg(G)\geq 1$, but $G'\neq G^{*}$. If $v\in V(G^{*})-V(G')$, then there is at least one 2-switch $\tau$ that acts on $G$ activating $v$. However, $\tau$ is clearly inactive on $G'$, and therefore $\deg(G')\leq\deg(G)-1$, which is a contradiction.
\end{proof}

The next result states that computing the degree of a graph essentially reduces to computing the degree of its components.

\begin{theorem}
	\label{degree.disconn.G}
	Let $G$ be a graph such that $\kappa(G)=k\geq 2$. If $G_{i}$ is a component of $G$, then 
	\begin{equation}
		\deg(G)=\sum_{i=1}^{k}\deg(G_{i}) +\sum_{1\leq i<j\leq k}2\rVert G_{i}\rVert \rVert G_{j}\rVert.	
	\end{equation}	
\end{theorem}

\begin{proof}
	Suppose $k=2$. Since $G_1$ and $G_2$ are disjoint induced subgraphs of $G$, the degree of $G$ is at least $\deg(G_1)+\deg(G_2)$, by Proposition \ref{collH.ind.sub.G.degG>=sum.degH}. The remaining 2-switches that transform $G$ are those that replace edges from different components. More specifically: if $ab\in G_1$ and $cd\in G_2$, then $\begin{pmatrix} a & b \\ c & d \end{pmatrix}$ and $\begin{pmatrix} a & b \\ d & c \end{pmatrix}$ are inactive in $G$. Therefore, $\deg(G)=\deg(G_1)+\deg(G_2) +2\Vert G_1 \Vert\Vert G_2 \Vert$. The rest of the proof follows easily by induction on $k$.
\end{proof}

If $G_1 =(V_1 ,E_1 )$ and $G_2 =(V_2 ,E_2 )$ are disjoint, then we denote by $G_1 +G_2$ the graph whose vertex set is $V_1 \cup V_2$ and whose edge set is
\[ E_1 \cup E_2 \cup\{xy:x\in G_1 ,y\in G_2 \}. \]
This binary operation is well known and is called the \textbf{join} of $G_1$ and $G_2$. If $V_2 =\{v\}$ (i.e., $G_2 \approx K_1$), we can write $G_1 +G_2$ as $G_1 +v$. The join operation is clearly commutative and associative. Also, $G_1, G_2 \preceq G_1+G_2$.

\begin{lemma}
	\label{deg(G+Kn)=degG}
	For any graph $G$ and any $n\geq 1$,
	\begin{equation}
		\deg(G +K_n )=\deg(G).
	\end{equation}
\end{lemma}

\begin{proof}
	Since $G$ and $K_n$ are disjoint induced subgraphs of $G+K_n$, we have that $\deg(G)+\deg(K_n )=\deg(G)\leq\deg(G+K_n)$, by Proposition \ref{collH.ind.sub.G.degG>=sum.degH}. If $\tau$ is a 2-switch between $ab\in G$ and $cd\in K_n$, then $\tau$ is inactive in $G+K_n$ because $a,b,c$, and $d$ form a clique there. If $a,b\in V(G)$ and $c,d\in V(K_n)$, then every 2-switch involving these 4 vertices is inactive in $G+K_n$, because $a,b,c,d$ induce a $K_4$ or a $D_4$ (see Figure \ref{los.11.de.orden4}) in $G+K_n$.
\end{proof}

By taking $G+K_n$, we can thus obtain a graph with a certain fixed degree (that of $G$), but with arbitrarily large order and size.

\begin{theorem}
	\label{deg(SoG)=deg(S)+deg(G)}
	If $(S,I,K)$ is a split graph and $G$ is a graph, then 
	\begin{equation}
		\label{eq17}
		\deg(S\circ G)=\deg(S)+\deg(G).
	\end{equation}
\end{theorem}

\begin{proof}
	Since $S$ and $G$ are disjoint induced subgraphs of $S\circ G$, we have that $\deg(S\circ G)\geq \deg(S)+\deg(G)$, by Proposition \ref{collH.ind.sub.G.degG>=sum.degH}. Thanks to Lemma \ref{deg(G+Kn)=degG}, to obtain the equality \eqref{eq17} we only need to focus on an arbitrary pair of disjoint edges of the form $\{ik,ab\}$, where $\{ik,ab\}\subseteq E(S\circ G)$, $i\in I,k\in K$ and $ab\notin E(S)$. Since $H=\langle a,b,i,k\rangle_{S\circ G}$ contains the triangle $kabk$, we have that $\deg(H)=0$. Thus, equality \eqref{eq17} is finally proven.
\end{proof}

\begin{theorem}
	\label{SoG.act.iff.S,G.act}
	$S\circ G$ is active if and only if $S$ and $G$ are active.
\end{theorem}

\begin{proof}
	For simplicity, let $H=S\circ G$.
	
	($\Leftarrow$) Since $S,G\preceq H$, it is clear that every active vertex in $S$ or in $G$ remains active in $H$. Since $V(H)=V(S)\cup V(G)$ by the definition of $\circ$, we conclude that $H$ is active.
	
	($\Rightarrow$) If $x\in V(S)$ is active in $H$, then by applying Theorem \ref{deg(SoG)=deg(S)+deg(G)} we obtain that 
	\[ \deg(S)+\deg(G)=\deg(H)>\deg(H-x)=\deg(S-x)+\deg(G). \]
	Hence, $\deg(S)>\deg(S-x)$, which shows that $x$ is active in $S$. If $x\in V(G)$ is active in $H$, we repeat the argument and complete the proof.
\end{proof}


\section{Explicit formulas} \label{sec:fórmulas.explicitas}

If $G$ is a graph, we define
\[ dpe(G)=|\{ \{e,f\}: e,f\in E(G), e\cap f=\varnothing \}|. \]
In other words, $dpe(G)$ is the number of unordered pairs of disjoint edges in $G$. The abbreviation “$dpe$” stands for “\textbf{disjoint pairs of edges}”.

\begin{proposition}
	\label{dpe.fórmula}
	If $G$ is a graph with degree sequence $s=(d_{v})_{v=1}^{n}$, then:
	\begin{equation*}
		dpe(G)=\binom{\Vert G\Vert}{2}-\sum_{v=1}^{n} \binom{d_{v}}{2}=\binom{\Vert G\Vert +1}{2}-\frac{1}{2}|s|^{2},
	\end{equation*}
	where $|s|^2=d_1^2+\ldots+d_n^2$ and $\binom{i}{j}=0$ if $i<j$. In particular, $dpe(G)$ is constant on $\mathcal{G}(s)$, i.e., it is an invariant associated with $s$.
\end{proposition}

\begin{proof}
	We can obtain $dpe(G)$ by computing the number of pairs of non-disjoint edges in $G$ and then subtracting this number from $\binom{\Vert G\Vert}{2}$, i.e., the total number of edge pairs in $G$. Since there are $d_{v}$ edges incident to each $v\in V(G)$, we get $\binom{d_{v}}{2}$ pairs of edges sharing $v$, and hence $\sum_{v=1}^{n} \binom{d_{v}}{2}$ edge pairs with a common vertex.
	
	The second equality follows from expanding the binomial coefficients and applying simple manipulations.
\end{proof}

Motivated by Proposition \ref{dpe.fórmula}, we may use the notation $dpe(s)$ instead of $dpe(G)$ whenever $s=s(G)$. Moreover, note that $dpe(G)=|\{H\subseteq G:H\approx 2K_2 \}|$. The next proposition gives a formula expressing the $dpe$ of a graph in terms of the $dpe$ of its connected components.

\begin{proposition}
	\label{dpe.components}
	If $G$ is a graph with $k$ components $G_i$, then
	\begin{equation*}
		dpe(G)=\sum_{i=1}^{k}dpe(G_i )+\sum_{1\leq i<j\leq k}\Vert G_i \Vert\Vert G_j \Vert .
	\end{equation*}
\end{proposition}

\begin{proof}
	Let $k=2$, i.e., $G=G_1\dot{\cup}G_2$. If $e_1\in E(G_1)$ and $e_2\in E(G_2)$, clearly $e_1\cap e_2=\varnothing$ and there are $\Vert G_2\Vert$ pairs of the form $\{e,e_2\}$ for each $e\in E(G_1)$. Then,
	\[ dpe(G)=dpe(G_1)+dpe(G_2)+\Vert G_1\Vert\Vert G_2\Vert. \]
	The rest follows easily by induction on $k$.
\end{proof}

\begin{proposition}
	For any graph $G$ of order $n$ and size $m\geq 0$,
	\begin{equation*}
		\deg(G)\leq 2dpe(G)\leq m(m-1),
	\end{equation*}
	and equality holds if and only if $G\approx (mK_2) \dot{\cup}\overline{K}_{n-2m}$.
\end{proposition}

\begin{proof}
	For each pair of disjoint edges in $G$, there are at most two 2-switches that replace them. Hence,
	\begin{equation*}
		\deg(G)\leq 2dpe(G)=m(m-1)-\sum_{v=1}^{n}(d_v^{2}-d_v )\leq m(m-1).
	\end{equation*}
	Clearly, equality is achieved precisely when $d_v \in\{0,1\}$ for each vertex $v\in G$. Under this condition, we must have $d_u =d_v =1$ for every $uv\in E(G)$, and the remaining $n-2m$ vertices of $G$ are isolated.
\end{proof}

\begin{proposition}
	\label{number.of.P4.in.G}
	
	Let $G$ be a graph. If $p_4 (G)=|\{H\subseteq G: H\approx P_4\}|$ and $k_3 (G)=|\{H\subseteq G: H\approx K_3\}|$, then
	\begin{equation}
		\label{eq43}
		p_4 (G)+3k_3 (G)=\sum_{uv\in E(G)}(d_{u}-1)(d_{v}-1) .	
	\end{equation}
\end{proposition}

\begin{proof}
	For each edge $uv\in G$, we have $d_{u}-1$ options to attach an edge $ux\neq uv$ at $u$, and $d_{v}-1$ options to attach an edge $vy\neq uv$ at $v$. If $G$ has no triangles, there are exactly $(d_{u}-1)(d_{v}-1)$ different $P_4$'s for each edge $uv$ in $G$. If $G$ has triangles, then
	\begin{equation}
		\label{eq44}
		(d_u -1)(d_v -1) = |P_4^*(uv)|+|K_3(uv)|,
	\end{equation} 
	where $P_4^*(uv)$ is the set of all $P_4$'s in $G$ with $uv$ as the central edge, and $K_3(uv)$ is the set of all triangles in $G$ containing $uv$. Since
	\[ \{H\subseteq G: H\approx P_4\} = \dot{\bigcup}_{e\in E(G)} P_4^*(e), \]
	it is clear that $\sum_{e\in E(G)} |P_4^*(e)|=p_4(G)$. On the other hand, let $M$ be the multiset given by the union, over $e\in E(G)$, of all $K_3(e)$. Since each triangle $T\in\{H\subseteq G:H\approx K_3\}$ appears in $M$ with multiplicity 3, it follows that $\sum_{e\in E(G)} |K_3(e)|=3k_3(G)$. Finally, \eqref{eq43} follows from \eqref{eq44} by summing over all $uv\in E(G)$.
\end{proof}

We denote by $c_4(G)$ and $k_4(G)$ the number of cycles and cliques of order 4 in $G$, respectively. More precisely:
\[ c_4(G) = |\{H \subseteq G : H \approx C_4\}|, \]
\[ k_4(G) = |\{H \subseteq G : H \approx K_4\}| = |Q_G(K_4)|. \]

\begin{theorem}
	\label{deg.general.fórmula}
	If $G \in \mathcal{G}(s)$, then
	\begin{equation}
		\label{deg.fórmula}
		\deg(G) = 2dpe(s) + 2c_4(G) - p_4(G) - 4k_4(G).
	\end{equation}
	Letting $\Vert G\Vert = m$, \eqref{deg.fórmula} can be rewritten as:
	\begin{equation*}
		\deg(G) = m(m+1) - |s|^2 + 2c_4(G) + 3k_3(G) - 4k_4(G) - \sum_{uv \in E(G)} (d_u - 1)(d_v - 1).
	\end{equation*}
\end{theorem}

\begin{proof}
	First, recall that $dpe(G) = |\{H \subseteq G : H \approx 2K_2\}|$. If $Y \preceq G$ and $|Y| = 4$, observe that
	\begin{equation}
		\label{contar.subgrafos}
		|\{H \subseteq G : H \approx Y\}| = \sum_{i=1}^{11} |Q_G(X_i)| \cdot |\{H \subseteq X_i : H \approx Y\}|,
	\end{equation}
	where the sum in \eqref{contar.subgrafos} ranges over the 11 graphs $X_i$ in Figure \ref{los.11.de.orden4}. To simplify notation, we write $|X|$ instead of $|Q_G(X)|$ and use $p_4, c_4, k_4, dpe$ instead of $p_4(G), c_4(G), k_4(G), dpe(s)$. Applying formula \eqref{contar.subgrafos} for $Y \in \{C_4, P_4, 2K_2\}$, we obtain:
	\begin{align*}
		c_4 &= |D_4| + |C_4| + 3k_4, \\
		p_4 &= 4|C_4| + |P_4| + 2|U_4| + 6|D_4| + 8k_4, \\
		dpe &= 2|C_4| + |P_4| + |2K_2| + |U_4| + 2|D_4| + 3k_4.
	\end{align*}
	Solving for $|C_4|$, $|2K_2|$, and $|P_4|$ from these, and applying Theorem \ref{degreeofG}, we find:
	\begin{align*}
		\deg(G) &= 2(c_4 - |D_4| - 3k_4) + \\
		&\quad 2(dpe - 2|C_4| - |U_4| - |P_4|) + \\
		&\quad p_4 - 4|C_4| - 2|U_4| - 6|D_4| - 8k_4 \\
		&= 2dpe + 2c_4 + p_4 - 12|D_4| - 20k_4 - 4|U_4| - 2(4|C_4| + |P_4|) \\
		&= 2dpe + 2c_4 - p_4 - 4k_4.
	\end{align*}
\end{proof}

Theorem \ref{deg.general.fórmula} suggests that the difficulty of computing the 2-switch-degree of $G$ essentially lies in determining the number of 4-cliques in $G$, i.e., $k_4(G)$. This is because it is relatively “easy” to compute $c_4(G)$ and $k_3(G)$ (from which $p_4(G)$ depends). Indeed, it is well known that these can be obtained from the \textbf{adjacency matrix} $A$ of $G$, an $n \times n$ matrix (where $n = |G|$), with entry $[A]_{ij} = 1$ if $ij \in E(G)$ and $0$ otherwise. In fact:
\[ k_3(G) = \frac{1}{6} \, \text{tr}(A^3), \]
where $\text{tr}(\ast)$ is the trace, and
\begin{equation*}
	c_4(G) = \frac{1}{2} \sum_{1 \leq i < j \leq n} \binom{[A^2]_{ij}}{2}.
\end{equation*}
We also have:
\begin{equation*}
	\text{tr}(A^4) = 8c_4(G) + 2\Vert G\Vert + 4\sum_{v=1}^{n} \binom{d_v}{2} = 8c_4(G) - 2\Vert G\Vert + 2|s|^2,
\end{equation*}
\begin{equation*}
	c_4(G) = \frac{1}{8} \text{tr}(A^4) + \frac{1}{4} \Vert G\Vert - \frac{1}{4} |s|^2 = \frac{1}{8} \text{tr}(A^4) + \frac{1}{2} dpe(s) - \frac{1}{4} \Vert G\Vert^2.
\end{equation*}

No general closed formulas (like the above) are currently known for $k_4$. In fact, the problem of counting cliques of a given size in a graph is generally of extreme computational complexity, as it is NP-complete. However, for certain families of graphs it might be “easy” to determine the degree of their members (or the degree might even be constant in such families), allowing $k_4$ to be computed easily via formula \eqref{deg.fórmula}.

As immediate corollaries of Theorem \ref{deg.general.fórmula}, we can derive simplified formulas for specific graph classes. For example, if $G$ is triangle-free (i.e., $G$ contains no 3-cycles), then $k_3(G) = 0 = k_4(G)$. If $G$ is $C_4$-free, then $c_4(G) = 0 = k_4(G)$. The \textbf{girth} of a graph $G$, denoted by $g(G)$, is the length of the shortest cycle contained in $G$. If $G$ is acyclic, $g(G)$ is defined as $\infty$. Note that $g(G) \geq 5$ if and only if $k_3(G) = 0 = c_4(G)$.

\begin{corollary}
	\label{deg.girth>4}
	If $G \in \mathcal{G}(s)$ and $g(G) \geq 5$, then
	\begin{equation}
		\label{deg.girth>4.fórmula}
		\deg(G) = 2dpe(s) - p_4(G).
	\end{equation}
	Letting $\Vert G\Vert = m$, formula \eqref{deg.girth>4.fórmula} can be rewritten as
	\begin{equation*}
		\deg(G) = m(m+1) - |s|^2 - \sum_{uv \in E(G)} (d_u - 1)(d_v - 1).
	\end{equation*}
\end{corollary}

\begin{proof}
	It follows from the previous discussion.
\end{proof}

As an application of Corollary \ref{deg.girth>4}, we can compute the degree of paths and cycles, since $g(P_n) = \infty$ and $g(C_n) = n$.

Paths of order 1 or 2 clearly have degree 0. For $n \geq 3$, $s = s(P_n) = 2^{n-2}1^2$. Since $dpe(s) = \binom{n-2}{2}$ and $p_4(P_n) = n - 3$, we get
\begin{equation}
	\label{path.deg}
	\deg(P_n) = (n - 3)^2.
\end{equation}
For each $n \geq 3$, $s=s(C_n) = 2^n$ and $dpe(s) = \binom{n}{2} - n$. Clearly, $\deg(C_3) = 0$ and $\deg(C_4) = 2$. If $n \geq 5$, then $p_4(C_n) = n$ and therefore,
\begin{equation}
	\label{ciclo.deg}
	\deg(C_n) = n(n - 4).
\end{equation}
Note that $\deg(C_n) \geq \deg(P_n)$ for all $n \geq 3$. We can apply formulas \eqref{path.deg} and \eqref{ciclo.deg} to obtain a lower bound on the degree of a graph $G$ that contains some path $P$ or cycle $C$ as an induced subgraph. Indeed, from Proposition \ref{deg.respects.ind.inc} we have $\deg(G) \geq \deg(P), \deg(C)$.

Let $G$ be a graph with degree sequence $s = (d_v)_{i=1}^n$. The numbers
\[ \zeta_{1}(G)=\sum_{v=1}^n d_{v}^{2}=|s|^{2}, \]
\[ \zeta_{2}(G)=\sum_{uv\in E(G)} d_{u}d_{v} \]
are called the \textbf{first} and \textbf{second Zagreb indices} of $G$, respectively. These are well-known parameters in Chemical Graph Theory and widely studied in the literature. These so-called topological invariants are often used to quantify structural characteristics of chemical compounds. Both $\zeta_1$ and $\zeta_2$ were first introduced in 1972 by Gutman and Trinajstić and have been used to relate molecular properties such as chemical stability and $\pi$-electron energy. The latter refers to the electrons involved in $\pi$ bonds, a type of covalent bond formed by the lateral overlap of $\pi$ atomic orbitals. There is a relationship between the total energy of $\pi$ electrons and the Zagreb indices, since the latter capture aspects of molecular connectivity and branching that influence electron delocalization. For example, in certain studies, it has been found that greater branching (reflected in higher Zagreb index values) can lead to lower stability and thus higher total $\pi$-electron energy. Therefore, Zagreb indices can be useful tools for estimating and understanding how molecular structure affects $\pi$-electron energy.

It is a curious fact that these chemical indices arise naturally in our context. Indeed, since $\sum_{uv \in E(G)} (d_u + d_v) = \zeta_1(G)$, we have
\begin{equation}
	\label{sum.zagreb.1,2}
	\sum_{uv \in E(G)} (d_u - 1)(d_v - 1) = \zeta_2(G) - \zeta_1(G) + \Vert G\Vert.
\end{equation}
We know that the left-hand sum in \eqref{sum.zagreb.1,2} (which equals $p_4(G) + 3k_3(G)$ by Proposition \ref{number.of.P4.in.G}) is part of the formula in Theorem \ref{deg.general.fórmula} and Corollary \ref{deg.girth>4} for computing $\deg(G)$. Hence, we see a link between the 2-switch and the Zagreb indices. More specifically, the next theorem tells us how the 2-switch-degree is tied to $\zeta_2$.

\begin{theorem}
	\label{relacion.deg.zeta2}
	If $G$ is a graph, then
	\[ \deg(G) + \zeta_2(G) = \Vert G\Vert^2 + 2c_4(G) + 3k_3(G) - 4k_4(G). \]
\end{theorem}

\begin{proof}
	It follows from the previous discussion.
\end{proof}

When $g(G) \geq 5$, we obtain a remarkable corollary of Theorem \ref{relacion.deg.zeta2}: the sum of the 2-switch-degree and the second Zagreb index equals the square of the size.

\begin{corollary}
	If $g(G) \geq 5$, then
	\begin{equation}
		\label{eq.deg.zeta2.girth>4}
		\deg(G) + \zeta_2(G) = \Vert G\Vert^2.
	\end{equation}
\end{corollary}

\begin{proof}
	It follows immediately from Theorem \ref{relacion.deg.zeta2}, since $c_4(G) = k_3(G) = k_4(G) = 0$ when $g(G) \geq 5$.
\end{proof}

Now suppose we are studying extremal values of the second Zagreb index (and which graphs attain them) in a family of graphs with fixed size. Equality \eqref{eq.deg.zeta2.girth>4} may be very useful in this case, as it tells us that minimizing (maximizing) $\zeta_2$ is equivalent to maximizing (minimizing) the degree.


\section{Degree of trees and unicyclic graphs} \label{sec:deg.arboles.unicicl}

Recall that we denote by $\mathcal{F}(s)$ the family of all forests with degree sequence $s$. Since all members of this family have the same number of connected components, it follows that if one of them is a tree, then all the others are trees as well. A 2-switch $\tau$ on a forest $F \in \mathcal{F}(s)$ is called an \textbf{f-switch} on $F$ if $\tau(F) \in \mathcal{F}(s)$. When $F$ is a tree, we can use the term \textbf{t-switch} instead of f-switch. The \textbf{f-degree} of a forest $F$, denoted by $\deg_f(F)$, is the degree of the vertex $F$ in the graph $\mathbf{T}(\mathcal{F}(s))$. Equivalently, $\deg_f(F)$ is the number of f-switches that act on $F$. Clearly, $\deg_f(F) \leq \deg(F)$, since $\mathcal{F}(s) \preceq \mathcal{G}(s)$.

The following result provides a formula to compute the f-degree of a tree.

\begin{theorem}
	\label{deg_f.tree}
	Let $T$ be a tree in $\mathcal{F}(s)$, where $s=(d_{v})_{v=1}^{n}$. Then,
	\begin{equation*}
		\deg_f(T) = dpe(s) = \binom{n-1}{2} - \sum_{v=1}^{n} \binom{d_v}{2} = \binom{n}{2} - \frac{1}{2}|s|^2,
	\end{equation*}
	where $\binom{i}{j}=0$ if $i<j$.
\end{theorem}

\begin{proof}
	Consider the 2-switches $\tau=\binom{a \ b}{c \ d}$ and $\tau'=\binom{a \ b}{d \ c}$. Assuming that $ab\ldots cd$ is the path connecting $a$ to $d$ in $T$, we observe that:
	\begin{enumerate}
		\item $\tau$ is a t-switch on $T$;
		\item if $bc \notin E(T)$, then $\tau'$ disconnects $T$, and therefore is not a t-switch on $T$;
		\item if $bc \in E(T)$, then $\tau'$ is inactive on $T$.
	\end{enumerate}
	With this in mind, it is now easy to see that we can perform exactly one t-switch in $T$ for each unordered pair of disjoint edges of $T$. Then, $\deg_f(T)=dpe(s)$, and we apply Proposition \ref{dpe.fórmula}.
\end{proof}

It is quite surprising that the f-degree of a tree $T \in \mathcal{F}(s)$ is an invariant associated with $s$. Theorem \ref{deg_f.tree} has a significant impact on the global structure of $\mathbf{T}(\mathcal{F}(s))$. Indeed, we see that every vertex of this transition space has the same degree, namely $dpe(s)$. Moreover, we already know from Section \ref{sec:preliminares} that $\mathcal{F}(s)$ is connected (\cite{vnsigma.2switch.graphs.forests}, page 9). Hence, we obtain the following corollary.

\begin{corollary}
	If $s$ is the degree sequence of a tree, then $\mathcal{F}(s)$ is a connected and $k$-regular graph, where $k=dpe(s)$.
\end{corollary}

\begin{proof}
	It follows from the preceding discussion.
\end{proof}

Previously, we stated that the degree of a tree is not generally an invariant associated with its degree sequence. The next corollary justifies this claim.

\begin{corollary}
	If $T$ is a tree of order $n$, then
	\begin{equation}
		\deg(T) = 2\deg_f(T) - \sum_{uv \in E(T)} (d_u - 1)(d_v - 1) = (n - 1)^2 - \zeta_2(T).
	\end{equation}
\end{corollary}

\begin{proof}
	The first equality is an immediate consequence of Theorems \ref{deg.general.fórmula} and \ref{deg_f.tree}, where $k_3(T) = c_4(T) = k_4(T) = 0$ and $dpe(T) = \deg_f(T)$. The second equality follows from Corollary \ref{deg.girth>4}.
\end{proof}

If $T$ is a tree in $\mathcal{G}(s)$, note that $\deg(T) - \deg_f(T)$ is the number of disconnected neighbors of $T$ in $\mathcal{G}(s)$.\\

We denote by $\mathcal{U}(s)$ the family of all unicyclic graphs with degree sequence $s$. It is worth noting that $\mathcal{U}(s)$ is connected (\cite{vnsigma.2switch.unic.pseudof}, page 8), something already mentioned in Section \ref{sec:preliminares}. A 2-switch $\tau$ on a unicyclic graph $U \in \mathcal{U}(s)$ is said to be a \textbf{u-switch} on $U$ if $\tau(U) \in \mathcal{U}(s)$. The \textbf{u-degree} of a unicyclic graph $U$, denoted by $\deg_u(U)$, is the degree of the vertex $U$ in the graph $\mathbf{T}(\mathcal{U}(s))$. Equivalently, $\deg_u(U)$ is the number of u-switches acting on $U$. Clearly, $\deg_u(U) \leq \deg(U)$, since $\mathcal{U}(s) \preceq \mathcal{G}(s)$.

In a unicyclic graph $U$, we can always identify the following two subgraphs: the unique cycle $C$ of $U$ and the forest  
\[ F = U - E(C) - \{v \in V(C) : \deg_U(v) = 2 \}. \]
Then, $E(U) = E(C) \dot{\cup} E(F)$ and $|V(F) \cap V(C)|$ is the number of components of $F$. If $s(U) = 2^n$, then $U \approx C_n$ and $F = K_0$. But if $d_v \neq 2$ for some $v \in V(U)$, then $E(F) \neq \varnothing$. Note that $C$ is an induced subgraph of $U$, but $F$, in general, is not.

\begin{corollary}
	If $U \in \mathcal{U}(s)$ is a unicyclic graph of order $n$ with a cycle of length $c$, then 
	\begin{equation*}
		\deg(U) =
		\begin{cases}
			n^2 - \zeta_2(U) + 3, & \text{if } c = 3 \\
			n^2 - \zeta_2(U) + 2, & \text{if } c = 4 \\
			n^2 - \zeta_2(U), & \text{if } c \geq 5 \\
		\end{cases}
	\end{equation*}
\end{corollary}

\begin{proof}
	It follows immediately from Theorem \ref{relacion.deg.zeta2}.
\end{proof}

\begin{theorem}[\cite{vnsigma.2switch.unic.pseudof}, page 5]
	\label{u-switch.caract}
	Let $\tau$ be a 2-switch acting on the edges $e_1$ and $e_2$ of a unicyclic graph $U$. If $C$ and $F$ are, respectively, the cycle and the forest of $U$, then we have the following:
	\begin{enumerate}
		\item if $e_1, e_2 \in F$ and $e \in E(C)$, then $\tau$ is a u-switch on $U$ if and only if it is a t-switch on $U - e$;
		\item if $e_1 \in C$ and $e_2 \in F$, then $\tau$ is a u-switch on $U$;
		\item if $e_1, e_2 \in C$, then $\tau$ is a u-switch on $U$ if and only if $\tau(C) \approx C$.
	\end{enumerate}
\end{theorem}

\begin{theorem}
	If $U$ is a unicyclic graph with cycle $C$ and forest $F$, then 
	\begin{align*}
		\deg_u(U) = & \deg(U) - \deg(C) + dpe(C) - dpe(F) + p_4(F) \\
		= & \deg(U) - \deg(C) - \deg(F) + dpe(C) + dpe(F).
	\end{align*}
\end{theorem}

\begin{proof}
	We can compute the required formulas by computing the number $\bar{u}(U)$ of u-switches that do not preserve the unicyclic structure of $U$, since $\deg_u(U) = \deg(U) - \bar{u}(U)$. Using Theorem \ref{u-switch.caract}, it follows that these 2-switches fall into two disjoint categories:
	\begin{enumerate}[(1).]
		\item those between edges in $F$ that are not t-switches on $U - e$, for any $e \in E(C)$;
		\item those between edges in $C$ that split $C$ into two disjoint cycles.
	\end{enumerate}
	If $F$ has $k$ components $T_i$, then there are $\deg(T_i) - \deg_f(T_i)$ 2-switches of type (1) acting on each $T_i$. Moreover, there is exactly one 2-switch of type (1) for each unordered pair of edges lying in different components of $F$. Using Theorem \ref{deg.girth>4}, we deduce that
	\[ \deg(T_i) - \deg_f(T_i) = dpe(T_i) - p_4(T_i). \]
	Furthermore, it is clear that
	\[ \sum_{i=1}^k p_4(T_i) = p_4(F). \]
	Finally, since 
	\[ dpe(F) = \sum_{i=1}^k dpe(T_i) + \sum_{1 \leq i < j \leq k} \Vert T_i \Vert \Vert T_j \Vert \]
	by Proposition \ref{dpe.components}, we find that there are
	\begin{equation}
		\label{dpe(F)-p_4(F)}
		dpe(F) - p_4(F)
	\end{equation}   
	active 2-switches of type (1) in $U$. Using Theorem \ref{deg.girth>4}, we see that \eqref{dpe(F)-p_4(F)} is equal to $\deg(F) - dpe(F)$.
	
	To determine the number of 2-switches of type (2), first note that $\deg_u(C) = dpe(C)$. Then, there are 
	\begin{equation*}
		\deg(C) - dpe(C)
	\end{equation*} 
	active u-switches of type (2) in $U$, and therefore,
	\begin{equation*}
		\bar{u}(U) = \deg(C) - dpe(C) + dpe(F) - p_4(F).
	\end{equation*}
\end{proof}

\begin{corollary}
	Let $U \in \mathcal{U}(s)$ be a unicyclic graph of order $n$ with cycle $C$ and forest $F$. If $c = |C|$, then we have:
	\begin{equation*}
		\deg_u(U) =
		\begin{cases}
			2dpe(s) + dpe(F) - \deg(F) - p_4(U), & \text{if } c = 3 \\
			2dpe(s) + dpe(F) - \deg(F) - p_4(U) + 2, & \text{if } c = 4 \\
			2dpe(s) + dpe(F) - \deg(F) - p_4(U) - \frac{c}{2}(c - 5), & \text{if } c \geq 5 \\
		\end{cases}
	\end{equation*}
	where
	\begin{equation*}
		p_4(U) =
		\begin{cases}
			\zeta_2(U) - |s|^2 + n - 3, & \text{if } c = 3 \\
			\zeta_2(U) - |s|^2 + n, & \text{if } c \geq 4 \\
		\end{cases}
	\end{equation*}
\end{corollary}

Contrary to the transition spaces of trees with the same degree sequence, $\mathcal{U}(s)$ is not regular in general. To see this, consider the unicyclic graph $U$ obtained by identifying a vertex of a triangle with a leaf of a $P_4$, and the unicyclic graph $U'$ obtained by identifying a vertex of a $C_4$ with a leaf of a $P_3$. We see that $s(U) = 3^1 2^4 1^1 = s(U')$. However, $\deg_u(U) = 11$ and $\deg_u(U') = 10$.


\chapter{Quotient by twins} \label{cap:cociente.gemelos}

Let $u$ and $v$ be vertices of a graph $G$. We say that $u$ is a \textbf{twin} of $v$ in $G$ if
\[ N_G(u)-v = N_G(v)-u. \]
In other words, twin vertices in $G$ appear as ``copies" of each other in $G$, since their neighborhoods are essentially identical. In this chapter, we develop a structural simplification tool for graphs: the quotient by twins. Informally, this operator identifies twin vertices in $G$, that is, for each $v \in V(G)$, it removes all its twins. Thus, it reduces the redundant information in $G$ that comes from having copies of a certain vertex. We denote by $[G]$ the graph obtained from $G$ through this process. When passing from $G$ to $[G]$, it is clear that some information is generally lost. One of the main objectives of this chapter is to quantify this loss by studying which essential properties of $G$ are preserved in $[G]$. Another fundamental objective is to understand how the quotient by twins behaves in relation to Tyshkevich composition. Finally, it should be mentioned that this chapter also serves as preparation for Chapter \ref{cap:grafos.asociados.a.split}, where we will see that twin vertices arise naturally in the context of split graphs, playing a very important role. We divide this analysis into 5 sections.

In Section \ref{sec:prop.basicas.cociente}, we present a series of basic results about the quotient by twins. We show that ``being twins in $G$" is an equivalence relation on $V(G)$, which we denote by $\cong_G$, and from this we rigorously define $[G]$ as a graph whose vertices are the equivalence classes. Subsequently, we study how $\cong_G$ relates to $\cong_H$ when $H \preceq G$; among other things, we prove that $[H]$ is essentially an induced subgraph of $[G]$. Then, we show that $[G]$ is isomorphic to the induced subgraph of $G$ obtained by removing all its twin vertices until only one representative of each class remains. Next, we show that $[\ast]$ preserves cliques and independent sets in a ``natural" way. We conclude the section by proving that inactive vertices of the same degree are twins.

In Section \ref{sec:cociente.en.split}, we focus on the quotient by twins in split graphs. Here, the key result is that $[S]$ is balanced if and only if $S$ is balanced.

In Section \ref{sec:cociente,diametro,activos}, the primary objective is to study under what conditions $[\ast]$ preserves the property of being an active graph. In fact, we see that this is generally not the case, i.e., $G$ being active does not always imply that $[G]$ is active. Conversely, an active quotient forces the original graph to be active. In Section \ref{sec:vertices.activos}, we saw that there is a relationship between $diam(G)$ and whether $G$ is active or not. This becomes important again here. Indeed, we show that if $diam(G) \geq 4$, then $[G]$ is active if and only if $G$ is. An identical result is also obtained for active split graphs, whose diameter is always 3. All these results are due to the fact that the quotient preserves distances $\geq 3$.

In Section \ref{sec:cociente.composicion}, we seek to understand how the quotient behaves with respect to Tyshkevich composition. First, we prove that if $H = S \circ G$, then a vertex $s \in S$ cannot be a twin in $H$ of a vertex $g \in G$. Then, through this and other auxiliary results, we manage to show that $[\ast]$ distributes over $\circ$. In other words, the quotient of the product is the product of the quotients: $[H] = [S] \circ [G]$, whenever $S$ is balanced.

Finally, in Section \ref{sec:indice.cociente}, we introduce and study a new invariant: the quotient index. We first observe that $[G]$ may still have twins. However, if the quotient is iterated sufficiently many times, we always eventually arrive at a graph that no longer has any pair of twin vertices. Such a graph is said to be twin-free. The minimum number of quotient iterations needed for $G$ to become twin-free is what we define as the quotient index of $G$, denoted by $i(G)$. This is a parameter that, in a certain sense, measures how ``far" a graph is from being twin-free. After finding a sharp upper bound for $i(\ast)$, we determine it precisely for certain specific classes, such as trees, unicyclic graphs, and balanced split graphs. Finally, we show that $i(S \circ G)$ essentially depends on $G$.


\section{Basic Properties} \label{sec:prop.basicas.cociente}

If $u$ and $v$ are twin vertices in a graph $G$, we denote this relation with the symbol $u\cong_G v$ (or simply $u\cong v$, if the graph in question is clear from context). It is straightforward to verify that ``$\cong_G$" is an equivalence relation on the set $V(G)$. Regarding this, we want to emphasize the following fact: if $u\in N_G(v)$, $u\cong_G v$ and $v\cong_G w$, then $u\in N_G(w)$. 

Therefore, we can quotient $V(G)$ by the relation $\cong_G$ and thus obtain a partition of $V(G)$ into equivalence classes 
\[ [v]_G=\{x\in V(G): x\cong_G v\} \]
(or simply $[v]$, if the graph in question is clear from context). If $W\subseteq V(G)$, then $[W]_G=\{[v]_G:v\in W\}$. We now define the \textbf{twin quotient} $[G]$ of $G$ as the graph whose vertex set is
\[ V[G]=V([G])=\{[v]_G: v\in V(G)\}, \]
and where $[u]$ and $[v]$ are neighbors if and only if the following two conditions hold:
\begin{enumerate}
	\item some $u'\in[u]$ is adjacent (in $G$) to some $v'\in[v]$;
	\item $[u]\neq[v]$.
\end{enumerate}
We want to note that condition (2) serves to prevent loops from appearing in $[G]$. Indeed, this would happen precisely when a certain class $[v]_G$ is a clique of size $\geq 2$ in $G$.

\begin{proposition}
	\label{lema.fundamental.uv.[u][v]}
	Let $G$ be a graph and let $u,v\in V(G)$. Then:
	\begin{enumerate}
		\item If $[u][v]\in E[G]$, then $uv\in E(G)$.
		\item If $uv\in E(G)$ and $u$ is not a twin of $v$, then $[u][v]\in E[G]$.
	\end{enumerate}
\end{proposition}

\begin{proof}
	\begin{enumerate}[(1).]
		\item If $[u][v]\in E[G]$, then there exist vertices $u',v'\in G$ such that $u'\in[u]$, $v'\in[v]$ and $u'v'\in G$. Since $u'\in N_{v'}$ and $v'\cong v$, it follows that $u'\in N_v$ and thus $v\in N_{u'}$. As $u'\cong u$, we conclude that $v\in N_u$. Therefore, $uv\in G$.
		\item If $uv\in E(G)$ and $[u]\neq[v]$, then it is obvious by definition of the quotient graph that $[u][v]\in E[G]$, since $u\in [u]$ and $v\in [v]$.
	\end{enumerate}
\end{proof}

The following lemma tells us that two twin vertices in a graph $G$ remain twins in every induced subgraph $H$ of $G$ that contains them. Indeed, if $u,v\in V(H)$ and $u\cong_G v$, then
\[N_H(u)-v=(N_u-v)\cap V(H)=(N_v-u)\cap V(H)=N_H(v)-u.\] 

\begin{proposition}
	\label{gem.G.entonces.gem.H<G}
	Let $G$ be a graph and $H\preceq G$. If $u,v\in V(H)$ and $u\cong_G v$, then $u\cong_H v$. In other words: $[v]_G\subseteq [v]_H$, for every $v\in V(H)$.
\end{proposition}

\begin{proof}
	It follows from the previous discussion.
\end{proof}

It can be easily verified that the converse of Proposition \ref{gem.G.entonces.gem.H<G} is not true, i.e.: if two vertices are twins in $H\preceq G$, they are not necessarily twins in $G$ as well. In fact, taking $G=abcd\approx P_4$ and $H=abc\approx P_3$, we have that $a$ and $c$ are twins in $H$ but not in $G$. Since $[a]_G=\{a\}$ and $[a]_H=\{a,c\}$, this example also shows us that the class of a vertex may change when viewed in an induced subgraph.

If $H\preceq G$, it is sometimes necessary to consider in $[G]$ the induced subgraph on the set $[V(H)]_G=\{[v]_G: v\in V(H)\}$. This subgraph should be denoted by the symbol $\langle [V(H)]_G \rangle_{[G]}$, which can be somewhat tedious to write down. Therefore, we replace it with a more concise one: $[H]_G$. When $H=G$, we recover the usual quotient, since $[G]_G$ is what we already denoted by $[G]$. Attention! If $H$ is an induced subgraph of a graph $G$, then in general it is not true that $[H]\approx [H]_G$. Indeed, taking 
\begin{equation}
	\label{[H].no-isomorfo.a.[H]_G}
	G=abcd\approx P_4, H=ab\approx K_2,
\end{equation}
we have that $[H]=[a]_H\approx K_1$ and $[H]_G=[a]_G[b]_G\approx K_2$. Note that in this particular example $[H]$ is essentially an induced subgraph of $[H]_G$. Will this be true in general?

\begin{proposition}
	\label{homomorfismo:[H]--->[H]_G}
	Let $G$ be a graph. If $H\preceq G$, then the function $\phi: V[H]\rightarrow [V(H)]_G$, defined by $\phi([x]_H)=[x]_G$, is a homomorphism between $[H]$ and $[H]_G$. Moreover, $\phi[H]\preceq [H]_G$ (i.e., $[H]$ is essentially an induced subgraph of $[H]_G$).
\end{proposition}

\begin{proof}
	If $[u]_H[v]_H\in E[H]$, then $uv\in H$, by Proposition \ref{lema.fundamental.uv.[u][v]}. Thus, $uv\in G$, since $H\subseteq G$. Note that $[u]_G\neq[v]_G$, because otherwise, by Proposition \ref{gem.G.entonces.gem.H<G}, we would have $u\cong_H v$, contradicting that $[u]_H\neq [v]_H$. Consequently, $[u]_G[v]_G\in E[G]$, by Proposition \ref{lema.fundamental.uv.[u][v]}. Since $[u]_G=\phi([u]_H)$ and $[v]_G=\phi([v]_H)$, we finally conclude that $\phi$ is a homomorphism.
	
	Since $\phi$ is a homomorphism, we have $\phi[H]\subseteq [H]_G$. To see that $\phi[H]\preceq [H]_G$, we will prove that $[u]_G[v]_G\in [H]_G$ implies $[u]_H[v]_H\in [H]$. By the definition of $[H]_G$, it is clear that $u,v\in H$. Applying Proposition \ref{lema.fundamental.uv.[u][v]}, we see that $[u]_G[v]_G\in [H]_G$ implies $uv\in G$. Thus, $uv\in H$, since $H\preceq G$. Since $\phi([u]_H)=[u]_G\neq[v]_G=\phi([v]_H)$, it is obvious that $[u]_H\neq[v]_H$. Therefore, $[u]_H[v]_H\in [H]$, again by Proposition \ref{lema.fundamental.uv.[u][v]}.
\end{proof}

Before continuing, it is important to note that the homomorphism $\phi$ from Proposition \ref{homomorfismo:[H]--->[H]_G} is not an isomorphism because, although it preserves adjacencies and non-adjacencies, it is not bijective. More specifically, $\phi$ is injective thanks to Proposition \ref{gem.G.entonces.gem.H<G}, but in general it is not surjective, as can be seen in example \eqref{[H].no-isomorfo.a.[H]_G}.

It is easy to intuit that $[G]$ is isomorphic to the induced subgraph of $G$ obtained by removing all its twin vertices until only one representative of each class remains.

\begin{proposition}
	\label{isomorfismo.Q=phi[G]}
	Let $G$ be a graph and let $[V(G)]=\{[v_i]:1\leq i\leq k\}$. Then $[G]$ is isomorphic to $Q=\langle \{v_i:1\leq i\leq k\}\rangle_G$.
\end{proposition}  

\begin{proof}
	Let $\varphi:V[G]\rightarrow V(Q)$ be the function defined by $\varphi[x]=x$. If $[x]\neq[y]$, it is clear that $\varphi[x]=x\in[x]$, $\varphi[y]=y\in[y]$ and $[x]\cap[y]=\varnothing$. Thus, $x\neq y$, so $\varphi$ is injective. On the other hand, we see that $V(Q)=\varphi(V[G])$, by definition of $Q$. Therefore, $\varphi$ is surjective.
	
	If $[u][v]\in E[G]$, then $uv\in G$, by Proposition \ref{lema.fundamental.uv.[u][v]}. By definition of $Q$, we have $u,v\in Q$. Thus, $\varphi[u]\varphi[v]\in E(Q)$, since $Q\preceq G$ by definition of $Q$. 
	
	Conversely, suppose $\varphi[u]\varphi[v]=uv\in Q$. Obviously, $\varphi[u]\neq\varphi[v]$. It follows that $[u]\neq[v]$, since $\varphi$ is bijective. As $uv\in G$, we conclude by Proposition \ref{lema.fundamental.uv.[u][v]} that $[u][v]\in E[G]$. Finally, we have shown that $\varphi$ is an isomorphism.     
\end{proof}

\begin{proposition}
	\label{gem.G.iff.gem.G^c}
	Two vertices $u$ and $v$ are twins in $G$ if and only if they are twins in $\overline{G}$.
\end{proposition}

\begin{proof}
	First, note that $N_x^c=N_{\overline{G}}(x)\dot{\cup}x$, for every $x\in V(G)$. Then, the following equivalent equalities prove the required result:  
	
	\[ N_u-v=N_v-u \]
	\[ N_u^c\cup v=N_v^c\cup u \]
	\[(N_{\overline{G}}(u)\cup v)\dot{\cup}u=(N_{\overline{G}}(v)\cup u)\dot{\cup}v\]
	\[(N_{\overline{G}}(u)-v)\dot{\cup}\{u,v\}=(N_{\overline{G}}(v)-u)\dot{\cup}\{u,v\}\]
	\[ N_{\overline{G}}(u)-v=N_{\overline{G}}(v)-u. \]
\end{proof}

Thanks to Proposition \ref{gem.G.iff.gem.G^c}, it is clear that $V\left[\overline{G}\right]= V\left(\overline{[G]}\right)$. Let us see that $E\left[\overline{G}\right] =E\left(\overline{[G]}\right)$ also holds. Indeed, if $[a][b]\in \left[\overline{G}\right]$, then $ab\in \overline{G}$ by Proposition \ref{lema.fundamental.uv.[u][v]}, i.e., $ab\notin G$. Thus, $[a][b]\notin [G]$, again by Proposition \ref{lema.fundamental.uv.[u][v]}, which is equivalent to $[a][b]\in \overline{[G]}$. Conversely, let $[a][b]\in\overline{[G]}$, i.e., $[a][b]\notin[G]$. Since $[a]\neq[b]$, it follows by Proposition \ref{lema.fundamental.uv.[u][v]} that $ab\notin G$, which is equivalent to $ab\in\overline{G}$. Therefore, $[a][b]\in [\overline{G}]$, again by Proposition \ref{lema.fundamental.uv.[u][v]}. We have just proved that quotienting by twins commutes with complementation.

\begin{proposition}
	\label{cociente.y.compl.conmutan}
	If $G$ is a graph, then $\left[\overline{G}\right]=\overline{[G]}$.
\end{proposition}

\begin{proof}
	It follows from the previous discussion. 
\end{proof}

\begin{proposition}
	\label{ind.clique.G.[G]}
	Let $G$ be a graph. 
	\begin{enumerate}
		\item If $v\in V(G)$, then $[v]$ is a clique or an independent set in $G$;
		\item if $I$ is an independent set in $G$, then $[I]$ is an independent set in $[G]$;
		\item if $K$ is a clique in $G$, then $[K]$ is a clique in $[G]$.
	\end{enumerate}
\end{proposition}

\begin{proof}
	\begin{enumerate}[(1).]
		\item Let $H$ be the subgraph of $G$ induced by $[v]$. If $E(H)=\varnothing$, then we are done because $[v]$ is an independent set. Therefore, suppose from now on that $E(H)\neq\varnothing$. If $|[v]|\leq 2$ the result is obvious. If $|[v]|\geq 3$, then we can affirm, thanks to the independence of the representative for equivalence classes, that there exists a vertex $u\in[v]$ such that $uv\in H$. Let $x$ be any member of $[v]-\{u,v\}$. Since $x\cong v$, we have $ux\in H$. Since $x\cong u$, then $vx\in H$. Thus, $N_{H}(v)=[v]-v$. If $w\cong v$, then $N_H(w)-w=N_H(v)-v$, by Proposition \ref{gem.G.entonces.gem.H<G}. In other words: $N_H(w)=V(H)-w$ for every $w\in H$, which means that $H$ is complete.
		\item If $[u]$ and $[v]$ are two distinct vertices in $[I]=\{[x]:x\in I\}$, then there exist $u',v'\in I$ such that $[u]=[u']$ and $[v]=[v']$. Since $u'v'\notin G$, it follows by Proposition \ref{lema.fundamental.uv.[u][v]} that $[u'][v']=[u][v]\notin E[G]$. Thus, $[I]$ is an independent set in $[G]$.
		\item Since $K$ is an independent set in $\overline{G}$, it follows that $[K]$ is an independent set in $[\overline{G}]$, by (1). As $[\overline{G}]=\overline{[G]}$ by Proposition \ref{cociente.y.compl.conmutan}, we conclude that $[K]$ is a clique in $[G]$.
	\end{enumerate}
\end{proof}

\begin{proposition}
	\label{inactivos.mismo.deg.son.gemelos}
	Let $u$ and $v$ be inactive vertices of a graph $G$. If $\deg_G(u)=\deg_G(v)$, then $u\cong_G v$. 
\end{proposition}

\begin{proof}
	Suppose $u$ and $v$ are two inactive vertices of $G$ with the same degree in $G$, but $N_u-v\neq N_v-u$. Since $|N_u-v|=|N_v-u|$, we can find vertices $a,b$ such that $a\in N_u-N_v$ and $b\in N_v-N_u$. But then $\langle a,b,u,v\rangle_G \in Q_G^*$, which contradicts that $u$ and $v$ are inactive. Thus, $N_u-v=N_v-u$.
\end{proof}

The previous proposition ceases to be true if we replace the word ``inactive" with ``active". In fact, it suffices to consider the leaves $u$ and $v$ of a graph $G\approx P_4$.


\section{Quotient in split graphs} \label{sec:cociente.en.split}

In this section, we establish that the quotient by twins is an efficient tool for simplifying split graphs and studying their fundamental properties. In fact, recall that the basic idea of the quotient by twins is to eliminate the “redundancy” created by vertices that have the same neighborhood. The important result we reach here is that $[S]$ is balanced if and only if $S$ is also balanced. This is achieved through several intermediate steps. We begin below by reflecting on some concrete examples.\\

Since the quotient by twins is essentially an induced subgraph of the original graph (Proposition \ref{isomorfismo.Q=phi[G]}), we can immediately affirm that $[S]$ is a split graph if $S$ is, as we already know that this property is hereditary. On the other hand, $[G]$ may be a split graph even if $G$ is not. In Figure \ref{[G].split.pero.G.no} we exhibit the corresponding counterexample.
\begin{figure}[h]
	\centering
	\includegraphics[scale=0.8]{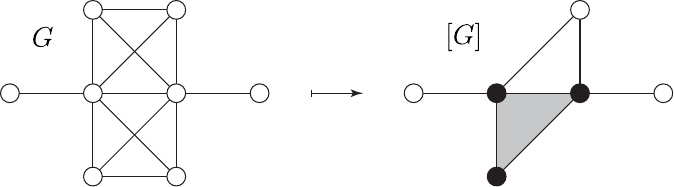}
	\caption{$[G]$ is split (unbalanced) but $G$ is not ($2K_2\preceq G$).}
	\label{[G].split.pero.G.no}
\end{figure}
Observing the counterexample in Figure \ref{[G].split.pero.G.no}, one might conjecture that adding the hypothesis that $[G]$ is balanced or active could finally allow us to infer that $G$ is split. However, the counterexample in Figure \ref{[G].split.activo.pero.G.no-split} dashes all hopes in this direction (recall that active implies balanced). Therefore, $[G]$ can be a split active (or balanced) graph without $G$ being split.
\begin{figure}[h]
	\centering
	\includegraphics[scale=0.8]{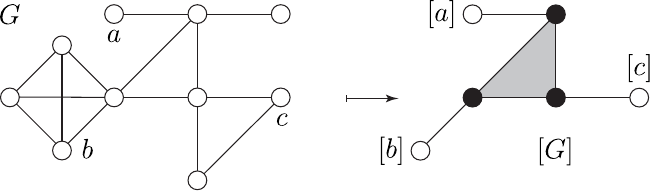}
	\caption{$[G]$ is split and active but $G$ is not split.}
	\label{[G].split.activo.pero.G.no-split}
\end{figure}
Recall that a split graph is balanced if and only if it has no swing vertices. We want to show that if $S$ is balanced, then $[S]$ is also balanced. As a first step, let us show that in balanced split graphs, independent vertices cannot be twins of clique vertices, and vice versa.

\begin{lemma}
	\label{v.ind.nogemelo.v.clique}
	If $(S,K,I)$ is balanced, then $([K],[I])$ is a bipartition for $[S]$. In particular: $[v]\in [I]$ ($[v]\in [K]$) if and only if $v\in I$ ($v\in K$).
\end{lemma}

\begin{proof}
	We already know that $[S]$ is split, so we only need to prove that $[I]\dot{\cup}[K]$ is a partition of $V[S]$ such that $[I]$ is an independent set in $[S]$ and $[K]$ is a clique in $[S]$. Applying Proposition \ref{ind.clique.G.[G]}, it follows that $[I]$ and $[K]$ are respectively an independent set and a clique in $[S]$. Since $V(S)=I\cup K$, it is obvious that $V[S]=[V(S)]=[I]\cup[K]$. 
	
	Suppose $[x]\in[I]\cap[K]$. Since $[x]\in [I]$, there exists a vertex $u\in I$ such that $[x]=[u]$. Since $[x]\in [K]$, there exists a vertex $v\in K$ such that $[x]=[v]$. Since $[u]=[v]$ and $I\cap K=\varnothing$, it follows that $u\cong v$ and $u\neq v$. Therefore,
	\[ (K-v)\dot{\cup}(N_v\cap(I-u))=N_v-u=N_u-v\subseteq K-v. \]
	This implies $N_v=K-v$ and $N_u=K$. But then $u$ and $v$ are swing. This is impossible since $S$ is balanced. Thus, $[I]$ and $[K]$ are disjoint.
	
	If $v \in I$, it is obvious that $[v] \in [I]$, by the definition of $[I]$. If $[v] \in [I]$ but $v \notin I$, then $v \in K$. Hence, $[v] \in [K]$, and therefore $[v] \in [I] \cap [K]$, contradicting that $[I] \cap [K] = \varnothing$.
\end{proof}

Lemma \ref{v.ind.nogemelo.v.clique} is not generally true if $(S,K,I)$ is not balanced. Indeed, let $S=abc\approx P_3$, with $I=\{a\}$ and $K=\{b,c\}$. Then, $a\cong c$, $[I]=\{[c]\}$ and $[K]=\{[b],[c]\}$, so
\begin{equation}
	\label{[I]&[K]no.vacio.contraejemplo}
	[I]\cap [K]\neq\varnothing.
\end{equation}
This means that: 1) there is an independent vertex that is a twin of a clique vertex ($a\in I,a\cong c\in K$); 2) $[I]\cup[K]$ is not a partition of $[S]=[a][b]\approx K_2$.

Through this same example, we also observe another counterintuitive phenomenon: a swing vertex may have a twin that is not swing. In fact, $c$ is swing in $S$ since $N_c =b=K-c$, and is a twin of $a$ in $S$. However, $a$ is not swing in $S$, since $N_a=b\neq \{b,c\}=K-a$. This is because the interchangeability of a vertex in $S$ is tied to the bipartition of $S$, which is not unique when $S$ is unbalanced.

Next, we make the first important step toward the goal of this section.

\begin{lemma}
	\label{S.balanc=>[S].balanc}
	If $S$ is balanced, then $[S]$ is balanced.    
\end{lemma}

\begin{proof}
	Thanks to Lemma \ref{v.ind.nogemelo.v.clique}, we know that $([K],[I])$ is a bipartition for $[S]$. Suppose that $[S]$ has a swing vertex $[w]$. Without loss of generality, we may assume that $[w]\in [I]$, so that $N_{[S]}[w]=[K]$. From Lemma \ref{v.ind.nogemelo.v.clique}, we have $w\in I$. If there were a vertex $k\in K$ such that $wk\notin S$, then by Proposition \ref{lema.fundamental.uv.[u][v]} we would have $[w][k]\notin E[K]$. This contradicts $N_{[S]}[w]=[K]$, since $[k]\in [K]$, by Lemma \ref{v.ind.nogemelo.v.clique}. Therefore, $N_w=K$, which means $w$ is swing in $S$, a contradiction.
\end{proof}

The next technical lemma essentially tells us that the quotient “preserves” the neighborhood of a swing independent vertex, provided its class does not belong to the set of clique vertex classes.

\begin{lemma}
	\label{lema.N_[w]=[K]}
	Let $(S,K,I)$ be a split graph. If $S$ has a swing vertex $w\in I$ such that $[w]\notin[K]$, then $N_{[w]}=[K]$.
\end{lemma}

\begin{proof}
	If $[x]\in N_{[w]}$, that is, if $[w][x]\in E[S]$, then $wx\in S$ by Proposition \ref{lema.fundamental.uv.[u][v]}. Since $w\in I$, it follows that $x\in K$, so $[x]\in[K]$.
	
	Conversely, suppose $[x]\in[K]$. Then there exists a vertex $t\in K$ such that $[t]=[x]$. Since $N_w=K$, we have $wt\in S$. Hence $wx\in S$ also, because $x\cong t$. Since $[w]\notin[K]$ by hypothesis, it is obvious that $[w]\neq[x]$. Therefore, $[x]\in N_{[w]}$ by Proposition \ref{lema.fundamental.uv.[u][v]}.
\end{proof}

Lemma \ref{lema.N_[w]=[K]} now helps us characterize when $S$ is balanced, assuming that $[S]$ also is.

\begin{lemma}
	\label{S.balanc.iff.[I]&[K]=vacio}
	Let $(S,K,I)$ be a split graph such that $[S]$ is balanced. Then, $S$ is balanced if and only if $[I]\cap[K]=\varnothing$.
\end{lemma}

\begin{proof}
	Suppose $[I]\cap[K]=\varnothing$ but $S$ has a swing vertex $w\in I$. Then $([K],[I])$ is a bipartition for $[S]$ and $[w]\in[I]-[K]$. Then, by Lemma \ref{lema.N_[w]=[K]}, we have $N_{[w]}=[K]$, which contradicts the hypothesis that $[S]$ is balanced.
	
	The reciprocal statement has already been proved in Lemma \ref{v.ind.nogemelo.v.clique}.
\end{proof}

The next is the final lemma before proving that $S$ is balanced if and only if $[S]$ is balanced as well. This reaffirms something already seen in example \eqref{[I]&[K]no.vacio.contraejemplo}: when $S$ is unbalanced, the class of a swing vertex may prevent $[I]\cup[K]$ from being a partition in the quotient.

\begin{lemma}
	\label{w.interc.=>[I]&[K]<{[w]}}
	Let $(S,K,I)$ be a split graph. If $S$ has a swing vertex $w$, then $[I]\cap[K]\subseteq\{[w]\}$.
\end{lemma}

\begin{proof}
	Assuming without loss of generality that $w\in I$, it suffices to show that $[I-w]\cap[K]=\varnothing$, since
	\[[(I-w)\cup w]\cap[K]=([I-w]\cap[K])\cup(\{[w]\}\cap[K]).\] 
	If $[x]\in[I-w]\cap[K]$, then there exist vertices $u\in I-w$ and $v\in K$ such that $[u]=[x]=[v]$. Since $N_w=K$ and $w\notin\{u,v\}$, it follows that $w\in N_v-u=N_u-v$. But then $uw\in S$, which is absurd.
\end{proof}

\begin{theorem}
	If $S$ is a split graph, then $[S]$ is balanced if and only if $S$ is balanced.
\end{theorem}

\begin{proof}
	We already know that $[S]$ is balanced if $S$ is, by Lemma \ref{S.balanc=>[S].balanc}. Now suppose that $[S]$ is balanced and that $(S,K,I)$ has a swing vertex $w\in I$. Then, combining Lemmas \ref{S.balanc.iff.[I]&[K]=vacio} and \ref{w.interc.=>[I]&[K]<{[w]}}, we deduce that $[I]\cap[K]=\{[w]\}$. Consequently, $V[S]=[I]\dot{\cup}([K]-[w])=([I]-[w])\dot{\cup}[K]$. Since $[S]$ is balanced, we have $|[I]|=|[I]-[w]|=|[I]|-1$, which is absurd.
\end{proof}


\section{Quotient, diameter, and active graphs} \label{sec:cociente,diametro,activos}

Let $P$ be an induced path in a graph $G$ connecting vertex $u$ to vertex $v$. Observe that if $u\cong v$ and $uv\notin G$, then $\Vert P\Vert =2$. Indeed, if $\Vert P\Vert \geq 3$, there would be (without loss of generality) a vertex $w$ adjacent to $u$ but not to $v$, contradicting $u\cong v$. With this in mind, we easily obtain the following lemma.

\begin{lemma}
	\label{prop.gem.en.caminos}
	Let $P$ be an induced path in a graph $G$. If $P$ contains two twin vertices $u$ and $v$, then $u$ and $v$ are the endpoints of $P$ and $\Vert P\Vert\in\{1,2\}$.
\end{lemma}

\begin{proof}
	First suppose $uv\in G$, but $\Vert P\Vert \geq 2$. If $w\neq v$ is adjacent to $u$ in $P$, then $vw\notin G$, contradicting $u\cong v$. Then $P=uv$.
	
	If instead $uv\notin G$, then from the previous observation we can write $P=a\ldots uwv\ldots b$, for some $a,w,b\in V(G)$. If $x\neq w$ is a neighbor of $u$ in $P$, then $vx\notin G$, again contradicting $u\cong v$. Therefore, $P=uwv$.
\end{proof}

\begin{lemma}
	\label{P.camino.ind.long3omás=>[P].camino.ind.long3omás}
	Let $n\geq 4$. If $P=v_1v_2\ldots v_n$ is an induced path in $G$, then $[v_1][v_2]\ldots[v_n]$ is an induced path in $[G]$.
\end{lemma}

\begin{proof}
	If $n=4$, it follows from Lemma \ref{prop.gem.en.caminos} that $P$ does not contain any twins in $G$. Hence, $[V(P)]=\{[v_i]:i\in[4]\}$ and $|[V(P)]|=4$. Moreover, combining Lemmas \ref{prop.gem.en.caminos} and \ref{lema.fundamental.uv.[u][v]}, we have that $[v_1][v_3], [v_1][v_4]$, and $[v_2][v_4]$ are not edges of $[G]$. Also, we see that $[v_i][v_{i+1}]\in E[G]$ for every $i\in[3]$, again by Proposition \ref{lema.fundamental.uv.[u][v]}. Thus, $[v_1]\ldots[v_4]$ is an induced path in $[G]$.
	
	If $n\geq 5$, simply apply the $n=4$ case to the induced paths $v_iv_{i+1}v_{i+2}v_{i+3}$, for $i=2,3,\ldots,n-3$.
\end{proof}

A \textbf{geodesic} between two vertices $u$ and $v$ in a graph $G$ is a path in $G$ of length $dist_G(u,v)$. In other words, a geodesic from $u$ to $v$ in $G$ is a shortest path between $u$ and $v$ in $G$. Every geodesic between $u$ and $v$ in $G$ is an induced path, but not every induced path from $u$ to $v$ is a geodesic.

Lemma \ref{P.camino.ind.long3omás=>[P].camino.ind.long3omás} essentially tells us that the quotient by twins preserves the length of every induced path in $G$ of length 3 or more. Clearly, this also applies to geodesics. Consider two vertices $u,v\in G$ such that $dist(u,v)=r\geq 3$. If $u\ldots v$ is a geodesic in $G$, then $P=[u]\ldots[v]$ is an induced path of length $r$ in $[G]$. Hence, $dist([u],[v])\leq r$. Now suppose that $P$ is not a geodesic between $[u]$ and $[v]$ in $[G]$. Then, there exists a path $R\preceq[G]$ connecting $[u]$ and $[v]$ of length $r'<r$. Since the isomorphism preserves induced subgraphs, it follows that $\varphi(R)$ (see Proposition \ref{isomorfismo.Q=phi[G]}) is an induced path in $\varphi[G]$ of length $r'$ connecting $u$ and $v$. As $\varphi[G]\preceq G$, we have by the definition of $dist(\ast)$ that $r\leq r'$, a contradiction. Therefore, $dist([u],[v])=r$. We formalize this below.

\begin{theorem}
	\label{cociente.preserva.dist>=3}
	Let $G$ be a graph and let $[u]$ and $[v]$ be two distinct vertices in $[G]$. If $dist(u,v)\geq 3$, then
	\[ dist([u],[v]) = dist(u,v). \]
\end{theorem}

\begin{proof}
	It follows from the previous discussion.
\end{proof}

Using similar arguments, we easily obtain the next corollary. But first, note that if a graph $G$ contains at most one isolated vertex, then the quotient “distributes” over each connected component of $G$. That is, if $G$ has $k$ components $G_i$, then
\begin{equation}
	\label{eq42}
	[G]=\dot{\bigcup}_{i=1}^k [G_i].
\end{equation}

\begin{corollary}
	\label{diam[G]=diam(G)}
	Let $G$ be a graph different from $\overline{K_n}$, for $n\geq 2$. If $diam(G)\geq 3$, then 
	\[diam[G] = diam(G).\]
\end{corollary}

\begin{proof}
	Since $G\not\approx\overline{K_n}$ for $n\geq 2$, it is clear from \eqref{eq42} that $[G]$ is disconnected if $G$ is.  
	
	If $G$ is connected and if $u\ldots v$ is a longest geodesic in $G$, then $d=dist([u],[v])=dist(u,v)=diam(G)$, by Theorem \ref{cociente.preserva.dist>=3}. Then, $d'=diam[G]\geq d$. Suppose now that $d'>d$. Then, there exists a geodesic $P$ in $[G]$ of length $d'$. Since the isomorphism preserves induced subgraphs, it follows that $\varphi(P)$ (see Proposition \ref{isomorfismo.Q=phi[G]}) is an induced path in $\varphi[G]$ of length $d'$. As $\varphi[G]\preceq G$, we have by the definition of $diam(\ast)$ that $d'\leq d$, which is absurd. Thus, $d'=d$.
\end{proof}

If $G$ is an active graph, it is easy to see that $[G]$ is not necessarily active. For instance, $[C_4]=K_2$ and $[2K_2]=2K_1$. What can we say about the converse? Below, we see that it is true.

\begin{proposition}
	\label{[G].activo=>G.activo}
	Let $G$ be a graph. If $[G]$ is active, then $G$ is active.    
\end{proposition}

\begin{proof}
	Given $v\in V(G)$, we will show that $v\in act(G)$. Since $[G]$ is active, all its vertices are active. In particular, $[v]$. Then, there exist vertices $[a], [b]$, and $[c]$ in $[G]$ such that the 2-switch $\binom{[a] \ [v]}{[b] \ [c]}$ is active in $[G]$. Hence: 
	\begin{enumerate}
		\item $[a][v],[b][c]\in E[G]$;
		\item $[a][b],[v][c]\notin E[G]$;
		\item $\{[a],[v]\}\cap\{[b],[c]\}=\varnothing$.
	\end{enumerate}
	Thanks to (1) and (2), it follows by Proposition \ref{lema.fundamental.uv.[u][v]} that $av,bc\in E(G)$ and $ab,vc\notin E(G)$. From (3), we immediately deduce that $[a],[v]\not\in\{[b],[c]\}$, hence $a,v\notin\{b,c\}$, i.e., $\{a,v\}\cap\{b,c\}=\varnothing$. Consequently, $\binom{a \ v}{b \ c}$ is an active 2-switch in $G$, which shows that $v\in act(G)$.
\end{proof}

\begin{theorem}
	Let $G$ be a graph with diameter $\geq 4$. If no connected component of $G$ is complete, then $[G]$ is active if and only if $G$ is active.
\end{theorem}

\begin{proof}
	We already know from Proposition \ref{[G].activo=>G.activo} that if $[G]$ is active, then $G$ is active. 
	
	For the converse, suppose $G$ is an active graph with diameter $d\geq 4$ and no complete components. By Theorem \ref{diam[G]=diam(G)}, we have $diam[G]=d$. Since $d\geq 4$, it follows from Corollary \ref{inactive.implies.diam<=3.contrarr} that $[G]$ is active, as $[G]$ has no isolated vertices. Indeed, if $[x]$ is isolated in $[G]$, then necessarily $H=\langle [x]\rangle_G$ is disjoint from $G$ and either empty or complete. Clearly, $H$ cannot be empty because $G$ is active. But then $H$ is complete, contradicting the hypothesis.
\end{proof}

If $G$ is an active graph with diameter 3, then $[G]$ may not be active. Figure \ref{G.activo.diam=3.pero.[G].no} shows a counterexample.

\begin{figure}[h]
	\centering
	\includegraphics[scale=0.8]{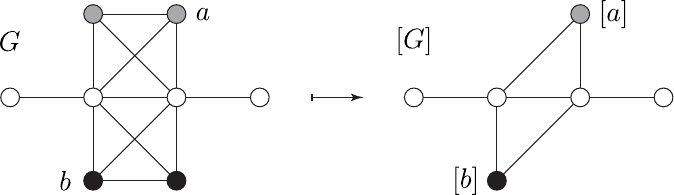}
	\caption{$G$ is active and $diam(G)=3$, but $[G]$ is not active.}
	\label{G.activo.diam=3.pero.[G].no}
\end{figure}

The counterexample in Figure \ref{G.activo.diam=3.pero.[G].no} is unfortunately not isolated. In fact, we can generate infinitely many similar counterexamples by simply replacing, in $G$, the two $K_4$'s attached to the central $P_4$ with any number $m$ of $K_n$'s. It is easy to verify that for all $m\geq 2$ and $n\geq 4$, $G$ remains active and with diameter 3, and $[G]$ still has inactive vertices (in Figure \ref{G.activo.diam=3.pero.[G].no}, the vertices $[a]$ and $[b]$ are inactive in $[G]$).

Every connected split graph has diameter at most 3. In particular, every active split graph has diameter 3. The following result extends Theorem \ref{diam[G]=diam(G)} to split graphs.

\begin{theorem}
	If $S$ is a split graph, then $S$ is active if and only if $[S]$ is active.
\end{theorem}   

\begin{proof}
	Thanks to Proposition \ref{[G].activo=>G.activo}, we already know that $S$ is active if $[S]$ is.
	
	To prove the converse, suppose that $S$ is active but $[S]$ is not. In other words, $[S]$ has an inactive vertex $[x]$. Since $[S]$ is split, this means that there are no induced $P_4$’s in $[S]$ containing $[x]$. Given that $x\in act(S)$, $x$ is contained in a path $P\in Q_S(P_4)$, because $S$ is split. But then, by Proposition \ref{P.camino.ind.long3omás=>[P].camino.ind.long3omás}, we have that $[P]_S$ is an induced $P_4$ in $[S]$ containing $[x]$, a contradiction.
\end{proof}


\section{Quotient and composition} \label{sec:cociente.composicion}

As announced in the introduction of this chapter, in this section we study how the quotient by twins relates to Tyshkevich's composition. The main result we reach is that $[S\circ G]=[S]\circ [G]$. To prove this, we first need to establish a few preliminary lemmas.

\begin{lemma}
	\label{s.no-gemelo.g.en.SoG}
	Let $(S,K,I)$ be a balanced split graph and let $G$ be a graph. If $s\in V(S)$ and $g\in V(G)$, then $s$ and $g$ cannot be twins in $S\circ G$.
\end{lemma}

\begin{proof}
	For simplicity, we will use $N_x$ instead of $N_{S\circ G}(x)$. Suppose that $N_s-g=N_g-s$. Since $K-s\subseteq N_g-s$ by definition of $\circ$, it follows that $K-s\subseteq N_s$, and hence $s\in K$. Moreover, $N_g\cap I=\varnothing$ by definition of $\circ$. Then, $N_s=K-s$, which means that $s$ is swing in $S$, a contradiction.
\end{proof}

\begin{lemma}
	\label{[S]=[S]_H,[G]=[G]_H}
	Let $S$ be a balanced split graph and let $G$ be a graph. If $H=S\circ G$, then $[S]=[S]_H$ and $[G]=[G]_H$.
\end{lemma}

\begin{proof}
	Since $S\preceq H$, it follows from Proposition \ref{homomorfismo:[H]--->[H]_G} that $\phi[S]\preceq [S]_H$, where $\phi:[S]\rightarrow[S]_H$ is the homomorphism defined by $\phi([x]_S)=[x]_H$. From Proposition \ref{gem.G.entonces.gem.H<G}, we have that $[x]_H\subseteq [x]_S$ for all $x\in V(S)$. Now suppose there exist vertices $u,v\in S$ such that $u\in [v]_S-[v]_H$. Then,
	\[ N_S(u)-v=N_S(v)-u, \ N_H(u)-v\neq N_H(v)-u. \]
	Let $(K,I)$ be the bipartition of $S$. It follows from Lemma \ref{v.ind.nogemelo.v.clique} that $u,v\in K$ or $u,v\in I$. If $u,v\in I$, then
	\[ N_H(u)=N_S(u)=N_S(v)=N_H(v), \]
	a contradiction. If $u,v\in K$, then
	\[ N_H(u)-v=(N_S(u)-v)\dot{\cup}V(G)=(N_S(v)-u)\dot{\cup}V(G)=N_H(v)-u, \]
	another contradiction. Therefore, we conclude that $[x]_S=[x]_H$ for all $x\in V(S)$, so $V[S]\subseteq V([S]_H)=[V(S)]_H$. If there existed $a\in V(S)$ such that $[a]_H\in [V(S)]_H-V[S]$, then clearly $[a]_H\neq [x]_S$ for all $x\in V(S)$. In particular, this would imply $[a]_H\neq[a]_S$, contradicting what we just proved. Hence, $[S]=[S]_H$.
	
	We obtain $[G]=[G]_H$ using the same reasoning as for $S$: the only detail that differs is that $N_H(g)=N_G(g)\dot{\cup}K$ for all $g\in V(G)$.
\end{proof}

\begin{lemma}
	\label{V[SoG]=...}
	If $G$ is a graph and $S$ is a balanced split graph, then $V[S\circ G]=V[S]\dot{\cup}V[G]$.
\end{lemma}

\begin{proof}
	Let $H=S\circ G$. If $[x]_H\in[V(S)]_H\cap[V(G)]_H$, then there exist vertices $s\in S$ and $g\in G$ such that $[s]_H=[x]_H=[g]_H$. This contradicts Lemma \ref{s.no-gemelo.g.en.SoG}. Then,
	\[ V[S\circ G]=[V(S)]_H\dot{\cup}[V(G)]_H. \]
	Finally, from Lemma \ref{[S]=[S]_H,[G]=[G]_H} we get that
	\[ [V(S)]_H=[V(S)]_S=V[S], \ [V(G)]_H=[V(G)]_G=V[G]. \]
\end{proof}

We now use the above lemmas to establish the main result of this section.

\begin{theorem}
	\label{[S°G]=[S]°[G]}
	If $G$ is a graph and $(S,K,I)$ is a balanced split graph, then 
	\[ [S\circ G]=[S]\circ [G]. \]
\end{theorem} 

\begin{proof}
	Let $H=S\circ G$ and $X=[S]\circ [G]$. It is clear that $V(X)=V[S]\dot{\cup}V[G]$ by the definition of $\circ$. Then, we immediately derive from Lemma \ref{V[SoG]=...} that $V[H]=V(X)$.
	
	Now we need to show that $E[H]=E(X)$. Clearly, every edge of $[S]_H$ or $[G]_H$ belongs to $[H]$, since $[S]_H,[G]_H\preceq [H]$. Then, every edge of $[S]$ or $[G]$ is also an edge of $[H]$ by Lemma \ref{[S]=[S]_H,[G]=[G]_H}. If $e$ is an edge in $X$ connecting $[S]$ and $[G]$, then $e=[s]_S[g]_G$, where $[s]_S\in [K]_S$ and $[g]_G\in [G]$ (the fact that $[K]_S$ is the clique of $[S]$ comes from Lemma \ref{v.ind.nogemelo.v.clique}). Clearly, $s\in S$ and $g\in G$. More specifically, by Lemma \ref{v.ind.nogemelo.v.clique}, $s\in K$. Then $sg\in H$ by the definition of $\circ$, and thus $[s]_H[g]_H\in [H]$ by Proposition \ref{lema.fundamental.uv.[u][v]}. Since $[s]_S=[s]_H$ and $[g]_G=[g]_H$ by Lemma \ref{[S]=[S]_H,[G]=[G]_H}, we conclude that $e\in [H]$. Therefore, $E(X)\subseteq E[H]$.
	
	Now let us prove the reverse inclusion. If $e=xy\in E[H]$, then by Lemma \ref{V[SoG]=...} we distinguish 3 cases: 1) $x,y\in V[S]$; 2) $x,y\in V[G]$; 3) $x\in V[S], y\in V[G]$. 
	
	For case (1), we can write $x=[a]_S$ and $y=[b]_S$ for some $a,b\in V(S)$. By Lemma \ref{[S]=[S]_H,[G]=[G]_H}, we have $[a]_S=[a]_H$ and $[b]_S=[b]_H$, so $x,y\in [S]_H$. Since $[S]_H\preceq [H]$, it follows that $e\in [S]_H$. As $[S]_H=[S]$ by Lemma \ref{[S]=[S]_H,[G]=[G]_H}, we conclude that $e\in X$, since $[S]\preceq X$. The same reasoning applies to case (2).
	
	In case (3), we can write $x=[a]_S$ and $y=[b]_G$ for some $a,b\in V(H)$, where $a\in S$ and $b\in G$. By Lemma \ref{[S]=[S]_H,[G]=[G]_H}, we have $[a]_S=[a]_H$ and $[b]_G=[b]_H$. Since $e\in [H]$, it follows that $ab\in H$ by Proposition \ref{lema.fundamental.uv.[u][v]}. Then $a\in K$ by the definition of $\circ$, and hence $[a]_S\in [K]_S$ by Lemma \ref{v.ind.nogemelo.v.clique}. Finally, $e=[a]_S[b]_G\in X$ by the definition of $\circ$.
	
	After analyzing all 3 cases, we conclude that $E[H]\subseteq E(X)$. Therefore, $[H]=X$.
\end{proof}

\begin{corollary}
	Let $G$ be a graph and let $(S,K,I)$ be a split graph. If $[S]$ and $[G]$ are active, then:
	\begin{enumerate}
		\item $S$ is balanced and $[S\circ G]=[S]\circ[G]$.
		\item $S,G,S\circ G$, and $[S\circ G]$ are active.
	\end{enumerate}
\end{corollary}

\begin{proof}
	\begin{enumerate}[(1).]
		\item If $[S]$ is active, then $[S]$ is balanced. Therefore, we can apply Theorem \ref{[S°G]=[S]°[G]}.
		\item The graph $[S\circ G]$ is active thanks to Theorems \ref{SoG.act.iff.S,G.act} and \ref{[S°G]=[S]°[G]}. Then $S\circ G$ is active by Proposition \ref{[G].activo=>G.activo}. Finally, $S$ and $G$ are active by Theorem \ref{SoG.act.iff.S,G.act}.
	\end{enumerate}
\end{proof}


\section{Quotient index} \label{sec:indice.cociente}

Once we take the quotient by twins of a graph $G$, nothing prevents us from quotienting $[G]$ again, obtaining the graph $[ [G] ]$, and so on. For this, it will be more convenient to use the notation $[G]^2$ instead of $[[G]]$ and, in general, $[G]^k$ to indicate that the quotient has been iterated $k$ times on $G$. Naturally, $[G]^1=[G]$ and $[G]^0=G$.

We say that a graph $G$ is \textbf{twin-free} if $G\approx [G]$ (that is: $N_G(u)-v\neq N_G(v)-u$ for every pair of distinct vertices $u,v\in V(G)$). It is easy to find examples of graphs $G$ in which there are still pairs of twin vertices in $[G]$, or even in $[G]^2$ or $[G]^3$. This motivates the creation of a parameter that measures “how far” $G$ is from being twin-free: the \textbf{quotient index} of $G$, denoted $i(G)$. For $k\geq 0$, we say that $i(G)=k$ if $k$ is the smallest integer such that $[G]^k \approx [G]^{k+1}$. Therefore, $G$ is twin-free if and only if $i(G)=0$. This parameter is obviously finite, since every time we quotient a graph with twins, we remove at least one vertex in the process. From Proposition \ref{cociente.y.compl.conmutan} and the definition of $i(\ast)$ it follows immediately that $i(G)=i(\overline{G})$.

\begin{proposition}
	\label{familia.i(G_n)=n-1}
	For every natural number $n$, there exists a graph $G_n$ of order $n$ such that $i(G_n)=n-1$. In particular, the inequality
	\begin{equation}
		\label{i(G)<n}
		i(G)\leq |G|-1
	\end{equation}
	holds for every graph $G\neq K_0$, and it is sharp.
\end{proposition}

\begin{proof}
	If $V(G_n)=\{v_i:i\in[n]\}$, consider the threshold graph $G_n$ defined by the following binary sequence:
	\[ \beta_n = 
	\begin{cases}
		00101010101\ldots, &\text{if } n \text{ is odd}, \\
		010101010101\ldots, &\text{if } n \text{ is even},
	\end{cases}\] 
	where digit $b_i$ corresponds to vertex $v_i$ ($b_1$ is the leftmost digit of $\beta_n$). Let $V_n^0=\{v_i: i \text{ is even}\}$ and $V_n^1=\{v_i: i \text{ is odd}\}$. Since $G_n$ is split, it is easy to see that when $n$ is odd, $(V_n^1,V_n^0)$ is a bipartition for $G_n$. On the other hand, when $n$ is even, $(V_n^0,V_n^1)$ is a bipartition for $G_n$. Other key observations involve the degree sequence of $G_n$:
	\begin{enumerate}[(1).]
		\item $d_{i+2}-d_{i}=1$, if $v_i$ and $v_{i+2}$ are clique vertices;
		\item $d_i - d_{i+2}=1$, if $v_i$ and $v_{i+2}$ are independent vertices;
		\item $v_n$ is universal and $v_{n-1}$ is a leaf;
		\item $d_1=\lfloor n/2 \rfloor = d_2$;
		\item $v_1$ and $v_2$ are the only vertices with the same degree.
	\end{enumerate}
	Thanks to (5) and Proposition \ref{inactivos.mismo.deg.son.gemelos}, we deduce that $\{v_1,v_2\}$ is the only pair of twins in $G_n$. Therefore, $[G_n]\approx G_n - v_1$ for $n\geq 2$. But the operation $G_n - v_1$ corresponds in $\beta_n$ to replacing $b_1b_2$ with a 0, so that the binary sequence of $G_n - v_1$ becomes exactly $\beta_{n-1}$. Hence, $G_n - v_1 \approx G_{n-1}$, from which $[G_n]^k \approx G_{n-k}$ for $k\geq 0$, and consequently $i(G_n)=n-1$.
\end{proof}

Let $\{G_n:n\in\mathbb{N}\}$ be the family of graphs constructed in Proposition \ref{familia.i(G_n)=n-1}. Since $\deg(G_n)=0$ and $diam(G_n)\leq 2$ for all $n$, it is natural to ask whether the inequality \eqref{i(G)<n} can be improved if $\deg(G)\geq 1$ or if $diam(G)\geq 3$. Obviously, $|G|\geq 4$ if $G$ is active. With this in mind, and assuming that $G$ is a connected graph with diameter $\geq 3$, what can be quickly deduced using Corollary \ref{diam[G]=diam(G)} is that $i(G)\leq |G|-4$, since every graph of order $\leq 3$ has diameter $\leq 2$ or $\infty$.

Using the next two results, we precisely describe the quotient index of a tree.

\begin{lemma}
	\label{gem.tree=leaves}
	Let $T$ be a tree. If $u\cong_T v\neq u$, then $u$ and $v$ are leaves.
\end{lemma}

\begin{proof}
	Since the result is trivially true for $|T|\leq 2$, we can assume $|T|\geq 3$. Suppose $\deg_T(u)\geq 2$. Then $u$ has a neighbor $x$ that is not part of the $u \ldots v$ path. Since $N_u-v=N_v-u$, it follows that $vx\in T$. But then $xu \ldots vx$ forms a cycle in $T$, a contradiction.    
\end{proof}

\begin{theorem}
	\label{index.trees}
	Let $T$ be a tree and let $d=diam(T)$.
	\begin{enumerate}
		\item If $d\leq 2$, then $i(T)\leq 3$.
		\item If $d\geq 3$, then $i(T)\leq 1$.
		\item A tree not isomorphic to $K_2$ is twin-free if and only if each of its vertices has at most one leaf in its neighborhood.
	\end{enumerate}
\end{theorem}

\begin{proof}
	\begin{enumerate}[(1).]
		\item If $|T|\leq 3$, it's easy to check by inspection that $i(T)\leq 2$. If $|T|\geq 4$, then $d\leq 2$ implies that $T$ is a star, which has index 3.
		\item Suppose $[T]$ contains two distinct twins $[u]$ and $[v]$. Since $diam[T]=d$ by Corollary \ref{diam[G]=diam(G)}, we have by Proposition \ref{isomorfismo.Q=phi[G]} that $[T]$ is a tree. Then $[u]$ and $[v]$ are leaves, by Lemma \ref{gem.tree=leaves}. 
		
		If $dist(u,v)\geq 3$, then by Theorem \ref{cociente.preserva.dist>=3} we get $dist(u,v)=dist([u],[v])=2$, a contradiction. Hence, $dist(u,v)=2$, since by Proposition \ref{lema.fundamental.uv.[u][v]} we have $uv\notin T$. Let $N_{[u]}=\{[x]\}=N_{[v]}$. Since $N_u-v\neq N_v-u$, there exists a vertex $a\notin uxv$ such that $au\in T$ and $av\notin T$. Since $auxv\subseteq T$ and $T$ is acyclic, we have $auxv\preceq T$. It follows by Lemma \ref{P.camino.ind.long3omás=>[P].camino.ind.long3omás} that $[a][u][x][v]\preceq[T]$. But then $[u]$ is not a leaf in $[T]$, which is a contradiction.
		\item It follows directly from Lemma \ref{gem.tree=leaves}.   
	\end{enumerate} 
\end{proof}

Just as we did for trees, we now aim to characterize the quotient index of unicyclic graphs. Intuitively, this parameter should behave in unicyclic graphs almost the same as in trees. This turns out to be true, but requires more effort to prove.

\begin{proposition}
	\label{[G+-{xy:...}]=[G]}
	Let $G$ be a graph and let $[v]\in V[G]$ with $|[v]|\geq 2$. Then
	\[[G-\{xy:x,y\in[v]\}] =[G]= [G+\{xy:x,y\in[v]\}].\]
	In particular,
	\[i(G-\{xy:x,y\in[v]\}) =i(G)= i(G+\{xy:x,y\in[v]\}).\]
\end{proposition}

\begin{proof}
	We know that every equivalence class (by twins) in $G$ is either a clique or an independent set in $G$.
	
	Suppose first that $[v]=[v]_G$ is a clique. Let $H=G-\{xy:x,y\in[v]\}$ and let $u\in V(G)$. Since $N_H(u)=N_u-[v]$ for all $u\in V(G)=V(H)$, it is easy to see that any pair of vertices that were twins (or not) in $G$ remain so in $H$. In other words, $[u]_H=[u]$ for all $u$, and hence $V[H]=V[G]$.
	
	If $e=[a][b]$ were an edge in $E[G]-E[H]$, then $ab\in G$ and $a\notin [v]$. But then $ab\in H$ and $e\in [H]$ by Proposition \ref{lema.fundamental.uv.[u][v]}, a contradiction. Thus, $E[G]\subseteq E[H]$. If $e=[a][b]\in E[H]$, then $ab\in H\subset G$ and $e\in [G]$ by Proposition \ref{lema.fundamental.uv.[u][v]}, so $E[H]\subseteq E[G]$.
	
	If $[v]$ is an independent set, we consider $\overline{G}$ and apply the same reasoning. 
\end{proof}

\begin{lemma}
	\label{gem.unicyclic.C>4}
	Let $U$ be a unicyclic graph with a cycle of order $\geq 5$. If $a\cong_U b\neq a$, then $a$ and $b$ are leaves.
\end{lemma}

\begin{proof}
	Let $C$ be the cycle of $U$. Since $|C|\geq 5$, there exists an edge $e\in C$ such that $N_U(a)=N_T(a)$ and $N_U(b)=N_T(b)$, where $T=U-e$. This means $a\cong_T b$. Since $T$ is a tree, $a$ and $b$ must be leaves in $T$ by Lemma \ref{gem.tree=leaves}. Because removing $e$ does not affect the neighborhoods of $a$ and $b$, we conclude that both vertices are also leaves in $U$.
\end{proof}

\begin{lemma}
	\label{gem.unicyclic.C<5}
	Let $U$ be a unicyclic graph with a cycle $C$ of order $\leq 4$, and suppose $a\cong_U b\neq a$. Then only one of the following cases may occur:
	\begin{enumerate}
		\item $a,b\in C$ and $d_a=2=d_b$; 
		\item $a,b\notin C$ and $d_a=1=d_b$.
	\end{enumerate}
\end{lemma}

\begin{proof}
	Let $C$ be the cycle of $U$. If $a\notin C$ or $b\notin C$, we may repeat the argument used in Lemma \ref{gem.tree=leaves} to deduce $d_a=1=d_b$. The same holds if $ab\in C\approx C_4$. It remains to consider: 1) $ab\in C\approx K_3$; 2) $a,b\in C\approx C_4, ab\notin C$. But in both cases it is clear that $d_a=2=d_b$ since otherwise $U$ would have more than one cycle. 	
\end{proof}

\begin{lemma}
	\label{U.with.C<5.no.twins.in.C}
	Let $U$ be a unicyclic graph with a cycle $C$ of order $\leq 4$. If $C$ contains no twins, then $i(U)\leq 1$.
\end{lemma}

\begin{proof}
	If $C$ has no twins and $U$ is not twin-free, then by Lemma \ref{gem.unicyclic.C<5} every pair of twins in $U$ must be a pair of leaves outside $C$. Moreover, the fact that $C$ has no twins implies there exist vertices $a,b\in C$ such that $ab\in U$ and $\deg_U(a),\deg_U(b)\geq 3$. Hence, $diam(U)\geq 3$. 
	
	Suppose $[x]$ and $[y]$ are distinct twins in $[U]$. Although $a$ and $b$ may lose neighbors in the quotient, they still retain at least one neighbor outside $C$ afterward. So both $[a]$ and $[b]$ still have degree $\geq 3$ in $[U]$. Therefore, as $[U]$ is a unicyclic graph with a cycle of the same length as $U$, Lemma \ref{gem.unicyclic.C<5} implies $[x]$ and $[y]$ are leaves in $[U]$. Note that $[a][b]$ is an edge in the cycle of $[U]$ and $[U-ab]=[U]-[a][b]$. Since $U-ab$ is a tree with $diam\geq 3$, Theorem \ref{index.trees} tells us that $[U-ab]$ is twin-free. However, we reach a contradiction, because $[x]$ and $[y]$ are twins in $[U]-[a][b]$.   
\end{proof}

\begin{theorem}
	Let $U$ be a unicyclic graph with diameter $d$ and let $C$ be its cycle. 
	\begin{enumerate}
		\item If $|C|\neq 4$ and $d\geq 3$, then $i(U)\leq 1$.
		\item If $|C|=4$ and $U\not\approx C_4$, then $i(U)\leq 1$.
		\item If $U\approx C_4$, then $i(U)=2$.
		\item If $|C|=3$ and $d=2$, then $i(U)=3$.
		\item If $U\approx K_3$, then $i(U)=1$.
	\end{enumerate}
\end{theorem}

\begin{proof}
	Recall that by Lemma \ref{gem.unicyclic.C<5}, each pair of twins in $U$ is either inside $C$ or in $U-V(C)$.
	\begin{enumerate}[(1).]
		\item By Lemma \ref{gem.unicyclic.C>4}, we only need to handle the case $|C|=3$. If $C=abca$ contains twins, we may assume $[a]=[b]\neq[c]$ without loss of generality (as $d\geq 3$ implies $U\not\approx K_3$). By Proposition \ref{[G+-{xy:...}]=[G]}, we get $i(U)=i(U-ab)$. Since $U-ab$ is a tree with diameter $d$, we have $i(U-ab)\leq 1$, by Theorem \ref{index.trees}. If instead $C$ contains no twins, we use Lemma \ref{U.with.C<5.no.twins.in.C}.
		\item If $|C|=4$ and $U\not\approx C_4$, then clearly $d\geq 3$. Hence, at most one pair of twins is in $V(C)$. If $C=xyztx$ contains twins, we may assume $[x]=[z]$. Observe that $[U]=[U-x]$. Since $U-x$ is a tree, $i[U-x]=0$ by Theorem \ref{index.trees}, so $i(U)\leq 1$. If instead $C$ has no twins, we use Lemma \ref{U.with.C<5.no.twins.in.C}.
		\item Immediate to verify.
		\item In this case, it is easy to see that $U$ must be isomorphic to a $K_3$ with $k\geq 1$ leaves attached to one of its vertices.
		\item Immediate to verify.
	\end{enumerate}   
\end{proof}

We now conclude this section with the two main theorems that describe the quotient index of a balanced split graph and of a composition.

\begin{theorem}
	\label{i(split)<2}
	If $S$ is balanced, then $i(S)\leq 1$.
\end{theorem}  

\begin{proof}
	If $S$ is twin-free, we are done. If $(S,K,I)$ contains twins, suppose $[S]$ has a pair of distinct vertices $[u]$ and $[v]$ such that $[u]\cong_{[S]}[v]$. By Lemma \ref{v.ind.nogemelo.v.clique}, we know that either $\{[u],[v]\}\subseteq[I]$ or $\{[u],[v]\}\subseteq[K]$, since $[S]$ is balanced by Theorem \ref{S.balanc=>[S].balanc}.
	
	If $[u],[v]\in[I]$, then $u,v\in I$ by Lemma \ref{v.ind.nogemelo.v.clique}. Since $N_u\neq N_v$, there exists, without loss of generality, a vertex $x\in K$ such that $ux\in S$ but $vx\notin S$, so $[u][x]\in[S]$ and $[v][x]\notin[S]$ by Proposition \ref{lema.fundamental.uv.[u][v]}. Therefore $N_{[u]}\neq N_{[v]}$, contradicting the assumption that $[u]$ and $[v]$ are twins in $[S]$.
	
	Then $[u]$ and $[v]$ must belong to $[K]$. But $[K]$ is the maximum independent set in $\bar{[S]}$, where $[u]$ and $[v]$ remain twins by Proposition \ref{gem.G.iff.gem.G^c}. So we can repeat the previous arguments and reach another contradiction. Therefore, we conclude that $[S]$ is twin-free, i.e., $i(S)=1$.
\end{proof}

\begin{theorem}
	\label{i(S°G)=max(i(S),i(G))}
	If $S$ is a balanced split graph and $G$ is a graph, then $i(S\circ G)=\max\{i(S),i(G)\}$. In particular, $i(S\circ G)=0$ if and only if $i(S)=0=i(G)$.
\end{theorem}

\begin{proof}
	Let $H=S\circ G, i(S)=s, i(G)=g$ and $i(H)=h$. By Theorem \ref{[S°G]=[S]°[G]}, we have $[H]^h=[S]^h\circ[G]^h$. Since $i([H]^h)=0$, it follows that $i([S]^h)=0=i([G]^h)$. Then $s,g\leq h$, and hence $\max\{s,g\}\leq h$.
	
	To obtain the reverse inequality, we consider two cases: 1) $g\geq 1$; 2) $g=0$. Suppose $g\geq 1$. By Theorem \ref{i(split)<2}, we have $s\leq g$. Applying Theorem \ref{[S°G]=[S]°[G]}, we get $[H]^{g+1}=[S]^{g+1}\circ [G]^{g+1}\approx [S]^g\circ[G]^g=[H]^g$. So $h\leq g=\max\{s,g\}$. If $g=0$, then again by Theorem \ref{[S°G]=[S]°[G]}, we get $[H]^{s+1}=[S]^{s+1}\circ [G]^{s+1}\approx [S]^s\circ[G]^s=[H]^s$. Then $h\leq s=\max\{s,g\}$.
\end{proof}


\chapter{Graphs associated with split graphs} \label{cap:grafos.asociados.a.split}

In this chapter, we introduce and study two objects associated with a split graph $S$: the factor graph of $S$ and the flow configuration of $S$. The former, denoted by $\Phi(S)$, is a loopless multigraph, while the latter, denoted by $\vec{\Phi}(S)$, is a digraph. Both are isomorphic to each other as simple graphs (ignoring multiplicities and arc directions), but they encode different combinatorial and structural information about $S$.

This chapter is divided into 5 sections. In the first, we define $\Phi(S)$ and relate its connectedness to that of $A_4(S)$, obtaining an analogue of Theorem \ref{indecomp.characterization} for $\Phi$, and an analogue of Theorem \ref{tyshk.decomp} for twin-free graphs.

In the second section, we study the multigraph $\Phi$ under the assumption that it is a simple and connected graph. We show that this condition greatly restricts the structure of both $\Phi(S)$ and $S$. This phenomenon becomes even more pronounced if $\Phi(S)$ is simple and complete. In such cases, using results from Appendix \ref{familias.inserc.conjuntos}, we obtain a very precise description of the possible realizations of $S$, especially when it is active.

In Section \ref{sec:caminos,ciclos.en.Phi}, we focus on investigating the length and multiplicities of the induced paths and cycles of $\Phi$. Two results stand out: 1) induced cycles can only have length 3 or 4; 2) only the terminal edges of an induced path can be simple. Thanks to (2), we obtain an upper bound for $diam(\Phi(S))$ in terms of the degree of $S$.

In Section \ref{sec:config.flujo}, we define the digraph $\vec{\Phi}$ and determine which directed configurations are admissible for its induced cycles. In this context, we also introduce the concept of square-free split graphs.

In the final section, we define and study a very special class of split graphs: linear split graphs. These share a number of very peculiar properties in their flow configurations, which ultimately allow us to prove that $[\Phi(S)]$ is a path when $S$ is linear.


\section{Relationship between $\Phi$ and $A_4$} \label{sec:relacion.Phi-A_4}

Let $(S,K,I)$ be a split graph. The \textbf{factor graph} of $S$ is the loopless multigraph $\Phi(S)$ with $V(\Phi(S))=I$, such that there is an edge between $u$ and $v\neq u$ for each $H\in Q_S(P_4)$ containing both $u$ and $v$.

We aim to investigate how the connectedness of $\Phi=\Phi(S)$ is related to the connectedness of $A_4=A_4(S)$ (see Theorem \ref{indecomp.characterization}) and vice versa. The connectedness of $\Phi$ is equivalent to the existence of a path in $A_4$ between any pair of vertices $u,v\in I$ that only involves vertices from $I$. This is the general idea behind Lemmas \ref{lema.conexionlocal.H7}, \ref{lema.conexionlocal.uxv}, and \ref{lema.conexionlocal.uxz}, which are used to prove that $\Phi$ is connected if $A_4$ is. However, we then note that the converse does not hold in general, unless $S$ is active. Finally, we obtain one of the most important results in this work: if $S$ is active and $\Phi(S)$ is connected, then $S$ is indecomposable.

\begin{lemma}
	\label{lema.conexionlocal.H7}
	Let $(H,K,I)$ be a split graph of order 7 with $I=\{a,b,u,v\}$ and $K=\{x,y,z\}$. If $H_1 =\langle a,x,y,u\rangle_H \approx P_4$ and $H_2 =\langle b,x,z,v\rangle_H \approx P_4$, then:
	\begin{enumerate}
		\item $u$ and $z$ are connected by a path $P$ in $A_4(H)$ such that $V(P)-z\subseteq I$;
		\item $\Phi(H)$ is connected.
	\end{enumerate}
\end{lemma}

\begin{proof}
	For simplicity, we use $N_*,d_*,A_4$, and $\Phi$ to denote $N_H(*)$, $\deg_H(*)$, $A_4(H)$, and $\Phi(H)$, respectively.
	First, observe that $V(H_1)$ and $V(H_2)$ are cliques in $A_4$ sharing only the vertex $x$. To show that $\Phi$ is connected, it suffices to find an edge in $\Phi$ between a vertex in $\{a,u\}$ and a vertex in $\{b,v\}$, since $au, bv\in\Phi$ by hypothesis. Given that $\{H_1,H_2\}\subseteq Q_H(P_4)$, $H_1\cap H_2=x$, and $H$ is active, we have $|N_x\cap \{a,u\}|=|N_x\cap \{b,v\}|=1$, and hence necessarily $d_x=4$. More specifically,
	\[ N_x\cap K=\{y,z\}, \ N_x\cap I\in\{ \{u,b\},\{u,v\},\{a,v\},\{a,b\} \}. \]
	Furthermore, the degree in $H$ of each vertex in $I$ must be 1 or 2.
	
	Without loss of generality, assume $ux,bx\in H$. If $d_b=1$, then $ayxb\preceq H$, because $ax\notin H$. Thus, $uabv\subseteq\Phi$ and $uabz\subseteq A_4$. If $d_u=1$, then $uxzv\preceq H$ because $vx\notin H$. Then $uv\in \Phi$ and $uz\in A_4$. If $d_b=2=d_u$, then $uz,by\in H$ and therefore $uzyb\preceq H$. Hence, $ubv\subseteq\Phi$ and $uz\in A_4$.
\end{proof}

\begin{lemma}
	\label{lema.conexionlocal.uxv}
	Let $(S,K,I)$ be a split graph. Suppose $u,v\in I$ are connected in $A_4 =A_4 (S)$ by the path $uxv$, for some $x\in K$. Then, there exists another path $P\subseteq A_4$ between $u$ and $v$ such that $V(P)\subseteq I$ and $|P|\leq 4$.
\end{lemma}

\begin{proof}
	Since $ux,xv\in A_4$, there exist $H_1,H_2 \in Q_S (P_4 )$ such that $u,x\in H_1$ and $x,v\in H_2$. Let $H_1 =\langle a,u,x,y\rangle_S$ and $H_2 =\langle b,v,x,z\rangle_S$, where $a,b\in I$ and $y,z\in K$. Observe that $V(H_1)$ and $V(H_2)$ are cliques in $A_4$, and $H=\langle V(H_1 )\cup V(H_2 )\rangle_S$ is active. Moreover, with $K'=\{x,y,z\}$ and $I'=\{a,b,u,v\}$, we have that $(K',I')$ is the bipartition of $H$.
	
	Clearly, $ub,av\in A_4$. Thus, $uav\subseteq A_4$ if $a=b$. Otherwise, if $a\neq b$, we consider two cases: 1) $y\neq z$; 2) $y=z$. In both cases we prove that $\Phi(H)$ is connected, which ensures the existence of the desired path $P$. If $y\neq z$, we apply Lemma \ref{lema.conexionlocal.H7}. If $y=z$, then
	\[ |N_H(x)\cap I'|=2=|N_H(y)\cap I'|, \  N_H(x)\cap N_H(y)\cap I'=\varnothing. \]
	Therefore, $\Phi(H)\approx C_4$.
\end{proof}

\begin{lemma}
	\label{lema.conexionlocal.uxz}
	Let $(S,K,I)$ be a split graph. Suppose $u\in I$ and $z\in K$ are connected in $A_4 =A_4 (S)$ by the path $uxz$, for some $x\in K$. Then $u$ and $z$ are connected in $A_4$ by a path $P$ such that $V(P)-z\subseteq I$ and $|P|\leq 4$.
\end{lemma}

\begin{proof}
	If $uz$ is an edge in $A_4$, then there is nothing to prove. So assume $uz\not\in A_4$. Since $ux,xz\in A_4 $, there exist $H_1 ,H_2 \in Q_S (P_4 )$ such that $u,x\in H_1$ and $x,z\in H_2$. Let $H_1 =\langle a,x,y,u\rangle_S$ and $H_2 =\langle b,x,z,v\rangle_S$, with $a,b,v\in I$ and $y\in K$. Observe that $V(H_1)$ and $V(H_2)$ are cliques in $A_4$. In particular, since $\{bz,vz,uy\}\subseteq E(A_4)$, it follows that $u\notin\{b,v\}$ and $y\neq z$, since $uz\notin A_4$ by hypothesis.
	
	Let $H=\langle V(H_1 )\cup V(H_2 )\rangle_S$ be the active split graph with bipartition $(\{x,y,z\}, \{a,b,u,v\})$. The above observations reduce the proof to two cases: 1) $a\in\{b,v\}$; 2) $a\notin\{b,v\}$. For the first case, we have that $uaz$ is the desired path. In case (2), we apply Lemma \ref{lema.conexionlocal.H7}.
\end{proof}

\begin{theorem}
	\label{A_4.conexo.implica.Phi.conexo}
	Let $(S,K,I)$ be a split graph. If $A_4(S)$ is connected, then $\Phi(S)$ is connected.
\end{theorem}

\begin{proof}
	Suppose $A_4$ is connected and choose arbitrary vertices $u$ and $v\in I$. To show that $\Phi$ is connected, we must find a path in $A_4$ from $u$ to $v$ that includes only independent vertices.
	
	Since $A_4$ is connected, there exists a path $P\subseteq A_4$ between $u$ and $v$. If $V(P)\subseteq I$, we are done. Otherwise, $P$ alternates sequences of $I$-vertices and $K$-vertices. It suffices to consider the case $P=ux_1 \ldots x_n v$, where $x_i \in K$ for all $i\in[n]$, and then construct the desired alternative path $P'=u\ldots v\subseteq A_4$ consisting only of independent vertices. We do this by induction on $n$.
	
	The base case $n=1$ is covered by Lemma \ref{lema.conexionlocal.uxv}. For each $m$ such that $1\leq m <n$ we assume the following: for any pair $u',v'\in I$ connected in $A_4$ by a path $u'x_1 \ldots x_m v'$ ($x_i \in K$ for all $i\in[m]$), there exists a path $u'\ldots v'\subseteq A_4$ with $V(u'\ldots v')\subseteq I$. Applying Lemma \ref{lema.conexionlocal.uxz} to $vx_n x_{n-1}$, we find a path in $A_4$ of the form $v\ldots w x_{n-1}$ with $V(v\ldots wx_{n-1})-x_{n-1}\subseteq I$ (possibly $w=v$). Then the inductive hypothesis applies to $ux_1 \ldots x_{n-1}w$ and we obtain the desired path $P'$.
\end{proof}

\begin{corollary}
	\label{S.indecomp.implies.PhiS.conn}
	If $S$ is a split graph and $\Phi(S)$ is disconnected, then $S$ is decomposable.
\end{corollary}

\begin{proof}
	If $\Phi$ is disconnected, then Theorem \ref{A_4.conexo.implica.Phi.conexo} implies that $A_4$ is disconnected. Thus, $S$ is decomposable by Theorem \ref{indecomp.characterization}.
\end{proof}

The converse of Theorem \ref{A_4.conexo.implica.Phi.conexo} does not hold in general: the connectedness of $\Phi$ does not necessarily imply the connectedness of $A_4$. For this, we present a counterexample in Figure \ref{phi.conexo.pero.A_4.no}.

\begin{figure}[h]
	\centering
	\includegraphics[scale=0.8]{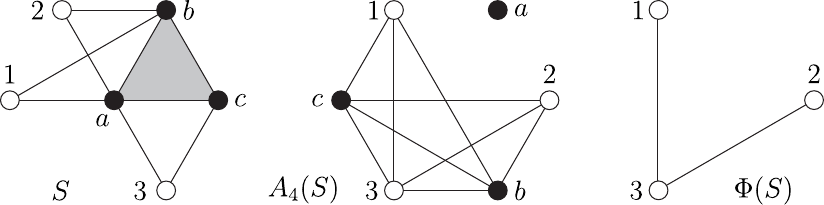}
	\caption{$\Phi$ is connected but $A_4$ is not.}
	\label{phi.conexo.pero.A_4.no}
\end{figure}

A careful look at the split graph $S$ in Figure \ref{phi.conexo.pero.A_4.no} reveals that it is balanced but not active (the vertex $a$ is universal). Then, a natural question arises: does the converse of Corollary \ref{S.indecomp.implies.PhiS.conn} hold if we add the hypothesis that $S$ is active? The answer is affirmative.

\begin{theorem}
	\label{thm.actividad_y_PhiS.conn}
	If $S$ is an active split graph and $\Phi(S)$ is connected, then $S$ is indecomposable.
\end{theorem}

\begin{proof}
	Ignoring parallel edges in $\Phi$, we clearly have $\Phi\subseteq A_4$. Furthermore, every clique vertex must be connected in $A_4$ to some independent vertex, since $S$ is active. Hence, $A_4$ is connected. Applying Theorem \ref{indecomp.characterization}, we conclude that $S$ is indecomposable.    
\end{proof}

We say that a graph $G$ is \textbf{prime} if $G$ is active and indecomposable. Basic examples of prime graphs include $C_4$ and $P_4$. Other examples of prime graphs are trees with diameter $\geq 3$, unicyclic graphs with girth $\geq 4$, and disconnected graphs with no isolated vertices.

\begin{corollary}
	\label{S.primo.iff.Phi(S).conexo}
	Let $S$ be an active split graph.  
	\begin{enumerate}
		\item $S$ is prime if and only if $\Phi(S)$ is connected.
		\item If $S=\prod_{k=1}^{n}S_k$, where each $S_k$ is prime, then $\Phi(S)=\dot{\bigcup}_{k=1}^n\Phi(S_k)$.
		\item If $G$ is an active graph, then each factor in the Tyshkevich decomposition of $G$ is prime.
	\end{enumerate}
\end{corollary}

\begin{proof}
	\begin{enumerate}[(1).]
		\item If $\Phi(S)$ is connected, then $S$ is prime by Theorem \ref{thm.actividad_y_PhiS.conn}. Conversely, if $S$ is prime, then $A_4(S)$ is connected by Theorem \ref{indecomp.characterization}. Hence, $\Phi(S)$ is connected by Theorem \ref{A_4.conexo.implica.Phi.conexo}.
		
		\item It suffices to consider $n=2$, since the general case follows by induction. Let $(K,I)$ be the bipartition of $S$ and let $\Phi=\Phi(S)$. If $(S_1,K_1,I_1)$ and $(S_2,K_2,I_2)$ are prime, then $\Phi(S_1)$ and $\Phi(S_2)$ are connected by (1). If $u,v\in I_1\subset I_1\dot{\cup} I_2=I$, then $\sigma_{uv}(S_1)=\sigma_{uv}(S)$ since $N_{S_1}(x)=N_S(x)$ for all $x\in I_1$. Similarly, $\sigma_{uv}(S_2)=\sigma_{uv}(S)$ for all $u,v\in I_2$. Hence, $\Phi(S_1)=\langle I_1\rangle_{\Phi}$ and $\Phi(S_2)=\langle I_2\rangle_{\Phi}$. If $u\in I_1$ and $v\in I_2$, then $\sigma_{uv}(S)=0$ by (1) (otherwise $\Phi$ would be connected). Therefore, $\Phi(S_1)\dot{\cup}\Phi(S_2)=\langle I_1\rangle_{\Phi}\dot{\cup}\langle I_2\rangle_{\Phi}=\langle I_1\cup I_2\rangle_{\Phi}=\Phi(S)$.
		
		\item It follows immediately from Theorems \ref{tyshk.decomp} and \ref{SoG.act.iff.S,G.act}.
	\end{enumerate}
\end{proof}

If $(S,K,I)$ is a split graph with $K\neq\varnothing$ and $G$ is a nontrivial graph, then $diam(S\circ G)\leq 3$. Hence, any graph of diameter $\geq 4$ with no isolated vertices is prime.

We say that a graph $G$ is \textbf{elementary} if $G$ is prime and twin-free.

\begin{theorem}
	Every active and twin-free graph can be uniquely expressed as a composition of elementary graphs.
\end{theorem}

\begin{proof}
	This result follows easily by combining Corollary \ref{S.primo.iff.Phi(S).conexo} with Theorems \ref{[S°G]=[S]°[G]} and \ref{i(S°G)=max(i(S),i(G))}.
\end{proof}


\section{Characterization of simple factor graphs} \label{sec:caract.Phi.simples}

Given a split graph $(S,K,I)$, we aim to study the neighborhoods in $S$ of the vertices in $I$ in the case where $\Phi(S)$ is simple and connected, with the goal of extrapolating structural properties of both $\Phi$ and $S$. For two vertices $u,v\in I$, we first obtain a formula to compute the multiplicity $\sigma_{uv}(S)$ of the edge $uv\in\Phi(S)$, for arbitrary $S$.

From this, we then describe the basic properties of $\sigma_{uv}$. This is where some important connections arise. One is with Number Theory, which will be explored in the next chapter. The other is about the neighborhoods in $S$ and $\Phi$ of $u$ and $v$, and how they intersect. Subsequently, we introduce the concept of a homogeneous split graph and analyze its general properties. When $\Phi(S)$ is simple and connected, we show that $S$ is homogeneous.

It is in this context that it becomes natural to view the set $\{N_S(v):v\in I\}$ as an intersecting family. This abstraction proves decisive when $\Phi(S)$ is simple and complete, and it is precisely in such a case that we apply the entire framework developed in Appendix \ref{familias.inserc.conjuntos}. In this scenario, there are essentially two possible behaviors: either each vertex in $I$ has a specific neighbor in $K$ that characterizes it, or each vertex in $I$ has a specific non-neighbor in $K$ that characterizes it. Thanks to the tools from intersecting families of sets, we ultimately obtain results that allow us to fully understand the structure of a balanced or active split graph $S$ when $\Phi(S)$ is simple and connected. \\

As we already anticipated in the introduction of this section, we define $\sigma_{uv}(S)$ as the multiplicity of the edge $uv$ in the multigraph $\Phi(S)$. Recall that, by the definition of $\Phi$, $\sigma_{uv}(S)$ also equals: 1) the number of 2-switches that simultaneously activate $u$ and $v$ in $S$; 2) the number of induced $P_4$’s in $S$ that contain $u$ and $v$. Hence,
\begin{equation*}
	\deg(S)=|Q_S(P_4)|=|E(\Phi(S))|=\sum_{\{u,v\}\subseteq I} \sigma_{uv}(S).
\end{equation*}
As usual, we omit the ``$(S)$'' symbol when the split graph $S$ under consideration is clear from context. This means, for instance, that we may write $\sigma_{uv}$ and $\Phi$ instead of $\sigma_{uv}(S)$ and $\Phi(S)$. Also, recall the following shorthand notations: $N_v, N_i, d_v, d_i$ instead of $N_G(v), N_G(v_i), \deg_G(v), \deg_G(v_i)$, respectively, as long as it is clear from the context which graph $G$ we are referring to.

\begin{figure}[h]
	\centering
	\includegraphics[scale=0.8]{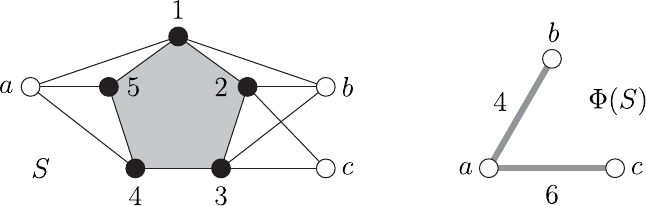}
	\caption{$\deg(S)=\sigma_{ab}+\sigma_{ac}+\sigma_{bc}=4+6+0=10$.}
	\label{ejemplo.cantidad.P4.ind.split}
\end{figure}

In Figure \ref{ejemplo.cantidad.P4.ind.split} we see an example of an active split graph $S$ where $\deg(S)$ is computed by counting the number $|Q_S(P_4)|$ of induced $P_4$’s in $S$. By inspection, we see that there are four induced $P_4$’s connecting $a$ with $b$ ($a52b, a53b, a42b, a43b$), six between $a$ and $c$ ($a12c, a13c, a52c, a53c, a42c, a43c$), and none between $b$ and $c$. In other words, $\deg(S)=\sum_{\{u,v\}\subseteq I} \sigma_{uv} = \sigma_{ab} + \sigma_{ac} + \sigma_{bc} = 4 + 6 + 0 = 10$. A closer look at this example makes it clear that the vertices in $N_u\cap N_v$ do not participate in any induced $P_4$ from $u$ to $v$. That is, the only clique vertices that appear in these paths are those in $N_u-N_v$ and $N_v-N_u$. Indeed, every induced $P_4$ between $u$ and $v$ must use a neighbor of $u$ that is not a neighbor of $v$, say $x$, and a neighbor of $v$ that is not a neighbor of $u$, say $z$. Since both $x$ and $z$ are in the clique of $S$, they are adjacent. Therefore, any $x\in N_u-N_v$ and any $z\in N_v-N_u$ yields an induced $P_4$ between $u$ and $v$. This gives us $|N_u-N_v||N_v-N_u|$ ways to form such a path. Letting $\eta_{uv}(S)=|N_S(u)\cap N_S(v)|$, we obtain the following formula:

\begin{equation}
	\label{fórmula.sigma_uv}
	\sigma_{uv}(S)=(\deg_S(u)-\eta_{uv}(S))(\deg_S(v)-\eta_{uv}(S)),
\end{equation}
As always, if no ambiguity arises in the context, we may write simply $\eta_{uv}$ instead of $\eta_{uv}(S)$. Hence, it will be quite common from now on to simplify the expression in \eqref{fórmula.sigma_uv}, using instead $\sigma_{uv}=(d_u-\eta_{uv})(d_v-\eta_{uv})$. We anticipate a very simple yet crucial observation that will be essential in the next chapter: $d_u -\eta_{uv}$ and $d_v -\eta_{uv}$ are complementary divisors of $\sigma_{uv}$.

The following proposition lists the most important properties of a split graph involving $\sigma_{uv}$. 

\begin{proposicion}
	\label{prop.basicas.sigma_uv}
	Every split graph $S$ satisfies the following properties:
	\begin{enumerate}
		\item $\sigma_{uv}=0$ (i.e., $uv\notin\Phi$) and $d_v \leq d_u$ if and only if $N_v \subseteq N_u$; 
		\item $\sigma_{uv}=0$ and $d_u =d_v$ if and only if $N_u = N_v$;
		\item if $N_u = N_v $, then $N_{\Phi}(u)=N_{\Phi}(v)$; in other words: if $u$ and $v$ are twins in $S$, then they are also twins in $\Phi(S)$;
		\item if $\sigma_{uv}=1$, then $d_u =d_v$ and $|N_u -N_v |=|N_v -N_u |=1$;
		\item if $d_u \geq d_v$ and $\sigma_{uv}=p$ is prime, then $|N_u -N_v |=p, |N_v -N_u |=1$ and $d_u-d_v=p-1$;
		\item $d_u =d_v$ if and only if $|N_u-N_v|=|N_v-N_u|$;
		\item if $d_u =d_v$, then $\sigma_{uv}$ is a perfect square;
		\item if $S$ has no isolated vertices, then $\sigma_{uv}=d_u d_v$ if and only if $N_u \cap N_v =\varnothing$.
	\end{enumerate}
\end{proposicion}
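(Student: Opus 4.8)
The plan is to establish all eight items from the single formula $\sigma_{uv}=(d_u-\eta_{uv})(d_v-\eta_{uv})$ established in \eqref{fórmula.sigma_uv}, together with elementary set-theoretic observations about the neighborhoods $N_u, N_v$ inside a split graph. The crucial structural fact I would use repeatedly is that in a split graph $(S,K,I)$ every vertex of $I$ has its neighborhood contained in the clique $K$, so for $u,v\in I$ the three sets $N_u-N_v$, $N_v-N_u$, $N_u\cap N_v$ partition $N_u\cup N_v$, and $|N_u-N_v|=d_u-\eta_{uv}$, $|N_v-N_u|=d_v-\eta_{uv}$. Thus \eqref{fórmula.sigma_uv} literally says $\sigma_{uv}=|N_u-N_v|\cdot|N_v-N_u|$.

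First I would prove (1): if $d_v\le d_u$ then $\sigma_{uv}=0$ iff one of the two factors vanishes; since $d_v-\eta_{uv}\ge0$ always, and $d_u-\eta_{uv}\ge d_v-\eta_{uv}$, the product is zero iff $d_v-\eta_{uv}=0$, i.e.\ $\eta_{uv}=d_v$, i.e.\ $N_v\subseteq N_u$. Item (2) is the symmetric refinement: $d_u=d_v$ forces both factors equal, so $\sigma_{uv}=0$ iff $|N_u-N_v|=0$ iff $N_u=N_v$ (using (1) twice, or directly). Item (3): if $N_u=N_v$, then for any $w\in I$, $\sigma_{uw}=(d_u-\eta_{uw})(d_w-\eta_{uw})=(d_v-\eta_{vw})(d_w-\eta_{vw})=\sigma_{vw}$, and moreover $\eta_{uw}=\eta_{vw}$, so $w\in N_\Phi(u)$ iff $\sigma_{uw}>0$ iff $\sigma_{vw}>0$ iff $w\in N_\Phi(v)$; hence $N_\Phi(u)-v=N_\Phi(v)-u$, i.e.\ $u\cong_\Phi v$. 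For (4): $\sigma_{uv}=1$ means both positive integer factors equal $1$, so $|N_u-N_v|=|N_v-N_u|=1$ and therefore $d_u=\eta_{uv}+1=d_v$. For (6): $d_u=d_v$ iff $d_u-\eta_{uv}=d_v-\eta_{uv}$ iff $|N_u-N_v|=|N_v-N_u|$, immediate. For (7): if $d_u=d_v$ then $\sigma_{uv}=(d_u-\eta_{uv})^2$, a perfect square. For (8): assuming $S$ has no isolated vertices, every $v\in I$ has $d_v\ge1$; then $\eta_{uv}=|N_u\cap N_v|$ and $\sigma_{uv}=(d_u-\eta_{uv})(d_v-\eta_{uv})=d_ud_v$ iff $d_ud_v-(d_u+d_v)\eta_{uv}+\eta_{uv}^2=d_ud_v$ iff $\eta_{uv}(\eta_{uv}-d_u-d_v)=0$ iff $\eta_{uv}=0$ (the other root $\eta_{uv}=d_u+d_v$ is impossible since $\eta_{uv}\le\min(d_u,d_v)$), i.e.\ $N_u\cap N_v=\varnothing$.

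The only item requiring a genuine (if small) number-theoretic argument is (5): suppose $d_u\ge d_v$ and $\sigma_{uv}=p$ is prime. Write $a=|N_u-N_v|=d_u-\eta_{uv}$ and $b=|N_v-N_u|=d_v-\eta_{uv}$, so $ab=p$ with $a\ge b\ge0$. Since $p\ge2>0$ neither factor is zero, and the only factorization of a prime into two positive integers with $a\ge b$ is $a=p$, $b=1$. Hence $|N_u-N_v|=p$, $|N_v-N_u|=1$, and $d_u-d_v=a-b=p-1$. This is where I expect the argument to be cleanest anyway; there is no real obstacle here, the prime factorization does all the work.

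Honestly the whole proposition is a direct consequence of reading \eqref{fórmula.sigma_uv} as $\sigma_{uv}=|N_u-N_v|\cdot|N_v-N_u|$ and pushing through elementary counting; the main thing to be careful about — and the place where a sloppy write-up could go wrong — is item (3), where one must verify not only that $\sigma_{uw}=\sigma_{vw}$ (equality of multiplicities, which would only say the \emph{number} of edges is the same) but that the \emph{same} vertices $w$ are adjacent to $u$ and to $v$ in $\Phi$, which follows because $N_u=N_v$ gives $\eta_{uw}=\eta_{vw}$ and $d_u=d_v$, so the two factors defining $\sigma_{uw}$ and $\sigma_{vw}$ agree termwise, not just in product. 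I would present (3) with that emphasis. All other items are one-line verifications once the partition $N_u\cup N_v=(N_u-N_v)\,\dot\cup\,(N_v-N_u)\,\dot\cup\,(N_u\cap N_v)$ is in hand, which is exactly the observation made in the paragraph preceding \eqref{fórmula.sigma_uv}.
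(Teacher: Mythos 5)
Tu propuesta es correcta y sigue esencialmente el mismo camino que la demostración del texto: leer \eqref{fórmula.sigma_uv} como $\sigma_{uv}=|N_u-N_v|\cdot|N_v-N_u|$ y deducir cada ítem por conteo elemental y, en (5), por la factorización única de un primo. Tu observación adicional sobre el ítem (3) —que $N_u=N_v$ da $\eta_{uw}=\eta_{vw}$ y $d_u=d_v$, de modo que $\sigma_{uw}=\sigma_{vw}$ para cada $w$ y por ende los mismos $w$ son vecinos de $u$ y de $v$ en $\Phi$— es exactamente lo que el texto usa, solo que explicitada con más cuidado.
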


\begin{proof}
	\begin{enumerate}[(1).]
		\item If $\sigma_{uv}=|N_u-N_v||N_v-N_u|=0$, then $|N_u-N_v|=0$ or $|N_v-N_u|=0$. Thus, $N_u\subseteq N_v$ or $N_v\subseteq N_u$.
		\item It follows from (1).
		\item Let $x$ be an independent vertex in $S$. If $N_u=N_v$, then $d_u=d_v$ and $N_u\cap N_x=N_v\cap N_x$. Hence, $\sigma_{ux}=\sigma_{vx}$.
		\item If $\sigma_{uv}=|N_u-N_v||N_v-N_u|=1$, then clearly $|N_u-N_v|=|N_v-N_u|=1$ since $\sigma_{uv}\in\mathbb{Z}$. Also, $d_u-\eta_{uv}=|N_u-N_v|=|N_v-N_u|=d_v-\eta_{uv}$ implies $d_u=d_v$.
		\item It is derived in the same way as (4).
		\item It follows from the equalities $d_v-|N_v-N_u|=\eta_{uv}=d_u-|N_u-N_v|$.
		\item Straightforward.
		\item If $N_u\cap N_v=\varnothing$, then $\eta_{uv}=0$ and thus $\sigma_{uv}=(d_v-0)(d_u-0)$. For the converse, assume that $\sigma_{uv}=d_ud_v$. We can rewrite \eqref{fórmula.sigma_uv} as $\sigma_{uv}=d_ud_v-\eta_{uv}(d_u+d_v-\eta_{uv})$. Then, $\eta_{uv}(d_u+d_v-\eta_{uv})=0$. If $d_u=\eta_{uv}-d_v$, then $d_u=0$, since $\eta_{uv}-d_v\leq 0$, which contradicts the assumption that $S$ has no isolated vertices. Hence, $\eta_{uv}=0$, that is, $N_u\cap N_v=\varnothing$.
	\end{enumerate}
\end{proof}

In Figure \ref{gemelos.Phi(S).pero.no.S} we show that $N_\Phi(u)=N_\Phi(v)$ does not necessarily imply $N_u=N_v$. In other words, the converse of statement (3) of Proposition \ref{prop.basicas.sigma_uv} does not hold. Furthermore, the converse of (7) also does not hold. To verify this, it is enough to take the split graph $(S,[5], \{a,b\})$ where $N_a=[4]$ and $N_b=\{5\}$: we see that $\sigma_{ab}=2^2$, but $d_a=4\neq 1=d_b$.

\begin{figure}[h]
	\centering
	\includegraphics[scale=0.8]{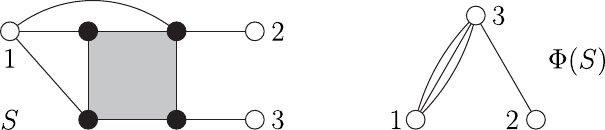}
	\caption{$N_{\Phi}(1)=N_{\Phi}(2)$ but $N_1\neq N_2$.}
	\label{gemelos.Phi(S).pero.no.S}
\end{figure}

It is also important to note that $N_u\subset N_v$ does not generally imply $N_{\Phi}(u)\subseteq N_{\Phi}(v)$, as shown in Figure \ref{contenido.S.pero.no.Phi}.

\begin{figure}[h]
	\centering
	\includegraphics[scale=0.8]{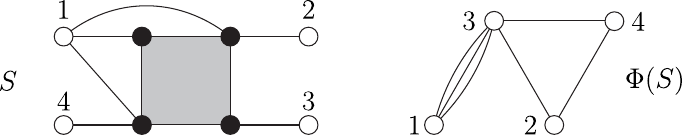}
	\caption{$N_2\subset N_1$ but $N_{\Phi}(2)\nsubseteq N_{\Phi}(1)$.}
	\label{contenido.S.pero.no.Phi}
\end{figure}

Let $(S,K,I)$ be a split graph. We say that $S$ is \textbf{homogeneous} if $S$ is balanced (recall this means that $S$ has no swing vertices, i.e., vertices $w$ such that $N_w=K-w$) and $\deg_S(v)=\deg_S(u)$ for all $u,v\in I$. We now apply Proposition \ref{prop.basicas.sigma_uv} to this class of split graphs, obtaining substantial understanding of the structure of both $S$ and $\Phi(S)$.

\begin{theorem}
	\label{S.homogeneo.implica...}
	If $(S,K,I)$ is a homogeneous split graph, then:
	\begin{enumerate}
		\item $\sigma_{uv}=0$ if and only if $u$ and $v$ are twins in $S$;
		\item $\sigma_{uv}$ is a perfect square for all $\{u,v\}\subseteq I$;
		\item $\deg(S)\geq 1$; in particular, if $d$ is the degree in $S$ of the independent vertices, then $d\notin\{0,|K|\}$;
		\item $\Phi=\Phi(S)$ cannot contain induced subgraphs isomorphic to $K_2\dot{\cup}K_1$;
		\item $diam(\Phi)\leq 2$;
		\item $\Phi$ contains no induced cycles of length 5 or greater.
	\end{enumerate}
\end{theorem}

\begin{proof}
	\begin{enumerate}[(1).]
		\item It follows immediately from Proposition \ref{prop.basicas.sigma_uv}.
		\item Same as (1).
		\item Suppose $\deg(S)=0$. Then $N_u=N=N_v$ for all $u,v\in I$, by (1) (for some $N\subseteq K$). Since $S$ is balanced, it follows that $N\neq K\neq\varnothing$ (otherwise there would be a swing vertex). Hence, there must be $w\in K-N$. But then $w$ would be swing, which contradicts the fact that $S$ is balanced. Thus, $\deg(S)\geq 1$. If $d\in\{0,|K|\}$, then $S$ is clearly inactive.
		\item Suppose $H=(\{a,b,c\},\{\sigma_{ab}\{a,b\}\})\preceq\Phi$, where $\sigma_{ab}\neq 0$. Since $d_a =d_c$ and $\sigma_{ac}=0$, Proposition \ref{prop.basicas.sigma_uv} implies $N_a =N_c$. Since $d_c =d_b$ and $\sigma_{bc}=0$, we get $N_c =N_b$. Thus $N_a =N_b$, and hence $\sigma_{ab}=0$, a contradiction.
		\item If $diam(\Phi)\in [3,\infty)$, then there is an induced $P_4$ in $\Phi$. Since $K_2\dot{\cup}K_1\preceq P_4$, we have a contradiction with (4). If $diam(\Phi)=\infty$, then there is an induced $K_2\dot{\cup}K_1$ in $\Phi$, since $\Vert\Phi\Vert=\deg(S)\geq 1$ by (3). Again, this contradicts (4). Therefore, $diam(\Phi)\leq 2$.
		\item It is an immediate consequence of (4).
	\end{enumerate}
\end{proof}

\begin{theorem}
	\label{Phi.simple.conexo.1}
	Let $(S,K,I)$ be a split graph such that $K=\bigcup_{v\in I}N_v$. If $\Phi=\Phi(S)$ is simple and connected and $|I|\geq 2$, then $S$ is homogeneous.
\end{theorem}

\begin{proof}
	The fact that $\Phi$ is simple means $uv\in E(\Phi)$ if and only if $\sigma_{uv}=1$. If $xyz\subseteq\Phi$, then $\sigma_{xy}=1$ implies $d_x =d_y$, and $\sigma_{yz}=1$ implies $d_y =d_z$, by Proposition \ref{prop.basicas.sigma_uv}. Therefore, there exists $d\geq 0$ such that $d_v =d$ for all $v\in I$, because $\Phi$ is connected. Also, $d\notin\{0,|K|\}$, since otherwise $\Phi$ would have no edges. Then, $S$ has no swing vertices in $I$. If there were such a vertex in $K$, say $w$, the hypothesis $K=\bigcup_{v\in I}N_v$ would force $w$ to be adjacent to some $u\in I$, contradicting $N_w=K-w$. Thus, $S$ has no swing vertices, i.e., it is balanced.
\end{proof}

\begin{lemma}
	\label{Phi.completo.implica.inactivos.universales}
	Let $(S,K,I)$ be a balanced split graph such that $|I|\geq 2$ and $\Phi(S)$ is complete. If $x$ is an inactive vertex in $S$, then $x$ is universal.
\end{lemma}

\begin{proof}
	The assumptions on $S$ guarantee $I\subseteq act(S)$. Hence, if $x\notin act(S)$, then $x\in K$. Suppose $x$ is not universal. Since $S$ has no swing vertices, it follows that $N_x\cap I\neq\varnothing$. Then there exist $a,b\in I$ such that $a\in N_x$ and $b\notin N_x$. On the other hand, since $\sigma_{ab}>0$, there exist $y,z\in K$ such that $ayzb\preceq S$. But then $axzb\preceq S$, contradicting that $x$ is inactive.
\end{proof}

To prove the next theorem (Theorem \ref{Phi.simple.completo.implica...}), we need auxiliary results about certain intersecting families of sets, which are independent of the graph structure itself. As announced in the introduction to this section, these tools are developed extensively in Appendix \ref{familias.inserc.conjuntos}. When $\Phi(S)$ is simple and complete, we know that all vertices of $\Phi$ have fixed degree $d$ in $(S,K,I)$. Translating this into set-theoretic language: $|N_S(v)|=d$ for all $v\in I$. Additionally, since $\Phi$ is simple, we also have $|N_u\cap N_v|=d-1$ for all $\{u,v\}\subseteq I$. As can be seen in Appendix \ref{familias.inserc.conjuntos}, $\{N_v:v\in I\}$ turns out to be an intersecting family with very interesting properties, which fit perfectly into this context. Below, we quote the three results from Appendix \ref{familias.inserc.conjuntos} that will be used to prove Theorem \ref{Phi.simple.completo.implica...}. The respective proofs can be found in Appendix \ref{familias.inserc.conjuntos}. \\

\textbf{Proposition} \ref{lema.aureo.conjuntista}.
Let $\mathcal{N}=\{N_v :v\in I\}$ be a finite family of finite sets of size $d$ such that $|N_u \cap N_v |=d-1$ for all $\{u,v\}\subseteq I$. If $W$ is a non-empty subset of $I$, then 
\begin{equation*}
	\left|\bigcap_{v\in W} N_v \right| \in\{d+1-|W|,d-1\}.
\end{equation*}

\textbf{Theorem} \ref{equivalencias.d-1.conjuntistas}. Let $\{N_v :v\in [\alpha],\alpha\geq 3\}$ be a family of finite sets of size $d\geq 1$ such that $|N_x \cap N_y |=d-1$ for all $\{x,y\}\subseteq [\alpha]$. Let also $\omega=|\bigcup_{v=1}^{\alpha}N_v|$. The following statements are equivalent:
\begin{enumerate}
	\item there exists a triple $T\subseteq[\alpha]$ such that $|\bigcap_{v\in T}N_v|=d-1$;
	\item \[ \left|\bigcap_{v\in A}N_v \right|=d-1, \] for all $A\subseteq[\alpha]$ with $|A|\geq 2$;
	\item \[ \omega=\alpha+d-1. \] 
\end{enumerate}

\textbf{Theorem} \ref{equivalencias.d+1-n.conjuntistas}. Let $\{N_v :v\in [\alpha],\alpha\geq 3\}$ be a family of finite sets of size $d\geq 1$ such that $|N_x \cap N_y |=d-1$ for all $\{x,y\}\subseteq [\alpha]$. Let also $\omega=|\bigcup_{v=1}^{\alpha}N_v|$. The following statements are equivalent:
\begin{enumerate}
	\item there exists a triple $T\subseteq[\alpha]$ such that $|\bigcap_{v\in T}N_v|=d-2$;
	\item \[ \left|\bigcap_{v\in A}N_v \right|=d+1-|A|, \] for all $A\subseteq[\alpha]$ with $A\neq\varnothing$;
	\item \[ \omega=d+1. \] 
\end{enumerate}

\begin{theorem}
	\label{Phi.simple.completo.implica...}
	Let $(S,K,I)$ be a split graph such that $\Phi(S)$ is simple and complete, and let $U$ be the set of universal vertices of $S$. If $\bigcup_{v\in I}N_v = K$, $\omega=|K|$ and $\alpha=|I|\geq 2$, then:
	\begin{enumerate}
		\item $S$ is homogeneous;
		\item if $d$ is the degree in $S$ of the vertices in $I$, then $1\leq d\leq\omega -1$;
		\item $|U|\in\{d-1,d+1-\alpha\}$;
		\item $|U|=d-1$ if and only if $\omega=\alpha +d-1$;
		\item $|U|=d+1-\alpha$ if and only if $\omega =d+1$;
		\item $\omega =\alpha +|U|$ (in particular, $\omega\geq\alpha$);    
		\item $S$ is active if and only if $U=\varnothing$;
		\item If $S$ is active, then $\omega=\alpha$ and $d\in\{1,\omega-1\}$;
		\item if $\omega=\alpha$ or $d=1$, then $S$ is active; 
	\end{enumerate} 
\end{theorem}

\begin{proof}
	\begin{enumerate}[(1).]
		\item It is a particular case of Theorem \ref{Phi.simple.conexo.1}.
		\item It is an immediate consequence of (1) and Theorem \ref{S.homogeneo.implica...}.
		\item It is a particular case of Proposition \ref{lema.aureo.conjuntista}, since $U=\bigcap_I N_v$, according to Proposition \ref{S.balanc.implica.deg>0,K=Union(N_v),|U|<|K|-1}.
		\item It is a particular case of Theorems \ref{equivalencias.d-1.conjuntistas} and \ref{equivalencias.d+1-n.conjuntistas}, but translated into the language of Graph Theory.
		\item Same as (4).
		\item We know that $|U|\in\{d-1,d+1-\alpha\}$, by (3). The statement is evident if $|U|=d-1$, thanks to (4). If instead $|U|=d+1-\alpha$, then $\omega=(d+1-\alpha)+\alpha$, by (5).
		\item If $S$ is active, it is clear that $U=\varnothing$, since $U\subseteq act(S)^c=\varnothing$. Conversely, if $U=\varnothing$, then $S$ is active thanks to Lemma \ref{Phi.completo.implica.inactivos.universales}.
		\item If $S$ is active, then $U=\varnothing$, by (7). Thus, $\omega=\alpha$, by (6), and $0\in\{d-1,d+1-\omega\}$, by (3). Therefore, $d\in\{1,\omega-1\}$.
		\item If $\omega=\alpha$, then $U=\varnothing$, by (6). Thus, $S$ is active, by (7). If $d=1$, then $|U|\in\{0,2-\alpha\}$, by (3). If $|U|=2-\alpha$, then $\omega=2=\alpha$, by (6) and because $\alpha\geq 2$. Therefore, $S\approx P_4$, where obviously there are no universal vertices. Consequently, $d=1$ implies $U=\varnothing$. Finally, $S$ is active thanks to (7).
	\end{enumerate}
\end{proof}

Regarding item (9) of Theorem \ref{Phi.simple.completo.implica...}, it is important to note that $d=\omega-1$ does not generally imply that $S$ is active. For example, consider the split graph $(S,[4],\{a,b\})$ where $N_a=\{1,2,3\}$ and $N_b=\{2,3,4\}$. Here, we see that $d_a=3=\omega-1=d_b$ and $\sigma_{ab}=1$, so $\Phi(S)$ is simple and complete. However, $S$ is not active, since vertices 2 and 3 are universal.\\

Given a split graph $(S,K,I)$, the \textbf{inversion} (or \textbf{inverse}) $(S,K,I)^{\iota}$ of $S$ is defined as the split graph $(S^{\iota},I,K)$, where 
\[ E(S^{\iota})=(E(S)-\{ab:a,b\in K\})\cup\{ab:a,b\in I\}. \]
When inverting $S$, $K$ becomes an independent set, $I$ becomes a clique, but all edges between $K$ and $I$ remain. This concept is not new and has been widely used by R. Tyshkevich in \cite{tyshkevich2000decomposition} (page 14). It should be noted that inversion, like composition, depends entirely on the bipartition of $S$. Indeed, it does not make sense in general to speak of ``the inverse" of $S$, since $S$ will not have a unique bipartition if it is unbalanced. Obviously, inversion is an involutive operation, i.e., $((S,K,I)^{\iota})^{\iota}=(S,K,I)$. Another important property is that inversion and complementation commute with each other.

\begin{lemma}
	\label{inv.compl.conmutan}
	If $(S,K,I)$ is a split graph, then
	\begin{equation}
		\label{eq40}
		\overline{(S,K,I)^{\iota}} = \Big(\overline{(S,K,I)}\Big)^{\iota}.
	\end{equation}  
	Moreover, $N_{\overline{S^{\iota}}}(v)=K-N_S(v)$, for all $v\in I$.
\end{lemma}

\begin{proof}
	Since both inversion and complementation interchange the roles of $K$ and $I$ as clique/independent sets, it follows that $G=\overline{S^{\iota}}$ and $H=(\overline{S})^{\iota}$ both have the same bipartition as $S$. Therefore, to obtain \eqref{eq40}, it suffices to prove that $N_G(v)=N_H(v)$ for all $v\in I$. First, note that: 
	\begin{enumerate}[(1).]
		\item $v$ is a clique vertex in $S^{\iota}$ and in $\overline{S}$;
		\item $N_{S^{\iota}}(v)=(I-v)\cup N_S(v)$;
		\item $N_{\overline{S}}(v) = (I-v)\cup(K-N_S(v))$.
	\end{enumerate}
	From (2), we deduce that 
	\[ N_G(v)\cup v =  \]
	\[ N_{S^{\iota}}(v)^c = (I-v)^c\cap N_S(v)^c = (K\cup v)\cap(I\cup(K-N_S(v)))=\]
	\[ ((K\cup v)\cap I)\cup((K\cup v)\cap(K-N_S(v)))= \]
	\[ v\cup(K-N_S(v)), \]
	that is, $N_G(v)=K-N_S(v)$. On the other hand, by (1) and by the definition of inversion, it is clear that $N_H(v)=N_{\overline{S}}(v)-I$. Finally, using (3), we obtain that $N_H(v)=K-N_S(v)$. 
\end{proof}

We now use Lemma \ref{inv.compl.conmutan} to make an important observation. Given a pair $\{u,v\}\subseteq I$, we have 
\[ \eta_{uv}(\overline{S^{\iota}})=|(K-N_u)\cap(K-N_v)|=|N_u^c\cap N_v^c|= \]
\[ |K-(N_u\cup N_v)|=|K|-|N_u\cup N_v|= \]
\[ |K|-d_u-d_v+\eta_{uv}. \]
Combining this with the fact that $\deg_{\overline{S^{\iota}}}(x)=|K|-d_x$, it quickly follows, applying \eqref{fórmula.sigma_uv}, that
\begin{equation}
	\label{eq41}
	\sigma_{uv}(\overline{S^{\iota}})=\sigma_{uv}(S),
\end{equation}
for all $\{u,v\}\subseteq I$. Equality \eqref{eq41} implies the following result.

\begin{proposition}
	\label{Phi(S)=Phi(S^-1)}
	For every split graph $S$,  
	\[ \Phi(S)=\Phi(\overline{S^{\iota}}). \]
\end{proposition}

\begin{proof}
	It follows from the previous discussion. 
\end{proof}

\begin{proposition}
	\label{S.activo.iff.S^-1.activo}
	Let $(S,K,I)$ be a split graph.
	\begin{enumerate}
		\item If $w$ is a swing vertex in $S$ and $w\in I$, then $w$ is isolated in $S^{\iota}$.
		\item If $w$ is a swing vertex in $S$ and $w\in K$, then $w$ is universal in $S^{\iota}$.
		\item If $S$ is non-empty and $w$ is a universal or isolated vertex in $S$, then $w$ is swing in $S^{\iota}$.
		\item $act(S) = act(S^{\iota}) = act(\overline{S^{\iota}})$.
	\end{enumerate}
\end{proposition}

\begin{proof}
	\begin{enumerate}[(1).]
		\item If $w\in I$, then $N_w=K$. Thus, by definition of inverse, $N_{S^{\iota}}(w)=(I-w)\cup N_w=V(S)-w$, which shows that $w$ is universal in $S^{\iota}$.
		\item If $w\in K$, then $N_w=K-w$. Thus, by definition of inverse, $N_{S^{\iota}}(w)=N_w-K=\varnothing$, which shows that $w$ is isolated in $S^{\iota}$.
		\item We need to prove that $N_{S^{\iota}}(w)=I-w$. If $w$ is universal in $S$, then $w\in K$, since $S$ is non-empty. Thus, $N_{S^{\iota}}(w)=N_w-K=(V(S)-w)-K=I-w$. If $N_w=\varnothing$, then $w\in I$, since $S$ is non-empty. Therefore, $N_{S^{\iota}}(w)=(I-w)\cup N_w=I-w$.  
		\item We know that a vertex is active in a graph if and only if it is active in its complement. Therefore, $act(S^{\iota})=act(\overline{S^{\iota}})$. If $a\in act(S)$, then there exists an $H\in Q_S(P_4)$ such that $a\in V(H)$. If $a\in I$, then $H=axyz$, where $z\in I$ and $x,y\in K$. Since $\{ax,az,yz\}\subseteq E(S^{\iota})$ and $xy\notin E(S^{\iota})$, it follows that $xazy\subseteq S^{\iota}$. Moreover, since $ay,xz\notin S$, we have that neither $ay$ nor $xz$ belong to $S^{\iota}$. Therefore, $xazy\preceq S^{\iota}$, which shows that $a\in act(S^{\iota})$. If instead $a\in K$, then $a$ is active and independent in $\overline{S}$. Thus, $a$ is active in $(\overline{S})^{\iota}=\overline{S^{\iota}}$, where the last equality is valid thanks to Lemma \ref{inv.compl.conmutan}. Since $act(\overline{S^{\iota}})=act(S^{\iota})$, we obtain that $act(S)\subseteq act(S^{\iota})$. In other words, if a vertex is active in a split graph, then it is also active in its inverse. This means that if $a$ is active in $S^{\iota}$, then $a$ is active in $(S^{\iota})^{\iota}=S$; that is, $act(S^{\iota})\subseteq act(S)$.
	\end{enumerate}
\end{proof}

Proposition \ref{S.activo.iff.S^-1.activo} implicitly tells us that inversion does not generally preserve the property of being balanced. In fact, it is easy to find a balanced non-empty split graph $S$ that contains isolated or universal vertices. In such a case, by item (3) of Proposition \ref{S.activo.iff.S^-1.activo}, $S^{\iota}$ will be unbalanced. As a concrete example, consider the graph $(S,[3],\{a,b\})$ such that $N_a=\{1,2\}$ and $N_b=\{2,3\}$, which is balanced, and in which vertex 2 is universal. By inverting $S$, we obtain that $E(S^{\iota})=\{1a,2a,2b,3b,ab\}$. Since $N_{S^{\iota}}(2)=\{a,b\}$, we see that, indeed, vertex 2 is swing in $(S^{\iota},\{a,b\},[3])$. An analogous example can be constructed by taking $\overline{S}$, which is also balanced. Here, vertex 2 is isolated in $\overline{S}$, but swing in $((\overline{S})^{\iota},[3],\{a,b\})$.

\begin{corollary}
\label{act.Phi.simple.compl.iff...}
Consider the split graph $(G,K,I)$ with $|K|=|I|$, $K=\{x_v:v\in I\}$, and $N_G(v)=\{x_v\}$ for all $v\in I$. Let $S$ be a split graph such that $|\Phi(S)|=|I|$. Then, $S$ is active and $\Phi(S)$ is simple and complete if and only if $S$ is isomorphic to $G$ or $\overline{G^{\iota}}$.
\end{corollary}

\begin{proof}
($\Leftarrow$). By the definition of $G$, it is very easy to verify that it is active and $\Phi(G)$ is simple and complete. Verifying the same properties for $\overline{G^{\iota}}$ is immediate thanks to Proposition \ref{Phi(S)=Phi(S^-1)}.  

($\Rightarrow$). By Theorem \ref{Phi.simple.completo.implica...} we know that the independent vertices of $(S,K',I')$ all have the same degree $d$ in $S$, $d\in\{1,\omega-1\}$ and $\omega=|K'|=|I'|$. If $d=1$, the fact that $S$ is active and $|K'|=|I'|=|I|$ forces $S$ to be isomorphic to $G$. If $d=\omega-1\geq 2$, observe that all vertices of $I$ are leaves in $\overline{S^{\iota}}$. Then, since $\overline{S^{\iota}}$ is active by Proposition \ref{S.activo.iff.S^-1.activo}, and $|K'|=|I'|=|I|$, it is clear that $\overline{S^{\iota}}\approx G$. Thus, $S\approx (\overline{G})^{\iota}=\overline{G^{\iota}}$, by Lemma \ref{inv.compl.conmutan}.
\end{proof}

The following is a technical result that relates the subgraph of $\Phi(S)$, induced by a subset $A$ of its vertices, to the factor graph of the corresponding induced subgraph in $S$ that has $A$ as an independent set. It also shows that, in some way, $\Phi$ does not depend on universal vertices, i.e., removing universal vertices in the underlying split graph does not alter the structure of the factor graph. 

\begin{proposition}
\label{subgrafos.de.Phi}
Let $(S,K,I)$ be a split graph where $K=\bigcup_{u\in I}N_u$. For $A\subseteq I$, define 
\[ S_A =\left\langle\bigcup_{v\in A}(v\dot{\cup}N_v )\right\rangle_{S}, \ S_A'=S_A-\bigcap_{v\in A}N_v.\]
Then 
\[ \left( \bigcup_{v\in A} N_v, A \right), \ \left( \bigcup_{v\in A} N_v -\bigcap_{v\in A} N_v, A \right) \]
are bipartitions for $S_A$ and $S_A'$, respectively, and
\[ \Phi(S_A)=\langle A\rangle_\Phi =\Phi(S_{A}'). \] 
\end{proposition}

\begin{proof}
By definition, $\Phi(S_A), \langle A\rangle_\Phi$ and $\Phi(S_{A}')$ have the same vertex set, namely $A$. Then, to obtain the required equalities, it suffices to show that
\[ \sigma_{ab}(S_A)=\sigma_{ab}=\sigma_{ab}(S'_A), \]
for every pair of vertices $a,b\in A$. On one hand, we know that $\sigma_{ab}=(d_a-\eta_{ab})(d_b-\eta_{ab})$ (see \eqref{fórmula.sigma_uv}). On the other hand: $N_{S_A}(v)=N_v$, for all $v\in A$, by the definition of $S_A$. With this in mind, it follows immediately that $\sigma_{ab}(S_A)=\sigma_{ab}$.

To prove that $\sigma_{ab}(S'_A)=\sigma_{ab}$, observe that 
\[ N_{S_A'}(u)=N_u-\bigcap_{v\in A}N_v, \]
for all $u\in A$, and that 
\[ N_{S_A'}(a)\cap N_{S_A'}(b) = N_a\cap N_b-\bigcap_{v\in A}N_v. \]
Letting $\eta_A=|\bigcap_{v\in A}N_v|$, $d_u'=\deg_{S_A'}(u)$ and $\eta_{ab}(S_A')=\eta_{ab}'$, we then obtain that $d_u'=d_u-\eta_A$ and $\eta_{ab}'=\eta_{ab}-\eta_A$. Using this in the equality
\[ \sigma_{ab}(S_A')=(d_a'-\eta_{ab}')(d_b'-\eta_{ab}'), \]
we finally prove that $\sigma_{ab}(S_A')=\sigma_{ab}$.
\end{proof}

The following result is very important because it provides structural information about both $S$ and $\Phi(S)$ when the latter is simple and connected, depending on whether it is complete or not. In short, we show that $S$ and $\Phi(S)$ have the same clique number. 

\begin{theorem}
\label{Phi.simple.conexo.caracterizacion} 
Let $(S,K,I)$ be an active split graph such that $\Phi=\Phi(S)$ is simple and connected. Then one and only one of the following two possibilities occurs:
\begin{enumerate}
\item $\Phi$ is complete and $|K|=|I|$.
\item $|K|=\omega(\Phi)<|I|$ and every vertex of $\Phi$ belongs to some maximum clique of $\Phi$; in particular:
\[ |K|\leq 1+\min\{\deg_{\Phi}(v):v\in I\}. \]
\end{enumerate} 
\end{theorem}

\begin{proof}
	\begin{enumerate}[(1).]
		\item Consider a maximum clique $Q\subseteq I$ in $\Phi$. If $Q=I$, we are done because $\Phi$ is complete and, therefore, $|I|=|K|$, according to Theorem \ref{Phi.simple.completo.implica...}.
		\item Then, we assume that $Q\neq I$, i.e., $|Q|=\omega(\Phi)<|I|$. Thus, for each $x\in I-Q$ there must be a vertex $q\in Q$ such that $\sigma_{qx}=0$, i.e., $N_x =N_q$ (otherwise, $Q$ would not be a maximum clique). This fact has two important consequences. First, that $N_{\Phi}(x)=N_{\Phi}(q)$; this means, in particular, that $x$ forms another maximum clique in $\Phi$ together with the vertices of $Q-q$. Second, that $\bigcup_{Q}N_v =\bigcup_{I}N_v =K$ and $\bigcap_{Q}N_v =\bigcap_{I}N_v =\varnothing$. If $S_Q=( \langle \bigcup_{Q}(v\dot{\cup} N_v)\rangle_{S}, Q, K ) $, then $\Phi(S_Q)=\langle Q\rangle_{\Phi}$, by Proposition \ref{subgrafos.de.Phi}. Clearly, we have that $|Q|\geq 2$, because $S$ is active, and hence $\Phi$ has at least one edge. Since $\Phi(S_Q)$ is simple and complete, it follows from Theorem \ref{Phi.simple.completo.implica...} that $S_Q$ is balanced. But then, by Lemma \ref{Phi.completo.implica.inactivos.universales}, $S_Q$ is active, since $\bigcap_{Q}N_v =\varnothing$. Finally, we can again apply Theorem \ref{Phi.simple.completo.implica...} to infer that $|K|=|Q|$.
		
		Let $v$ be any vertex of $\Phi$. Since $v$ is part of a clique of size $|K|$ in $\Phi$, it follows that $\deg_{\Phi}(v)\geq |K|-1$.
	\end{enumerate}
\end{proof}

\begin{corollary}
\label{Phi.simple.conexo.deg.cotas}
Let $(S,K,I)$ be an active split graph and let $\alpha=|I|$ and $\omega=|K|$. If $\Phi(S)$ is simple and connected, then:
\begin{enumerate}
\item $\omega\leq\alpha$;
\item $S$ and $\Phi(S)$ have the same clique number;
\item if $\Phi(S)$ is complete, then \[ \deg(S)=\binom{\alpha}{2}; \]
\item if $\Phi(S)$ is not complete, then \[ \left\lceil\frac{\alpha}{2}(\omega-1)\right\rceil\leq\deg(S)<\binom{\alpha}{2}. \]
\end{enumerate}
\end{corollary}

\begin{proof}
\begin{enumerate}[(1).]
\item It is part of the content of Theorem \ref{Phi.simple.conexo.caracterizacion}.
\item Same as (1).
\item $\deg(S)=\sum_{\{u,v\}\subseteq I}\sigma_{uv}(S)=\sum_{\{u,v\}\subseteq I}1=\binom{\alpha}{2}$, since $\Phi(S)$ is simple and complete.
\item The upper bound is a trivial consequence of assuming that $\Phi$ is not complete. On the other hand, the lower bound is deduced from Theorem \ref{Phi.simple.conexo.caracterizacion} via the Handshake Lemma. Indeed,
\[ |\Phi|\min\{\deg_{\Phi}(v):v\in I\}=\alpha(\omega-1)\leq \]
\[ \sum_{v\in I}\deg_{\Phi}(v)=2\Vert\Phi\Vert=2\deg(S). \]
\end{enumerate}
\end{proof}

\begin{corollary}
\label{Phi(S).simple.conexo.no-compl.implica.Phi(S^c)...}
Let $S$ be an active split graph. If $\Phi(S)$ is simple, connected, but not complete, then $\Phi(\overline{S})$ is complete but not simple.
\end{corollary}

\begin{proof}
Being $(K,I)$ the bipartition of $S$, we know that $|K|=\omega(\Phi(S))<|I|$, by Theorem \ref{Phi.simple.conexo.caracterizacion}. This means that $\Vert\Phi(S)\Vert >\binom{|K|}{2}$. Thus, $\Phi(\overline{S})$ cannot be simple, since $\deg(\overline{S})=\deg(S)$, by Corollary \ref{degG=deg(G.complemento)}. 

We will now prove that $\Phi(\overline{S})$ is complete. To this end, observe the following: if for each $v\in I$ we remove all $u\in I-v$ that are twins of $v$ in $S$, it is clear (by Theorem \ref{Phi.simple.conexo.1} and by item (1) of Theorem \ref{S.homogeneo.implica...}) that we obtain an active split graph $(H,J,K)\preceq S$ such that $\Phi(H)$ is simple and complete. Consequently, $\Phi(\overline{H})$ is also simple and complete, by Corollary \ref{act.Phi.simple.compl.iff...}. This means that $\sigma_{xy}(\overline{H})\neq 0$ for all $\{x,y\}\subseteq K$. Since $\overline{H}\preceq\overline{S}$, then $\sigma_{xy}(\overline{S})\neq 0$ for all $\{x,y\}\subseteq K$ as well, which shows that $\Phi(\overline{S})$ is complete.  
\end{proof}

Finally, with the next Theorem, all active split graphs $(S,K,I)$ with $|K|\leq|I|$ such that $\Phi(S)$ is simple and connected are completely characterized.

\begin{theorem}
\label{caract.Phi.simple.conexo}
Given a split graph $(S,K,I)$ with $2\leq|K|\leq |I|$ and $K=\bigcup_{v \in I}N_S(v)$, consider the associated split graph $(R,K,I)$ such that $\deg_R(v)=1$ for all $v\in I$, where $u\cong_R v$ if and only if $\sigma_{uv}(S)=0$, for all $\{u,v\}\subseteq I$. Then $S$ is active and $\Phi(S)$ is simple and connected, if and only if $S$ is isomorphic to $R$ or $\overline{R^{\iota}}$.
\end{theorem}

\begin{proof}
($\Leftarrow$). Thanks to Proposition \ref{Phi(S)=Phi(S^-1)}, it suffices to consider the case $S\approx R$. Since $\deg_S(v)=1$ for all $v\in I$, it is obvious that $\sigma_{uv}(S)\leq 1$ for all $\{u,v\}\subseteq I$. Therefore, $\Phi(S)$ is simple. Moreover, since $2\leq|K|\leq |I|$ and $K=\bigcup_{v \in I}N_S(v)$ by hypothesis, it is evident that $S$ is active. Since $S$ is homogeneous, we conclude that $\Phi(S)$ is connected, by Theorem \ref{S.homogeneo.implica...}. 

($\Rightarrow$). If for each $v\in I$ we remove all $u\in I-v$ that are twins of $v$ in $S$, it is clear (by Theorem \ref{Phi.simple.conexo.1} and by item (1) of Theorem \ref{S.homogeneo.implica...}) that we obtain an active split graph $(H,J,K)\preceq S$ such that $\Phi(H)$ is simple and complete. Let $(G,J,K)$ be the split graph such that $K=\{x_j:j\in J\}$ and $N_G(j)=\{x_j\}$, for all $j\in J$. Then, $H$ is isomorphic to $G$ or $\overline{G^{\iota}}$, by Corollary \ref{act.Phi.simple.compl.iff...}. It is important to note that, obviously, $N_H(v)=N_S(v)$, for all $v\in I$. If $H\approx G$, let $i\in I-J$ be a twin of $j$ in $S$. Since $N_S(i)=N_S(j)=N_H(j)=\{x_j\}$, we conclude that $\deg_S(v)=1$, for all $v\in I$. In other words, $S\approx R$. If $H\approx\overline{G^{\iota}}$, let $i\in I-J$ be a twin of $j$ in $S$. Since $N_S(i)=N_S(j)=N_H(j)=K-x_j$, we conclude that $\deg_S(v)=|K|-1$, for all $v\in I$. This shows that $S\approx\overline{R^{\iota}}$.
\end{proof}


\section{Induced paths and cycles in $\Phi$} \label{sec:caminos,ciclos.en.Phi}

In this section, we continue studying the implications of Proposition \ref{prop.basicas.sigma_uv} on the structure of the factor graph $\Phi(S)$ of a split graph $S$. 

We begin by studying induced paths in $\Phi$. We note that the presence of such a path $P\preceq\Phi$ induces certain inclusion chains in the neighborhoods in $S$ of the vertices of $P$. Thanks to this, we also see how the degree in $S$ of the vertices of $P$ tends to increase along $P$, moving from one end to the other. These results are then applied to study the length of induced cycles in $\Phi$ (since these contain induced paths), discovering that it is at most 4. 

Subsequently, we return to induced paths, now analyzing the multiplicity of their edges. We find that only the first or last edge can be simple (i.e., of multiplicity 1). This result allows us to conclude the section with an interesting upper bound for the diameter of $\Phi(S)$, in terms of the degree of $S$.

\begin{lemma}
	\label{lema.camino.inducido.en.Phi}
	Let $(S,K,I)$ be a split graph and $P=v_1\ldots v_n$ an induced path in $\Phi(S)$, with $n\geq 2$. If $d_i=\deg_S(v_i)$, $N_i=N_S(v_i)$ and $d_1=\max\{d_i:i\in[n]\}$, then for each $i\in[n]$ we have
	\begin{equation*}
		d_i\geq d_j \ \text{and} \ N_i\supseteq N_j \ \text{for all} \ j\geq i+2.
	\end{equation*}
\end{lemma}

\begin{proof}
	We proceed by induction on $i$. Since $\sigma_{1j}=0$ and $d_1 \geq d_j$ for all $j\geq 3$, we have $N_1\supseteq N_j$, for all $j\geq 3$. For $i\geq 2$, we assume that $d_{i-1}\geq d_j$ and $N_{i-1}\supseteq N_j$, for all $j\geq i+1$. If for some $k\geq i+2$ we had $d_i<d_k$, then we would have $N_i\subset N_k\subseteq N_{i-1}$, where the last containment follows from the inductive hypothesis. This would imply that $\sigma_{i-1,i}=0$, which contradicts that $P$ is a path.
\end{proof}

\begin{lemma}
	\label{no.existe.Phi=P_5.tal.que...}
	There is no split graph $S$ such that $\Phi(S)$ contains an induced path $v_5 v_4 v_1 v_2 v_3$ where $\max\{ d_i=\deg_S(v_i):i\in[5]\}=d_1$.
\end{lemma}

\begin{proof}
	Suppose such a split graph $S$ exists. Then: $N_5,N_3\subseteq N_1$, since $\sigma_{13}=0=\sigma_{15}$ and $d_3,d_5\leq d_1$. As $\sigma_{24}=0$, we can assume without loss of generality that $N_4\subseteq N_2$. Then, it must be that $N_4\subseteq N_3$, since otherwise $N_3\subseteq N_4$ would imply $\sigma_{23}=0$. But then $N_4\subseteq N_1$, and therefore $\sigma_{14}=0$, which is a contradiction.
\end{proof}

\begin{theorem}
	\label{caminos.inducidos.en.Phi}
	Let $S$ be a split graph and $P=v_1\ldots v_n$ an induced path in $\Phi(S)$, with $n\geq 2$. If $d_i=\deg_S(v_i)$, then
	\begin{equation}
		\max\{ d_i:i\in[n] \}\in\{d_1,d_2,d_{n-1},d_n\}.
		\label{max.d_i}
	\end{equation}
	Moreover, if $\max\{ d_i:i\in[n] \}=d_1$, then:
	\begin{enumerate}
		\item for each $i\geq 1$: $d_i\geq d_j \ \text{and} \ N_i=N_S(v_i)\supseteq N_j \ \text{for all} \ j\geq i+2$;
		\item for each $i\geq 1$: $N_i\supseteq\bigcup_{j= i+2}^n N_j$; in particular, $\bigcup_{i=1}^n N_i=N_1\cup N_2$;
		\item $d_{2k}\geq d_{2k+2}$ and $d_{2k-1}\geq d_{2k+1}$ for all $k\geq 1$;
		\item $\min\{ d_i:i\in[n] \}\in\{d_{n-1},d_n\}$.
	\end{enumerate} 
\end{theorem}

\begin{proof}
	We begin by proving (\ref{max.d_i}). If $n\leq 4$, there is nothing to do. If $n\geq 5$, suppose that $\max\{ d_i:i\in[n] \}=d_j$, where $j\notin\{1,2,n-1,n\}$. Then $v_{j-2}v_{j-1}v_jv_{j+1}v_{j+2}\preceq \Phi(S)$, but by Lemma \ref{no.existe.Phi=P_5.tal.que...} we know this is impossible. 
	\begin{enumerate}[(1).]
		\item It is the content of Lemma \ref{lema.camino.inducido.en.Phi}.
		\item It is an immediate consequence of (1).
		\item From (1) it immediately follows that $d_1\geq d_3\geq d_5\geq d_7\geq\ldots$ and that $d_2\geq d_4\geq d_6\geq d_8\geq\ldots$.
		\item If $\min\{ d_i:i\in[n] \}=d_j$ for some $j<n-1$, then $d_j\leq d_n$, which contradicts (1).
	\end{enumerate}  
\end{proof}

The following is a direct application of Theorem \ref{caminos.inducidos.en.Phi}. Through it, we find an interesting divisibility relationship involving the multiplicity of one of the terminal edges of an induced path in $\Phi$. 

\begin{corollary}
	\label{K-d_max.divide.sigma}
	Let $(S,K,I)$ be a split graph such that $K=\bigcup_{v\in I}N_v$. If 
	$P=v_1\ldots v_n$ is an induced path in $\Phi(S)$ where $d_1=\max\{d_i:i\in[n],n\geq 2\}$, then $|\bigcup_{i=1}^n N_i|-d_1$ is a divisor of $\sigma_{12}$ less than or equal to $\sqrt{\sigma_{12}}$. In particular, if $\Phi=P$, then $|K|-d_1$ is a divisor of $\sigma_{12}$ less than or equal to $\sqrt{\sigma_{12}}$.
\end{corollary}

\begin{proof}
	Since $P$ is an induced path in $\Phi$, we have that $\bigcup_{i=1}^n N_i$ $=N_1\cup N_2$, by Theorem \ref{caminos.inducidos.en.Phi}. Applying formula \eqref{formulafamosa.union.intersec}, it follows that $|\bigcup_{i=1}^n N_i|-d_1=d_2-\eta_{12}$. We complete the proof using that $\sigma_{12}=(d_1-\eta_{12})(d_2-\eta_{12})$ and that $d_1\geq d_2$.
\end{proof}

Since every induced cycle in a graph (or multigraph) contains induced paths, we can apply Theorem \ref{caminos.inducidos.en.Phi} to such paths in $\Phi$. In this way, we obtain an extremely interesting result: $\Phi$ can only contain induced cycles of length 3 or 4. This is precisely the content of Theorem \ref{ciclos.inducidos.en.Phi}, whose proof is essentially divided between the following two lemmas. 

\begin{lemma}
	\label{no.hay.C_5.en.Phi.con.|C|<6}
	If $S$ is a split graph and $C$ is an induced cycle in $\Phi(S)$, then $|C|\leq 5$.
\end{lemma}

\begin{proof}
	Assume that $|C|\geq 6$. If $C=v_1\ldots v_nv_1$, we can assume without loss of generality that $d_1=\max\{d_i:i\in[n]\}$ ($d_i=\deg_S(v_i)$). Then, $P=v_1\ldots v_{n-1}$ and $P'=v_1v_n v_{n-1}\ldots v_3$ are induced paths of $\Phi$. Since $n\geq 6$, we obtain that $N_3\supseteq N_{n-1}$ and $N_{n-1}\supseteq N_3$, applying Theorem \ref{caminos.inducidos.en.Phi} to $P$ and $P'$ respectively. Consequently, $N_3=N_{n-1}$ and, therefore, also $N_{\Phi}(v_3)=N_{\Phi}(v_{n-1})$. This means that $v_2v_{n-1} \in\Phi$, which is a contradiction.
\end{proof}

\begin{lemma}
	\label{no.hay.C_5.en.Phi}
	If $S$ is a split graph and $C$ is an induced cycle in $\Phi(S)$, then $|C|\neq 5$.
\end{lemma}

\begin{proof}
	Suppose that $\Phi(S)$ has an induced cycle $C$ of length 5. If $C=v_1\ldots v_5v_1$, we can assume without loss of generality that $\max\{d_i\in[5]\}=d_1$. Since $v_1v_2v_3v_4$ and $v_1v_5v_4v_3$ are induced paths in $S$, we can use Theorem \ref{caminos.inducidos.en.Phi} to infer that $N_4\subseteq N_2$ and that $N_3\subseteq N_5$. Since $\sigma_{25}=0$, we have $N_2\subseteq N_5$ or $N_5\subseteq N_2$. If $N_2\subseteq N_5$, then $N_4\subseteq N_2$, and hence $\sigma_{45}=0$. If $N_5\subseteq N_2$, then $N_3\subseteq N_5$, which implies $\sigma_{23}=0$. Both are contradictions.
\end{proof}

\begin{theorem}
	\label{ciclos.inducidos.en.Phi}
	If $S$ is a split graph and $C$ is an induced cycle in $\Phi(S)$, then $|C|\leq 4$.
\end{theorem}

\begin{proof}
	It follows directly from Lemmas \ref{no.hay.C_5.en.Phi.con.|C|<6} and \ref{no.hay.C_5.en.Phi}.
\end{proof}

As announced in the introduction to this section, we will now see a very interesting result about the simple edges of a $P_4\preceq\Phi$. 

\begin{theorem}
	\label{prohibido.P_4.sigma_23=1}
	If $S$ is a split graph, then $\Phi(S)$ cannot contain an induced path $v_1 v_2 v_3 v_4$ where $\sigma_{23}=1$.
\end{theorem}

The proof of Theorem \ref{prohibido.P_4.sigma_23=1} requires several steps. Therefore, we postpone it momentarily, as it will be obtained more easily by first proving some preliminary lemmas.  

\begin{lemma}
	\label{prohibido.P_4.sigma_23=1.d_1<d_2>d_4}
	If $S$ is a split graph, then $\Phi(S)$ cannot contain an induced path $v_1 v_2 v_3 v_4$ where $\sigma_{23}=1$ and $d_1\leq d_2\geq d_4$ ($d_i=\deg_S(v_i)$).
\end{lemma}

\begin{proof}
	Suppose there exists a split graph $S$ such that $v_1v_2v_3v_4\preceq\Phi=\Phi(S), \sigma_{23}=1$ and $d_1\leq d_2\geq d_4$. Since $\sigma_{23}=1$, by Proposition \ref{prop.basicas.sigma_uv} we have that $d_2=d_3$, which implies that $d_3\geq d_4$ and $N_1\subseteq N_3$. Since $\sigma_{14}=0$, we can assume without loss of generality that $d_4\leq d_1$ and that $N_4\subseteq N_1\subseteq N_3$. But this contradicts that $\sigma_{34}\neq 0$. 
\end{proof}

\begin{lemma}
	\label{prohibido.P_4.sigma_23=1.d_1>d_2<d_4}
	If $S$ is a split graph, then $\Phi(S)$ cannot contain an induced path $v_1 v_2 v_3 v_4$ where $\sigma_{23}=1$ and $d_1\geq d_2\leq d_4$.
\end{lemma}

\begin{proof}
	The proof is entirely analogous to that of Lemma \ref{prohibido.P_4.sigma_23=1.d_1<d_2>d_4}.
\end{proof}

The following proposition allows, among other things, to obtain a quick proof of Lemma \ref{prohibido.P_4.sigma_23=1.d_1<d_2<d_4}. 

\begin{proposition}
	\label{P_3.d_1<d_2.sigma_23=1.implica...}
	Let $S$ be a split graph. If $\Phi(S)$ contains an induced path $v_1v_2v_3$ such that $d_1\leq d_2$ and $\sigma_{23}=1$, then:
	\begin{enumerate}
		\item $N_1-N_2=\{x\}=N_3-N_2$, for some $x\in V(S)$.
		\item $N_3=x\dot{\cup}(N_2\cap N_3)\subset N_1\cup N_2$.
		\item $\sigma_{12}=d_2-d_1+1$.
	\end{enumerate} 
\end{proposition}

\begin{proof}
	First, recall that $d_2=d_3$ and $|N_2-N_3|=1=|N_3-N_2|$, since $\sigma_{23}=1$. Moreover, $N_1\subseteq N_3$, since $\sigma_{13}=0$ and $d_1\leq d_3$. If $|N_1-N_2|>1$, it is clear that we would also have $|N_3-N_2|>1$, since $N_1\subseteq N_3$. Therefore, it must be that $N_1-N_2=\{x\}=N_3-N_2$, for some $x\in V(S)$. 
	
	Since $N_3-N_2=N_3-N_2\cap N_3$, it follows that $N_3=x\dot{\cup}(N_2\cap N_3)$. In particular, $N_3\subset N_1\cup N_2$, and the inclusion is strict because $N_2-N_3\neq\varnothing$.
	
	Finally, since $\sigma_{12}=(d_1-\eta_{12})(d_2-\eta_{12})$ and $d_1-\eta_{12}=|N_1-N_2|=1$, we easily obtain that $\sigma_{12}=d_2-d_1+1$.  
\end{proof}

\begin{lemma}
	\label{prohibido.P_4.sigma_23=1.d_1<d_2<d_4}
	If $S$ is a split graph, then $\Phi(S)$ cannot contain an induced path $v_1 v_2 v_3 v_4$ where $\sigma_{23}=1$ and $d_1\leq d_2\leq d_4$.
\end{lemma}

\begin{proof}
	Suppose such a path exists in $\Phi(S)$. Since $d_1\leq d_2=d_3\leq d_4$ and $\sigma_{14}=\sigma_{24}=0$, it follows that $N_1\cup N_2\subseteq N_4$. But thanks to Proposition \ref{P_3.d_1<d_2.sigma_23=1.implica...} we know that $N_3\subseteq N_1\cup N_2$. Thus, $\sigma_{34}=0$, a contradiction.
\end{proof}

We are now ready to prove Theorem \ref{prohibido.P_4.sigma_23=1}.

\begin{proof}[Proof of Theorem \ref{prohibido.P_4.sigma_23=1}]
	Suppose there exists a split graph $S$ such that
	\[ v_1v_2v_3v_4\preceq\Phi=\Phi(S) \ \text{and} \ \sigma_{23}=1. \]
	Since $d_2=d_3$ (by Proposition \ref{prop.basicas.sigma_uv}), we only have 3 possibilities for the degrees of $v_1, v_2$ and $v_4$ in $S$: 1) $d_1\leq d_2\geq d_4$; 2) $d_1\geq d_2\leq d_4$; 3) $d_1\leq d_2\leq d_4$. However, these cases conflict with Lemmas \ref{prohibido.P_4.sigma_23=1.d_1<d_2>d_4}, \ref{prohibido.P_4.sigma_23=1.d_1>d_2<d_4} and \ref{prohibido.P_4.sigma_23=1.d_1<d_2<d_4}.
\end{proof}

A very important immediate consequence of Theorem \ref{prohibido.P_4.sigma_23=1} is that, in any induced path of $\Phi$, only the first or last edge can be simple. This possibility is indeed realized; that is: there exist split graphs $S$ such that $\Phi(S)$ contains an induced $P_4$ $1234$ with $\sigma_{12}=1$. As an example, we can take the (active) split graph $(G,\{x,y,z,t\},[4])$ where $N_1=x, N_2=y, N_3=\{x,z\}$ and $N_4=\{x,y,t\}$ (in this case: $\Phi\approx P_4$ and $\sigma_{23}=2=\sigma_{34}$).

\begin{corollary}
	\label{no.aristas.simples.internas.en.P_n}
	Let $S$ be a split graph and let $v_1\ldots v_n$ ($n\geq 2$) be an induced path in $\Phi(S)$. If $\sigma_{i,i+1}=1$ for some $i\in[n-1]$, then $i\in\{1,n-1\}$. 
\end{corollary}

\begin{proof}
	It follows immediately from Theorem \ref{prohibido.P_4.sigma_23=1}.
\end{proof}

Recall that in this work, distances between vertices in a multigraph are measured ignoring edge multiplicities. In other words, the metric in $\Phi$ is the usual metric of a simple graph. On the other hand, the fact that paths maximize the diameter of connected graphs with fixed size is a classical result. Therefore, the diameter of a connected $\Phi$ cannot exceed the length of a path multigraph $P$ with the same size, i.e., $\deg(S)$. Clearly, the multiplicity of each edge of $P$ should be as small as possible, if we want to maximize its length in terms of the degree of $S$. Using Corollary \ref{no.aristas.simples.internas.en.P_n}, it is then easy to conclude that the internal edges of $P$ must all have multiplicity 2, while its terminal edges (i.e., the first and last) must have multiplicity 1 or 2. We have thus obtained the following inequality.

\begin{theorem}
	If $S$ is a split graph such that $\Phi=\Phi(S)$ is connected, then
	\[ diam(\Phi)\leq\lceil (\deg(S)+1)/2 \rceil . \] 
\end{theorem}

\begin{proof}
	It follows from the previous discussion.
\end{proof}


\section{Flow configuration} \label{sec:config.flujo}

The \textbf{flow configuration} of a split graph $(S,K,I)$, denoted by $\vec{\Phi}(S)$, is defined as the digraph with $I$ as vertex set, where there is an arc $(u,v)$ from $u$ to $v$ if and only if $\deg_S(u)\leq\deg_S(v)$ and $\sigma_{uv}(S)>0$. Clearly, $\vec{\Phi}(S)\approx \Phi(S)$ as simple graphs, ignoring multiple edges and arc directions. Furthermore: $uv\in\vec{\Phi}(S)$ and $vu\notin\vec{\Phi}(S)$ if and only if $\sigma_{uv}(S)>0$ and $\deg_S(u)<\deg_S(v)$.

Let $(S,K,\{a,b,c\})$ be a split graph such that $\vec{\Phi}=\vec{\Phi}(S)$ is a triangle, and consider the following 4 digraphs (see Figure \ref{triangulos.permitidos}):
\begin{enumerate}
	\item $\Delta_0 =(\{a,b,c\},\{ab,bc,ac\})$ (type 0);
	\item $\Delta_1^+ =(\{a,b,c\},\{ab,cb,ac,ca\})$ (type $1^+$);
	\item $\Delta_1^- =(\{a,b,c\},\{ba,bc,ac,ca\})$ (type $1^-$);
	\item $\Delta_3 =(\{a,b,c\},\{ab,ba,bc,cb,ac,ca\})$ (type 3). 
\end{enumerate}

\begin{figure}[h]
	\centering
	\includegraphics[scale=0.8]{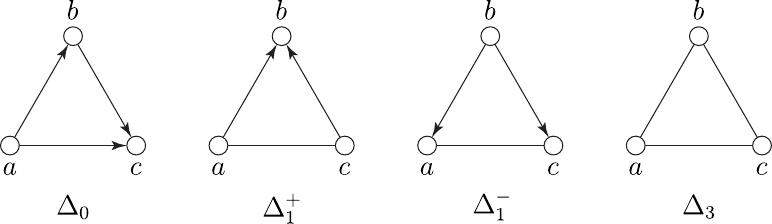}
	\caption{Permitted triangles in $\vec{\Phi}$.}
	\label{triangulos.permitidos}
\end{figure}

If $\{ab,ba,bc,cb\}\subseteq\vec{\Phi}$ (that is, $\vec{\Phi}$ has at least two edges), then $d_a=d_b$ and $d_b=d_c$. Hence, $d_a=d_c$ as well, and therefore $\vec{\Phi}=\Delta_3$. If $\vec{\Phi}$ has exactly one edge, we may assume without loss of generality that $ac,ca\in \vec{\Phi}$. If $ab\in \vec{\Phi}$, then $cb$ must also belong to $\vec{\Phi}$, because if $bc\in \vec{\Phi}$, we would get $d_a\leq d_b\leq d_c$, which would again force $\vec{\Phi}=\Delta_3$. Thus, $\vec{\Phi}=\Delta_1^+$. On the other hand, if $ba\in \vec{\Phi}$, similar arguments yield $\vec{\Phi}=\Delta_1^-$. Finally, suppose $\vec{\Phi}$ has no edges. Without loss of generality, assume $ab,bc\in\vec{\Phi}$. If $ca\in \vec{\Phi}$, then $d_a\leq d_b\leq d_c\leq d_a$, hence $\vec{\Phi}=\Delta_3$, a contradiction. Thus, $ac\in \vec{\Phi}$, and so $\vec{\Phi}=\Delta_0$. We have just proved the following theorem.

\begin{theorem}
	\label{triang.dir.en.Phi_lujo}
	Let $(S,K,\{a,b,c\})$ be a split graph such that $\vec{\Phi}(S)$ is a triangle. Then $\vec{\Phi}(S)$ is one of the 4 digraphs in Figure \ref{triangulos.permitidos}.
\end{theorem}

\begin{proof}
	It follows from the previous discussion.
\end{proof}

We say that $S$ is \textbf{square-free} if $\sigma_{uv}(S)$ is not a positive square for any pair $\{u,v\}\subseteq I$. In this case, we can also say that $\Phi(S)$ is square-free.

\begin{proposition}
	\label{squarefree&d_u=d_v.implica.N_u=N_v}
	Let $S$ be a square-free split graph. If $\deg_S(u)=\deg_S(v)$, then $\sigma_{uv}(S)=0$, $N_S(u)=N_S(v)$, and $N_{\Phi(S)}(u)=N_{\Phi(S)}(v)$.   
\end{proposition}

\begin{proof}
	If $d=\deg_S(u)$, then $\sigma_{uv}(S)=(d-\eta_{uv})^2$. Since $S$ is square-free, we must have $\sigma_{uv}=0$, which implies $d=\eta_{uv}$, that is, $|N_u|=|N_v|=|N_u\cap N_v|$. Hence, $N_u =N_v$, and therefore $N_{\Phi}(u)=N_{\Phi}(v)$, by Proposition \ref{prop.basicas.sigma_uv}.
\end{proof}

In other words, Proposition \ref{squarefree&d_u=d_v.implica.N_u=N_v} states that when $S$ is square-free, its independent vertices with the same degree cannot be adjacent in $\Phi$. Moreover, such vertices are twins in both $S$ and $\Phi$.   

\begin{corollary}
	\label{S.squarefree.implica.triang_tipo0}
	Let $S$ be a square-free split graph. If $\vec{\Phi}(S)$ contains a triangle $T$, then $T$ is of type 0.
\end{corollary}

\begin{proof}
	It follows immediately from Theorem \ref{triang.dir.en.Phi_lujo} using Proposition \ref{squarefree&d_u=d_v.implica.N_u=N_v}.
\end{proof}

\begin{theorem}
	Let $S$ be a split graph such that $\vec{\Phi}(S)=\Delta_1^+$ (see Figure \ref{triangulos.permitidos}). Then $\sigma_{ab}=\sigma_{bc}$ if and only if $\eta_{ab}=\eta_{bc}$.
\end{theorem}

\begin{proof}
	Note that $\sigma_{ab}=\sigma_{bc}$ is equivalent to
	\[(d_a-\eta_{ab})(d_b-\eta_{ab})=(d_b-\eta_{bc})(d_c-\eta_{bc}).\]
	But since $ac,ca\in \Delta_1^+$, we have $d_c=d_a$ in $S$. Then the equivalence becomes
	\[(d_a-\eta_{ab})(d_b-\eta_{ab})=(d_b-\eta_{bc})(d_a-\eta_{bc}).\]	
	Which is equivalent to:
	\begin{align*}
		d_ad_b-(d_a+d_b)\eta_{ab}+\eta_{ab}^2-d_ad_b+(d_a+d_b)\eta_{bc}-\eta_{bc}^2=&0\\
		(d_a+d_b)(\eta_{bc}-\eta_{ab})+\eta_{ab}^2-\eta_{bc}^2=&0\\
		(d_a+d_b)(\eta_{bc}-\eta_{ab})+(\eta_{ab}+\eta_{bc})(\eta_{ab}-\eta_{bc})=&0\\
		(\eta_{bc}-\eta_{ab})(d_a+d_b-\eta_{ab}-\eta_{bc})=&0.
	\end{align*}	
	Since $\sigma_{ab},\sigma_{bc}>0$, it follows that $d_a-\eta_{ab}>0$ and $d_b-\eta_{bc}>0$, so $d_a-\eta_{ab}+d_b-\eta_{bc}>0$. Thus, $\sigma_{ab}=\sigma_{bc}$ if and only if $\eta_{ab}=\eta_{bc}$.
\end{proof}

\begin{lemma}
	\label{lema.5.digrafos.C4.prohibidos}
	Let $S$ be a split graph and let $C$ be an induced 4-cycle in $\vec{\Phi}(S)$. Then $C$ cannot be any of the 5 digraphs shown in Figure \ref{5.digrafos.C4.proibidos}.
	\begin{figure}[H]
		\centering
		\includegraphics[scale=0.8]{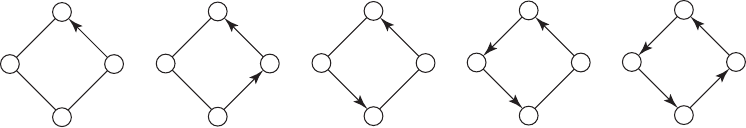}
		\caption{The forbidden $C_4$ digraphs in $\vec{\Phi}$.}
		\label{5.digrafos.C4.proibidos}
	\end{figure}
\end{lemma}

\begin{proof}
	Since all digraphs in Figure \ref{5.digrafos.C4.proibidos} have at least one arc, let $V(C)=\{a,b,c,d\}$ and let $ab$ be the arc. Moreover, for $v\in V(C)$, let $d_v=\deg_S(v)$. Note that for all the considered digraphs we have $d_a<d_b\leq d_c\leq d_d\leq d_a$. This gives $d_a<d_a$, which is absurd.
\end{proof}

\begin{theorem}
	\label{C_4.dir.en.Phi_lujo}
	Let $S$ be a split graph and let $C$ be an induced 4-cycle in $\vec{\Phi}(S)$. Then $C$ is one of the 10 digraphs shown in Figure \ref{10.digrafos.C4.posibles}.
	\begin{figure}[H]
		\centering
		\includegraphics[scale=0.8]{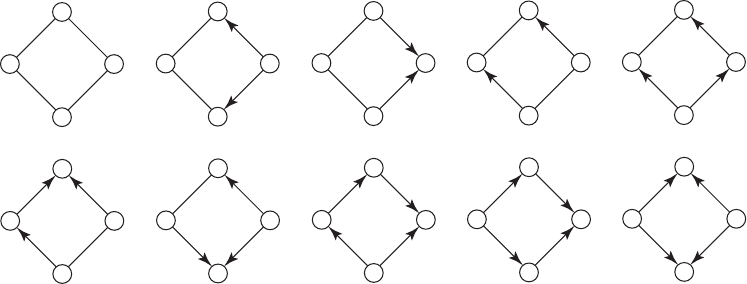}
		\caption{The allowed $C_4$ digraphs in $\vec{\Phi}$.}
		\label{10.digrafos.C4.posibles}
	\end{figure}
\end{theorem}

\begin{proof}
	Up to relabeling, there are 15 digraphs isomorphic to $C_4$: the 10 shown in Figure \ref{10.digrafos.C4.posibles} plus the 5 in Figure \ref{5.digrafos.C4.proibidos}. The result follows from Lemma \ref{lema.5.digrafos.C4.prohibidos}.
\end{proof}


\section{Linear split graphs}

We say that a split graph $(S,K,I)$ is $\varepsilon$\textbf{-linear} if there exists some constant $\varepsilon>0$ such that $|\deg_S(u)-\deg_S(v)|=\varepsilon$ for every edge $uv\in\Phi(S)$. In other words, it is required that for every pair of adjacent vertices in $\Phi$, the absolute difference of their respective degrees in $S$ is constant. Alternatively, when specifying the value of $\varepsilon$ is not important, we may simply say that $S$ (or $\Phi$, or $\vec{\Phi}$) is \textbf{linear}.

From the definition, it follows that if $S$ is $\varepsilon$-linear, then $\vec{\Phi}$ contains no edges. Indeed, if $uv$ is an arc in $\vec{\Phi}$, we have $d_v=d_u+\varepsilon>d_u$ (i.e., the degree strictly increases in the direction of the arc). The term ``linear" is inspired by the well-known fact that a linear function has a constant incremental ratio. We will also see that if $S$ is linear, then $\vec{\Phi}$ is ``essentially" a path, reinforcing this concept. Indeed, one of the central results of this section is that the twin quotient of $\Phi(S)$ is a path when $S$ is a linear split graph. Notably, the vertices in each equivalence class all have the same degree in $S$. As a consequence, the metric in $\Phi$ is surprisingly simple. In fact, we find that the distance between $u$ and $v$ in $\Phi$ depends only on $\varepsilon$ and the difference between the respective degrees of $u$ and $v$ in $S$.

We also study the induced cycles in $\vec{\Phi}$, showing that they can only be of order 4 and of two types (recall that, in principle, there are 10 possible types; see Figure \ref{10.digrafos.C4.posibles}). Last but not least, we again relate split graphs to Number Theory, through results on those that are $p$-simple (we will define this concept later), for $p$ prime, and with an interesting theorem about $\Phi(S)$, when $S$ is linear. \\

The first lemma of this section tells us that a cycle $C$ of order 4 induced in $\vec{\Phi}(S)$ can only be of two types when $S$ is $\varepsilon$-linear. If $\{u,x,y,z\}$ is the vertex set of $C$, then these are the only two possibilities for the arc set of $C$: 1) $\{ ux,uz,xy,zy \}$; 2) $\{ ux,uz,yx,yz \}$. These two digraphs can be seen in Figure \ref{2.digrafos.C4.posibles.split.lineal}. The digraph of the first case is called $C_4(0121)$, or we say that $C$ is of type 0121. The digraph of the second case is called $C_4(0101)$, or we say that $C$ is of type 0101. The reason for this nomenclature lies in the degrees in $S$ of the vertices of $C$. Indeed, in $C_4(0121)$, we have $d_u=d_u+0\varepsilon, d_x=d_u+1\varepsilon, d_y=d_u+2\varepsilon$, and $d_z=d_u+1\varepsilon$. Similarly, 0101 is derived in the other case.

\begin{figure}[h]
	\centering
	\includegraphics[scale=0.8]{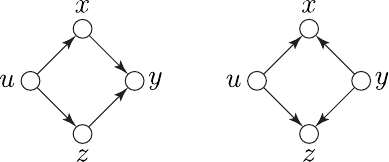}
	\caption{$C_4(0121)$ (left) and $C_4(0101)$ (right).}
	\label{2.digrafos.C4.posibles.split.lineal}
\end{figure}

\begin{lemma}
	\label{C_4.dir.split.lineal}
	Let $S$ be an $\varepsilon$-linear split graph and let $C$ be a cycle of order 4 in $\vec{\Phi}(S)$. Then $C$ is isomorphic to $C_4(0121)$ or $C_4(0101)$ (see Figure \ref{2.digrafos.C4.posibles.split.lineal}).
\end{lemma} 

\begin{proof}
	By Theorem \ref{C_4.dir.en.Phi_lujo}, it suffices to rule out that $C$ can have $\{ ux,xy,yz,uz \}$ as its arc set. Indeed, if this were the case, we would have $d_u+\varepsilon=d_z=d_u+3\varepsilon$, which is absurd.  
\end{proof}

If $S$ is linear, we can easily rule out the possibility that $\vec{\Phi}(S)$ contains triangles. Thus, we confirm that the induced cycles in $\vec{\Phi}(S)$, if any, can only be of length 4 and of type 0121 or 0101.

\begin{theorem}
	\label{ciclos.en.Phi(split.lin)}
	Let $S$ be an $\varepsilon$-linear split graph and let $C$ be an induced cycle in $\vec{\Phi}(S)$. Then, $C$ is isomorphic to $C_4(0121)$ or $C_4(0101)$ (see Figure \ref{2.digrafos.C4.posibles.split.lineal}).
\end{theorem}   

\begin{proof}
	Applying Theorems \ref{ciclos.inducidos.en.Phi} and \ref{triang.dir.en.Phi_lujo}, and Lemma \ref{C_4.dir.split.lineal}, we only need to verify that $C$ is not a triangle of type 0 (see Figure \ref{triangulos.permitidos}). Indeed, if it were, we would have $d_a+\varepsilon =d_b=d_a+2\varepsilon$, a clear absurdity. 
\end{proof}

We now turn our attention to the induced paths of $\vec{\Phi}(S)$, still in the case where $S$ is linear. For this, we first need a lemma about induced paths of length 3.  

\begin{lemma}
	\label{lema.P_4.lineal}
	Let $S$ be an $\varepsilon$-linear split graph and let $uxyz\preceq\Phi(S)$. If $d_v=\deg_S(v)$ and $d_u<d_x$, then $d_x<d_y<d_z$.
\end{lemma}

\begin{proof}
	If it were not true that $d_x<d_y<d_z$, then we would face three possible scenarios: 
	\begin{enumerate}[(1).]
		\item $d_y=d_u<d_x=d_u+\varepsilon=d_z;$ 
		\item $d_u<d_x=d_u+\varepsilon=d_z<d_y=d_u+2\varepsilon$;
		\item $d_u-\varepsilon=d_z<d_y=d_u<d_x=d_u+\varepsilon$.
	\end{enumerate}
	We see that $d_x=d_z$ in cases (1) and (2), while $d_u=d_y$ in the third. This implies, thanks to Proposition \ref{prop.basicas.sigma_uv}, that $x\cong_{\Phi}z$ or $u\cong_{\Phi}y$, respectively. But then $uz\in\Phi$, which contradicts $uxyz\preceq\Phi$.
\end{proof}

By repeatedly applying the previous lemma, we find that in an induced path $P\preceq\vec{\Phi}(S)$ of length 3 or more, the degrees in $S$ of the vertices of $P$ grow strictly and regularly from one end to the other of $P$.   

\begin{theorem}
	\label{los.caminos.largos.son.dir.en.Phi_lujo}
	Let $S$ be an $\varepsilon$-linear split graph and let $P=v_1v_2 \ldots v_n$ be an induced path in $\Phi(S)$ of order $n\geq 4$. If $d_i=\deg_S(v_i)$, then $d_1<d_2<\ldots<d_n$. Moreover, $d_i=d_1+(i-1)\varepsilon$, for all $i\in[n]$.  
\end{theorem} 

\begin{proof}
	We obtain $d_1<d_2<\ldots<d_n$ by applying Lemma \ref{lema.P_4.lineal} to each path $v_kv_{k+1}v_{k+2}v_{k+3}$ of order 4 contained in $P$, for each $k\in[n-3]$. The fact that $d_i=d_1+(i-1)\varepsilon$, for all $i\in[n]$, is straightforward to derive.  
\end{proof}

The following lemma essentially says that if the twin quotient of a graph $G$ is a path, then we can easily reconstruct much of the original structure of $G$. Indeed, we take each pair of consecutive equivalence classes in $[G]$ and perform a join (recall that we denote this operation by the symbol $+$) between their respective induced subgraphs in $G$, which we already know can only be cliques or independent sets (in $G$), by Proposition \ref{ind.clique.G.[G]}. Thus, to completely determine $G$, we only need to specify the cardinality of each class and whether it is a clique or an independent set.  

\begin{lemma}
	\label{aristas.de.G,si.[G].es.camino}
	Let $G$ be a graph. Then $[G]$ is the path $[v_1]\ldots[v_q]$ ($q\geq 2$) if and only if
	\[ E(G)=\bigcup_{i=1}^{q-1}E\big(\langle[v_i]\rangle_G+\langle[v_{i+1}]\rangle_G\big). \] 
\end{lemma}

\begin{proof}
	If $q=2$, the result is obvious. From now on, we assume $q\geq 3$.
	
	($\Rightarrow$). If $a\in[v_i]$ and $b\in[v_{i+1}]$, then $ab\in G$ by Proposition \ref{lema.fundamental.uv.[u][v]}, since $[v_i]$ and $[v_{i+1}]$ are neighbors in $[G]$ by hypothesis. Thus,
	\[ E(G)\supseteq\bigcup_{i=1}^{q-1}E\big(\langle[v_i]\rangle_G+\langle[v_{i+1}]\rangle_G\big). \]
	To prove that this inclusion cannot be proper, suppose that 
	\[ ab\in E(G)-\bigcup_{i=1}^{q-1}E\big(\langle[v_i]\rangle_G+\langle[v_{i+1}]\rangle_G\big). \]
	Then, $a\in[v_i], b\in[v_j]$ and $|j-i|\geq 2$. Since $[G]$ is the path $[v_1]\ldots[v_q]$, it follows that $[a][b]=[v_i][v_j]\notin E[G]$. But then $ab\notin E(G)$, by Proposition \ref{lema.fundamental.uv.[u][v]}, and we get a contradiction.
	
	($\Leftarrow$). Straightforward.
\end{proof}

We want to craft a theorem that includes the most relevant properties of a linear split graph $S$ and its associated graphs. To this end, we first prove two auxiliary results relating the twin vertices of $S$, their degrees in $S$, and the multiplicities in $\Phi(S)$ of the edges containing them.  

\begin{lemma}
	\label{lineal.mismo.grado.implica.gem}
	Let $(S,K,I)$ be a linear split graph and let $a,b\in I$. If $d_a=d_b$, then $\sigma_{ab}=0$. Moreover, $a$ and $b$ are twins in $S$ and in $\Phi(S)$.  
\end{lemma}

\begin{proof}
	If $ab\in E(\Phi)$ (i.e., $\sigma_{ab}\neq 0$), then $|d_a-d_b|>0$, since $S$ is linear. Thus, $d_a\neq d_b$. The fact that $a$ and $b$ are twins in $S$ and in $\Phi$ follows immediately from Proposition \ref{prop.basicas.sigma_uv}. 
\end{proof}

\begin{lemma}
	\label{sigma_xz=sigma_yt}
	Let $S$ be a split graph. If $x\cong_S y$ and $z\cong_S t$, then $\sigma_{xz}=\sigma_{yt}$.   
\end{lemma}

\begin{proof}
	Since $x\cong_S y$, we have $N_S(x)=N_S(y)$. Since $z\cong_S t$, we have $N_S(z)=N_S(t)$. Thus,
	\[ \sigma_{xz}=(\deg_S(x)-|N_S(x)\cap N_S(z)|)(\deg_S(z)-|N_S(x)\cap N_S(z)|)= \]
	\[ (\deg_S(y)-|N_S(y)\cap N_S(t)|)(\deg_S(t)-|N_S(y)\cap N_S(t)|)=\sigma_{yt}. \]
\end{proof}

\begin{theorem}
	\label{split.lineal.propiedades}
	Let $(S,K,I)$ be a balanced and $\varepsilon$-linear split graph such that $\Phi=\Phi(S)$ is connected and $[\Phi]\neq K_1$. Then $S$ has the following properties:   
	\begin{enumerate}
		\item if $v_i$ is a vertex in $I$ whose degree in $S$ is $\delta_i=\delta_1+(i-1)\varepsilon$, for some $i,\delta_1\geq 1$, then each class $[v_i]_{\Phi}$ is an independent set in $\Phi$;
		\item $[v_i]_S=\{ v\in I:\deg_S(v)=\delta_i \}= [v_i]_{\Phi}$;
		\item $[\Phi(S)]$ is the path $[v_1]_{\Phi}\ldots[v_q]_{\Phi}$, where $q=|[I]_S|=|[I]_{\Phi}|$;
		\item $E(\Phi)=\bigcup_{i=1}^{q-1}E\big(\langle[v_i]_{\Phi}\rangle_{\Phi}+\langle[v_{i+1}]_{\Phi}\rangle_{\Phi}\big)$, ignoring multiplicities; 
		\item if $\sigma_{ab}\neq 0$, then $a\in[v_i]_{\Phi}, b\in[v_{i+1}]_{\Phi}$ and $\sigma_{ab}=\sigma_{i,i+1}$, for some $i\in[q-1]$;
		\item $\deg(S)=\sum_{i=1}^{q-1}\sigma_{i,i+1}|[v_i]_{\Phi}||[v_{i+1}]_{\Phi}|$.
	\end{enumerate}
\end{theorem}

\begin{proof}
	For simplicity, throughout the proof, the symbol $[v_i]_{\Phi}$ will be replaced by $[v_i]$.
	\begin{enumerate}[(1).]
		\item If there exists a $u\in [v_i]$ such that $uv_i\in\Phi$, then both $u$ and $v_i$ have a common neighbor $w$ in another equivalence class, since $[\Phi]\neq K_1$ and $\Phi$ is connected (recall Proposition \ref{lema.fundamental.uv.[u][v]}). Hence, $\Phi$ contains the triangle $uv_iwu$, contradicting Theorem \ref{ciclos.en.Phi(split.lin)}.
		
		\item By Lemma \ref{lineal.mismo.grado.implica.gem}, we have $[v_i],[v_i]_S\supseteq\{ v\in I:\deg_S(v)=\delta_i \}$. If $x\in [v_i]_S$, then $x\in I$, by Lemma \ref{v.ind.nogemelo.v.clique}, and consequently $N_S(x)=N_S(v_i)$. Thus, $x\in [v_i]$, by Proposition \ref{prop.basicas.sigma_uv}. Moreover, it is clear that $\deg_S(x)=\delta_i$. Hence, $[v_i]_S\subseteq [v_i],\{ v\in I:\deg_S(v)=\delta_i \}$. 
		
		To prove that $[v_i]=[v_i]_S$, suppose there exists a vertex $x\in [v_i]-[v_i]_S$, i.e., $x$ is a twin of $v_i$ in $\Phi$ but not in $S$. This means that $N_S(x)\neq N_S(v_i)$. Since $[v_i]$ is an independent set in $\Phi$, by (1), it follows that $v_ix\notin\Phi$, and thus $N_S(x)\subset N_S(v_i)$ or $N_S(v_i)\subset N_S(x)$, by Proposition \ref{prop.basicas.sigma_uv}. Hence, $\deg_S(x)\neq\delta_i$. Since $\Phi$ is connected, it must be that $\deg_S(x)=\delta_j$ for some $j\neq i$, and thus $x\in[v_j]_S\subseteq [v_j]$, by the first part of (2). But then $x\in [v_i]\cap[v_j]=\varnothing$, absurd. 
		
		\item Since $\delta_{i+1}-\delta_i=\varepsilon$, it follows that $v_iv_{i+1}\in E(\Phi)$ for all $i\in[q-1]$, by the definition of an $\varepsilon$-linear split graph. Thus, $[v_i][v_{i+1}]\in E[\Phi]$ for all $i\in[q-1]$, by Proposition \ref{lema.fundamental.uv.[u][v]}. Now suppose that $[v_i][v_j]\in E[\Phi]$, for some $i,j\in[q]$ such that $|j-i|\geq 2$. This implies that $v_iv_j\in E(\Phi)$, by Proposition \ref{lema.fundamental.uv.[u][v]}. On the other hand, we have $|\delta_j-\delta_i|\geq 2\varepsilon$. But then $v_iv_j\notin E(\Phi)$, by the definition of an $\varepsilon$-linear split graph. This contradiction shows that $[\Phi]$ is indeed the path $[v_1]\ldots[v_q]$. 
		
		\item It is a direct application of Lemma \ref{aristas.de.G,si.[G].es.camino}, thanks to (3).   
		
		\item We know that $\sigma_{ab}\neq 0$ means $ab\in E(\Phi)$. Thus, $ab\in E\big(\langle[v_i]\rangle_{\Phi}+\langle[v_{i+1}]\rangle_{\Phi}\big)$ for some $i\in[q-1]$, by (4). More specifically, we have $a\in[v_i]$ and $b\in[v_{i+1}]$, by (1). Finally, combining Lemma \ref{sigma_xz=sigma_yt} with (2), we obtain $\sigma_{ab}=\sigma_{i,i+1}$.
		
		\item Since $G_i=\langle[v_i]\rangle_{\Phi}+\langle[v_{i+1}]\rangle_{\Phi}$ is a complete bipartite graph, by (1), we have $\Vert G_i\Vert=|[v_i]||[v_{i+1}]|$, for each $i\in[q-1]$. On the other hand, each edge of $G_i$ has multiplicity $\sigma_{i,i+1}$, by (5). The required formula then follows immediately from (4), since $E(G_i)\cap E(G_j)=\varnothing$ for all $j\neq i$, by (1).
	\end{enumerate}	  
\end{proof}

As a corollary of the previous theorem, we obtain that the metric of $\Phi(S)$, when $(S,K,I)$ is $\varepsilon$-linear, is completely determined by $\varepsilon$ and the degrees in $S$ of the vertices in $I$.

\begin{corollary}
	\label{distancias.en.split.lineal}
	Let $S$ be an $\varepsilon$-linear split graph such that $\Phi=\Phi(S)$ is connected, and let $u$ and $v$ be two distinct vertices in $I$. If $u\cong v$, then $dist_{\Phi}(u,v)=2$; otherwise:
	\[ dist_{\Phi}(u,v)=\frac{1}{\varepsilon}\Big|\deg_S(u)-\deg_S(v)\Big|. \]
	In particular, if $diam(\Phi)\geq 3$, then
	\[ diam(\Phi)=\frac{1}{\varepsilon}\Big(d_{\max}-d_{\min}\Big) =|[\Phi]|-1, \]
	where $d_{\max}=\max\{\deg_S(v):v\in I\}$ and $d_{\min}=\min\{\deg_S(v):v\in I\}$. 
\end{corollary}

\begin{proof}
	This is an immediate consequence of items (1), (3), and (4) of Theorem \ref{split.lineal.propiedades}. 
\end{proof}

We say that $S$ is $n$\textbf{-simple} if there exists a positive integer $n$ such that $\sigma_{uv}(S)\in\{0,n\}$ for all $\{u,v\}\subseteq I$. In this case, we may also say that $\Phi(S)$ (or $\vec{\Phi}$) is $n$-simple. When $n=1$, we recover for $\Phi$ the property of being a simple graph and use the term ``simple" instead of ``1-simple".

When $n$ is prime, it is easy to see that $S$ is $(n-1)$-linear. This is precisely the content of the next theorem.

\begin{theorem}
	\label{p-simple.implica.p-1-lineal}
	Let $p$ be a prime number. If $S$ is a $p$-simple split graph, then $S$ is $(p-1)$-linear.
\end{theorem}

\begin{proof}
	If $S$ is $p$-simple, let $uv$ be an arbitrary edge of $\Phi(S)$. Then, $\sigma_{uv}=(d_u-\eta_{uv})(d_v-\eta_{uv})=p$, and thus $(d_u-\eta_{uv},d_v-\eta_{uv})\in\{(p,1),(1,p)\}$, since $p$ is prime. Hence, $|(d_u-\eta_{uv})-(d_v-\eta_{uv})|=|d_u-d_v|=p-1$.
\end{proof}

When $p$ is an odd prime, Theorems \ref{p-simple.implica.p-1-lineal} and \ref{los.caminos.largos.son.dir.en.Phi_lujo} imply that in a $p$-simple split graph $S$, the degrees in $S$ of its independent vertices are all even or all odd, since $p-1$ is even.

\begin{corollary}
	Let $(S,K,I)$ be a $p$-simple split graph. If $p$ is prime and $p\neq 2$, then $d_u\equiv d_v\pmod 2$, for all $\{u,v\}\subseteq I$.
\end{corollary}

\begin{proof}
	It follows from the previous discussion.
\end{proof}

\begin{theorem}
	\label{lineal.K-d_max.divide.sigma}
	Let $(S,K,I)$ be a linear split graph such that $K=\bigcup_{v\in I}N_v$, where the independent vertices have the following degrees in $S$: $\delta_1>\delta_2>\ldots >\delta_q$, for some integer $q$. If $a$ and $b$ are two vertices in $I$ such that $d_a=\delta_1$ and $d_b=\delta_2$, then $|K|-d_a$ is a divisor of $\sigma_{ab}$ less than $\sqrt{\sigma_{ab}}$.
\end{theorem}

\begin{proof}
	For this proof, it is essential to first review the statement of Theorem \ref{split.lineal.propiedades}. With this result in mind, let $v_i$ be an independent vertex of $S$ such that $\deg_S(v_i)=\delta_i$. Clearly, $N_u=N_i$ for all $u\in[v_i]$. Thus, $K=\bigcup_{i=1}^q N_i$. Since, by Theorem \ref{split.lineal.propiedades}, $v_1\ldots v_q$ is an induced path in $\Phi$, it follows by Corollary \ref{K-d_max.divide.sigma} that $|K|-d_a$ is a divisor of $\sigma_{ab}$ less than or equal to $\sqrt{\sigma_{ab}}$. More specifically, we have $|K|-d_a<\sqrt{\sigma_{ab}}$, since $d_a>d_b$. 
\end{proof}

\begin{corollary}
	\label{S.p-simple.Phi.camino.implica.d_max=|K|-1=d_min+(|I|-1)(p-1)}
	Let $(S,K,I)$ be a $p$-simple split graph, and let $d_{\min}$ and $d_{\max}$ be the minimum and maximum degrees in $S$ of the vertices in $I$, respectively, with $|I|\geq 2$. If $K=\bigcup_{v\in I}N_v$, $p$ is prime, and $\Phi(S)$ is a path, then
	\[ d_{\max}=|K|-1=d_{\min}+(|I|-1)(p-1). \] 
	In particular, if $p=2$ and $d_{\min}=1$, then
	\[ |I|=d_{\max}=|K|-1. \]
\end{corollary}

\begin{proof}
	Thanks to Theorem \ref{p-simple.implica.p-1-lineal}, we know that $S$ is linear. Thus, Theorem \ref{lineal.K-d_max.divide.sigma} tells us that $|K|-d_{\max}$ divides $p$ and is $\leq\sqrt{p}$. Since $p$ is prime, it follows that $|K|-d_{\max}=1$. The equality $d_{\max}=d_{\min}+(|I|-1)(p-1)$, on the other hand, is a particular case of Corollary \ref{distancias.en.split.lineal}, since $S$ is $(p-1)$-linear.   
\end{proof}


\chapter{The $\Delta$ property}\label{cap:La.Prop.Delta}

In this chapter, we address a problem that lies at the intersection of Graph Theory and Number Theory: establishing conditions on $n\in\mathbb{N}$ for the existence of a split graph $S$ such that $\vec{\Phi}(S)$ is an $n$-simple triangle of type 0 (see Figure \ref{triangulos.permitidos}). This problem may initially seem overly specific, but in fact, considering that the cycles in $\Phi$ can only be of size 3 or 4, it represents a significant step forward in understanding the 2-switch structure of split graphs. Moreover, the bridge it creates with Number Theory opens many doors to future research in this area.

The core of this chapter lies in the definition and study of the $\Delta$ property and of those natural numbers that do or do not satisfy it. We denote by $\mathbb{N}(\Delta)$ the set of natural numbers with the $\Delta$ property. This is a property involving the complementary divisors of a number and certain sets of their differences and sums. In this context, we prove two fundamental theorems. The first one tells us that if $\vec{\Phi}$ is an $n$-simple triangle of type 0, then $n\in\mathbb{N}(\Delta)$. The corollaries of this result generate numerous interesting connections between the sets that define the $\Delta$ property and $n$-simple and linear split graphs. The second theorem is a kind of converse of the first: if $n\in\mathbb{N}(\Delta)$, then we can construct a split graph $S$ such that $\vec{\Phi}(S)$ is an $n$-simple triangle of type 0. This, in summary, is the content of Section \ref{sec:Graphs<->NumberTheory}, the first of the three sections of this chapter.

In the following three sections, we momentarily step away from Graph Theory to focus our efforts on the $\Delta$ property. In Section \ref{sec:Delta.prim}, we introduce the concept of a $\Delta$-primitive number, an analogue of the concept of a prime number, but for the members of $\mathbb{N}(\Delta)$. Indeed, these numbers are essentially the ``atoms" that compose each member of $\mathbb{N}(\Delta)$. A notable results we will prove is the fact that there are infinite $\Delta$-primitive numbers. We note that perfect squares and square-free numbers take on a certain importance in this theoretical framework. In this regard, it is shown that the squares with the $\Delta$ property form an abelian semigroup under the usual multiplication, and we also conjecture the infinitude of $\Delta$-primitive squares.

In Section \ref{sec:polinom.generadores}, we develop a very effective method to generate numbers with the $\Delta$ property. To this end, we construct an infinite family of polynomials $f(x)$ of degree 3, such that $f(x)\in\mathbb{N}(\Delta)$ for all $x\in[n_0,\infty)\cap\mathbb{N}$, for some explicit $n_0\in\mathbb{N}$.

Finally, in the last section, we find a long list of families of numbers that do not satisfy the $\Delta$ condition. A key insight in this direction is that $pk\notin\mathbb{N}(\Delta)$ when $p$ is a ``large" prime compared to $k$. Another observed behavior is that those numbers with very few prime factors in their factorization (for example, powers of a prime or products of certain powers of two primes) are less likely to satisfy the $\Delta$ condition. We conjecture that if $n$ is a $\Delta$-primitive of the form $p^xq^y$ ($p,q$ primes), then $n\in\{24,40\}$, and we characterize when a product of three primes $pqr\in\mathbb{N}(\Delta)$. After all this analysis, we return to split graphs, establishing a result about the $n$-simple induced cycles of $\Phi$, when $n$ does not satisfy the $\Delta$ condition and is not a square.


\section{From Graph Theory to Number Theory}\label{sec:Graphs<->NumberTheory}

If $D_n$ is the set of positive divisors of a natural number $n$, we define:
\begin{equation*}
	D^*_n =\{|a-b|: a,b\in D_n, ab=n\}.
\end{equation*}
That is, $D^*_n$ is the set of non-negative differences between complementary divisors of $n$. For example, we have $D_1^*=\{0\}, D_7^*=\{6\}, D_9^*=\{8,0\}, D_{10}^*=\{9,3\}, D_{16}^*=\{15,6,0\}$ and $D_{18}^*=\{17,7,3\}$. Clearly, $\max(D_n^*)=n-1$ for all $n\in\mathbb{N}$. Also note that $0\in D^*_n$ if and only if $n$ is a perfect square, and that $D_p^*=\{p-1\}$ if and only if $p$ is prime or $p=1$. A basic and well-known property of the set $D_n$ is that
\begin{equation}
	|\{ x\in D_n:x<\sqrt{n} \}|=|\{x\in D_n:x>\sqrt{n}  \}|.
	\label{particion.D_n}
\end{equation}
This can be proven by showing that if $x<\sqrt{n}$, then the function mapping $x$ to its complement $n/x$ is a bijection between the sets in (\ref{particion.D_n}). Then, letting $k=|\{x\in D_n:x<\sqrt{n}\}|$, it follows that $|D_n-\sqrt{n}|=2k$ and $|D_n^* -0|=k$. In particular, the following three statements are equivalent: 1) $\sqrt{n}\in\mathbb{N}$; 2) $|D_n|=2k+1$; 3) $|D_n^*|=k+1$.

From $D^*_n$, we now define the following set:
\begin{equation*}
	D^+_n =\{x+y: x,y\in D^*_n-\{0\}\}.
\end{equation*}
For example, $D_1^+=\varnothing, D_7^+=\{12\}, D_9^+=\{16\}, D_{10}^+=\{18,12,6\}, D_{16}^+=\{30,21,12\}$ and $D_{18}^+=\{34,24,20,14,10,6\}$. Clearly, $\max(D_n^+)=2(n-1)$ for all $n\geq 2$.
We say that a natural number $n$ has the \textbf{$\Delta$ property} (or that $n$ satisfies the $\Delta$ condition) if
\begin{equation*}
	D^*_n \cap D^+_n \neq\varnothing.
\end{equation*}
We denote by $\mathbb{N}(\Delta)$ the set of all natural numbers that satisfy the $\Delta$ condition. In the previous examples we can see that, if $n\in\{1,7,9,10,16,18\}$, then $n\notin\mathbb{N}(\Delta)$. By inspection, we can quickly verify that 24 is the first natural number that satisfies the $\Delta$ condition. Indeed, since $D_{24}=\{1,2,3,4,6,8,12,24\}$, we have
\[ D_{24}^*=\{23,10,5,2\}, \]
\[ D_{24}^+=\{46,33,28,25,20,15,12,10,7,4\}, \]
and thus,
\[ D_{24}^*\cap D_{24}^+=\{10\}. \]
The second element of $\mathbb{N}(\Delta)$ is 40. This means that no number in the interval $[25,39]$ satisfies the $\Delta$ condition. The positive divisors of 40 are 1, 2, 4, 5, 8, 10, 20, and 40. Then,
\[ D_{40}^*=\{39,18,6,3\}, \]
\[ D_{40}^+=\{6,9,12,21,24,36,42,45,57,78\}, \]
and hence
\[ D_{40}^*\cap D_{40}^+=\{6\}. \]
As we can see, $\mathbb{N}(\Delta)\neq\varnothing$. A very natural question arises: is $\mathbb{N}(\Delta)$ finite or infinite? In the next section, we will answer this question in a very simple and elegant way (spoiler: $\mathbb{N}(\Delta)$ is infinite!).

\begin{proposition}
	\label{Delta.prop.caracterizacion}
	A natural number $n$ satisfies the $\Delta$ condition if and only if there exists a triple $\{x,y,z\}\subseteq D_n$ such that $1<x<y\leq z<\sqrt{n}$ and
	\begin{equation}
		\frac{n}{x}-x = \frac{n}{y}-y + \frac{n}{z}-z.
		\label{ecuacion.Delta.prop}
	\end{equation}
\end{proposition}

\begin{proof}
	A number $c$ belongs to $D^*_{n}\cap D^+_{n}$ if and only if $c\in D^*_{n}$ and $c=a+b$, for some $a,b\in D^*_{n}-\{0\}$. Since $a,b,c\in D^*_{n}$, we can write them as
	\begin{equation*}
		a=\frac{n}{z}-z, \ b=\frac{n}{y}-y, \ c=\frac{n}{x}-x,
	\end{equation*}
	where $x,y,z$ are divisors of $n$. As $a,b>0$, then also $c>0$. Thus, $x^2,y^2,z^2<n$, so $x,y,z<\sqrt{n}$. Since $c=a+b$ and $a,b>0$, clearly $a,b<c$, that is, $x<y,z$. By symmetry on the right-hand side of \eqref{ecuacion.Delta.prop}, we may assume without loss of generality that $y\leq z$. Finally, suppose $x=1$. Since $y,z\geq 2$, we have
	\[ n-1=c=a+b\leq 2\Big(\frac{n}{2}-2\Big)=n-4, \]
	which is a contradiction. Therefore, it must be $x>1$.
\end{proof}

The condition \eqref{ecuacion.Delta.prop} can be rewritten as
\begin{equation}
	(xy+xz-yz)n=xyz(z+y-x),
	\label{ecuacion2.Delta.prop}	
\end{equation}
through elementary algebraic manipulations. The triple $\{x,y,z\}$ referred to in Proposition \ref{Delta.prop.caracterizacion} is $\{2,3,3\}$ for $n=24$ and $\{4,5,5\}$ for $n=40$. Thus,
\[ \frac{24}{2}-2=10=2\Big(\frac{24}{3}-3\Big), \  \frac{40}{4}-4=6=2\Big(\frac{40}{5}-5\Big). \]
The next two results establish some arguments used in the proof of Proposition \ref{Delta.prop.caracterizacion}.

\begin{proposition}
	\label{cotasup.D_n^*}
	Let $n$ be a natural number and let $x,y\in D_n$ such that $x\leq y\leq\sqrt{n}$. Then
	\begin{equation}
		\label{eq15}
		\frac{n}{y}-y\leq\frac{n}{x}-x,
	\end{equation} 
	and equality holds if and only if $x=y$. In particular, we have
	\[ \max\big(D_n^*-\{n-1\}\big)\leq\frac{n}{2}-2. \]
\end{proposition}

\begin{proof}
	The hypothesis $x\leq y$ implies $-y\leq -x$ and $\frac{n}{y}\leq\frac{n}{x}$, which added together yield inequality \eqref{eq15}. To obtain the latter bound, we first recall that $\max(D_n^*)=n-1$, and then take $x=2$ in \eqref{eq15}.
\end{proof}

\begin{corollary}
	\label{cotasup.D^*capD^+}
	For all $n\in\mathbb{N}(\Delta)$:
	\[ \max(D_n^*\cap D_n^+)\leq \frac{2n}{3}-6. \]
\end{corollary}

\begin{proof}
	Since $D_n^*\cap D_n^+\neq\varnothing$, there exists a triple $\{x,y,z\}\subseteq D_n$ that satisfies \eqref{ecuacion.Delta.prop}, such that $1<x<y\leq z<\sqrt{n}$ and
	\[ \frac{n}{x}-x = \max(D_n^*\cap D_n^+)=m.\]
	Since $y,z\geq 3$, it follows from Proposition \ref{cotasup.D_n^*} that $m\leq 2\big(\frac{n}{3}-3\big)$.
\end{proof}

A curious consequence of Proposition \ref{cotasup.D_n^*} is the exotic characterization of perfect squares we now present.

\begin{corollary}
	Let $D^*_n +D^*_n =\{x+y:x,y\in D^*_n\}$. An integer $n>1$ is a perfect square if and only if 
	\[ D^*_n \subset D^*_n+D^*_n. \]
\end{corollary}

\begin{proof}
	If $n$ is a square, then $0\in D^*_n$ and thus $x+0\in D^*_n+D^*_n$ for all $x\in D^*_n$. Hence, $D^*_n \subseteq D^*_n+D^*_n$. To see that this inclusion is always strict, note that $n-1\in D^*_n$ but $2n-2=(n-1)+(n-1)\in (D^*_n+D^*_n)-D^*_n$, since $n>1$.
	
	For the converse, suppose $D^*_n \subset D^*_n+D^*_n$. This means, in particular, that $n-1\in D^*_n\cap(D^*_n+D^*_n)$. Then, there exist $y,z\in D_n$ such that $y^2,z^2\leq n$ and
	\begin{equation}
		\frac{n}{z}-z+\frac{n}{y}-y=n-1.
		\label{ecuac.Delta.prop.x=1}	
	\end{equation} 
	If $y,z\geq 2$, then by Proposition \ref{cotasup.D_n^*}, we would have
	\[ \frac{n}{z}-z, \frac{n}{y}-y \leq\frac{n}{2}-2, \]
	and thus (\ref{ecuac.Delta.prop.x=1}) could not hold. Therefore, either $y=1$ or $z=1$. If $y=1$, then $n=z^2$. If $z=1$, then $n=y^2$.
\end{proof}

We now show the connection that exists between split graphs and the $\Delta$ property. The following theorem, whose proof is surprisingly simple, is very important because it provides a bridge from Graph Theory to Number Theory.

\begin{theorem}
	\label{triang.n-simple&tipo0.implica.Delta.prop}
	Let $S$ be a split graph.
	\begin{enumerate}
		\item If $\sigma_{uv}\neq 0$, then $|\deg_S(u)-\deg_S(v)|\in D_{\sigma_{uv}}^*$.
		\item If $S$ is $n$-simple and $\vec{\Phi}(S)=\Delta_0$ (see Figure \ref{triangulos.permitidos}), then $n\in\mathbb{N}(\Delta)$.
	\end{enumerate}
\end{theorem}
\begin{figure}[h]
	\centering
	\begin{tikzpicture}[scale=2, every node/.style={circle, draw, thick, minimum size=6pt, inner sep=2pt}]
		
		\node [label=above left:{$\Phi(S)$}](x1) at (0,0) {$a$};
		\node (x2) at (1,0) {$c$};
		\node (x3) at (0.5, {0.5*sqrt(3)}) {$b$};
		
		\draw[thick] (x1) -- (x2) node[midway, draw=none, fill=white, inner sep=1pt] {$n$};
		\draw[thick] (x2) -- (x3) node[midway, draw=none, fill=white, inner sep=1pt] {$n$};
		\draw[thick] (x3) -- (x1) node[midway, draw=none, fill=white, inner sep=1pt] {$n$};
		
		\draw[bend left=15] (x1) to (x2) ;
		\draw[bend right=15] (x1) to (x3) ;
		\draw[bend left=15] (x2) to (x3) ;
		
		\node (a) at (2,0) {$d_a$};
		\node [label=above right:{$S$}](c) at (3,0) {$d_c$};
		\node (b) at (2.5, {0.5*sqrt(3)}) {$d_b$};
		
		\draw[->, thick, >=stealth, line width=2pt, scale=1.5] (a) -- (c); 
		\draw[->, thick, >=stealth, line width=2pt, scale=1.5] (a) -- (b);
		\draw[->, thick, >=stealth, line width=2pt, scale=1.5] (b) -- (c);
	\end{tikzpicture}
	\caption{Hypothesis of Theorem \ref{triang.n-simple&tipo0.implica.Delta.prop}.}
\end{figure}
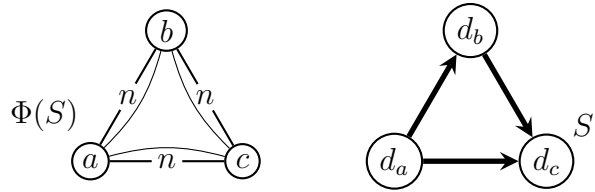

\begin{proof}
	(1). Since $0\neq\sigma_{uv}=(d_u-\eta_{uv})(d_v-\eta_{uv})$, we have that $d_u-\eta_{uv}$ and $d_v-\eta_{uv}$ are complementary divisors of $\sigma_{uv}$. Therefore,
	\[ |(d_u-\eta_{uv})-(d_v-\eta_{uv})|=|d_u-d_v|\in D_{\sigma_{uv}}^*. \]
	
	(2). From (1), we have $|d_u-d_v|\in D_n^*$ for every edge $uv\in\Phi$. Since $\vec{\Phi}=\Delta_0$, it follows that $d_c>d_a$, $d_b>d_a$, and $d_c>d_b$. Moreover, the arcs of $\vec{\Phi}$ indicate that the degree increase (in $S$) from $a$ to $c$ equals the increase from $a$ to $b$ plus the increase from $b$ to $c$. Hence,
	\[ d_c-d_a =(d_b-d_a)+(d_c-d_b)\in D^*_n \cap D^+_n, \] 
	which shows that $D^*_n \cap D^+_n\neq\varnothing$.
\end{proof}

Theorem \ref{Delta.prop.caracterizacion} has many corollaries that relate the set $D_n^*$ to split graphs, especially those that are $n$-simple and $\varepsilon$-linear.

\begin{corollary}
	\label{triang.n-simple&n-nonsquare.implica.Delta.prop}
	Let $S$ be a split graph such that $\Phi(S)$ is an $n$-simple triangle. If $n$ is not a perfect square, then $n\in\mathbb{N}(\Delta)$. 
\end{corollary}

\begin{proof}
	By Corollary \ref{S.squarefree.implica.triang_tipo0}, $\vec{\Phi}$ must be of type 0. Then, by item (2) of Theorem \ref{triang.n-simple&tipo0.implica.Delta.prop}, we conclude that $n\in\mathbb{N}(\Delta)$.
\end{proof}

\begin{corollary}
	\label{|d_u-d_v|<=sigma_uv-1}
	If $S$ is a split graph, then for every edge $uv\in\Phi(S)$ we have
	\[|\deg_S(u)-\deg_S(v)|\leq\sigma_{uv}(S)-1.\]
\end{corollary}

\begin{proof}
	Recalling that $\max(D_n^*)= n-1$ for every $n\in\mathbb{N}$, the claim follows immediately from item (1) of Theorem \ref{triang.n-simple&tipo0.implica.Delta.prop}.
\end{proof}

\begin{corollary}
	If $S$ is an $n$-simple split graph, then 
	\begin{equation}
		\label{eq16}
		\{|\deg_S(u)-\deg_S(v)|: \sigma_{uv}(S)\neq 0\}\subseteq D_n^*.
	\end{equation}
	In particular, if $\Phi(S)$ has two adjacent vertices with the same degree in $S$, then $n$ is a perfect square. 
\end{corollary}

\begin{proof}
	The inclusion \eqref{eq16} is a direct consequence of item (1) of Theorem \ref{triang.n-simple&tipo0.implica.Delta.prop}. If $\Phi$ has two adjacent vertices with the same degree in $S$, then by \eqref{eq16}, we have $0\in D_n^*$, which is equivalent to $n$ being a square. 
\end{proof}

\begin{corollary}
	\label{split.lineal.relacion.epsilon.sigma_uv.D^*}
	Let $(S,K,I)$ be an $\varepsilon$-linear split graph. Then, we have the following:
	\begin{enumerate}
		\item $\varepsilon\in\bigcap_{uv\in\Phi} D_{\sigma_{uv}}^*$.
		\item $\varepsilon\leq\min\{\sigma_{uv}>0:u,v\in I\}-1$.
		\item If $\sigma_{ab}=p$ and $\sigma_{cd}=q$ for some $\{a,b\},\{c,d\}\subseteq I$ and some primes $p$ and $q$, then $p=q$ and $\varepsilon=p-1$.
		\item $\sigma_{uv}\neq 1$, for all $\{u,v\}\subseteq I$. In other words, $\Phi(S)$ cannot have simple edges.
		\item If $n=\max\{\sigma_{uv}>0:u,v\in I\}$ and $\varepsilon=n-1$, then $S$ is $n$-simple. 
	\end{enumerate}
\end{corollary}

\begin{proof}
	(1). It follows immediately from Theorem \ref{triang.n-simple&tipo0.implica.Delta.prop}, since $|d_u-d_v|=\varepsilon$ for every edge $uv\in\Phi$. 
	
	(2). It follows immediately from Corollary \ref{|d_u-d_v|<=sigma_uv-1}, since $|d_u-d_v|=\varepsilon$ for all edges $uv\in\Phi$.
	
	(3). Using (1), we get that $\varepsilon\in\bigcap_{uv\in\Phi} D_{\sigma_{uv}}^*\subseteq D_p^*\cap D_q^*=\{p-1\}\cap\{q-1\}$. Thus, $p-1=\varepsilon=q-1$.
	
	(4). If $\sigma_{ab}=1$ for some $\{a,b\}\subseteq I$, then by Corollary \ref{|d_u-d_v|<=sigma_uv-1}, we have $\varepsilon\leq\sigma_{ab}-1=0$, which is a contradiction.
	
	(5). Let $A=\{\sigma_{uv}>0:u,v\in I\}$. Using (2), we get $n-1=\varepsilon\leq\min A-1\leq\max A-1=n-1$. Hence, $\min A=\max A=n$, so $A=\{n\}$.
\end{proof}

Before proceeding, we would like to share some remarks on Corollary \ref{split.lineal.relacion.epsilon.sigma_uv.D^*}. Items (1) and (3) can be interpreted as strong constraints on the structure of a linear split graph $S$ and its associated graph $\Phi(S)$. Item (1) limits, in a sense, the size of $\{\sigma_{uv}:uv\in\Phi\}$, that is, the number of distinct multiplicities of the edges in $\Phi$. The intuition behind this is that, with too many multiplicities, it becomes more likely that $\bigcap_{uv\in\Phi} D_{\sigma_{uv}}^*=\varnothing$, which would clearly forbid the existence of an $S$ with the required properties. Item (3) tells us that the edges of $\Phi$ can have at most one prime multiplicity. Moreover, it is enough for a single edge to have such a multiplicity, say $p$, for $\varepsilon$ to be forced to equal $p-1$.

If Theorem \ref{triang.n-simple&tipo0.implica.Delta.prop} can be thought of as a bridge from Graphs to Numbers, the following result goes in the opposite direction, that is, from Number Theory back to Graph Theory. For this reason, we consider it another key theorem of this section.

\begin{theorem}
	\label{n.con.propDelta.implica.existencia.de.T_0.n-simple}
	If $n\in\mathbb{N}(\Delta)$, let $(x,y,z)$ be a triple of divisors of $n$ satisfying (\ref{ecuacion.Delta.prop}). Then, there exists a balanced and $n$-simple split graph $(S,K,I)$ such that $\vec{\Phi}(S)=\Delta_0$ (see Figure \ref{triangulos.permitidos}). Moreover, if $y\leq z$, then $S$ has the following properties:
	\begin{enumerate}
		\item $d_b=d_a+\frac{n}{z}-z, \quad d_c=d_a+\frac{n}{x}-x$;
		\item $\eta_{ab}=d_a-z, \quad \eta_{bc}=d_a+\frac{n}{z}-z-y, \quad \eta_{ac}=d_a-x$;
		\item $d_a\geq z$;
		\item $|K|=\frac{n}{x}+z+y+\eta_{abc}$, where $\eta_{abc}=|N_a\cap N_b\cap N_c|\geq d_a-x-z$;
		\item if $S$ is active, then $d_a\leq x+z$;
		\item if $d_a=z$, then $S$ is active.
	\end{enumerate}
\end{theorem}

\begin{proof}
	We construct a split graph $(S,K,I)$ with $I=\{a,b,c\}$ and $K=N_a\cup N_b\cup N_c$. For $S$ to have the desired properties, it suffices to express $d_a, d_b, d_c, \eta_{ab},\eta_{bc}$ and $\eta_{ac}$ in terms of $n, x, y$ and $z$. If $\{u,v\}\subseteq I$, then $n=\sigma_{uv}=(d_u-\eta_{uv})(d_v-\eta_{uv})$. Thus, $d_u-\eta_{uv}$ and $d_v-\eta_{uv}$ are complementary divisors of $n$. Now take:
	\[ d_a-\eta_{ab}=z, \quad d_b-\eta_{ab}=n/z, \]
	\[ d_b-\eta_{bc}=y, \quad d_c-\eta_{bc}=n/y, \]
	\[ d_c-\eta_{ac}=x, \quad d_c-\eta_{ac}=n/x. \]
	These relations make it clear that $S$ is $n$-simple. Then, we solve for $d_b, d_c, \eta_{ab}, \eta_{bc}$ and $\eta_{ac}$ in terms of $d_a,n,x,y$ and $z$. Note that $d_a$ acts as a ``free parameter". This yields (1) and (2). In particular, $d_a<d_b<d_c$, which shows that $\vec{\Phi}=\Delta_0$.
	
	Inequality (3) follows from three facts: $\eta_{uv}\geq 0$, $z>x$, and $z>z-(\frac{n}{z}-y)$. To clarify the last of these inequalities, recall that $y^2,z^2<n$ (see Theorem \ref{Delta.prop.caracterizacion}); hence, $y^2z^2<n^2$, and thus $y<n/z$.
	
	To obtain (4), we use (1) and (2) in formula (\ref{formulafamosa.union.intersec}).
	
	Next, we show that $S$ is balanced, that is, it has no swing vertices. By construction, it is clear that $d_w\geq |K|$ for all $w\in K$. This means $N_w\neq K-w$ for all $w\in K$, so no clique vertex is swing in $S$. On the other hand, if $d_v=|K|$ for some $v\in I$, then necessarily $v=c$ (since $d_a<d_b<d_c$); hence, $N_c=K=N_a\cup N_b$ and $\eta_{abc}=\eta_{ab}$. Consequently, we get:
	\begin{equation}\label{eq14}
		d_a+\frac{n}{x}-x = d_c=|K|=\frac{n}{x}+z+y+\eta_{ab}.
	\end{equation}
	Given that $\eta_{ab}=d_a-z$, by (2), equality \eqref{eq14} becomes $-x=y$, which is absurd. Hence, $d_c<|K|$ and $S$ is balanced.
	
	(5). If $S$ is active, it clearly has no universal vertices (since they are always inactive). Then, $\eta_{abc}=0$, by Proposition \ref{split.caract.vert.activos}. Combining (2) with formula (\ref{formulafamosa.union.intersec}) and some basic set properties, we deduce:
	\[
	|N_a\cap(N_b\cup N_c)|=|(N_a\cap N_b)\cup(N_a\cap N_c)|=\eta_{ab}+\eta_{ac}-\eta_{abc}= 
	\]	
	\[
	2d_a-x-z-\eta_{abc}\leq |N_a|=d_a,
	\]
	so
	\[
	\eta_{abc}\geq d_a-x-z.
	\]
	If $d_a>x+z$, then $\eta_{abc}\geq 1$, which means there are universal vertices in $K$. In other words, $\eta_{abc}=0$ implies $d_a\leq x+z$.
	
	(6). If $d_a=z$, then $\eta_{ab}=0$, and hence $\eta_{abc}=0$. Then, $S$ is active by Lemma \ref{Phi.completo.implica.inactivos.universales}.
\end{proof}

Let us now look at a concrete example applying Theorem \ref{n.con.propDelta.implica.existencia.de.T_0.n-simple}. We take $n=24$, which, as we already know, is the smallest number satisfying the $\Delta$ condition. Since $(\frac{24}{3}-3)+(\frac{24}{3}-3)=\frac{24}{2}-2$, we have $z=y=3$ and $x=2$. Therefore, we want to construct a balanced split graph $S$ with the following properties:
\begin{enumerate}
	\item $d_b=d_a+5, \quad d_c=d_a+10$;
	\item $\eta_{ab}=d_a-3, \quad \eta_{bc}=d_a+2, \quad \eta_{ac}=d_a-2$;
	\item $d_a\geq 3$;
	\item $|K|=18+\eta_{abc}$, where $\eta_{abc}\geq d_a-5$;
	\item if $S$ is active, then $d_a\leq 5$;
	\item if $d_a=3$, then $S$ is active.
\end{enumerate}
We see that everything depends on the value assigned to $d_a$, as long as it is at least 3. For example, taking $d_a=3$, we obtain an active split graph $S$ with $|K|=18$, $d_b=8$, $d_c=13$, $\eta_{ab}=0$, $\eta_{bc}=5$, and $\eta_{ac}=1$. The following neighborhoods for $a,b$, and $c$ are compatible with these parameters: if $K=[18]$, define $N_a=[3], N_b=\{4,\ldots,11\}$, and $N_c=\{3,\ldots,8,12,\ldots,18\}$. Thus, $N_a\cap N_b=\varnothing, N_b\cap N_c=\{4,\ldots,8\}$, and $N_a\cap N_c=\{3\}$. Since $\Phi(S)$ is a $24$-simple triangle, we should check that $\sigma_{uv}=(d_u-\eta_{uv})(d_v-\eta_{uv})=24$ for every $uv\in\Phi$. Indeed, $\sigma_{ab}=(3-0)(8-0)=24$, $\sigma_{bc}=(8-5)(13-5)=24$, and $\sigma_{ac}=(3-1)(13-1)=24$. In Figure \ref{split.24simple.Delta.tipo0}, we show a simplified version of the split graph $S$ we have just constructed, omitting all edges between clique vertices (in black).

\begin{figure}[ht]
	\centering
	\begin{tikzpicture}[scale=0.5, every node/.style={circle, draw, minimum size=0.25cm,inner sep=1pt}]
		\coordinate (center) at (0,0);
		
		\foreach \i [count=\j from -9] in {1,...,18} {
			\node[fill=black] (K\i) at (\j*1.2, 0) {};
		}
		
		\node[label=right:{$a$}] (a) at (-7, 4) {};
		\node[label=right:{$b$}] (b) at (0, 4) {};
		\node[label=below:{$c$}] (c) at (0, -4) {};
		
		\foreach \i in {1,2,3} {
			\draw (a) -- (K\i);
		}
		
		\foreach \i in {4,5,6,7,8,9,10,11} {
			\draw (b) -- (K\i);
		}
		
		\foreach \i in {3,4,5,6,7,8,12,13,14,15,16,17,18} {
			\draw (c) -- (K\i);
		}
	\end{tikzpicture}
	\caption{Application of Theorem \ref{n.con.propDelta.implica.existencia.de.T_0.n-simple} to the case $n=24$.}
	\label{split.24simple.Delta.tipo0}
\end{figure}
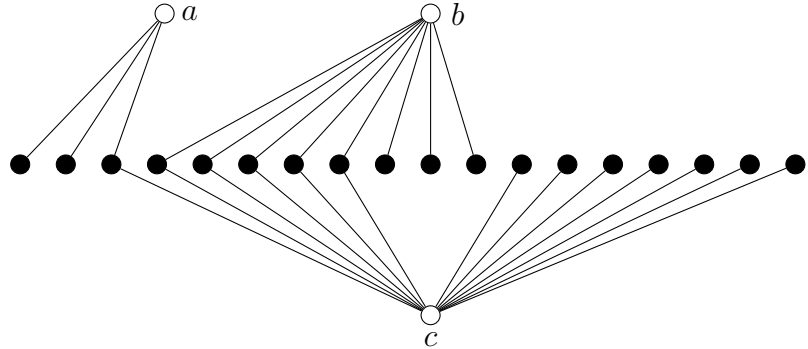

As we have just seen, all parameters in Theorem \ref{n.con.propDelta.implica.existencia.de.T_0.n-simple} defining $S$ depend on $d_a$, the degree of vertex $a$ in $S$. Indeed, according to item (3) of that theorem, it is enough to fix any value $d_a\geq z$ to obtain essentially different balanced split graphs for different choices of $d_a$. For $d_a=z$, item (6) ensures that $S$ is active. However, item (5) restricts the number of active graphs we can construct to a finite quantity, since $S$ will have universal vertices if $d_a>x+z$. These observations are summarized in the following corollary.

\begin{corollary}
	For each $n\in\mathbb{N}(\Delta)$:
	\begin{enumerate}
		\item there exists an infinite number of balanced and $n$-simple split graphs $S$ such that $\vec{\Phi}(S)=\Delta_0$;
		\item there exists a finite, but nonzero, number of active and $n$-simple split graphs $S$ such that $\vec{\Phi}(S)=\Delta_0$.
	\end{enumerate}
\end{corollary}

\begin{proof}
	It follows from the previous discussion.
\end{proof}


\section{$\Delta$-primitive numbers}\label{sec:Delta.prim}

In this section, we observe that numbers of the form $\alpha^2n$, with $n\in\mathbb{N}(\Delta)$ and $\alpha\in\mathbb{N}$, all have the $\Delta$ property. This fact motivates the search for those members of $\mathbb{N}(\Delta)$ that are not nontrivial square multiples of any other element in $\mathbb{N}(\Delta)$. This leads to the concept of a $\Delta$-primitive number. We see that every element of $\mathbb{N}(\Delta)$ is a square multiple of some primitive, and that the set $\mathbb{N}(\Delta^2)$, consisting of all squares with the $\Delta$ property, forms an abelian semigroup under ordinary multiplication. Subsequently, we exhibit infinite examples of $\Delta$ squares that admit more than one representation as $\alpha^2m$, where $m$ is $\Delta$-primitive. Finally, we construct a polynomial generator of numbers with the $\Delta$ property, through which we prove that there are infinitely many $\Delta$-primitives. \\

\begin{proposition}
	\label{multiplos.cuadr.prop.delta}
	If $n\in\mathbb{N}(\Delta)$, then $\alpha^2 n\in\mathbb{N}(\Delta)$ for all $\alpha\in\mathbb{N}$. In particular, every odd power of $n$ satisfies the $\Delta$ condition.
\end{proposition}

\begin{proof}
	It suffices to observe that condition \eqref{ecuacion.Delta.prop} from Proposition \ref{Delta.prop.caracterizacion} can be rewritten as
	\begin{equation*}
		\frac{\alpha^2 n}{\alpha z}-\alpha z + \frac{\alpha^2 n}{\alpha y}-\alpha y =\frac{\alpha^2 n}{\alpha x}-\alpha x.
	\end{equation*}
	Indeed, since $1<x<y\leq z<\sqrt{n}$ implies
	\[ 1\leq\alpha<\alpha x<\alpha y\leq\alpha z<\alpha\sqrt{n}=\sqrt{\alpha^2n}, \]
	and moreover $\alpha\{x,y,z\}\subseteq D_{\alpha^2n}$, we conclude that $\alpha^2 n\in\mathbb{N}(\Delta)$. In particular, taking $\alpha=n^\beta$, we see that $(n^\beta)^2 n=n^{2\beta+1}\in\mathbb{N}(\Delta)$ for all $\beta\in\mathbb{N}$.
\end{proof}

Thanks to Proposition \ref{multiplos.cuadr.prop.delta}, we see that $|\mathbb{N}(\Delta)|=\infty$. Another immediate consequence of this proposition is that if $n\in\mathbb{N}(\Delta)$ and $n^{2\beta}\in\mathbb{N}(\Delta)$ for some $\beta\in\mathbb{N}$, then $\{n^k:2\beta\leq k\in\mathbb{N}\}\subset\mathbb{N}(\Delta)$. For example, both $84$ and $84^2$ have the $\Delta$ property, so all higher powers of 84 do as well.

Proposition \ref{multiplos.cuadr.prop.delta} allows us to trivially obtain infinitely many elements of $\mathbb{N}(\Delta)$ from others. This motivates the search and study of those numbers with the $\Delta$ property that cannot be obtained in this way. We say that a natural number $n$ is $\Delta$-\textbf{primitive} if $n\in\mathbb{N}(\Delta)$ and there is no pair of numbers $\alpha,m\geq 2$ such that $n=\alpha^2m$ and $m\in\mathbb{N}(\Delta)$. For example, 24 and 40 are $\Delta$-primitive numbers. On the other hand, 96 is not, since we can write it as $2^2 \cdot 24$.

A natural number $n$ is said to be \textbf{square-free} if there is no prime number $p$ such that $p^2|n$. In other words, all primes appearing in the factorization of $n$ have exponent 1. Clearly, if $n\in\mathbb{N}(\Delta)$ and $n$ is square-free, then $n$ is $\Delta$-primitive. Examples of numbers satisfying the $\Delta$ condition and which are square-free include 105 and 385. Therefore, 105 and 385 are $\Delta$-primitive. Moreover, 105 is the smallest odd number with the $\Delta$ property.

\begin{proposition}
	\label{n=alfa^2 m,m.delta.prim}
	Every $n\in\mathbb{N}(\Delta)$ is either $\Delta$-primitive or can be written as $\alpha^2m$ for some $\alpha\geq 2$ and some $\Delta$-primitive $m$.
\end{proposition}

\begin{proof}
	If $n\in\mathbb{N}(\Delta)$ but is not primitive, then $n=\alpha_1^2m_1$ for some $\alpha_1\geq 2$ and some $m_1\in\mathbb{N}(\Delta)$. If $m_1$ is primitive, we are done. Otherwise, $m_1=\alpha_2^2m_2$ for some $\alpha_2\geq 2$ and $m_2\in\mathbb{N}(\Delta)$. If $m_2$ is primitive, we are done. Otherwise, $m_2=\alpha_3^2m_3$, and so on. Continuing this way generates a strictly decreasing sequence $\{m_1,m_2,m_3,\ldots\}\subseteq\mathbb{N}(\Delta)$. This process cannot continue indefinitely, so there must exist some $k\in\mathbb{N}$ such that $m_k$ is $\Delta$-primitive, hence $n=(\alpha_1\cdots\alpha_k)^2m_k$.  
\end{proof}

An interesting consequence of Proposition \ref{multiplos.cuadr.prop.delta} is that the set $\mathbb{N}(\Delta^2)$ of all squares satisfying the $\Delta$ condition is closed under multiplication.

\begin{corollary}
	\label{delta.cuadrados.cerrado}
	If $m^2,n^2\in\mathbb{N}(\Delta)$, then $m^2n^2\in\mathbb{N}(\Delta)$. In other words, the set $\mathbb{N}(\Delta^2)$, under the usual multiplication, forms an abelian semigroup.
\end{corollary}

\begin{proof}
	It follows immediately from Proposition \ref{multiplos.cuadr.prop.delta}.
\end{proof}

With the help of a computer, one can verify that $30^2$ is the first element of $\mathbb{N}(\Delta^2)$. Then, $|\mathbb{N}(\Delta^2)|=\infty$, by Proposition \ref{multiplos.cuadr.prop.delta}. However, the following questions remain open: Are there infinitely many $\Delta$-primitive numbers? Are there infinitely many $\Delta$-primitive squares? Is the decomposition in Proposition \ref{n=alfa^2 m,m.delta.prim} unique?

Regarding the last question, we observe that, given two distinct $\Delta$-primitive squares, it is quite easy to construct a number $n$ for which the decomposition of Proposition \ref{n=alfa^2 m,m.delta.prim} is not unique. Let us illustrate this with an example.

We already know that $30^2$ is $\Delta$-primitive. With the help of a computer, one can determine that $84^2$ is the next smallest $\Delta$-primitive square after $30^2$. By Proposition \ref{multiplos.cuadr.prop.delta}, we also know that $30^2a^2, 84^2b^2\in\mathbb{N}(\Delta)$ for all $a,b\in\mathbb{N}$. Therefore, if we find a pair $(a,b)\in\mathbb{N}^2$ that satisfies the equation
\begin{equation}
	\label{eq27}
	30^2a^2=84^2b^2,
\end{equation}
then $n=30^2a^2$ is the number we are looking for. Obviously, \eqref{eq27} has the same solutions as
\begin{equation}
	\label{eq28}
	5a=14b,
\end{equation}
in $\mathbb{N}^2$. Since 5 and 14 are coprime, it follows from \eqref{eq28} that $a=14a_1$ and $b=5b_1$, for some $a_1,b_1\in\mathbb{N}$. Substituting this into \eqref{eq28} gives $a_1=b_1$. Therefore, the solutions $(a,b)$ to \eqref{eq27} are of the form $k(14,5)$ for $k\in\mathbb{N}$. Finally, we find that $n=420^2k^2$. We generalize this procedure in the following proposition.

\begin{proposition}
	If $m$ and $l$ are perfect squares, then there exist $a,b\in\mathbb{N}$ such that $a^2m=b^2l$. In particular, if $m$ and $l$ are $\Delta$-primitive, then $n=a^2m$ is a square with the $\Delta$ property that admits more than one representation as the product of a square and a $\Delta$-primitive.
\end{proposition}

\begin{proof}
	To prove the claim, it is necessary to find the integer solutions of the equation $a^2m=b^2l$ in the variables $a$ and $b$. Clearly, in $\mathbb{N}^2$, this is equivalent to solving
	\begin{equation}
		\label{eq26}
		am_1=bl_1,
	\end{equation}
	where $m_1=m/\gcd(\sqrt{m},\sqrt{l})$ and $l_1=l/\gcd(\sqrt{m},\sqrt{l})$. 
	
	Since $\gcd(m_1,l_1)=1$, it follows that $m_1|b$ and $l_1|a$. Then, $b=m_1b_1$ and $a=l_1a_1$ for some $a_1,b_1\in\mathbb{N}$. Substituting into \eqref{eq26}, we get $a_1=b_1$. Hence, the solution set of \eqref{eq26} is 
	\[ \{ (a,b)\in\mathbb{N}^2: a=l_1k, b=m_1k, k\in\mathbb{N} \}. \]
\end{proof}

\begin{proposition}
	\label{Delta-num.y=z}
	If $x$ is a natural number $\geq 2$, then
	\[ n=x(2x-1)(3x-2)\in\mathbb{N}(\Delta). \]
\end{proposition}

\begin{proof}
	To prove that $n\in\mathbb{N}(\Delta)$, consider the number $y=2x-1$. Observe that $x,y\in D_n$, $1=2x-y$, and $3x-2=2y-x$. Substituting all of this into the identity $1\cdot n=x(2x-1)(3x-2)$ gives
	\begin{equation*}
		(2x-y)n=xy(2y-x).
	\end{equation*}  
	But this is exactly condition \eqref{ecuacion2.Delta.prop} from Proposition \ref{Delta.prop.caracterizacion} for $z=y$. We see that $2\leq x <y\leq z$, in accordance with Proposition \ref{Delta.prop.caracterizacion}. According to that proposition, it only remains to verify that $z<\sqrt{n}$, i.e., that $(2x-1)^2<n$. This is equivalent to the inequality $3x^2-4x+1>0$, which is true for all $x\geq 2$. Therefore, $n\in\mathbb{N}(\Delta)$.
\end{proof}

\begin{proposition}
	\label{Delta-primitivos.son.infinitos}
	There are infinitely many $\Delta$-primitive numbers.
\end{proposition}

\begin{proof}
	We will prove that there are infinitely many $\Delta$-primitives of the form $f(x)=x(2x-1)(3x-2)$, where $x$ is an integer $\geq 2$ (see Proposition \ref{Delta-num.y=z}).
	
	Suppose that $P=\{f(x_i): i\in[k]\}$ is the set of all $\Delta$-primitives of this form, with $x_1<x_2<\ldots<x_k$. Then $f(x_1)<f(x_2)<\ldots<f(x_k)$, since $f$ is increasing on $[2,\infty)$. Now consider an (odd) prime $p>f(x_k)$. Since $f(p)>f(x_k)$, the number $f(p)$ cannot be $\Delta$-primitive, i.e.,
	\begin{equation}
		f(p)=\alpha^2f(x_j), 
		\label{eq1}
	\end{equation}
	for some $\alpha\geq 2$ and $j\in[k]$. For simplicity, call $x_j=n$. Then, equality (\ref{eq1}) becomes
	\begin{equation}
		p(2p-1)(3p-2)=\alpha^2n(2n-1)(3n-2).
		\label{eq2}
	\end{equation} 
	We see that $p|\alpha^2f(n)$, but clearly $p$ cannot divide $n$, $2n-1$, or $3n-2$. Consequently, $p|\alpha$, so we can write $\alpha=p\beta$ for some $\beta\in\mathbb{N}$. Substituting into (\ref{eq2}) and simplifying, we get
	\begin{equation}
		(2p-1)(3p-2)=p\beta^2f(n),
		\label{eq3}
	\end{equation} 
	which is absurd since the right-hand side of (\ref{eq3}) is divisible by $p$, while the left-hand side is not.  
\end{proof}

\begin{conjecture}
	There are infinitely many $\Delta$-primitive squares.
\end{conjecture}


\section{Generating polynomials} \label{sec:polinom.generadores}

In this section, we revisit and expand upon an idea that arose in Proposition \ref{Delta-num.y=z}, developing a method for creating generating polynomials for numbers with the $\Delta$ property. \\

If $x$ is a natural number $>1$, consider
\begin{equation}
	\label{eq34}
	n=n(x)=x(x+2)(2x+1).
\end{equation}
Now observe that
\[ x(2x+1)-(x+2) = 2(x(x+2)-(2x+1)), \]
which is equivalent to
\begin{equation}
	\label{eq29}
	\frac{n}{x+2}-(x+2)=2\Big( \frac{n}{2x+1}-(2x+1) \Big).
\end{equation}
Since for all $x>1$ we have that $x+2, 2x+1\in D_n$ and
\[ 1<x+2<2x+1<\sqrt{n}, \]
it follows from \eqref{eq29} that $n\in\mathbb{N}(\Delta)$ for all $x>1$, since all requirements of Proposition \ref{Delta.prop.caracterizacion} are satisfied. These simple calculations, together with what we saw in Proposition \ref{Delta-num.y=z}, suggest the possibility of finding a more general method for constructing generating polynomials for numbers with the $\Delta$ property.

The idea is to consider polynomials of the form
\[ n(x)=\prod_{i=1}^{k}(a_ix+b_i), \]
with $a_i,b_i\in\mathbb{Z}$, such that each linear factor $a_ix+b_i$ of $n$ is an element of $D_n$. Clearly, we must have $k\geq 2$, since we always need $n$ to have at least two nontrivial divisors for sufficiently large $x$. If $k=2$, then suppose that $n=(a_1x+b_1)(a_2x+b_2)$ satisfies the $\Delta$ property via its divisors $a_1x+b_1$ and $a_2x+b_2$. Without loss of generality, this can only happen in the following way:
\begin{equation}
	\label{eq30}
	\frac{n}{a_1x+b_1}-(a_2x+b_2)=2\Big( \frac{n}{a_2x+b_2}-(a_1x+b_1) \Big).
\end{equation}
Simplifying expression \eqref{eq30}, we obtain
\begin{equation}
	\label{eq31}
	(a_2-a_1)x+(b_2-b_1)=0.
\end{equation}
The key to the method we wish to present is this: if the left-hand side of \eqref{eq31} is the identically zero polynomial, then \eqref{eq30} holds. Therefore, we take $a_2-a_1=0=b_2-b_1$, that is, $a_2=a_1$ and $b_2=b_1$. However, now $n=(a_1x+b_1)^2$, so $a_1x+b_1=\sqrt{n}$, which is not acceptable by Proposition \ref{Delta.prop.caracterizacion}. Hence, we must have $k\geq 3$. We now define the following polynomial:
\begin{equation}
	\label{eq32}
	f(x) = \frac{n}{a_1x + b_1} - (a_1x + b_1) - 2\left(\frac{n}{a_2x + b_2} - (a_2x + b_2)\right).
\end{equation}
Let $k=3$. Observe that the conditions $0<a_1<a_2$ and $a_3>0$ are sufficient to guarantee the existence of some $n_0\in\mathbb{N}$ such that the inequalities
\begin{equation*}
	1<a_1x+b_1<a_2x+b_2<\sqrt{n(x)}
\end{equation*}
hold simultaneously for $x\in[n_0,\infty)$. Moreover, since all $a_i$ are positive, we also ensure that $\deg(n)=3$ and that $n$ is eventually increasing. Through basic manipulations, we rewrite \eqref{eq32} as $f(x)=c_2x^2+c_1x+c_0$, where
\[ c_2= a_2a_3 - 2a_1a_3, \]
\[ c_1= a_2b_3 + a_3b_2 - a_1 - 2a_1b_3 - 2a_3b_1 + 2a_2, \]
\[ c_0= b_2b_3 - b_1 - 2b_1b_3 + 2b_2. \]
Since $f=0$ implies $n\in\mathbb{N}(\Delta)$ for all $x\geq n_0$, we must find integer solutions of the system $c_2=c_1=c_0=0$ in the six unknowns $a_i,b_i$. From $c_2=0$ we deduce that
\begin{equation}
	\label{eq33}
	a_2=2a_1,
\end{equation}
which is compatible with assuming $a_1<a_2$. Substituting \eqref{eq33} into $c_1=0$ and $c_0=0$, we obtain respectively that $(2b_1-b_2)a_3=3a_1$ and $(2b_1-b_2)b_3=2b_2-b_1$. Since $a_1,a_3\neq 0$, it follows that $2b_1-b_2\neq 0$, which allows us to isolate:
\begin{equation*}
	a_3=\frac{3a_1}{2b_1-b_2}, \quad b_3=\frac{2b_2-b_1}{2b_1-b_2}.
\end{equation*}
We note that $a_1,a_3>0$ implies $2b_1>b_2$; in particular, $(b_1,b_2)\neq(0,0)$ and $b_3\geq 0$ if and only if $2b_2\geq b_1$. We see that $a_1,b_1$, and $b_2$ end up being free parameters. Finally, for $a_3,b_3\in\mathbb{Z}$, it is sufficient that $2b_1-b_2$ divides $\gcd(3a_1,2b_2-b_1)$. We summarize all this in the following proposition.

\begin{proposition}
	\label{polinom.Delta-generadores,deg=3}
	Let $a,b,c\in\mathbb{Z}$ and define
	\[ \alpha=\frac{3a}{2b-c}, \quad \beta=\frac{2c-b}{2b-c}, \]
	where $a,\alpha>0$. Consider the polynomial $n(x)=(ax+b)(2ax+c)(\alpha x+\beta)$ and define $n_0$ as the smallest natural number such that the inequalities
	\[ 1<ax+b<2ax+c<\sqrt{n(x)} \]
	hold simultaneously for $x\in[n_0,\infty)$. If $x\in\mathbb{N}$ and
	\[ (2b-c)\mid\gcd(3a,2c-b), \]
	then $n(x)\in\mathbb{N}(\Delta)$ for all $x\geq n_0$.
\end{proposition}

\begin{proof}
	It follows from the preceding discussion.
\end{proof}

Let us see some examples of how to use Proposition \ref{polinom.Delta-generadores,deg=3} to construct generating polynomials for numbers with the $\Delta$ property.

If $\gcd(3a,2c-b)$ is a prime $p$, then $2b-c\in\{1,p\}$. If $2b-c=1$, then $c=2b-1$, $\alpha=3a$, and $\beta=3b-2$, yielding
\[ n(x)=(ax+b)(2ax+2b-1)(3ax+3b-2), \]
for $a\in\mathbb{N}$ and $b\in\mathbb{Z}$. For example, if $(a,b)=(3,4)$, then
\[ (3x+4)(6x+7)(9x+10)\in\mathbb{N}(\Delta), \]
for all $x\in\mathbb{N}$. If instead $(a,b)=(1,0)$, we recover the polynomial $x(2x-1)(3x-2)$ from Proposition \ref{Delta-num.y=z}.

If $2b-c=p$, then $c=2b-p$, $\alpha=\frac{3a}{p}$, and $\beta=\frac{3b}{p}-2$. Then: either $p=3$, or $p\neq 3$ and $p\mid\gcd(a,b)$. If $p=3$, then $c=2b-3$, $\alpha=a$, and $\beta=b-2$, so
\[ n(x)=(ax+b)(2ax+2b-3)(ax+b-2). \]
For example, if $(a,b)=(1,-1)$, then
\[ (x-1)(2x-5)(x-3)\in\mathbb{N}(\Delta), \]
for all $x\geq 5$, $x\in\mathbb{N}$. If instead $(a,b)=(1,2)$, we recover the polynomial \eqref{eq34}. If $p\neq 3$ and $(a,b)=(2,-4)$, then $p=2$, $c=-10$, $\alpha=3$, and $\beta=-8$. Hence,
\[ 4(x-2)(2x-5)(3x-8)\in\mathbb{N}(\Delta), \]
for all $x\geq 4$, $x\in\mathbb{N}$.


\section{Numbers without the $\Delta$ property}\label{sec:Num.sin.peop.Delta}

In this final section of Chapter \ref{cap:La.Prop.Delta}, we present a long list of families of numbers that do not satisfy the $\Delta$ condition. This class of numbers is infinite, as seen from the first result of the section. In fact, it states that no even integer not divisible by 4 has the $\Delta$ property. A key insight is that $pk \notin \mathbb{N}(\Delta)$ when $p$ is a ``large" prime compared to $k$. In rigorous terms: $p \geq 2k - 1$. A similar phenomenon occurs for numbers $n$ of the form $p^x q^y$ ($p, q$ primes): if $q > p^x$, then $n \notin \mathbb{N}(\Delta)$.

Another observed behavior is that numbers with very few prime factors in their factorization are less likely to satisfy the $\Delta$ condition. For example, powers of a prime or products of certain powers of two primes, such as $pq, pq^2, p^2 q^2$, and $p^k q$. We conjecture that if $n$ is a $\Delta$-primitive of the form $p^x q^y$ ($p, q$ primes), then $n \in \{24, 40\}$, and we characterize when a product of three primes $pqr \in \mathbb{N}(\Delta)$. In this part of the section, we note that having information about the ordering of the elements of $D_n$ is crucial to prove that $n \notin \mathbb{N}(\Delta)$.

After all this analysis, we return to split graphs, establishing a result about the $n$-simple induced cycles of $\Phi$, when $n$ does not satisfy the $\Delta$ condition and is not a square.

\begin{theorem}
	\label{2impar.no.tiene.prop.Delta}
	If $k$ is odd, then $2k \notin \mathbb{N}(\Delta)$.
\end{theorem}

\begin{proof}
	If $2k = ab$, then $a$ and $b$ cannot have the same parity, since otherwise $2k$ would be odd or a multiple of 4. Consequently, each element of $D^*_{2k}$ is odd, making every member of $D^+_{2k}$ even. Thus, $D^*_{2k} \cap D^+_{2k} = \varnothing$.  
\end{proof}

Proposition \ref{2impar.no.tiene.prop.Delta} has two direct consequences worth highlighting. The first is that $|\mathbb{N} - \mathbb{N}(\Delta)| = \infty$. The second is that $\mathbb{N}(\Delta)$ contains no even square-free numbers. In other words, every even number in $\mathbb{N}(\Delta)$ is a multiple of 4.

The next result is of notable importance because it tells us that a number cannot satisfy the $\Delta$ condition if it is divisible by a ``sufficiently large" prime.

\begin{theorem}
	\label{p>2k-2,entonces.pk.no tiene.prop.Delta}
	If $k \in \mathbb{N}$ and $p$ is a prime $\geq 2k - 1$, then $pk \notin \mathbb{N}(\Delta)$.
\end{theorem}

\begin{proof}
	Suppose $k \in \mathbb{N}$, $p$ is a prime $\geq 2k - 1$, but $pk \in \mathbb{N}(\Delta)$. Then, by Proposition \ref{Delta.prop.caracterizacion}, there exists a triple $T = \{x, y, z\} \subseteq D_{pk} = D_k \cup p D_k$ such that $2 \leq x < y \leq z < \sqrt{pk}$ and 
	\begin{equation}
		\label{eq19}
		\frac{pk}{x} - x = \frac{pk}{y} - y + \frac{pk}{z} - z.
	\end{equation}
	Since $k \leq 2k - 1 \leq p$, we have $\sqrt{pk} \leq \sqrt{p^2} = p$. Thus, $x, y, z < p$, which shows that $T \subseteq D_k$. We rewrite \eqref{eq19} as:
	\begin{equation}
		\label{eq20}
		p \left( \frac{k}{z} + \frac{k}{y} - \frac{k}{x} \right) = z + y - x
	\end{equation}
	Since the left-hand side of \eqref{eq20} is a product of two integers, it follows that $p | (z + y - x)$. Therefore, 
	\[ 2k - 1 \leq p \leq z + (y - x) \leq k + (k - 2) < 2k - 1, \]
	which is absurd. Hence, $pk \notin \mathbb{N}(\Delta)$.
\end{proof}

Proposition \ref{p>2k-2,entonces.pk.no tiene.prop.Delta} is useful for generating numbers without the $\Delta$ property, since for each $k \in \mathbb{N}$, we have infinitely many choices for $p$. It is interesting to note what the contrapositive of this proposition tells us: if $pk \in \mathbb{N}(\Delta)$, then $p < 2(k - 1)$. In other words, there are only finitely many prime multiples of a number that satisfy the $\Delta$ property. Another important observation is that Proposition \ref{p>2k-2,entonces.pk.no tiene.prop.Delta} implicitly provides a recipe to construct, for each prime $k_1$, an infinite sequence $\ell(k_1) = (k_i)_{i \in \mathbb{N}}$ of square-free numbers that do not have the $\Delta$ property. Let's see how to do this with an example. We want to determine the first terms of $\ell(2)$. Since $k_1 = 2$, let $p_1 = 3 \geq 2k_1 - 1$. Then, $k_2 = p_1 k_1 = 6$. If $p_2 = 11 \geq 2k_2 - 1$, then $k_3 = p_2 k_2 = 66$. If $p_3 = 131 \geq 2k_3 - 1$, then $k_4 = p_3 k_3 = 8646$. And so on. In general, $k_{i+1} = p_i k_i$, where $p_i$ is a prime $\geq 2k_i - 1$.

The next result we prove, closely resembles Proposition \ref{p>2k-2,entonces.pk.no tiene.prop.Delta} in style. It states that a number cannot have the $\Delta$ property if its factorization contains exactly two primes and one of them is ``much larger" than the other.

\begin{theorem}
	\label{p^xq^y.no.tiene.prop.Delta.q>p^x}
	Let $x, y \in \mathbb{N}$, with $y \geq 2$, and let $p, q$ be distinct primes. If $n = p^x q^y$ and $q > p^x$, then $n \notin \mathbb{N}(\Delta)$.
\end{theorem}

\begin{proof}
	We proceed by contradiction, first assuming that $n$ is $\Delta$-primitive. Then, there exist non-negative integers $a, c, e \leq x$ and $b, d, f \leq y$ such that 
	\begin{equation}
		\frac{n}{p^a q^b} - p^a q^b = \frac{n}{p^c q^d} - p^c q^d + \frac{n}{p^e q^f} - p^e q^f,
		\label{eq11}
	\end{equation}  
	where 
	\begin{equation*}
		1 < p^a q^b < p^c q^d \leq p^e q^f \leq \max \{ \alpha \in D_n : \alpha < \sqrt{n} \} = p^z q^{\lfloor y/2 \rfloor},
	\end{equation*}
	for some $z \leq x$. Note that $f \leq \lfloor y/2 \rfloor$. Indeed, if $f > \lfloor y/2 \rfloor$, we would have $p^e q^{f - \lfloor y/2 \rfloor} \leq p^z < q$, which is absurd. Using the same argument, we can also see that $b \leq d \leq f$. Furthermore, since $\lfloor y/2 \rfloor \leq y/2 < y$, it follows that $y - b, y - d$, and $y - f$ are all positive. Now, we rewrite equality \eqref{eq11} as
	\begin{equation}
		p^{x - a} q^{y - b} - p^a q^b = p^{x - c} q^{y - d} - p^c q^d + p^{x - e} q^{y - f} - p^e q^f.
		\label{eq12}
	\end{equation} 
	
	If $b, d, f > 0$, then we can divide both sides of \eqref{eq12} by $q$, obtaining that $m = p^x q^{y - 2} \in \mathbb{N}(\Delta)$ (recall that $y - 2 \geq 0$ by hypothesis). Since $n = q^2 m$ is $\Delta$-primitive, it follows that $q = 1$, a contradiction. Hence, $0 \in \{b, d, f\}$. 
	
	If $f = 0$, then $b = d = 0$ and \eqref{eq12} becomes
	\[ q^y (p^{x - a} - p^{x - c} - p^{x - e}) = p^a - p^c - p^e. \]
	Thus, $q^y$ divides $|p^a - p^c - p^e|$. Since $p^a q^0 < p^c q^0$, it follows that $|p^a - p^c - p^e| = (p^c - p^a) + p^e \leq p^c + p^e \leq 2q$. But then $q^y \leq 2q$, which is absurd. Therefore, it must be that $f > 0$.
	
	If $d = 0$, then $b = 0$ as well. Since $p^a q^0 < p^c q^0$, it follows that $a < c$ and, thus, $x - c < x - a$, so $p^{x - c} < p^{x - a}$. With these observations in mind, we rewrite \eqref{eq12} appropriately, obtaining the following contradiction:
	\[ 0 < q^{y - f} [q^f (p^{x - a} - p^{x - c}) - p^{x - e}] = -p^e q^f - (p^c - p^a) < 0. \]
	Therefore, $d > 0$. 
	
	Since $0 \in \{b, d, f\}$ but $d, f > 0$, necessarily $b = 0$. But then, taking congruences modulo $q$ in \eqref{eq12}, we deduce that $p^a \equiv 0 \pmod q$, which is impossible since $p$ and $q$ are distinct primes. Finally, we can conclude that $n$ cannot be $\Delta$-primitive.
	
	If $n \in \mathbb{N}(\Delta)$ but is not $\Delta$-primitive, then we can write $n = \alpha^2 m$ for some $\alpha \geq 2$ and some $\Delta$-primitive $m$. Note that $m = p^{x'} q^{y'}$, where $x' \leq x, y' \leq y$ but $(x', y') \neq (x, y)$. Since $q > p^{x'}$ and $m$ is $\Delta$-primitive, it follows from the first part of the proof that $m \notin \mathbb{N}(\Delta)$, which is absurd.   
\end{proof}

\begin{corollary}
	\label{contrarr.p^xq^y.no.tiene.prop.Delta.q>p^x}
	Let $x, y \in \mathbb{N}$, with $y \geq 2$, and let $p, q$ be distinct primes. For each pair $(p, x)$, there are only finitely many primes $q$ such that $p^x q^y \in \mathbb{N}(\Delta)$.
\end{corollary} 

\begin{proof}
	If $p^x q^y \in \mathbb{N}(\Delta)$, then $q < p^x$, by Proposition \ref{p^xq^y.no.tiene.prop.Delta.q>p^x}. Thus, fixing $p$ and $x$, we have only finitely many options for $q$.
\end{proof}

As an example application of Corollary \ref{contrarr.p^xq^y.no.tiene.prop.Delta.q>p^x}, consider a number $n$ of the form $9q^y$, with $y \geq 2$. If $n \in \mathbb{N}(\Delta)$, then it must be that $q < 9$, i.e., $q \in \{2, 5, 7\}$. 

We now prove that no power of a prime has the $\Delta$ property. A preliminary step is to verify that this claim holds for powers of order 1 and 2.

\begin{lemma}
	\label{p,p^2.no.tienen.prop.Delta}
	If $k \in \{1, 2\}$ and $p$ is prime, then $p^k \notin \mathbb{N}(\Delta)$.
\end{lemma}

\begin{proof}
	If $k = 1$, then $D^*_p = \{p - 1\}$ and $D^+_p = \{2(p - 1)\}$ are clearly disjoint. For $k = 2$, let $n = p^2$. Then, $D_n^* = \{n - 1, 0\}$ and $D_n^+ = \{2(n - 1)\}$. Hence, it is again evident that $D_n^* \cap D_n^+ = \varnothing$.
\end{proof}

As a mere curiosity, note that it can be proven that no prime satisfies the $\Delta$ condition simply by taking $k = 1$ in Proposition \ref{p>2k-2,entonces.pk.no tiene.prop.Delta}.  

\begin{theorem}
	\label{p^k.no.tiene.prop.Delta}
	If $p$ is prime and $k \in \mathbb{N}$, then $p^k \notin \mathbb{N}(\Delta)$. 
\end{theorem}

\begin{proof}
	First, observe that if $n = p^k$, then:
	\begin{equation*}
		D^*_n - 0 = \{p^{k - i} - p^i : 0 \leq 2i < k\}.
	\end{equation*}
	Thanks to Lemma \ref{p,p^2.no.tienen.prop.Delta}, we know the claim holds for $k = 1, 2$. Thus, we can assume $k \geq 3$.
	
	We proceed by contradiction. Suppose $n$ is $\Delta$-primitive. By Theorem \ref{Delta.prop.caracterizacion} (condition \ref{ecuacion.Delta.prop}), we have 
	\begin{equation}
		p^{k - z} - p^z + p^{k - y} - p^y = p^{k - x} - p^x,
		\label{eq13}    
	\end{equation}
	for certain non-negative integers $x, y, z$ such that $2x, 2y, 2z < k$. Then, $k-x, k-y, k-z > 0$. If $0 \in \{x, y, z\}$, $n - 1$ would participate in equality \eqref{eq13}, contradicting Corollary \ref{cotasup.D^*capD^+}. Thus, $x, y, z > 0$, and we can divide both sides of \eqref{eq13} by $p$. But then $n = p^2 m$, where $2 \leq m = p^{k - 2} \in \mathbb{N}(\Delta)$ (recall that $k \geq 3$), contradicting the primitivity of $n$. Therefore, we conclude that $n$ cannot be $\Delta$-primitive.
	
	If $n \in \mathbb{N}(\Delta)$ but is not $\Delta$-primitive ($k \geq 3$), then $n = \alpha^2 m$, where $\alpha \geq 2$ and $m$ is $\Delta$-primitive. But $m = p^h \in \mathbb{N}(\Delta)$, for some $h \in [k - 2]$, which contradicts what was shown in the first part of the proof or in Lemma \ref{p,p^2.no.tienen.prop.Delta}.   
\end{proof}

Over the course of the next three lemmas, we prove that for every pair of primes $p, q$, no number of the form $p^2 q^2, p q^2$, or $p q$ satisfies the $\Delta$ condition.

\begin{lemma}
	\label{p^2q^2.no.tiene.prop.Delta}
	If $p$ and $q$ are primes, then $p^2 q^2 \notin \mathbb{N}(\Delta)$.
\end{lemma}

\begin{proof}
	If $p = q$, we use Proposition \ref{p^k.no.tiene.prop.Delta}. If $p \neq q$, we can assume without loss of generality that $q < p$ (in particular, $p$ is odd). If $q > p^2$ or $p > q^2$, the result follows immediately from Proposition \ref{p^xq^y.no.tiene.prop.Delta.q>p^x}. The complementary case remains to be analyzed, i.e., when $q < p^2$ and $p < q^2$. Thus, $q < p < q^2 < p^2$. Under these restrictions, we have 
	\[ D_n = \{1, q, p, q^2, p q, p^2, p q^2, p^2 q, n\}, \]
	\[ D_n^* = \{n - 1, p^2 q - q, p q^2 - p, p^2 - q^2, 0\}, \]
	where the elements of $D_n$ are listed in increasing order and those of $D_n^*$ in decreasing order. Explicitly, $D_n^+ =$
	\[ \{p^2 q - q + p q^2 - p, p^2 q - q + p^2 - q^2, p q^2 - p + p^2 - q^2\} \cup \]
	\[\cup (D_n^* + (n - 1)) \cup 2 D_n^*. \]
	For every triple of elements $a, b, c \in D_n^* - 0$, we must prove that $a + b \neq c$. Clearly, $a + b \neq a$. Thanks to this observation and Corollary \ref{cotasup.D^*capD^+}, we can immediately discard many cases, leaving only the following:
	\begin{enumerate}
		\item $p^2 q - q + p q^2 - p = p^2 - q^2$,
		\item $p^2 q - q + p^2 - q^2 = p q^2 - p$,
		\item $p q^2 - p + p^2 - q^2 = p^2 q - q$,
		\item $2(p^2 q - q) \in \{p q^2 - p, p^2 - q^2\}$,
		\item $2(p q^2 - p) \in \{p^2 q - q, p^2 - q^2\}$,
		\item $2(p^2 - q^2) \in \{p^2 q - q, p q^2 - p\}$.
	\end{enumerate}
	Equalities (1), (2), (4), and the equality $2(p q^2 - p) = p^2 - q^2$ in (5) are evidently impossible due to the fact that we could fully specify the ordering of the elements in $D_n^*$.
	
	With basic manipulations, we can rewrite (3) as $p = q(p + 1) - q^2$. But this means $q | p$, which is absurd.
	
	The equality $2(p q^2 - p) = p^2 q - q$ in case (5) implies $2p \equiv 0 \pmod q$, forcing $q$ to be $2$. Then, we obtain $p^2 = 3p + 1$, but this is not possible since $p$ is odd.
	
	Only case (6) remains to be analyzed. If $2(p^2 - q^2) = p^2 q - q$, it follows that $2p^2 \equiv 0 \pmod q$, and thus $q = 2$. But then $p^2 - 4 = p^2 - 1$, which is absurd. If $2(p^2 - q^2) = p q^2 - p$, we deduce that $2q^2 \equiv 0 \pmod p$. But then it must be that $p = 2$, contradicting the hypothesis that $p$ is odd.
\end{proof}

\begin{lemma}
	If $p$ and $q$ are primes, then $p q^2 \notin \mathbb{N}(\Delta)$.
	\label{pq^2.no.tiene.prop.Delta}
\end{lemma}

\begin{proof}
	Let $n = p q^2$. If $p = q$, we use Proposition \ref{p^k.no.tiene.prop.Delta}. Then, suppose that $p\neq q$. Since $D_n = \{1, q, p, q^2, p q, n\}$, we have 
	\[ D_n^* = \{n - 1, p q - q, |q^2 - p|\}, \]
	and thus 
	\[ D_n^+ = \{p q - q + |q^2 - p|\} \cup (D_n^* + (n - 1)) \cup 2 D_n^*. \]
	Obviously, $p q - q + |q^2 - p|\notin \{2(pq-q),2|q^2-p|\}$ and $2d\neq d$ for all $d\in D_n^*$. Moreover, 
	\[ \{p q - q + |q^2 - p|, 2(pq-q),2|q^2-p|\}\cap\{n-1\}=\varnothing, \]
	by Corollary \ref{cotasup.D^*capD^+}. If $2(pq-q)=|q^2-p|$ or $2|q^2-p|=pq-q$, then $|q^2-p|\equiv 0\pmod{q}$, which is absurd. Thus, $D^*_n \cap D^+_n = \varnothing$. 
\end{proof}

\begin{lemma}
	\label{pq.no.tiene.prop.Delta}
	If $p$ and $q$ are primes, then $p q \notin \mathbb{N}(\Delta)$.
\end{lemma}

\begin{proof}
	If $p = q$, we use Proposition \ref{p^k.no.tiene.prop.Delta}. If $p \neq q$, we can assume without loss of generality that $p < q$. Then, with $n = p q$, $D^*_n = \{n - 1, q - p\}$ and
	\begin{equation*}
		D^+_n = \{2(n - 1), n - 1 + q - p, 2(q - p)\}.    
	\end{equation*}
	At this point, using the usual arguments, it is quickly verified that $D^*_n \cap D^+_n = \varnothing$.
\end{proof}

\begin{theorem}
	\label{pq,pq^2,p^2q^2.no.tiene.prop.Delta}
	If $p$ and $q$ are prime numbers, then
	\[ \{p q, p^2q, pq^2, p^2 q^2\} \subset \mathbb{N} - \mathbb{N}(\Delta). \]
\end{theorem}

\begin{proof}
	This is the content of Lemmas \ref{p^2q^2.no.tiene.prop.Delta}, \ref{pq^2.no.tiene.prop.Delta} and \ref{pq.no.tiene.prop.Delta}.
\end{proof}

The following is a technical lemma we need later to prove that 24 and 40 are the only $\Delta$-primitives of the form $p^k q$, where $p$ and $q$ are primes.

\begin{lemma}
	\label{2^i-1q-2^j-1=2^i+j-q}
	Let $i, j \in \mathbb{N}$, and let $q \geq 3$ be an odd number. If
	\begin{equation}
		|2^{i - 1} q - 2^{j - 1}| = |2^{i + j} - q|,
		\label{eq9}
	\end{equation}
	then $(i, j, q) \in \{(1, 2, 5), (2, 1, 3)\}$.   
\end{lemma}

\begin{proof}
	If $q > 2^{i + j}$ in \eqref{eq9}, then $2^{i - 1} q - 2^{j - 1} > 2^{2i + j - 1} - 2^{j - 1} > 0$. Thus,
	\[ 0 < 2^{j - 1} (2^{i + 1} - 1) = q (1 - 2^{i - 1}) \leq 0, \]
	which is absurd. Therefore, it follows that $2^{i + j} > q$ in \eqref{eq9}. Successively, using similar arguments, it is easy to also eliminate the absolute value on the left-hand side of \eqref{eq9}. Finally, \eqref{eq9} is equivalent to
	\begin{equation}
		2^{i - 1} q - 2^{j - 1} = 2^{i + j} - q.
		\label{eq10}
	\end{equation}
	Since the right-hand side of \eqref{eq10} is odd, necessarily one of the two terms on the left-hand side must be odd. Therefore, it must be either $i = 1$ or $j = 1$. Substituting $i = 1$ into \eqref{eq10} and simplifying, we obtain $2q = 2^{j - 1} 5$, which implies $j = 2$ and $q = 5$. If instead we substitute $j = 1$ into \eqref{eq10}, we arrive at $q - 1 = 2^{i - 1} (4 - q)$. Since $q - 1 > 0$, it must also be that $4 - q > 0$. Given that by hypothesis $q$ is an odd number $\geq 3$, the only option is $q = 3$, which in turn implies $i = 2$.   
\end{proof}

\begin{lemma}
	\label{p^kq.prim.24.40}
	Let $n = p^k q$, where $p$ and $q$ are distinct primes, $q$ is odd, and $k \geq 2$. If $n$ is $\Delta$-primitive, then $n \in \{24, 40\}$. 
\end{lemma}

\begin{proof}
	Since $n \in \mathbb{N}(\Delta)$, there exist non-negative integers $e_1, \ldots, e_6$ such that $e_1 + e_2 = e_3 + e_4 = e_5 + e_6 = k$ and
	\begin{equation}
		|p^{e_1} q - p^{e_2}| = |p^{e_3} q - p^{e_4}| + |p^{e_5} q - p^{e_6}|.
		\label{eq4}
	\end{equation}
	If $e_i > 0$ for all $i$, we can divide both sides of \eqref{eq4} by $p$, obtaining that $m = p^{k - 2} q \in \mathbb{N}(\Delta)$ (recall that $k \geq 2$). But then $n = p^2 m$, which contradicts the primitivity of $n$. Thus, $0 \in \{e_1, \ldots, e_6\}$. Note that necessarily $e_2, e_4, e_6 > 0$ in \eqref{eq4}, thanks to Corollary \ref{cotasup.D^*capD^+}. 
	
	If $e_1 = 0$, then \eqref{eq4} becomes
	\begin{equation}
		|q - p^k| = |p^{e_3} q - p^{e_4}| + |p^{e_5} q - p^{e_6}|. 
		\label{eq5}
	\end{equation}
	If $e_3 = 0$ or $e_5 = 0$, we find $|q - p^k|$ as a summand on the right-hand side of \eqref{eq5}. Both cases lead to clear absurdities. If $e_3, e_5 > 0$, then the right-hand side of \eqref{eq5} becomes a multiple of $p$: this represents another absurdity because the left-hand side of \eqref{eq5} is not. Therefore, we can conclude that $e_1 > 0$ in \eqref{eq4}, i.e., the left-hand side of \eqref{eq4} is a multiple of $p$, and moreover $0 \in \{e_3, e_5\}$. 
	
	Suppose now that $e_3 = 0$ and $e_5 > 0$. After reducing \eqref{eq4} modulo $p$, we get that $p|q$, which is obviously false. The same would happen if $e_3 > 0$ and $e_5 = 0$. Hence, the only possibility is that $e_3 = e_5 = 0$. After all these steps, we have converted \eqref{eq4} into
	\begin{equation}
		|p^{e_1} q - p^{e_2}| = 2 |q - p^{e_1 + e_2}|.
		\label{eq6}
	\end{equation}
	Reducing \eqref{eq6} modulo $p$, we obtain $2q \equiv 0 \pmod p$, which is true only if $p = 2$. We can then rewrite \eqref{eq6} as
	\begin{equation}
		|2^{e_1 - 1} q - 2^{e_2 - 1}| = |q - p^{e_1 + e_2}|.
		\label{eq7}
	\end{equation}
	We have shown that if $n$ is $\Delta$-primitive, then $n = 2^{e_1 + e_2} q$, where the integers $e_1, e_2$ and $q$ satisfy \eqref{eq7}. By Lemma \ref{2^i-1q-2^j-1=2^i+j-q}, the only triples $(e_1, e_2, q)$ of natural numbers satisfying \eqref{eq7} are $(1, 2, 5)$ and $(2, 1, 3)$. Therefore, we conclude that $n \in \{24, 40\}$. 
\end{proof}

\begin{theorem}
	\label{p^kq.tiene.prop.Delta.iff...}
	Let $n = p^k q$, where $p$ and $q$ are primes and $k \in \mathbb{N}$. If $n \in \mathbb{N}(\Delta)$, then $n \in \{2^{2h + 1} q : q \in \{3, 5\}, h \in \mathbb{N}\}$. 
\end{theorem}

\begin{proof}
	If $k = 1$, we apply Lemma \ref{pq.no.tiene.prop.Delta}. If $k \geq 2$, suppose $p \neq q$, $q$ is odd, and $n \in \mathbb{N}(\Delta)$. If $n$ is $\Delta$-primitive, we use Lemma \ref{p^kq.prim.24.40}. If $n$ is not $\Delta$-primitive, then there exists an integer $e > 0$ such that $n = p^{2e} p^{k - 2e} q$ and $m = p^{k - 2e} q$ is $\Delta$-primitive. But again, by Lemma \ref{p^kq.prim.24.40}, it follows that $m \in \{24, 40\}$, i.e., $p = 2, k - 2e = 3$ and $q \in \{3, 5\}$. 
\end{proof}

\begin{corollary}
	If $p$ and $q$ are distinct odd primes, consider a number $n$ of the form $p^2 q^3, p^3 q^3$, or $p^2 q^4$. If $n \in \mathbb{N}(\Delta)$, then $n$ is $\Delta$-primitive.
\end{corollary}

\begin{proof}
	Let $n \in \{p^2 q^3, p^3 q^3, p^2 q^4\}$. Suppose $n$ has the $\Delta$ property but is not $\Delta$-primitive. Then $n = \alpha^2 m$, where $\alpha \in \{p, q, p q\}$ and $m$ is $\Delta$-primitive. Thus,
	\[ m \in \{q, q^3, p^2 q, p q, p^3 q, p q^3, q^2, q^4, p^2 q^2\}. \]
	Thanks to Propositions \ref{p^k.no.tiene.prop.Delta}, \ref{pq,pq^2,p^2q^2.no.tiene.prop.Delta}, and \ref{p^kq.tiene.prop.Delta.iff...}, we know this is not possible.
\end{proof}

Based on experimental evidence, we conjecture that 24 and 40 are the only $\Delta$-primitive numbers using at most two primes in their decomposition.

\begin{conjecture}
	Let $n$ be a natural number of the form $p^x q^y$, where $p$ and $q$ are primes and $x, y \geq 0$. If $n$ is $\Delta$-primitive, then $n \in \{24, 40\}$. 
\end{conjecture}

Lemma \ref{pq.no.tiene.prop.Delta} cannot be extended to a product $p q r$ of three distinct primes: numbers like $105 = 3 \cdot 5 \cdot 7$ and $385 = 5 \cdot 7 \cdot 11$ would be counterexamples. However, we will analyze this new situation to understand in detail how the case $p q$ differs from the case $p q r$.

Let $p, q$, and $r$ be primes such that $p < q < r$. If $n = p q r$, then 
\[ D_n = \{1, p, q, r, p q, p r, q r, n\}, \]
\[ D_n^* = \{n - 1, q r - p, p r - q, |p q - r|\}, \]
\[ D_n^+ = \{q r - p + p r - q, q r - p + |p q - r|, p r - q + |p q - r|\} \cup \]
\[ \cup (D_n^* + (n - 1)) \cup 2 D_n^*, \]
where the elements of $D_n^*$ are listed in decreasing order. This complete ordering is due to $1 < p < q < r, p q < p r < q r < n$. Imagine having to prove that $a + b \neq c$ for every triple of elements $a, b, c \in D_n^* - 0$. Clearly, $a + b \neq a$. Thanks to this observation and Corollary \ref{cotasup.D^*capD^+}, we can immediately discard many cases, leaving only the following:
\begin{enumerate}
	\item $q r - p + p r - q = |p q - r|$,
	\item $q r - p + |p q - r| = p r - q$,
	\item $p r - q + |p q - r| = q r - p$,
	\item $2(q r - p) \in \{p r - q, |p q - r|\}$,
	\item $2(p r - q) = q r - p$,
	\item $2(p r - q) = |p q - r|$,
	\item $2|p q - r| = q r - p$,
	\item $2|p q - r| = p r - q$.
\end{enumerate}
The impossibility of (1), (2), (4), and (6) is evident due to the complete ordering of the elements of $D_n^*$. 

If $r < p q$, then (7) can be rewritten as $p (2q + 1) = r (q + 2)$, so $q = k p - 2$, for some $k \in \mathbb{N}$. Then, $k (2p - r) = 3$, and thus $k \in \{1, 3\}$. If $k = 1$, then $q = p - 2 < p$. If $k = 3$, then $r = 2p - 1 < 3p - 2 = q$. Both cases contradict the hypotheses. If $p q < r$, then (7) is equivalent to $r (2 - q) = p (2q - 1)$, which is obviously false.

If $r < p q$, equality (8) can be rewritten as $q (2p + 1) = r (p + 2)$. Since $q (2p + 1)$ is odd, necessarily $p \neq 2$. Moreover, $q | (p + 2)$, which implies $q \leq p + 2$. But then $q = p + 2$, since $3 \leq p < p + 2 \leq q$, and thus $r = 2p + 1$. If $p q < r$, then we can convert (8) into $r (2 - p) = q (2p - 1)$, which is clearly absurd. 

We can rewrite (5) as $p (2r + 1) = q (r + 2)$. Then, $2r + 1 = k q$ and $r + 2 = h p$, for certain $k, h \in \mathbb{N}$. But then $p k q = q h p$, i.e., $k = h$, from which we deduce that $k (2p - q) = 3$. Hence, $k \in \{1, 3\}$. If $k = 1$, then $r = p - 2 < p$, a contradiction. If $k = 3$, then $r = 3p - 2$ and $q = 2p - 1$.

Regarding equality (3), note that it can be rewritten as
\[ (r + p + 1)(q - p - 1) = -p^2 - p - 1, \]
if $p q < r$, and
\[ (r - p + 1)(q - p + 1) = p^2 - p + 1, \]
if $r < p q$. The first case is immediately discarded. Regarding the second, we can say that $d_1 = q - p + 1$ and $d_2 = r - p + 1$ are complementary divisors of the integer $m = d_1 d_2 = p^2 - p + 1$. Therefore, by fixing $p$, we will have a finite number of pairs $(q, r)$ such that $p q r \in \mathbb{N}(\Delta)$. We also note that: 1) $1 < d_1 < \sqrt{m}$; 2) $m, d_1$, and $d_2$ are odd for $p > 2$. We can summarize all this analysis in the following proposition.

\begin{theorem}
	\label{pqr.prop.Delta}
	Let $p, q$, and $r$ be prime numbers such that $p < q < r$. Then $p q r \in \mathbb{N}(\Delta)$ if and only if one of these conditions is satisfied:
	\begin{enumerate}
		\item $(r - p + 1)(q - p + 1) = p^2 - p + 1$,
		\item $(q, r) \in \{(p + 2, 2p + 1), (2p - 1, 3p - 2)\}$.
	\end{enumerate}
	In particular, once $p$ is fixed, there are only finitely many pairs $(q, r)$ such that $p q r \in \mathbb{N}(\Delta)$. 
\end{theorem} 

\begin{proof}
	It follows from the previous discussion.
\end{proof}

Let's see some examples of applying Proposition \ref{pqr.prop.Delta}. If $p = 2$, then $2 q r \in \mathbb{N}(\Delta)$ if and only if $(r - 1)(q - 1) = 3$ (1) or $(q, r) \in \{(4, 5), (3, 4)\}$ (2). Note that (1) implies $q - 1 = 1$, which contradicts $p < q$. Regarding (2), we see there are no pairs of primes. Therefore, we conclude that $2 q r \notin \mathbb{N}(\Delta)$. This is consistent with Proposition \ref{2impar.no.tiene.prop.Delta}, of which Proposition \ref{pqr.prop.Delta} is a particular case when $p = 2$. If $p = 3$, then $3 q r \in \mathbb{N}(\Delta)$ if and only if $(r - 2)(q - 2) = 7$ (1) or $(q, r) = (5, 7)$ (2). We see that (1) implies $q - 2 = 1$, which contradicts $p < q$. Thus, $3 q r \in \mathbb{N}(\Delta)$ if and only if $(q, r) = (5, 7)$, by (2). If $p = 5$, then $5 q r \in \mathbb{N}(\Delta)$ if and only if $(r - 4)(q - 4) = 21$ (1) or $(q, r) \in \{(7, 11), (9, 13)\}$ (2). Since $1 < q - 4 < \sqrt{21} < 5$, it follows from (1) that $q - 4 = 3$ and $r - 4 = 7$. Hence, $5 q r \in \mathbb{N}(\Delta)$ if and only if $(q, r) = (7, 11)$. Similarly, it is very easy to verify that $7 q r \in \mathbb{N}(\Delta)$ if and only if $(q, r) = (13, 19)$. We can summarize all this in the following corollary.

\begin{corollary}
	Let $p, q$, and $r$ be prime numbers such that $p < q < r$, and let $n = p q r$. If $p \leq 7$ and $n \in \mathbb{N}(\Delta)$, then $n \in \{105, 385, 1729\}$.
\end{corollary} 

\begin{proof}
	It follows from the previous discussion.  
\end{proof}

The next corollary generalizes some arguments used in the previous examples. 

\begin{corollary}
	\label{p^2-p+1.primo...}
	Let $p, q$, and $r$ be prime numbers such that $p < q < r$. If $p^2 - p + 1$ is prime and the set $\{p + 2, 2p \pm 1, 3p - 2\}$ contains at most one prime, then $p q r \notin \mathbb{N}(\Delta)$. 
\end{corollary}

\begin{proof}
	Condition (1) of Proposition \ref{pqr.prop.Delta} cannot be fulfilled acceptably if $p^2 - p + 1$ is prime, since this implies $q - p + 1 = 1$, contradicting the hypothesis that $p < q$.
	
	If the set $\{p + 2, 2p \pm 1, 3p - 2\}$ contains at most one prime, then it is impossible to form any pair of primes $(q, r)$ as required by condition (2) of Proposition \ref{pqr.prop.Delta}.
\end{proof}

The numbers 13, 67, and 79 are the smallest primes satisfying the hypotheses of Corollary \ref{p^2-p+1.primo...}.

Finally, we close this section with an important theorem about the $n$-simple induced cycles of $\Phi$, when $n \notin \mathbb{N}(\Delta)$ and is not a square.

\begin{theorem}
	Let $S$ be a split graph, and let $C$ be an $n$-simple induced cycle in $\Phi(S)$. If $n$ is not a square and $n \notin \mathbb{N}(\Delta)$, then $|C| = 4$. 
\end{theorem}

\begin{proof}
	We already know from Theorem \ref{ciclos.inducidos.en.Phi} that $|C| \in \{3, 4\}$. If $|C| = 3$, then $C$, viewed in $\vec{\Phi}(S)$, would have to be of type 0 (see Figure \ref{triangulos.permitidos}), by Corollary \ref{S.squarefree.implica.triang_tipo0}. Since $C$ is $n$-simple, it follows by Theorem \ref{triang.n-simple&tipo0.implica.Delta.prop} that $n \in \mathbb{N}(\Delta)$, which contradicts the hypothesis.
\end{proof}


\chapter{Active graphs of low degree} \label{cap:grafos.activos.con.deg<5}

In this chapter, we apply all the machinery developed earlier with the aim of:
\begin{enumerate}[(1).]
	\item classifying active graphs of degrees 1, 2, and 3;
	\item classifying the realization spaces associated with graphs of degree $\leq 2$;
	\item proving that the realization spaces associated with graphs of degree 3 are ``almost all" 3-regular;
	\item classifying prime split graphs of degree 4.
\end{enumerate}
In the first section, we resolve (1) for degrees 1 and 2, obtaining (2) as a consequence. In the second section, we classify active graphs of degree 3, thereby achieving (3). In the third and final section, we resolve (4).


\section{Active graphs of degree 1 and 2}

The following immediate corollaries of Theorem \ref{degreeofG} allow us to characterize graphs $G$ of degree $\leq 2$ based on the members of $Q_G^*$.

\begin{corollary}
	Let $G$ be a non-null graph. The following statements are equivalent:
	\begin{enumerate}
		\item $\deg(G)=0$.
		\item $G$ has no induced subgraphs isomorphic to $2K_2$, $C_4$, or $P_4$.
		\item $G$ is threshold.
		\item $G$ is inactive.
		\item $G^* = K_0$.
	\end{enumerate}
\end{corollary}

\begin{corollary}
	A graph $G$ has degree 1 if and only if $G$ has exactly one induced subgraph isomorphic to $P_4$ and no induced subgraphs isomorphic to $2K_2$ or $C_4$.
\end{corollary}

\begin{corollary}
	A graph $G$ has degree 2 if and only if exactly one of the following conditions holds:
	\begin{enumerate}
		\item $G$ has exactly two induced subgraphs isomorphic to $P_4$ and no induced subgraphs isomorphic to $2K_2$ or $C_4$.
		\item $G$ has exactly one induced subgraph isomorphic to $C_4$ and no induced subgraphs isomorphic to $2K_2$ or $P_4$.
		\item $G$ has exactly one induced subgraph isomorphic to $2K_2$ and no induced subgraphs isomorphic to $C_4$ or $P_4$.
	\end{enumerate}
\end{corollary}

\begin{theorem}
	\label{|G^*|}
	If $G$ is an active graph, then
	\begin{equation*}
		4 \leq |G| \leq 4\deg(G).
	\end{equation*}
\end{theorem}

\begin{proof}
	Since $G$ is active, it is clear that $Q_G^* \neq \varnothing$, as $\deg(G) \geq 1$. Thus, $|G| \geq 4$. On the other hand:
	\begin{equation*}
		|G| = \left|\bigcup_{H \in Q_G^*} V(H)\right| \leq \sum_{H \in Q_G^*} 4 \leq 4 \sum_{H \in Q_G^*} \deg(H) = 4\deg(G).
	\end{equation*}
\end{proof}

An immediate consequence of Theorem \ref{|G^*|} is that $P_4$ is the only active graph of degree 1.

\begin{corollary}
	If $s$ is a graphical sequence, then
	\begin{equation*}
		|\text{act}(s)| \leq 4\min\{\deg(G) : G \in \mathcal{G}(s)\}.
	\end{equation*}
\end{corollary}

\begin{theorem}
	\label{tau.preserves.deg1}
	If $\deg(G)=1$, then $\deg(\tau(G))=1$ for every 2-switch $\tau$. In other words, the 2-switch preserves degree 1.
\end{theorem}

\begin{proof}
	Using Theorem \ref{|G^*|}, we deduce that $G^* \approx P_4$. Letting $s=s(G)$, we have $\mathcal{G}(s) \approx \mathcal{G}(P_4) \approx K_2$, by Theorem \ref{isomorfismo.espacio.activo}.
\end{proof}

From Theorem \ref{tau.preserves.deg1}, we immediately obtain the following corollary.

\begin{corollary}
	If $\mathcal{G}(s)$ has a leaf, then $\mathcal{G}(s) \approx K_2$.
\end{corollary}

\begin{proposition}
	\label{|I|,|K|<=2deg(S)}
	Let $(S,K,I)$ be a split graph.
	\begin{enumerate}
		\item If $S$ is active, then
		\[ 2 \leq |I|, |K| \leq 2\deg(S). \]
		\item If $S$ is prime, then
		\[ 2 \leq |I|, |K| \leq \deg(S)+1. \]
	\end{enumerate}
\end{proposition}

\begin{proof}
	\begin{enumerate}[(1).]
		\item If $S$ is active, then
		\[ |I| = \left|\bigcup_{H \in Q_S(P_4)} (V(H) \cap I)\right| \leq \sum_{H \in Q_S(P_4)} |V(H) \cap I| \]
		\[ \leq \sum_{H \in Q_S(P_4)} 2 = 2|Q_S(P_4)| = 2\deg(S). \]
		The above arguments remain valid if we replace $I$ with $K$.
		\item If $S$ is prime, then $\Phi(S)$ is connected, by Theorem \ref{S.primo.iff.Phi(S).conexo}. Among all connected multigraphs of fixed size $\deg(S)$, those that maximize the number of vertices are clearly the simple trees, which have order $\deg(S)+1$. Thus, $|I| \leq \deg(S)+1$. Since $S$ is prime, $(\overline{S},I,K)$ is also prime (see \cite{tyshkevich2000decomposition}, page 14). Therefore, reusing the same argument, we obtain $|K| \leq \deg(S)+1$.
	\end{enumerate}
\end{proof}

\begin{lemma}
	\label{5>degG=|{P_4}|.implica.G.split}
	If $G$ is a graph such that $\deg(G)=|Q_G(P_4)| \leq 4$, then $G$ is split.
\end{lemma}

\begin{proof}
	Recall that a graph is split if and only if it contains no induced subgraphs isomorphic to $C_4$, $2K_2$, or $C_5$. The hypotheses imply that $G$ contains no induced subgraphs isomorphic to $C_4$ or $2K_2$. If $C_5 \preceq G$, then $4 \geq \deg(G) \geq \deg(C_5)=5$, by Proposition \ref{deg.respects.ind.inc}. Thus, $G$ cannot contain induced subgraphs isomorphic to $C_5$ either.
\end{proof}

Consider an active graph $G$ such that $\deg(G)=|Q_G(P_4)|=2$. Thanks to Lemma \ref{5>degG=|{P_4}|.implica.G.split}, we know that $G$ is split. Let $n=|G|$. Applying Theorem \ref{|G^*|}, we obtain $4 \leq n \leq 8$. However, it is clear that $n \neq 4$, since otherwise we would have $|Q_G(P_4)|=1$. Now, suppose $G=S \circ H$ for some graphs $S$ and $H$. Since $G$ is active, it follows from Proposition \ref{SoG.act.iff.S,G.act} that $S$ and $H$ are active. Consequently, $|S|, |H| \geq 4$, by Theorem \ref{|G^*|}. Since $|S|+|H|=n \leq 8$, we conclude that $S, H \approx P_4$. Therefore:
\begin{enumerate}[(1).]
	\item if $n=8$, then $G \approx P_4^2$;
	\item if $n \in \{5,6,7\}$, then $(G,K,I)$ is prime.
\end{enumerate}
In general, to find all prime split graphs $S$ of a certain degree, we can first assume that $(S,K,I)$ satisfies
\begin{equation}
	\label{eq36}
	\alpha(S) \leq \omega(S).
\end{equation}
When we say ``all", we clearly mean all isomorphism classes (which, as we know, are unlabeled graphs). Once we find all $S$ satisfying \eqref{eq36}, it suffices to complement each of them to complete the classification. This procedure is valid for the following reasons:
\begin{enumerate}[(1).]
	\item a graph is prime if and only if its complement is prime (\cite{tyshkevich2000decomposition}, page 14);
	\item every graph has the same degree as its complement (Theorem \ref{degG=deg(G.complemento)});
	\item $(\overline{S},I,K)$ is the bipartition of $\overline{S}$ and $\alpha(\overline{S}) \geq \omega(\overline{S})$.
\end{enumerate}

Let us continue our analysis of $G$. If $n \in \{5,6,7\}$, then $\Phi(G)$ is connected, by Corollary \ref{S.primo.iff.Phi(S).conexo}. This means that $\Phi(G)$ is isomorphic to $K_2$ or $P_3$, ignoring multiplicities. Assume $\alpha=\alpha(G) \leq \omega(G)=\omega$. Remember that $\omega=|K|$ and $\alpha=|I|$ (since $G$ is balanced). As $5 \leq \alpha+\omega=n \leq 7$ and $2 \leq \alpha \leq \omega$, it follows that $\alpha \in \{2,3\}$ and $\omega \neq 2$. Moreover, $\omega \leq 3$, by Proposition \ref{|I|,|K|<=2deg(S)}. Thus, $\omega=3$.

Let $I=\{a,b\}$. Clearly, $\eta_{ab}=0$, since $G$ has no universal vertices. Thus, $3=\omega=d_a+d_b$, and hence $(d_a,d_b)=(1,2)$. Up to isomorphism, there is a unique split graph with these characteristics; we call it $D_5$ and it can be seen in Figure \ref{grafos.activos.deg<3}.

If $\Phi$ is the path $abc$, we deduce from Theorem \ref{caract.Phi.simple.conexo} that $G$ is isomorphic to $(R,[3],I)$ or $\overline{R^{\iota}}$, where $N_R(a)=N_R(c)=\{1\}$ and $N_R(b)=\{2\}$. This is impossible because $G$ is active, but neither $R$ nor $\overline{R^{\iota}}$ are (vertex 3 is swing in $R$ and universal in $\overline{R^{\iota}}$).

So far, we have classified all active graphs $G$ of degree 2 such that $Q_G^*=Q_G(P_4)$. To classify all active graphs of degree 2, it remains to analyze the case
\begin{equation}
	\label{eq35}
	Q_G^*=Q_G(C_4) \cup Q_G(2K_2).
\end{equation}
Recall that $4 \leq n \leq 8$, by Theorem \ref{|G^*|}. Obviously, $G \approx C_4$ or $2K_2$ if $n=4$. On the other hand, if $n \geq 5$, then $|Q_G^*| \geq 2$. This would imply $\deg(G) \geq 4$, by \eqref{eq35}, contradicting the hypothesis that $\deg(G)=2$. It is very easy to verify that $C_4$ and $2K_2$ are indecomposable. Indeed, if there were graphs $S$ and $H$ such that $S \circ H \approx C_4$, both factors would have to be active, by Theorem \ref{SoG.act.iff.S,G.act}. But then applying Theorem \ref{|G^*|} leads to the following absurdity: $4=|C_4|=|S|+|H| \geq 8$. The same argument applies to $2K_2$.

\begin{theorem}
	\label{clasificacion.activo,deg=2}
	Let $G$ be an active graph.
	\begin{enumerate}
		\item If $\deg(G)=1$, then $G \approx P_4$.
		\item If $\deg(G)=2$ and $G$ is decomposable, then $G \approx P_4^2$.
		\item If $\deg(G)=2$ and $G$ is indecomposable, then $G$ is isomorphic to one of these graphs: $D_5$, $\overline{D_5}$, $C_4$, $2K_2$.
	\end{enumerate}
	All graphs mentioned here are represented in Figure \ref{grafos.activos.deg<3}.
\end{theorem}
\begin{figure}[h]
	\centering
	\includegraphics[scale=0.8]{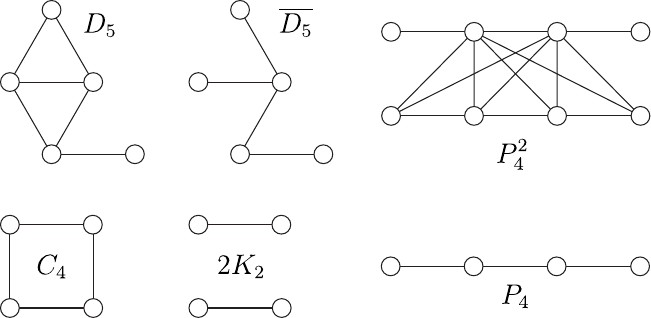}
	\caption{The 6 active graphs of degree $\leq 2$.}
	\label{grafos.activos.deg<3}
\end{figure}
\begin{proof}
	It follows from the previous discussion.
\end{proof}

\begin{theorem}
	\label{2switch.preserva.deg=2}
	Let $G$ be a graph. If $\deg(G)=2$, then $\deg(\tau(G))=2$ for every 2-switch $\tau$. In other words, the 2-switch preserves degree 2.
\end{theorem}

\begin{proof}
	If $X$ is any graph of degree 2, we know that $\deg(X^*)=2$ as well. Thanks to Theorem \ref{clasificacion.activo,deg=2}, we have $X^* \approx D_5$, $\overline{D_5}$, $C_4$, $2K_2$, or $P_4^2$ (see Figure \ref{grafos.activos.deg<3}). Thus, it is evident that $\tau(X^*) \approx X^*$ for every 2-switch $\tau$, which means that $\mathcal{G}(X^*)$ is 2-regular. Finally, the result follows from Theorem \ref{isomorfismo.espacio.activo}.
\end{proof}

\begin{corollary}
	If $\mathcal{G}(s)$ has a vertex of degree 2, then $\mathcal{G}(s)$ is isomorphic to $K_3$ or $C_4$.
\end{corollary}

\begin{proof}
	From Theorems \ref{berge's.theorem} and \ref{2switch.preserva.deg=2}, it follows respectively that $\mathcal{G}(s)$ is connected and 2-regular. Thus, $\mathcal{G}(s)$ is a cycle. To complete the proof, thanks to Theorems \ref{isomorfismo.espacio.activo} and \ref{dual.spaces.iso}, it suffices to consider $X \in \{ \overline{D_5}, 2K_2, P_4^2 \}$ (see Figure \ref{grafos.activos.deg<3}) and compute $c=|\mathcal{G}(X)|$. Due to the high symmetry of $X$, it is easy in this case to find the exact value of $c$.
	
	If $X=\overline{D_5}$, we see that there are only 3 ways to label $X$ while maintaining $s(X)$, since the 2-switch only exchanges the adjacencies of its leaves. Thus, $c=3$. If $Q$ is a set of 4 elements, there are exactly 3 ways to write $Q$ as $A \dot{\cup} B$, with $|A|=2=|B|$. This shows that $c=3$ if $X=2K_2$. Finally, let $X=P_4^2$. In this case, we note that the 2-switch acts on each of the $P_4$ factors as if the rest of the graph did not exist, exchanging the adjacencies of their leaves. Therefore, there are two distinct graphs with the same degree sequence for each $P_4 \preceq X$, which implies $c=4$.
\end{proof}


\section{Active graphs of degree 3}

Consider an active graph $G$ such that $\deg(G)=|Q_G(P_4)|=3$. Thanks to Lemma \ref{5>degG=|{P_4}|.implica.G.split}, we know that $G$ is split. Suppose $G$ is decomposable. Since $G$ is active, it follows from Theorems \ref{SoG.act.iff.S,G.act} and \ref{deg(SoG)=deg(S)+deg(G)} that each factor of $G$ is an active split graph of degree 1 or 2. Consequently,
\[ G \in \{P_4 \circ D_5, D_5 \circ P_4, P_4 \circ \overline{D_5}, \overline{D_5} \circ P_4, P_4^3\}, \]
by Theorem \ref{clasificacion.activo,deg=2}.

If instead $(G,K,I)$ is indecomposable, assume $\alpha=|I|\leq|K|=\omega$. Let $n=|G|$. Applying Theorem \ref{|G^*|}, we obtain $4 \leq n \leq 12$. However, it is clear that $n \geq 5$ and $\omega \geq 3$, since otherwise we would have $|Q_G(P_4)|=1$. Thanks to Proposition \ref{|I|,|K|<=2deg(S)}, we can then state that, in principle, $2 \leq \alpha \leq 4$ and $\omega \in \{3,4\}$. Applying Corollary \ref{S.primo.iff.Phi(S).conexo}, we have
\[ \Phi(G) \in \{K_2, K_3, P_3, S_4, P_4\}, \]
ignoring multiplicities if any (Figure \ref{Phi.posibles.deg=3}).
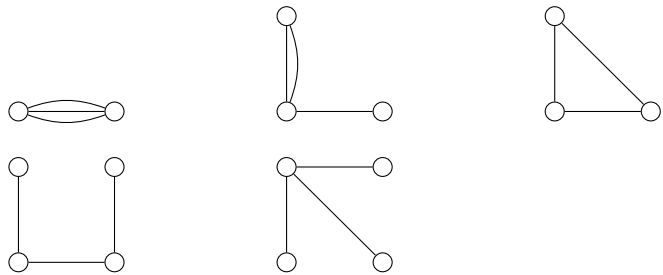
\begin{figure}[h]
	\centering
	\begin{tikzpicture}[scale=1, every node/.style={draw, circle, fill=white, minimum size=0.25cm, inner sep=1pt}]
		
		\node (a1) at (0,0) {};
		\node (a2) [right=of a1] {};
		\draw (a1) -- (a2);
		\draw[bend left=20] (a1) to (a2);
		\draw[bend right=20] (a1) to (a2);
		
		\node (b1) [right=2cm of a2] {};
		\node (b2) [right=of b1] {};
		\node (b3) [above=of b1] {};
		\draw (b1) -- (b2);
		\draw (b1) -- (b3);
		\draw[bend right=20] (b1) to (b3);
		
		\node (c1) [right=2cm of b2] {};
		\node (c2) [right=of c1] {};
		\node (c3) [above=of c1] {};
		\draw (c1) -- (c2);
		\draw (c1) -- (c3);
		\draw (c2) -- (c3);
		
		\node (d1) at (0,-2) {};
		\node (d2) [right=of d1] {};
		\node (d3) [above=of d1] {};
		\node (d4) [above=of d2] {};
		\draw (d1) -- (d2);
		\draw (d1) -- (d3);
		\draw (d2) -- (d4);
		
		\node (e1) [right=2cm of d2] {};
		\node (e2) [right=of e1] {};
		\node (e3) [above=of e1] {};
		\node (e4) [above=of e2] {};
		\draw (e4) -- (e3);
		\draw (e1) -- (e3);
		\draw (e2) -- (e3);
	\end{tikzpicture}
	\caption{The 5 connected multigraphs of size 3.}
	\label{Phi.posibles.deg=3}
\end{figure}
If $\Phi=\Phi(G) \in \{K_3, S_4, P_4\}$, then $G$ is homogeneous, by Theorem \ref{Phi.simple.conexo.1}. Theorem \ref{S.homogeneo.implica...} therefore prohibits $\Phi=P_4$. Let $d$ be the degree in $G$ of the vertices in $I$. Suppose $\Phi=S_4$. Thanks to Theorem \ref{S.homogeneo.implica...}, we know that $N_1=N_2=N_3$. On the other hand, $0=|\bigcap_{i=1}^4 N_i|=\eta_{14}$, since $G$ has no universal vertices. This means that $d=1$, by Proposition \ref{prop.basicas.sigma_uv}. But then $\omega=2$, contradicting $\alpha \leq \omega$.

If $\Phi=K_3$, Theorem \ref{Phi.simple.completo.implica...} tells us that $\omega=3$ and $d \in \{1,2\}$. Both possibilities for $d$ are realized, forming a pair of graphs that are complements of each other. We name $T_6$ the graph resulting from taking $d=1$. These are shown in Figure \ref{grafos.primos.deg=3}.

Let $I=\{a,b\}$, with $d_a \leq d_b$. Since $G$ has no universal vertices, we have $\eta_{ab}=0$. Thus, $d_a=1$ and $d_b=3$, by Proposition \ref{prop.basicas.sigma_uv}, which implies $\omega=4$. Up to isomorphism, there is a unique split graph with these characteristics; we name it $D_6$ and it can be seen in Figure \ref{grafos.primos.deg=3}.

Finally, let $\Phi=abc$, with $\sigma_{ab}=2$. Here two cases arise: 1) $d_a < d_b$; 2) $d_a > d_b$. Suppose $d_a < d_b$. By Proposition \ref{prop.basicas.sigma_uv}, we have $d_c = d_b = d_a + 1$ and $N_a \subset N_c$. Note that $\eta_{ac}=d_a$. Since $G$ contains no universal vertices, it follows that $\varnothing = N_a \cap N_b \cap N_c = N_a \cap N_b$, and thus $\eta_{bc}=d_a=1$, again by Proposition \ref{prop.basicas.sigma_uv}. Then, applying formula \eqref{formulafamosa.union.intersec}, it is easy to deduce that $\omega=3$. All these characteristics determine a graph we name $U_6$, which can be seen in Figure \ref{grafos.primos.deg=3}. If $d_a > d_b$, an almost identical reasoning leads us to $\overline{U_6}$.

So far, we have classified active graphs $G$ of degree 3 such that $Q_G^*=Q_G(P_4)$. To complete the classification of all active graphs of degree 3, it remains to analyze the following case:
\begin{equation}
	\label{eq38}
	|Q_G(C_4)| + |Q_G(2K_2)| = 1 = |Q_G(P_4)|.
\end{equation}
Recall something very important about how Tyshkevich composition is defined: if $G=X \circ Y$, then $X$ must be split. With this in mind, if $G$ is decomposable and satisfies \eqref{eq38}, then $G \in \{P_4 \circ C_4, P_4 \circ 2K_2\}$, by Propositions \ref{SoG.act.iff.S,G.act}, \ref{deg(SoG)=deg(S)+deg(G)} and Theorem \ref{clasificacion.activo,deg=2}. In the following lemmas, we prove that there are no prime graphs satisfying \eqref{eq38}.

\begin{lemma}
	\label{no.existe.deg3.primo.5,8}
	There exists no prime graph $G$ of order 5 or 8 such that
	\begin{equation}
		\label{eq39}
		|Q_G(C_4)| = 1 = |Q_G(P_4)|, \ Q_G(2K_2) = \varnothing.
	\end{equation}
\end{lemma}

\begin{proof}
	Suppose there exists a graph $G$ of order 5 or 8 satisfying \eqref{eq39}. Then, $Q_G^*=\{P,C\}$, where $P \approx P_4$, $C \approx C_4$ and $V(P) \cup V(C) = V(G)$. Moreover, thanks to Theorem \ref{indecomp.characterization}, we know that $A_4=A_4(G)$ is connected and that both $V(P)$ and $V(C)$ are cliques of size 4 in $A_4$.
	
	If $|G|=8$, then $V(P) \cap V(C) = \varnothing$. Since $A_4$ is connected, there must be an edge $ab \in A_4$ such that $a \in P$ and $b \in C$. This means there exists $H \in Q_G^*$ such that $a,b \in V(H)$, which is clearly impossible.
	
	If $|G|=5$, then $|V(P) \cap V(C)|=3$. Thus, $P=ax_1x_2x_3$ and $C=bx_1x_2x_3b$. Now observe that necessarily $ab \in E(G)$. Otherwise, $ax_1bx_3 \preceq G$, which would contradict the hypothesis about $Q_G^*$. Consequently, $ax_3 \in E(G)$ or $bx_2 \in E(G)$, since otherwise we would have $abx_3x_2 \preceq G$. But this contradicts that $P,C \preceq G$.
\end{proof}

\begin{lemma}
	\label{no.existe.deg3.primo.7}
	There exist no prime graphs of order 7 satisfying \eqref{eq39}.
\end{lemma}

\begin{proof}
	Suppose there exists a prime graph $G$ of order 7 satisfying \eqref{eq39}. Then, $Q_G^*=\{P,C\}$, where $P \approx P_4$, $C \approx C_4$ and $V(P) \cup V(C) = V(G)$. Since $|G|=7$, it follows that $P$ and $C$ share exactly one vertex, call it $x$. Therefore, we essentially have two cases to analyze: 1) $P=xa_1a_2a_3$; 2) $P=a_1xa_2a_3$. Let $C=xb_1b_2b_3x$.
	\begin{enumerate}[(1).]
		\item Observe that $a_1b_1, a_1b_3 \in E(G)$ (otherwise, $\{b_1xa_1a_2, b_3xa_1a_2\} \subseteq Q_G^*$). Consequently, also $a_1b_2 \in E(G)$ (otherwise, $a_1b_1b_2b_3a_1 \in Q_G^*$). This in turn implies the presence of $a_2b_1$ in $G$ (otherwise, $b_1a_1a_2a_3 \in Q_G^*$). Since the path $a_2b_1xb_3$ cannot be induced in $G$, at least one edge between $b_1b_3$ and $a_2x$ is forced to be in $G$. However, this contradicts that $P,C \preceq G$.
		\item Note that $a_2b_1, a_2b_3 \in E(G)$ (otherwise, $\{b_3xa_2a_3, a_2xb_1b_2\} \subseteq Q_G^*$). Then, also $a_2b_2 \in E(G)$ (otherwise, $a_2b_1b_2b_3a_2 \in Q_G^*$). Since the path $a_1xa_2b_2$ cannot be induced in $G$, at least one edge between $a_1a_2$ and $b_2x$ is forced to be in $G$. However, this contradicts that $P,C \preceq G$.
	\end{enumerate}
\end{proof}

\begin{lemma}
	\label{no.existe.deg3.primo.6}
	There exist no prime graphs of order 6 satisfying \eqref{eq39}.
\end{lemma}

\begin{proof}
	Suppose there exists a prime graph $G$ of order 6 satisfying \eqref{eq39}. Then, $Q_G^*=\{P,C\}$, where $P \approx P_4$, $C \approx C_4$ and $V(P) \cup V(C) = V(G)$. Since $|G|=6$, it follows that $V(P) \cap V(C) = \{x_1,x_2\}$. We see that $x_1$ and $x_2$ can be positioned in $P$ essentially in 4 ways: 1) $a_1x_1x_2a_2$; 2) $a_1a_2x_1x_2$; 3) $a_1x_1a_2x_2$; 4) $x_1a_1a_2x_2$.
	\begin{enumerate}[(1).]
		\item Let $C=b_1b_2x_2x_1b_1$. Note that necessarily $a_1x_2 \in G$ or $b_2x_1 \in G$ (otherwise, $a_1x_1x_2b_2 \preceq G$). This contradicts that $P,C \preceq G$.
		\item Let $C=b_1b_2x_2x_1b_1$. Observe that necessarily $a_2x_2 \in G$ or $b_2x_1 \in G$ (otherwise, $a_2x_1x_2b_2 \preceq G$). This contradicts that $P,C \preceq G$.
		\item Let $C=b_1x_1b_2x_2b_1$. Note that necessarily $a_1b_1, a_1b_2 \in G$ (otherwise, $\{a_1x_1b_1x_2, a_1x_1b_2x_2\} \subseteq Q_G^*$). This forces $b_1b_2$ or $a_1x_2$ to be in $G$ (otherwise, $a_1b_1x_2b_2a_1 \preceq G$), which contradicts that $P,C \preceq G$.
		\item Let $C=b_1x_1b_2x_2b_1$. Observe that necessarily $a_2b_1, a_2b_2 \in G$ (otherwise, $\{a_1a_2x_2b_1, a_1a_2x_2b_2\} \subseteq Q_G^*$). This forces $b_1b_2$ or $a_2x_1$ to be edges of $G$ (otherwise, $a_2b_2x_1b_1a_2 \preceq G$), which contradicts that $P,C \preceq G$.
	\end{enumerate}
\end{proof}

\begin{lemma}
	\label{no.existe.deg3.primo}
	There exists no prime graph satisfying \eqref{eq38}.
\end{lemma}

\begin{proof}
	Suppose $G$ is a prime graph satisfying \eqref{eq38}. Since $|Q_G^*|=2$, it is clear that $5 \leq |G| \leq 8$, since $G$ is active and $\deg(G)>1$. We have two cases to analyze: 1) $|Q_G(C_4)|=1$; 2) $Q_G(C_4)=\varnothing$.
	\begin{enumerate}[(1).]
		\item Since $G$ satisfies \eqref{eq39}, Lemmas \ref{no.existe.deg3.primo.5,8}, \ref{no.existe.deg3.primo.7} and \ref{no.existe.deg3.primo.6} are contradicted.
		\item Since $\overline{2K_2}=C_4$ and $\overline{P_4}=P_4$, it follows that $\overline{G}$ satisfies \eqref{eq39}. This contradicts Lemmas \ref{no.existe.deg3.primo.5,8}, \ref{no.existe.deg3.primo.7} and \ref{no.existe.deg3.primo.6}.
	\end{enumerate}
\end{proof}

\begin{proposition}
	\label{clasificacion.activo,deg=3}
	Let $G$ be an active graph of degree 3.
	\begin{enumerate}
		\item If $G$ is decomposable, then $G \in$ \[ \{P_4 \circ D_5, D_5 \circ P_4, P_4 \circ \overline{D_5}, \overline{D_5} \circ P_4, P_4^3, P_4 \circ C_4, P_4 \circ 2K_2\}, \]
		\item If $G$ is indecomposable, then
		\[ G \in \{T_6, \overline{T_6}, D_6, \overline{D_6}, U_6, \overline{U_6}\}, \]
		all of which are split (see Figure \ref{grafos.primos.deg=3}).
	\end{enumerate}
\end{proposition}
\begin{figure}[h]
	\centering
	\includegraphics[scale=0.8]{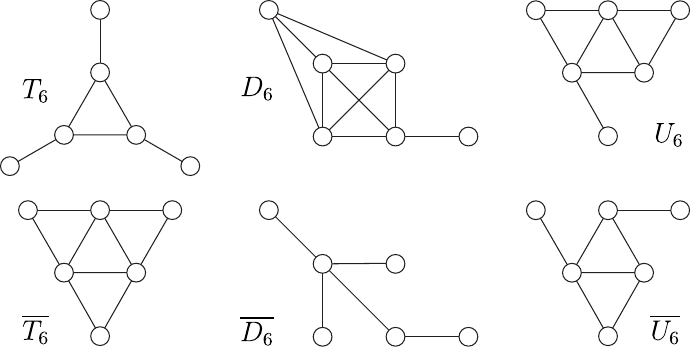}
	\caption{The 6 prime graphs of degree $3$.}
	\label{grafos.primos.deg=3}
\end{figure}
\begin{proof}
	It follows from the previous discussion.
\end{proof}

\begin{theorem}
	\label{2switch.casi-preserva.deg3}
	Let $X$ be a graph of degree 3 such that neither $U_6$ nor $\overline{U_6}$ (see Figure \ref{grafos.primos.deg=3}) appear as factors in the Tyshkevich decomposition of $X$. Then, $\deg(\tau(X))=3$ for every 2-switch $\tau$. In other words, $\mathcal{G}(X)$ is 3-regular.
\end{theorem}

\begin{proof}
	Thanks to Theorem \ref{isomorfismo.espacio.activo}, we know that $\mathcal{G}(X) \approx \mathcal{G}(X^*)$. Since $\deg(X^*)=3$ and $X^*$ is active, we can apply Proposition \ref{clasificacion.activo,deg=3} and easily see that $\tau(X^*) \approx X^*$ for every 2-switch $\tau$ and for each of the 11 graphs to which $X^*$ can be isomorphic. Therefore, $\mathcal{G}(X)$ is 3-regular.
\end{proof}

Regarding Theorem \ref{2switch.casi-preserva.deg3}, it is important to note that it ceases to be true if $U_6$ or $\overline{U_6}$ appear as factors in the decomposition of $X$. This is because $U_6$ has a neighbor of degree 4 in $\mathcal{G}(U_6)$ (see Figure \ref{G(U_6)}). 
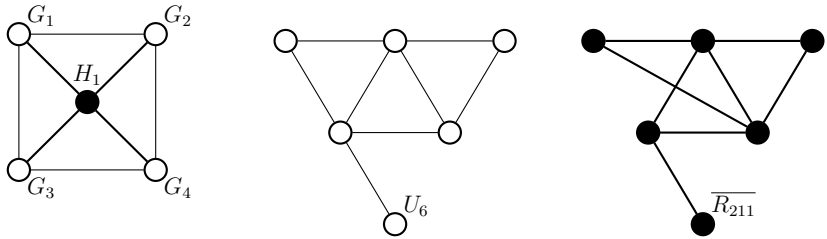
\begin{figure}[h]
	\label{G(U_6)}
	\centering
	\begin{tikzpicture}[scale=0.9, every node/.style={scale=0.75}]
		\begin{scope}[shift={(-4.5,0)}]
			\node[draw, circle, fill=white, thick] (G_1) at (-1,1) {};
			\node[color=black,above right] at (G_1) {$G_1$};
			\node[draw, circle, fill=white, thick] (G_2) at (1,1) {};
			\node[above right] at (G_2) {$G_2$};
			\node[draw, circle, fill=white, thick] (G_3) at (-1,-1) {};
			\node[below right] at (G_3) {$G_3$};
			\node[draw, circle, fill=white, thick] (G_4) at (1,-1) {};
			\node[below right] at (G_4) {$G_4$};
			\node[color=black,draw, circle, fill=black, thick] (H_1) at (0,0) {};
			\node at ($(H_1)+(0,0.4)$) {\textcolor{black}{$H_1$}};
			\draw (G_1) -- (G_2);
			\draw (G_1) -- (G_3);
			\draw (G_2) -- (G_4);
			\draw (G_3) -- (G_4);
			\draw[thick] (G_1) -- (H_1);
			\draw[thick] (G_2) -- (H_1);
			\draw[thick] (G_3) -- (H_1);
			\draw[thick] (G_4) -- (H_1);
		\end{scope}
		
		\begin{scope}[shift={(0,0)}]
			\node[draw, circle, fill=white, thick] (G0) at (0,0.9) {};
			\node[draw, circle, fill=white, thick] (G1) at (-0.8,-0.45) {};
			\node[draw, circle, fill=white, thick] (G2) at (0.8,-0.45) {};
			\node[draw, circle, fill=white, thick] (G3) at (0,-1.8) {};
			\node[above right] at (G3) {$U_6$};
			\node[draw, circle, fill=white, thick] (G4) at (1.6,0.9) {};
			\node[draw, circle, fill=white, thick] (G5) at (-1.6,0.9) {};
			\draw (G0) -- (G1);
			\draw (G1) -- (G2);
			\draw (G0) -- (G2);
			\draw (G3) -- (G1);
			\draw (G4) -- (G2);
			\draw (G4) -- (G0);
			\draw (G5) -- (G1);
			\draw (G5) -- (G0);
		\end{scope}
		
		\begin{scope}[shift={(4.5,0)}]
			\node[draw, circle, fill=black, thick] (H0) at (0,0.9) {};
			\node[draw, circle, fill=black, thick] (H1) at (-0.8,-0.45) {};
			\node[draw, circle, fill=black, thick] (H2) at (0.8,-0.45) {};
			\node[draw, circle, fill=black, thick] (H3) at (0,-1.8) {};
			\node[above right] at (H3) {$\overline{R_{211}}$};
			\node[draw, circle, fill=black, thick] (H4) at (1.6,0.9) {};
			\node[draw, circle, fill=black, thick] (H5) at (-1.6,0.9) {};
			\draw[thick] (H0) -- (H1);
			\draw[thick] (H1) -- (H2);
			\draw[thick] (H0) -- (H2);
			\draw[thick] (H3) -- (H1);
			\draw[thick] (H4) -- (H2);
			\draw[thick] (H4) -- (H0);
			\draw[thick] (H5) -- (H2);
			\draw[thick] (H5) -- (H0);
		\end{scope}
	\end{tikzpicture}
	\caption{Left: $\mathcal{G}(U_6)$; $G_i\approx U_6$ (center), \textcolor{black}{$H_1\approx\overline{R_{211}}$} (right).}
\end{figure}


\section{Prime split graphs of degree 4}

\begin{theorem}
	For every $k \geq 4$, there exists a graph $G_k$ and a 2-switch $\tau$ such that $\deg(\tau(G_k)) \neq \deg(G_k) = k$. In other words, for $k \geq 4$, the 2-switch does not generally preserve degree $k$.
\end{theorem}

\begin{proof}
	Let $\tau = {{a \ b} \choose {d \ c}}$. If $G_4 = abxcd \approx P_5$, then $\tau(G_4) \approx K_3 \dot{\cup} K_2$ and therefore $\deg(\tau(G_4)) = 6 \neq \deg(G_4) = 4$. Then, for $k \geq 5$, consider the graph $G_k = Y_{k-1} \circ G_4$, where $Y_{k-1}$ is the tree of order $k-1$ obtained by identifying a vertex of $K_2$ with a leaf of $S_{k-2}$. Clearly, $\deg(Y_{k-1}) = k-4$. Then, by Theorem \ref{deg(SoG)=deg(S)+deg(G)}, we have that $\deg(G_k) = k$ and $\deg(\tau(G_k)) = \deg(Y_{k-1} \circ \tau(G_4)) = k+2$.
\end{proof}

Consider a prime graph $S$ such that $\deg(G) = |Q_G(P_4)| = 4$. Thanks to Lemma \ref{5>degG=|{P_4}|.implica.G.split}, we know that $S$ is split. Assume $S$ has bipartition $(S,K,I)$ and $\alpha = |I| \leq |K| = \omega$. Let $n = |S|$. Applying Theorem \ref{|G^*|}, we obtain $4 \leq n \leq 16$. However, it is clear that $n \geq 5$ and $\omega \geq 3$, since otherwise we would have $|Q_G(P_4)| = 1$. Thanks to Proposition \ref{|I|,|K|<=2deg(S)}, we can then state that, in principle, $2 \leq \alpha \leq 8$ and $3 \leq \omega \leq 8$. Applying Corollary \ref{S.primo.iff.Phi(S).conexo}, we have that $\Phi = \Phi(S)$ is isomorphic to one of the 12 multigraphs in Figure \ref{Phi.posibles.deg=4}.
\begin{figure}[h]
	\centering
	\begin{tikzpicture}[scale=1, every node/.style={draw, circle, fill=white, minimum size=0.25cm, inner sep=1pt}]
		
		\node (a1) at (0,0) {};
		\node (a2) [right=of a1] {};
		\draw (a1) -- (a2);
		\draw[bend left=15] (a1) to (a2);
		\draw[bend left=35] (a1) to (a2);
		\draw[bend right=15] (a1) to (a2);
		
		\node (b1) [right=2cm of a2] {};
		\node (b2) [right=of b1] {};
		\node (b3) [above=of b1] {};
		\draw (b1) -- (b2);
		\draw (b1) -- (b3);
		\draw[bend left=20] (b1) to (b2);
		\draw[bend right=20] (b1) to (b3);
		
		\node (c1) [right=2cm of b2] {};
		\node (c2) [right=of c1] {};
		\node (c3) [above=of c1] {};
		\draw (c1) -- (c2);
		\draw (c1) -- (c3);
		\draw (c2) -- (c3);
		\draw[bend left=20] (c2) to (c3);
		
		\node (d1) at (0,-2) {};
		\node (d2) [right=of d1] {};
		\node (d3) [above=of d1] {};
		\node (d4) [above=of d2] {};
		\draw (d1) -- (d2);
		\draw (d1) -- (d3);
		\draw (d2) -- (d4);
		\draw (d3) -- (d4);
		
		\node (e1) [right=2cm of d2] {};
		\node (e2) [right=of e1] {};
		\node (e3) [above=of e1] {};
		\node (e4) [above=of e2] {};
		\draw (e4) -- (e3);
		\draw (e1) -- (e3);
		\draw[bend left=20] (e1) to (e3);
		\draw (e2) -- (e3);
		
		\node (f1) [right=2cm of e2] {};
		\node (f2) [right=of f1] {};
		\node (f3) [above=of f1] {};
		\draw (f1) -- (f3);
		\draw (f2) -- (f3);
		\draw[bend left=20] (f3) to (f1);
		\draw[bend right=20] (f3) to (f1);
		
		\node (g1) at (0,-4) {};
		\node (g2) [right=of g1] {};
		\node (g3) [above=of g1] {};
		\node (g4) [above=of g2] {};
		\draw (g1) -- (g2);
		\draw (g1) -- (g3);
		\draw (g3) -- (g4);
		\draw[bend left=20] (g3) to (g1);
		
		\node (h1) [right=2cm of g2] {};
		\node (h2) [right=of h1] {};
		\node (h3) [above=of h1] {};
		\node (h4) [above=of h2] {};
		\draw (h1) -- (h2);
		\draw (h1) -- (h3);
		\draw (h2) -- (h4);
		\draw (h2) -- (h3);
		
		\node (i1) [right=2cm of h2] {};
		\node (i2) [right=of i1] {};
		\node (i3) [above=of i1] {};
		\node (i4) [above=of i2] {};
		\draw (i1) -- (i2);
		\draw (i1) -- (i3);
		\draw (i2) -- (i4);
		\draw[bend left=20] (i1) to (i3);
		
		\node (j1) at (0,-6) {};
		\node (j2) [right=of j1] {};
		\node (j3) [above=of j1] {};
		\node (j4) [above=of j2] {};
		\node (j5) [below=of j1] {};
		\draw (j1) -- (j5);
		\draw (j1) -- (j3);
		\draw (j2) -- (j4);
		\draw (j3) -- (j4);
		
		\node (k1) [right=2cm of j2] {};
		\node (k2) [right=of k1] {};
		\node (k3) [above=of k1] {};
		\node (k4) [above=of k2] {};
		\node (k5) [below=of k1] {};
		\draw (k1) -- (k2);
		\draw (k1) -- (k3);
		\draw (k1) -- (k4);
		\draw (k1) -- (k5);
		
		\node (l1) [right=2cm of k2] {};
		\node (l2) [right=of l1] {};
		\node (l3) [above=of l1] {};
		\node (l4) [above=of l2] {};
		\node (l5) [below=of l1] {};
		\draw (l5) -- (l2);
		\draw (l1) -- (l3);
		\draw (l1) -- (l4);
		\draw (l1) -- (l5);
	\end{tikzpicture}
	\caption{The 12 connected multigraphs of size 4.}
	\label{Phi.posibles.deg=4}
\end{figure}
Thanks to Theorems \ref{Phi.simple.conexo.1} and \ref{S.homogeneo.implica...}, we can immediately rule out that $\Phi$ is isomorphic to $U_4$, $P_5$, or $\overline{D_5}$. Applying instead Theorem \ref{prohibido.P_4.sigma_23=1}, we can also exclude that $\Phi = v_1v_2v_3v_4$, with $\sigma_{23} = 1$.

If $\Phi = abca$, with $\sigma_{ac} = 2$, then by Proposition \ref{prop.basicas.sigma_uv} we would have $d_a = d_b = d_c = d_a \pm 1$, which is absurd.

If $\Phi \approx S_5$, let $v_5$ be the vertex of degree 4 in $S_5$ and $v_i$ its leaves, for $1 \leq i \leq 4$. Applying Theorems \ref{Phi.simple.conexo.1} and \ref{S.homogeneo.implica...}, we have that the $v_i$ are all twins in $S$, so $K = N_1 \cup N_5$. On the other hand, since $S$ has no universal vertices, we have $\varnothing = \bigcap_{j=1}^5 N_j = N_1 \cap N_5$, and consequently, $d_1 = 1 = d_5$, by Proposition \ref{prop.basicas.sigma_uv}. Then, $5 = \alpha \leq \omega = 2$, an absurdity.

If $\Phi \approx C_4 = v_1v_2v_3v_4v_1$, then $N_1 = N_3$ and $N_2 = N_4$, by Theorems \ref{Phi.simple.conexo.1} and \ref{S.homogeneo.implica...}, so $K = N_1 \cup N_2$. Since $S$ has no universal vertices, we have $\varnothing = \bigcap_{i=1}^4 N_i = N_1 \cap N_2$, and consequently, $d_1 = 1 = d_2$, by Proposition \ref{prop.basicas.sigma_uv}. Then, $4 = \alpha \leq \omega = d_1 + d_5 = 2$, which is impossible.

If $\Phi = v_1v_2v_3v_4$, with $\sigma_{23} = 2$, then we can assume without loss of generality that $d_2 \leq d_3$. Then, $d_1 = d_2$, $d_3 = d_1 + 1 = d_4$, $N_2 \subset N_4$, and $N_1 \subset N_3, N_4$, by Proposition \ref{prop.basicas.sigma_uv}. Thus, $K = N_3 \cup N_4$. Since $S$ has no universal vertices, we have $\varnothing = \bigcap_{i=1}^4 N_i = N_1 \cap N_2$, and consequently, $d_1 = 1$, by Proposition \ref{prop.basicas.sigma_uv}. Thus, $d_3 = 2$ and $\eta_{34} = 1$, again by Proposition \ref{prop.basicas.sigma_uv}. Finally, we obtain $4 = \alpha \leq \omega = d_3 + d_4 - \eta_{34} = 3$, an absurdity.

If $E(\Phi) = \{2v_1v_2, v_2v_3, v_2v_4\}$, then $d_3 = d_2 = d_4$, $|d_2 - d_1| = 1$, and $N_3 = N_4$, by Proposition \ref{prop.basicas.sigma_uv}. If $d_2 = d_1 + 1$, then $N_1 \subset N_3$, so $K = N_2 \cup N_3$. Since $S$ has no universal vertices, we have $\varnothing = \bigcap_{i=1}^4 N_i = N_1 \cap N_2$, and consequently, $d_1 = 1$, by Proposition \ref{prop.basicas.sigma_uv}. Thus, $\eta_{23} = 1$, again by Proposition \ref{prop.basicas.sigma_uv}. Finally, we obtain $4 = \alpha \leq \omega = d_2 + d_3 - \eta_{23} = 3$, which is impossible. The case $d_1 = d_2 + 1$ is reduced to an absurdity with similar arguments.

If $I = \{a, b\}$, then $N_a \cap N_b = \varnothing$, since $S$ has no universal vertices. Thus, $\omega = d_a + d_b$ and $4 = \sigma_{ab} = d_a d_b$, by Proposition \ref{prop.basicas.sigma_uv}. With $d_a \leq d_b$, we conclude that $(d_a, d_b, \omega) \in \{(1, 4, 5), (2, 2, 4)\}$. The realizations of these triples are called $D_{41}$ and $D_{22}$, respectively; they are shown along with their complements in Figure \ref{split.primos.deg=4.fig.1}.
\begin{figure}[h]
	\centering
	\includegraphics[scale=0.8]{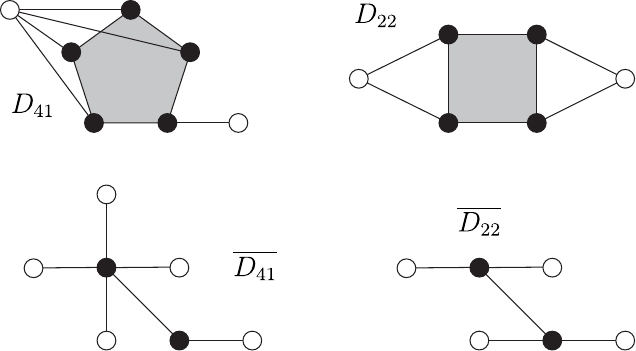}
	\caption{The prime split graphs $D_{41}, D_{22}$ and their complements.}
	\label{split.primos.deg=4.fig.1}
\end{figure}

If $\Phi = abc$, with $\sigma_{bc} = 3$, then $d_a = d_b$ and $|d_c - d_b| = 2$, by Proposition \ref{prop.basicas.sigma_uv}. If $d_c = d_b + 2$, it follows by Proposition \ref{prop.basicas.sigma_uv} that $N_a \subset N_c$, so $K = N_b \cup N_c$. Since $S$ has no universal vertices, we have $\varnothing = N_a \cap N_b \cap N_c = N_a \cap N_b$, and consequently, $d_a = 1$, by Proposition \ref{prop.basicas.sigma_uv}. Thus, $\eta_{bc} = 0$, again by Proposition \ref{prop.basicas.sigma_uv}. Then, $\omega = d_b + d_c = 4$, so we have a realization for $S$ called $F_{311}$. If $d_b = d_c + 2$, it follows by Proposition \ref{prop.basicas.sigma_uv} that $N_c \subset N_a$, so $K = N_a \cup N_b$. Since $S$ has no universal vertices, we have $\varnothing = N_a \cap N_b \cap N_c = N_b \cap N_c$, and consequently, $d_c = 1$, by Proposition \ref{prop.basicas.sigma_uv}. Thus, $\eta_{ab} = 2$, again by Proposition \ref{prop.basicas.sigma_uv}. Then, $\omega = d_a + d_b - \eta_{ab} = 4$, so we obtain another realization for $S$ called $F_{331}$. The graphs $F_{311}, F_{331}$ and their complements are shown in Figure \ref{split.primos.deg=4.fig.2}.
\begin{figure}[h]
	\centering
	\includegraphics[scale=0.8]{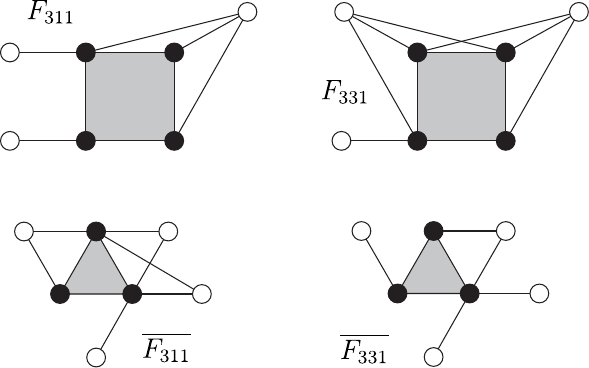}
	\caption{The prime split graphs $F_{311}, F_{331}$ and their complements.}
	\label{split.primos.deg=4.fig.2}
\end{figure}

Let $\Phi = abc$, with $\sigma_{ab} = 2$. Then $|d_b - d_a| = 1 = |d_c - d_b|$, by Proposition \ref{prop.basicas.sigma_uv}. If $d_b = d_a + 1$ and $d_c = d_a$, it follows by Proposition \ref{prop.basicas.sigma_uv} that $N_a = N_c$, so $K = N_a \cup N_b$. Since $S$ has no universal vertices, we have $\varnothing = N_a \cap N_b \cap N_c = N_a \cap N_b$, and consequently, $d_a = 1$, by Proposition \ref{prop.basicas.sigma_uv}. Then, $\omega = d_a + d_b = 3$, so we have a realization for $S$ called $R_{211}$. If $d_b = d_a + 1$ and $d_c = d_a + 2$, it follows by Proposition \ref{prop.basicas.sigma_uv} that $N_a \subset N_c$, so $K = N_b \cup N_c$. Since $S$ has no universal vertices, we have $\varnothing = N_a \cap N_b \cap N_c = N_a \cap N_b$, and consequently, $d_a = 1$, by Proposition \ref{prop.basicas.sigma_uv}. Thus, $\eta_{bc} = 1$, again by Proposition \ref{prop.basicas.sigma_uv}. Then, $\omega = d_b + d_c - \eta_{bc} = 4$, so we obtain another realization for $S$ called $R_{321}$. Finally, if $d_a = d_b + 1$ and $d_a = d_c$, then analogous reasoning yields $\overline{R_{211}}$. The graphs $R_{211}, R_{321}$ and their complements are shown in Figure \ref{split.primos.deg=4.fig.3}.
\begin{figure}[h]
	\centering
	\includegraphics[scale=0.8]{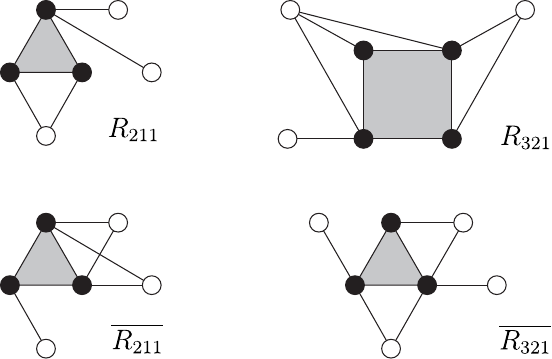}
	\caption{The prime split graphs $R_{211}, R_{321}$ and their complements.}
	\label{split.primos.deg=4.fig.3}
\end{figure}


\chapter{Conclusions}

Among all the results obtained throughout this work, we wish to highlight the following.

\begin{theorem}
	Every transition space $\mathcal{X}\preceq\mathcal{G}(s)$ is isomorphic to its associated active space:
	\begin{equation*}
		\mathcal{X}\approx\mathcal{X}^*.
	\end{equation*}
\end{theorem}

\begin{theorem}
	Let $k\leq 3$ and let $X$ be a graph of degree $k$ such that neither $U_6$ nor $\overline{U_6}$ (see Figure \ref{grafos.primos.deg=3}) appear as factors in the Tyshkevich decomposition of $X$. Then, $\deg(\tau(X))=k$ for every 2-switch $\tau$. In other words, $\mathcal{G}(X)$ is $k$-regular.
\end{theorem}

\begin{theorem}
	If $S$ is a split graph and $G$ is a graph, then:
	\begin{enumerate}
		\item $\deg(S\circ G)=\deg(S)+\deg(G)$;
		\item if $S$ is balanced, then $[S\circ G]=[S]\circ [G]$.
	\end{enumerate}
\end{theorem}

\begin{theorem}
	If $G$ is a graph, then
	\[ \deg(G)+\zeta_2(G)=\Vert G\Vert^2+2c_4(G)+3k_3(G)-4k_4(G). \]
\end{theorem}

\begin{theorem}
	If $S$ is an active split graph, then $S$ is prime if and only if $\Phi(S)$ is connected.
\end{theorem}

\begin{theorem}
	Every active and twin-free graph can be uniquely written as a composition of elementary graphs.
\end{theorem}

\begin{theorem}
	Let $(S,K,I)$ be a split graph such that $\Phi(S)$ is simple and complete, and let $U$ be the set of universal vertices of $S$. If $\bigcup_{v\in I}N_v = K$, $\omega=|K|$ and $\alpha=|I|\geq 2$, then:
	\begin{enumerate}
		\item $S$ is homogeneous;
		\item if $d$ is the degree in $S$ of the vertices in $I$, then $1\leq d\leq\omega -1$;
		\item $|U|\in\{d-1,d+1-\alpha\}$;
		\item $|U|=d-1$ if and only if $\omega=\alpha +d-1$;
		\item $|U|=d+1-\alpha$ if and only if $\omega =d+1$;
		\item $\omega =\alpha +|U|$ (in particular, $\omega\geq\alpha$);
		\item $S$ is active if and only if $U=\varnothing$;
		\item If $S$ is active, then $\omega=\alpha$ and $d\in\{1,\omega-1\}$;
		\item if $\omega=\alpha$ or $d=1$, then $S$ is active;
	\end{enumerate}
\end{theorem}

\begin{theorem}
	Given a split graph $(S,K,I)$ with $|K|\leq |I|$, consider the associated split graph $(R,K,I)$ such that $\deg_R(v)=1$ for every $v\in I$, where $u\cong_R v$ if and only if $\sigma_{uv}(S)=0$ for all $\{u,v\}\subseteq I$. Then $S$ is active and $\Phi(S)$ is simple and connected if and only if $S$ is isomorphic to $R$ or to $\overline{R^{\iota}}$.
\end{theorem}

\begin{theorem}
	Let $S$ be a split graph and let $\Phi=\Phi(S)$.
	\begin{enumerate}
		\item If $C$ is an induced cycle in $\Phi$, then $|C|\leq 4$.
		\item If $\Phi$ is connected, then $diam(\Phi)\leq\lceil (\deg(S)+1)/2 \rceil$.
	\end{enumerate}
\end{theorem}

\begin{theorem}
	If $S$ is an $n$-simple split graph and $\vec{\Phi}(S)=\Delta_0$ (see Figure \ref{triangulos.permitidos}), then $n\in\mathbb{N}(\Delta)$.
\end{theorem}

\begin{theorem}
	If $n\in\mathbb{N}(\Delta)$, let $(x,y,z)$ be a triple of divisors of $n$ satisfying (\ref{ecuacion.Delta.prop}). Then there exists a balanced and $n$-simple split graph $(S,K,I)$ such that $\vec{\Phi}(S)=\Delta_0$ (see Figure \ref{triangulos.permitidos}). Moreover, if $y\leq z$, we have that $S$ has the following properties:
	\begin{enumerate}
		\item $d_b=d_a+\frac{n}{z}-z, d_c=d_a+\frac{n}{x}-x$;
		\item $\eta_{ab}=d_a-z, \eta_{bc}=d_a+\frac{n}{z}-z-y, \eta_{ac}=d_a-x$;
		\item $d_a\geq z$;
		\item $|K|=\frac{n}{x}+z+y+\eta_{abc}$, where $\eta_{abc}=|N_a\cap N_b\cap N_c|\geq d_a-x-z$;
		\item if $S$ is active, then $d_a\leq x+z$;
		\item if $d_a=z$, then $S$ is active.
	\end{enumerate}
\end{theorem}

\begin{theorem}
	Let $S$ be a split graph and let $C$ be an induced $n$-simple cycle in $\Phi(S)$. If $n$ is not a square and $n\notin\mathbb{N}(\Delta)$, then $|C|=4$.
\end{theorem}

\begin{theorem}
	There are infinitely many $\Delta$-primitive numbers.
\end{theorem}

We now provide some ideas for future research.
\begin{enumerate}[(1).]
	\item Find an efficient method to classify by degree the prime graphs $G$ containing $C_4$ or $2K_2$ as induced subgraphs when $\deg(G)\geq 4$. Since under these conditions $G$ is not split, we cannot directly apply the tools developed around the factor graph $\Phi$. However, these ideas could be adapted to define a more general factor graph to be used in other graph families.
	
	\item Study and/or characterize the prime graphs $G$ whose degree is invariant under 2-switch, that is, $\deg(\tau(G))=\deg(G)$ for all 2-switch $\tau$. This would be crucial in determining whether an arbitrary transition space (or a given induced subspace of it) is a regular graph.
	
	\item Investigate how a 2-switch on a split graph $S$ affects the structure of $\Phi(S)$.
	
	\item The $\Delta$ property and the study of the natural numbers that satisfy it or not arise from the study of $n$-simple triangles in $\Phi$. An analogous property emerges when analyzing 4-cycles in $\Phi$, and we propose to investigate the natural numbers that satisfy this new condition and their relationship with the $\Delta$ property.
	
	\item Decide whether the following conjecture is true: there are infinitely many $\Delta$-primitive squares.
\end{enumerate}


\appendix

\chapter{Proof of Lemma \ref{inac.G.inac.tauG}}\label{proof.lema.2switch.preserva.vert.activos}

\textit{If $a$ is an inactive vertex of a graph $G$, then $a$ is also inactive in $\tau(G)$, for every 2-switch $\tau$.}

\begin{proof}
	Let $\tau={{x \ z}\choose{y \ u}}$. Suppose $a$ is inactive in $G$, but active in $\tau(G) \neq G$. Then $a \notin \{x, z, y, u\}$ (otherwise, $a$ would be active in $G$). Since $a$ is active in $\tau(G)$, there must exist a 2-switch $\tau'$ that replaces the edges $av$ and $e$ in $\tau(G)$, for some $v \in V(\tau(G))$ and $e \in E(\tau(G))$. We distinguish two main cases:\\
	
	(1) $e \in \{xy, zu\}$,
	
	(2) $e \notin \{xy, zu\}$.\\
	
	Case (1): Suppose without loss of generality that $e = xy$, and let $H$ be the subgraph of $G$ induced by $\{a, v, x, y\}$. If $av \notin G$, then $\tau$ necessarily activates $a$ (since $av \in \tau(G)$), contradicting that $a$ is inactive in $G$. Therefore, $av \in H$. Clearly, $\deg(H) = 0$ (since $a \in H \in Q_G$ and $a \notin act(G)$).
	
	Let $H'$ be the subgraph of $\tau(G)$ induced by $\{a, v, x, y\}$. Since $\deg(H') \neq 0$ (due to $\tau'$), we have four possibilities for $E(H')$ (without loss of generality):\\
	
	(1.1) $\{ax, xy, yv, va\}$,
	
	(1.2) $\{av, xy\}$,
	
	(1.3) $\{va, ax, xy\}$,
	
	(1.4) $\{av, vy, yx\}$.\\
	
	Clearly, $H \neq H'$ because $\deg(H) \neq \deg(H')$. Recall that $G = \tau(G) - xy - zu + xz + uy$. In particular, $xy, zu \notin H$. If $v \notin \{u, z\}$, then $zu, xz,$ and $uy$ cannot be in $H$ (by definition of $H$), and thus $H = H' - xy$. If $v = u$, then $vy = uy \in H$ and $xz \notin H$, so $H = H' - xy + uy$. If $v = z$, then $xv = xz \in H$ and $uy \notin H$, so $H = H' - xy + xz$.
	
	Case (1.1): If $v \neq z$, then $H \approx P_4$, and hence $\deg(H) \neq 0$, a contradiction. If $v = z$, focus on the edges $av$ and $uy$ in $G$. Since $a \notin act(G)$, the edge $ay$ or $uz$ must be in $G$, which contradicts $E(H) = \{ax, yx, ay, xz\}$ and the definition of $\tau$, respectively.
	
	Case (1.2): If $v \notin \{u, z\}$, focus on the edges $av$ and $xz$ in $G$. Since $a \notin act(G)$, at least one of the following four triangles must be in $G$: $azxa, vxzv, xvax, zvaz$. Since $ax, vx \notin G$, we can discard $azxa, vxzv,$ and $xvax$. Now consider the edges $az$ and $uy$ in $G$. Since $a \notin act(G)$, at least one of the triangles $ayua, zyuz, yzay, uzau$ must be in $G$, which is a contradiction because $ay, zu \notin G$.
	
	Case (1.3): If $v \notin \{u, z\}$, consider the edges $av$ and $xz$ in $G$. Since $a \notin act(G)$, either $az$ or $vx$ must be in $G$. But $E(H) = \{av, ax\}$, so $vx \in G$ is ruled out. Now consider $az$ and $uy$. Since $a \notin act(G)$, one of the triangles $auya, zuyz, yazy, uazu$ must be in $G$, which is a contradiction since $ay, zu \notin G$.
	
	Case (1.4): If $v \notin \{u, z\}$, consider the edges $av$ and $xz$ in $G$. Since $a \notin act(G)$, one of the triangles $azxa, xvax, vxzv, zvaz$ must be in $G$. The first three are discarded because $ax, vx,$ and $xz$ are not in $G$. Now consider $az$ and $uy$. Again, one of $auya, zuyz, yazy, uazu$ must be in $G$, which is absurd since $zu, ay \notin G$.\\
	
	Case (2): Suppose $e = bc \notin \{xy, zu\}$ and let $H$ be the subgraph of $G$ induced by $\{a, v, b, c\}$. Since $a \notin act(G)$ and $a \in H \in Q_G$, $H$ has degree 0. If $av \notin G$, then $\tau^{-1}$ activates $a$ (since $av \in \tau(G)$), contradicting that $a$ is inactive in $G$. Thus $av \in H$. Since $bc \notin \{xy, zu\}$ by hypothesis, $\tau^{-1}$ does not replace it, so $bc \in H$.
	
	Let $H'$ be the subgraph of $\tau(G)$ induced by $\{a, v, b, c\}$. Clearly, $\deg(H) \neq \deg(H')$ implies $H \neq H'$. Moreover, we cannot obtain $H$ by adding or removing two edges from $H'$ (otherwise, $\tau(H) = H' = H$, a contradiction). So $H$ must be obtained by adding or removing a single edge. Since $\deg(H') \neq 0$, we have (without loss of generality) four cases for $E(H')$ :\\
	
	(2.1) $\{av, bc\}$,
	
	(2.2) $\{ab, bc, cv, va\}$,
	
	(2.3) $\{av, vc, cb\}$,
	
	(2.4) $\{va, ab, bc\}$.\\
	
	Case (2.1): Since $av, bc \in H$, $\tau^{-1}$ must add one edge to $H'$ to get $H$. There are four ways to do this, but all result in $H \approx P_4$, which is impossible since $\deg(P_4) = 1$.
	
	Case (2.2): Removing any edge from $H'$ gives $H \approx P_4$. So $H = H' + bv$ or $H = H' - cv + bv$, since $\tau^{-1}$ does not activate $a$ (we can't add or remove $ac$ or $ab$ in $H'$).
	
	If $H = H' + bv$, clearly $bv \in \{xz, uy\}$. Assume $b = x$ and $v = z$. Then $u, y \notin H$ (otherwise the subgraph induced by $\{x, y, z, u\}$ in $G$ would have degree 0). Consider edges $az$ and $uy$ in $G$. Since $a \notin act(G)$, one of the triangles $ayua, zyuz, yzay, uzau$ must be in $G$. But $uz \notin G$, discarding $zyuz$ and $uzau$. If $ayua$ or $yzay$ are in $G$, consider $ay$ and $cx$. Since $a \notin act(G)$, we must have $ac \in G$ or $xy \in G$, a contradiction.
	
	If $H = H' - cv + bv$, then $cv \in \{xy, uz\}$ and $bv \in \{xz, uy\}$. We may assume $v = x$, so $H = H' - xy + xz$ and $u \notin H$. Again, $az$ or $uz$ must be in $G$, a contradiction.
	
	Case (2.3): Note $H' - cv \approx 2K_2$ and $H' - cv + bv \approx P_4$. So $H = H' + bv$, where we can take $bv = xz$, without loss of generality. If $c \in \{u, y\}$, then $\{x, y, z, u\}$ would induce degree 0. So $u, y \notin H$. If $b = x$, $v = z$, the argument is as in (2.2). If $b = z$, $v = x$, consider $ax$ and $uy$, and triangles $ayua, xyux, yxay, uxau$. Since $xy \notin G$, discard $xyux$ and $yxay$. If $ayua$ or $uxau$ is in $G$, consider $au$ and $cz$. Then, $azca, uczu, cauc, zuaz$ must be in $G$. Since $ac, uz \notin G$, we get a contradiction.
	
	Case (2.4): $ab$ cannot be removed by $\tau^{-1}$ (or $a$ would be used by $\tau$). So $H = H' + bv$, with $bv = xz$. If $b = x$, $v = z$ and $u, y \notin H$, continue as in (2.3). If $c = u$, consider $ax$ and $uy$: $ac \in G$ or $xy \in G$, a contradiction. If $b = z$, $v = x$, and $u, y \notin H$, we use the same argument as in (2.2). If $c = y$, consider $az$ and $uy$: $ac \in G$ or $uz \in G$, a contradiction.
\end{proof}


\chapter{On a special intersecting family}\label{familias.inserc.conjuntos}	

Intersecting families of sets have proven to be a fundamental tool in combinatorics. This theoretical framework allows for the description, analysis, and classification of complex structures based on intersection properties among finite sets. In particular, conditions on the size of intersections provide a solid foundation for deriving broader properties that can extend to structural contexts, such as that of graphs.

In this section, we explore families of sets of fixed size whose elements satisfy particular intersection constraints, such as the size of the intersection between two sets being constant. More specifically, we study families of sets $\{N_v:v\in I\}$ that satisfy these two properties:
\begin{enumerate}
	\item $|N_v|=d$, for some $d\geq 1$ and for all $v\in I$;
	\item $|N_u\cap N_v|=d-1$, for all $\{u,v\}\subseteq I$.
\end{enumerate}
We prove that the sizes of arbitrary intersections and the size of the union of all members of the family are completely determined by $|\bigcap_{v\in T}N_v|$, where $T$ is any triple of $I$. This analysis is essential in Section \ref{sec:caract.Phi.simples} for characterizing split graphs $S$ such that $\Phi(S)$ is simple and complete. We remark that the key theorems of this section are essentially equivalent to certain auxiliary results proved by M. Ramras and E. Donovan in \cite{ramras2011automorphism}. \\

The following two lemmas describe how the size of an arbitrary intersection of sets changes when a new intersecting set is added. In short, we show that it either remains the same or decreases by 1.

\begin{lemma}
	\label{pre.lema.aureo}
	Let $\mathcal{N}=\{N_v :v\in I\}$ be a finite family of finite sets of size $d$ such that $|N_u \cap N_v |=d-1$ for all $\{u,v\}\subseteq I$. If $A\subseteq I, |A|\geq 2$ and $b\in I-A$, then 
	\begin{equation*}
		\left|\bigcap_{v\in A\cup b}N_v \right| \in \left\{ \left| \bigcap_{v\in A}N_v \right| -1, \left| \bigcap_{v\in A}N_v \right| \right\} .
	\end{equation*}
	In particular:
	\begin{equation*}
		|N_u \cap N_v \cap N_w |\in \{d-2,d-1\},
	\end{equation*}
	for any triple $\{u,v,w\}\subseteq I$.
\end{lemma}

\begin{proof}
	Without loss of generality, we can take $A=[n-1]$ and $b=n$ ($n\geq 3$). We rename the intersections as $\bigcap_{v\in A}N_v =M_{n-1}$ and $N_n\cap M_{n-1} =M_{n}$. Obviously, $|M_n|\leq |M_{n-1}|$, since $M_n \subseteq M_{n-1}$. Suppose that $|M_n|\leq |M_{n-1}|-2$. Then, $|M_{n-1} -N_n|\geq 2$. Let $x,y\in M_{n-1}-N_n$. Since $x,y\in M_{n-1}$, it follows that in particular $x,y\in N_1$. Therefore, $\{x,y\}\subseteq N_1 -N_n$, which is absurd because the hypothesis implies that $|N_u -N_v|=1=|N_v -N_u|$ for all $\{u,v\}\subseteq I$. 
\end{proof}

\begin{lemma}
	\label{pre.pre.lema.aureo}
	Let $\mathcal{N}=\{N_v :v\in I\}$ be a finite family of finite sets of size $d$ such that $|N_u \cap N_v |=d-1$ for all $\{u,v\}\subseteq I$. If $W$ is an arbitrary subset of $I$ with at least 4 elements, then:
	\begin{enumerate}
		\item if $|\bigcap_{v\in W-x} N_v|=d-1$ for some $x\in W$, then \[\left|\bigcap_{v\in W} N_v \right|=\left|\bigcap_{v\in W-x} N_v \right|;\] 
		\item if $|\bigcap_{v\in W-x} N_v|=d+2-|W|$ for some $x\in W$, then \[\left|\bigcap_{v\in W} N_v \right|=\left|\bigcap_{v\in W-x} N_v \right|-1.\]
	\end{enumerate}
\end{lemma}

\begin{proof}
	Without loss of generality, we take $W=[n]$ and $x=n$, for $n\geq 4$, so that $W-x=[n-1]$. If $M_k=\bigcap_{v=1}^k N_v$ ($k\in[n]$), then $|M_n|\in\{|M_{n-1}|-1,|M_{n-1}|\}$, by Lemma \ref{pre.lema.aureo}. 
	
	Suppose that $|M_{n-1}|=d-1$, but $|M_n |=|M_{n-1}|-1$. Then, we can find elements $a,b\in N_n -M_{n-1}$. Since $a,b\notin M_{n-1}$, there exist $i,j\in [n-1]$ such that $a\notin N_i$ and $b\notin N_j$. If $i=j$, then $\{a,b\}\subseteq N_n -N_i$, which is a contradiction. Therefore, it must be $i\neq j$, and we can assume without loss of generality that $i=1$ and $j=2$. That is, $a\in N_n -N_1$ and $b\in N_n -N_2$. Let $x_1$ be the unique element in $N_1 -M_{n-1}$. 
	If $x_1 \neq b$, then $\{a,b\}\subseteq N_1 -N_n$, which is absurd. Therefore, $x_1=b$ and $N_1 =b\dot{\cup}M_{n-1}$. Using similar arguments, it is easy to deduce that $N_2 =a\dot{\cup}M_{n-1}$. Let $x_3$ be the unique element in $N_3 -M_{n-1}$. Then, $N_3 =x_3\dot{\cup}M_{n-1}$. If $x_3\in\{a,b\}$, then $N_3 =N_1$ or $N_3 =N_2$, which is absurd. If $x_3\notin\{a,b\}$, then $\{a,b\}\subseteq N_n -N_3$, another contradiction. With this, item (1) is proven.
	
	Now suppose that $|M_{n-1}|=d+2-n$, but $|M_n|=|M_{n-1}|$. Then, $M_n =M_{n-1}$, because $M_n \subseteq M_{n-1}$ and $|M_n|=|M_{n-1}|$. Observe that $N_n -M_n =\bigcup_{v=1}^{n-1}(N_n -N_v)$. Since $|\bigcup_{v=1}^{n-1}(N_n -N_v)|=|N_n|-|M_n|=n-2$ and $|N_n -N_v|=1$ for all $v\in[n-1]$, there is exactly one repetition in the union of these singletons. Therefore, we can assume without loss of generality that $N_n-N_1=\{a\}=N_n-N_2$ and $|N|=n-3$, where $N=\bigcup_{v=3}^{n-1}(N_n -N_v)$. Since $|N_1\cap N_n|=d-1$ and $|N_1|=d$, we have that $N_1=x_1\dot{\cup}M_n\dot{\cup}N$, for some element $x_1 \notin N_n$. Similarly: $N_2=x_2\dot{\cup}M_n\dot{\cup}N$ for some $x_2 \notin N_n$. Since $N_n =a\dot{\cup}M_n\dot{\cup}N$, it is evident that $|N_1\cap N_2\cap N_n|=|M_n\dot{\cup}N|=d-1$. Then, $|M_n|=d-1$, by (1), and consequently $n=3$, which is a contradiction. Finally, item (2) is also proven.
\end{proof}

The effort made in the first lemmas of this section is well rewarded in the results that follow. Below, we show that the size of arbitrary intersections is restricted to two possible values.

\begin{proposition}
	\label{lema.aureo.conjuntista}
	Let $\mathcal{N}=\{N_v :v\in I\}$ be a finite family of finite sets of size $d$ such that $|N_u \cap N_v |=d-1$ for all $\{u,v\}\subseteq I$. If $W$ is a non-empty subset of $I$, then 
	\begin{equation*}
		\left|\bigcap_{v\in W} N_v \right| \in\{d+1-|W|,d-1\}.
	\end{equation*}
\end{proposition}

\begin{proof}
	We proceed by induction on $n=|W|$. The case $n\leq 2$ is part of the hypotheses. The case $n=3$ is true by Lemma \ref{pre.lema.aureo}. Now, suppose the claim holds for each $W'\subseteq I$ such that $4\leq|W'|<n$. Without loss of generality, we take $W=[n]$. If $M_k=\bigcap_{v=1}^k N_v$ ($k\in[n]$), then $|M_{n-1}|\in\{d-1, d+2-n\}$, by the inductive hypothesis. If $|M_{n-1}|=d-1$, then $|M_n|=d-1$, by item (1) of Lemma \ref{pre.pre.lema.aureo}. If $|M_{n-1}|=d+2-n$, then $|M_n|=d+1-n$, by item (2) of Lemma \ref{pre.pre.lema.aureo}. 
\end{proof}

The following proposition tells us that the size of an arbitrary intersection of 3 or more sets is determined by the size of the intersection of only 3 of them, and vice versa. Moreover, intersections of the same number of sets have the same size. 

\begin{proposition}
	\label{lema.aureo.conjuntista.2}
	Let $\mathcal{N}=\{N_v :v\in I\}$ be a finite family of finite sets of size $d$ such that $|N_x \cap N_y |=d-1$ for all $\{x,y\}\subseteq I$. Then, we have the following:
	\begin{enumerate}
		\item if $A\subseteq I$ and $|A|\geq 3$, then
		\begin{equation*}
			\left|\bigcap_{v\in A}N_v \right|=d-1
		\end{equation*}
		if and only if there exists a triple $T\subseteq A$ such that $|\bigcap_{v\in T}N_v|=d-1$;
		\item if $A\subseteq I$ and $|A|\geq 3$, then
		\begin{equation*}
			\left|\bigcap_{v\in A}N_v \right|=d+1-|A|
		\end{equation*}
		if and only if there exists a triple $T\subseteq A$ such that $|\bigcap_{v\in T}N_v|=d-2$;
		\item if $A,B\subseteq I$ and $|A|=|B|\geq 1$, then 
		\begin{equation*}
			\left|\bigcap_{v\in A} N_v \right| = \left|\bigcap_{v\in B}N_v \right|.
		\end{equation*}
	\end{enumerate} 
\end{proposition}

\begin{proof}
	(1)-(2). If $|A|=3$, then both (1) and (2) are trivially true. Let then $|A|\geq 4$. The converse in both items is easily obtained by induction on $|A|$ using Lemma \ref{pre.pre.lema.aureo}. Let us now prove the forward direction of (1). Suppose no triple of $A$ satisfies the requirement. Then, by Lemma \ref{pre.lema.aureo}, we have that $|\bigcap_{v\in T}N_v|=d-2$ for every triple $T\subseteq A$. Since $\bigcap_{A}N_v\subseteq\bigcap_{T}N_v$ implies $|\bigcap_{A}N_v|\leq|\bigcap_{T}N_v|$, we finally get a contradiction. For the forward direction of (2), suppose again that no triple of $A$ satisfies the requirement. Then, by Lemma \ref{pre.lema.aureo}, we have that $|\bigcap_{T}N_v|=d-1$ for every triple $T\subseteq A$, and thus $|\bigcap_{A}N_v|=d-1$, by (1). But then $d+1-|A|=d-1$ implies $|A|=2$, a contradiction.
	
	(3). Let $n=|A|=|B|$. The case $n\leq 2$ is the general hypothesis. If $n\geq 3$, suppose that $|\bigcap_{A} N_v |=d-1$ and $|\bigcap_{B}N_v |=d+1-n$. By items (1) and (2), there then exist triples of elements $T_1$ and $T_2$ such that $T_1\subseteq A, T_2\subseteq B, |\bigcap_{T_1}N_v|=d-1$ and $|\bigcap_{T_2}N_v|=d-2$. Since $T_1,T_2\subseteq A\cup B$, we can again apply (1) and (2) to infer that 
	\[d-1=\left|\bigcap_{v\in A\cup B}N_v\right|=d+1-|A\cup B|.\]
	Then, $|A\cup B|=2$. This is impossible since $n\geq 3$ implies $|A\cup B|\geq 4$. 
\end{proof}

Before continuing, we need to recall a well-known result that, through the \textbf{inclusion-exclusion principle} (also known as the sieve principle), relates the size of the union of a finite family of finite sets to the sizes of the intersections of the same. This result states that, if $\{N_v :v\in [\alpha]\}$ is a family of finite sets, then
\begin{equation}
	\left|\bigcup _{v=1}^{\alpha}N_{v}\right|=\sum _{\varnothing \neq A\subseteq [\alpha]}(-1)^{|A|+1}\left|\bigcap _{v\in A}N_{v}\right|.
	\label{formulafamosa.union.intersec}
\end{equation}

\begin{lemma}
	\label{suma.binomial0}
	For every natural number $n$,
	\[ \sum_{k=0}^{n}\binom{n}{k}(-1)^k =0. \]
\end{lemma}

\begin{proof}
	It is a consequence of the binomial theorem:
	\[0=((-1)+1)^{n}=\sum_{k=0}^{n}\binom{n}{k}1^{n-k}(-1)^k.\]
\end{proof}

\begin{lemma}
	\label{suma.binomial1}
	If $\alpha$ and $d$ are two natural numbers, then
	\[ \sum_{v=0}^{\alpha}\binom{\alpha}{v}(d+1-v)(-1)^v =0. \] 
\end{lemma}

\begin{proof}
	First, observe that the following identity holds:
	\begin{align*}
		\binom{\alpha}{v}(d+1-v)&=\binom{\alpha}{v}\big((d+1-\alpha)+(\alpha-v)\big)\\
		&=\binom{\alpha}{v}(d+1-\alpha)+\frac{\alpha!}{v!(\alpha-v-1)!}\\
		&=\binom{\alpha}{v}(d+1-\alpha)+\alpha\binom{\alpha-1}{v}.
	\end{align*}
	Moreover, recall that $\binom{n}{k}=0$ if $k>n$. Finally, using Lemma \ref{suma.binomial0}, we obtain the required identity as follows:
	\[ \sum_{v=0}^{\alpha}\binom{\alpha}{v}(d+1-v)(-1)^v = \]
	\[ (d+1-\alpha)\sum_{v=0}^{\alpha}\binom{\alpha}{v}(-1)^v +\alpha\sum_{v=0}^{\alpha}\binom{\alpha-1}{v}(-1)^v = \]
	\[ \alpha \big( \binom{\alpha-1}{\alpha}(-1)^{\alpha}+\sum_{v=0}^{\alpha-1}\binom{\alpha-1}{v}(-1)^v \big) = 0. \]
\end{proof}

Through the technical lemmas just developed, we will now prove that the size of the union of all sets in the family is determined by the size of the intersection of only 3 of them, and vice versa.

\begin{proposition}
	\label{lema.relacion.omegalfa.conjuntista}
	Let $\{N_v :v\in [\alpha],\alpha\geq 3\}$ be a family of finite sets of size $d\geq 1$ such that $|N_x \cap N_y |=d-1$ for all $\{x,y\}\subseteq [\alpha]$. If $\omega=|\bigcup_{v=1}^{\alpha}N_v |$, then we have the following: 
	\begin{enumerate}
		\item \[ \omega=\alpha +d-1 \] 
		if and only if there exists a triple $T\subseteq[\alpha]$ such that $|\bigcap_{v\in T}N_v|=d-1$;
		\item \[ \omega=d+1 \] 
		if and only if there exists a triple $T\subseteq[\alpha]$ such that $|\bigcap_{v\in T}N_v|=d-2$.
	\end{enumerate}
\end{proposition}

\begin{proof}
	(1, $\Leftarrow$). Using Lemma \ref{lema.aureo.conjuntista.2}, we can immediately infer that $|\bigcap_{A}N_v|=d-1$ for all $A\subseteq [\alpha]$ such that $|A|\geq 2$. Then, applying formula \eqref{formulafamosa.union.intersec}, we obtain that
	\begin{equation}
		\label{igualdad0}
		\omega =\alpha d -(d-1)\sum_{v=2}^{\alpha}\binom{\alpha}{v}(-1)^{v}.
	\end{equation}
	Through Lemma \ref{suma.binomial0}, it immediately follows that
	\begin{equation}
		\label{igualdad1}
		\sum_{v=2}^{\alpha}\binom{\alpha}{v}(-1)^v=\alpha-1.
	\end{equation}
	The required formula is now obtained by substituting \eqref{igualdad1} into \eqref{igualdad0}.
	
	(2, $\Leftarrow$). Using Lemma \ref{lema.aureo.conjuntista.2}, we deduce that $|\bigcap_{v\in A}N_v|=d+1-|A|$ for all $A\subseteq [\alpha]$ such that $A\neq\varnothing$. Once again, we apply formula \eqref{formulafamosa.union.intersec}, and then combine it with Lemma \ref{suma.binomial1}, obtaining the required equality as follows:
	\[ \omega=-\sum_{v=1}^{\alpha}\binom{\alpha}{v}(d+1-v)(-1)^{v} = \]
	\[ \binom{\alpha}{0}(d+1-0)(-1)^0 -\sum_{v=0}^{\alpha}\binom{\alpha}{v}(d+1-v)(-1)^{v} = d+1.\]
	
	(1, $\Rightarrow$). Suppose that $\omega=\alpha+d-1$, but no triple in $[\alpha]$ satisfies the requirement. Then, $|\bigcap_{T}N_v|=d-2$ for every triple $T\subseteq[\alpha]$, by Lemma \ref{pre.lema.aureo}. Consequently, we have that $\omega=d+1$, by (2, $\Leftarrow$). But then $\alpha=2$, a contradiction.  
	
	(2, $\Rightarrow$). It follows from $(1,\Leftarrow)$ using the same reasoning as in the proof of $(1,\Rightarrow)$.
\end{proof}

The section culminates with two important theorems that gather all the partial progress made so far. We conclude that the size of the intersection of a single triple of sets determines the behavior of the entire family, leading to a high structural regularity of the same. This great combinatorial ``rigidity" then allows us to relate local properties (such as intersections of 2 or 3 sets) and global properties (such as the union of all sets) of the intersecting family.

\begin{theorem}
	\label{equivalencias.d-1.conjuntistas}
	Let $\{N_v :v\in [\alpha],\alpha\geq 3\}$ be a family of finite sets of size $d\geq 1$ such that $|N_x \cap N_y |=d-1$ for all $\{x,y\}\subseteq [\alpha]$. Moreover, let $\omega=|\bigcup_{v=1}^{\alpha}N_v|$. The following statements are equivalent:
	\begin{enumerate}
		\item there exists a triple $T\subseteq[\alpha]$ such that $|\bigcap_{v\in T}N_v|=d-1$;
		\item \[ \left|\bigcap_{v\in A}N_v \right|=d-1, \] for all $A\subseteq[\alpha]$ with $|A|\geq 2$;
		\item \[ \omega=\alpha+d-1. \] 
	\end{enumerate}
\end{theorem}

\begin{proof}
	(1$\iff$2). This is the content of item (1) of Lemma \ref{lema.aureo.conjuntista.2} together with one of the general hypotheses of the theorem.
	
	(1$\iff$3). This is the content of item (1) of Proposition \ref{lema.relacion.omegalfa.conjuntista}.
\end{proof}

\begin{theorem}
	\label{equivalencias.d+1-n.conjuntistas}
	Let $\{N_v :v\in [\alpha],\alpha\geq 3\}$ be a family of finite sets of size $d\geq 1$ such that $|N_x \cap N_y |=d-1$ for all $\{x,y\}\subseteq [\alpha]$. Let, moreover, $\omega=|\bigcup_{v=1}^{\alpha}N_v|$. The following statements are equivalent:
	\begin{enumerate}
		\item there exists a triple $T\subseteq[\alpha]$ such that $|\bigcap_{v\in T}N_v|=d-2$;
		\item \[ \left|\bigcap_{v\in A}N_v \right|=d+1-|A|, \] for all $A\subseteq[\alpha]$ with $A\neq\varnothing$;
		\item \[ \omega=d+1. \] 
	\end{enumerate}
\end{theorem}

\begin{proof}
	(1$\iff$2). This is the content of item (2) of Lemma \ref{lema.aureo.conjuntista.2} together with the general hypotheses of the theorem.
	
	(1$\iff$3). This is the content of item (2) of Proposition \ref{lema.relacion.omegalfa.conjuntista}.
\end{proof}


\bibliographystyle{abbrv}
\bibliography{citastesis}


\end{document}